\newtheorem{Theorem}{Theorem}[section]
\newtheorem{Proposition}[Theorem]{Proposition} 
\newtheorem{Lemma}[Theorem]{Lemma}
\newtheorem{Corollary}[Theorem]{Corollary}
\theoremstyle{definition}
\newtheorem{Remark}[Theorem]{Remark}
\newcommand{\qbin}[2]{
\left[
 \begin{array}{c}
 #1 \\
 #2 \\
 \end{array}
 \right]
}
\def\Kom{{\rm{Kom}}}
\def\hg{\hat{\mathfrak{g}}}
\def\Gr{{\mathrm{Gr}}}
\def\uck{{\underline{ck}}}
\def\h{\mathfrak{h}}
\def\H{\mathfrak{H}}
\def\A{{\sf{A}}}
\def\B{{\sf{B}}}
\def\X{{\sf{X}}}
\def\Y{{\sf{Y}}}
\def\t{\theta}
\def\({{\rm (}}
\def\){{\rm )}}
\def\eps{\epsilon}
\def\1{{\mathbf{1}}}
\newcommand{\N}{\mathbb{N}}
\newcommand{\Z}{\mathbb{Z}}
\newcommand{\C}{\mathbb{C}}
\def\adj{{{adj}}}
\def\g{\mathfrak{g}}
\def\la{\langle}
\def\ra{\rangle}
\def\l{\lambda}
\def\id{{\mathrm{id}}}
\def\k{{\Bbbk}}
\def\sl{{\mathfrak{sl}}}
\def\E{{\sf{E}}}
\def\F{{\sf{F}}}
\def\sC{{\mathcal{C}}}
\def\sF{{\mathcal{F}}}
\def\tY{{\tilde{Y}}}
\def\O{{\mathcal O}}
\def\sE{{\mathcal{E}}}
\def\H{{\mathcal{H}}}
\def\dim{{\rm{dim}}}
\newcommand{\K}{\mathcal{K}}
\DeclareMathOperator{\Hom}{Hom}
\DeclareMathOperator{\End}{End}
\begin{document}
\setcounter{tocdepth}{1}

\title{Rigidity in higher representation theory}

\author{Sabin Cautis}
\email{cautis@math.ubc.ca}
\address{Department of Mathematics\\ University of British Columbia \\ Vancouver, Canada}

\begin{abstract}
We describe a categorical $\g$ action, called a $(\g,\t)$ action, which is easier to check in practice. Most categorical $\g$ actions can be shown to be of this form. The main result is that a $(\g,\t)$ action carries actions of quiver Hecke algebras (KLR algebras). We discuss applications of this fact to categorical vertex operators, affine Grassmannians (or Nakajima quiver varieties) and to homological knot invariants. 
\end{abstract}

\maketitle
\tableofcontents

\section{Introduction}

The higher representation theory of Kac-Moody Lie algebra $\g$ involves the action of $U_q(\g)$ on categories. This means that to each weight $\l$ of $\g$ one assigns an additive (graded) category $\sC(\l)$ and to generators $E_i$ and $F_i$ of $U_q(\g)$ one assigns functors $\E_i: \sC(\l) \rightarrow \sC(\l+\alpha_i)$ and $\F_i: \sC(\l+\alpha_i) \rightarrow \sC(\l)$. These functors are then required to satisfy certain relations analogous to those in $U_q(\g)$. For example, the relation $[E_i,F_i]= \frac{K_i-K_i^{-1}}{q-q^{-1}}$ becomes 
\begin{align}
\label{EQ:intro1}
\E_i \F_i |_{\sC(\l)} \cong \F_i \E_i |_{\sC(\l)} \bigoplus_{[\la \l, \alpha_i \ra]} \id_{\sC(\l)} \ \  &\text{ if } \la \l, \alpha_i \ra \ge 0 \\
\label{EQ:intro2}
\F_i \E_i |_{\sC(\l)} \cong \E_i \F_i |_{\sC(\l)} \bigoplus_{[-\la \l, \alpha_i \ra]} \id_{\sC(\l)} \ \  &\text{ if } \la \l, \alpha_i \ra \le 0.
\end{align}

Since such categorical actions involve functors one would like to understand their natural transformations. For instance, one might wonder about $\End(\E_i \E_i)$ or $\End(\E_i \E_j)$ or any other composition of $\E$'s and $\F$'s. Moreover, these natural transformations should induce the isomorphisms in (\ref{EQ:intro1}) and (\ref{EQ:intro2}). 

One answer to this question is given by Khovanov-Lauda \cite{KL1,KL2,KL3} and Chuang-Rouquier \cite{CR,R}. They suggest that one should require the endomorphism algebras of composition of $\E$'s to be certain Hecke quiver algebras (or KLR algebras). More precisely, Khovanov-Lauda show that these algebras, together with several other explicit natural transformations relating $\E$'s and $\F$'s, determine all the necessary isomorphisms of functors such as the ones in (\ref{EQ:intro1}) and (\ref{EQ:intro2}). In this sense this gives a categorification of quantum groups by replacing isomorphisms of functors with equalities of natural transformations. 

On the other hand, suppose one is given a ``naive'' categorical action of $\g$. This means that we have categories $\sC(\l)$ and functors $\E_i,\F_i$ together with the knowledge that certain compositions of functors, such as those in (\ref{EQ:intro1}) or (\ref{EQ:intro2}), are isomorphic. There are many examples of such actions (see section \ref{sec:app} for a couple). In such cases there is little reason to expect the quiver Hecke algebras to act. In other words, one would expect the space of natural transformations, between say compositions of $\E$'s, to depend on the choice of categories $\sC(\l)$.

The purpose of this paper is to show that, under some mild conditions, a naive categorical action of $\g$ carries an action of the quiver Hecke algebras by natural transformations. This is a rigidity result for the higher representation theory of Kac-Moody Lie algebras $\g$ because it implies that the spaces of natural transformations are determined by $\g$. 

We do not know if such rigidity phenomena are common. For example, it is not clear the extent to which similar results hold for the higher representation theory of the Heisenberg algebras from \cite{CLi1}. 

\subsection{Main results}

In section \ref{sec:gaction} we introduce the concept of a $(\g,\t)$ action. Roughly this is a naive categorical $\g$ action together with another piece of data $\t$. The main result of this paper, Theorem \ref{THM:main}, states the following. 

\begin{Theorem}
A $(\g,\t)$ action carries an action of a quiver Hecke algebra (modulo transient maps).
\end{Theorem}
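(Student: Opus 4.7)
The goal is to promote the data of a $(\g,\t)$ action to natural transformations between compositions of $\E$'s (and $\F$'s) that satisfy the defining relations of a KLR algebra. The plan is to (a) construct, from $\t$ and the adjunction data implicit in the isomorphisms (\ref{EQ:intro1})--(\ref{EQ:intro2}), explicit candidates for the KLR generators (a dot $x_i \in \End(\E_i)$ and crossings $T_{ij} \in \Hom(\E_i\E_j,\E_j\E_i)$ of appropriate degree); and (b) verify the KLR relations for these candidates, absorbing obstructions into a well-defined ideal of ``transient'' natural transformations, the precise meaning of which will be dictated by the proof.

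\textbf{Constructing the generators.} The datum $\t$ should encode, or immediately furnish, a canonical degree-$2$ endomorphism $x_i$ of $\E_i$; higher dots come by iteration, and a dot on the $k$-th strand of $\E_{i_1}\cdots \E_{i_n}$ is obtained by placing $x_{i_k}$ in the appropriate slot. The crossing $T_{ij}$ for $i\neq j$ is then built by composing these dots with the unit/counit morphisms of the $\E_i \dashv \F_i$ adjunction implicit in (\ref{EQ:intro1}); schematically, one threads a dot through a cap/cup to produce a map $\E_i\E_j \to \E_j\E_i$. For $i=j$, the self-crossing is constructed from a quadratic expression in $\t$ together with adjunction maps, with the degree matched by inserting or removing an $x_i$. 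In parallel, the units/counits supply caps and cups $\E_i\F_i \leftrightarrows \id$ with the appropriate shifts, which play the role of auxiliary structure when restricting relations to weight spaces.

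\textbf{Verifying the relations.} The KLR relations split into (i) dot commutation; (ii) dot--crossing commutation; (iii) the quadratic relation for $T_{ii}$; and (iv) the braid relation relating $T_{ij}T_{jk}T_{ij}$ and $T_{jk}T_{ij}T_{jk}$. Since each involves only one or two simple roots, it suffices to check them inside the rank-$\leq 2$ sub-$(\g,\t)$-action on $\sC(\l),\sC(\l \pm \alpha_i),\sC(\l\pm\alpha_j),\ldots$. In each rank-two case, the Hom spaces between the relevant compositions of $\E$'s and $\F$'s can be computed, via iterated use of (\ref{EQ:intro1})--(\ref{EQ:intro2}), in terms of $\End(\id_{\sC(\mu)})$ for various weights $\mu$ plus a residual piece which factors through weights at which the relation is automatic. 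That residual piece is the \emph{transient} ideal, and the KLR relations then hold on the nose modulo it.

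\textbf{Main obstacle.} The principal difficulty is the dihedral/braid relation in rank $2$, where a long composite of $\t$'s, dots, and adjunction maps must be shown to equal its reflected composite up to a transient correction. Overcoming this should require: (1) a case analysis in the Cartan datum $\la \alpha_i,\alpha_j\ra \in \{0,-1,-2,-3\}$; (2) a compatibility identity between $\t$ and the $\E_i \dashv \F_i$ adjunction, essentially asserting that $\t$ is self-dual under the adjunction modulo transient maps; and (3) a rigidity computation controlling $\End(\E_i^k\E_j^\ell)$ in each weight space, preventing unwanted natural transformations from hiding further obstructions. Once the rank-two relations are established, the higher-rank relations follow formally, yielding the KLR action modulo transient maps.
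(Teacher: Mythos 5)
Your high-level picture is reasonable, but there are several genuine gaps that the paper must work hard to close and which your outline does not address.

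\textbf{Construction of $T_{ij}$ and $X_i$.} The paper does not ``thread a dot through a cap/cup'' to build crossings. Instead it computes, by adjunction induction (Lemmas \ref{LEM:homEEs}, \ref{LEM:Ts1}, \ref{LEM:Ts2}), that $\Hom(\E_i\E_j\1_\l,\E_j\E_i\1_\l\la -\la i,j\ra\ra)$ is at most one-dimensional, and then \emph{defines} $T_{ij}$ as an arbitrary nonzero element of that space. There is no circularity risk, and no need to manufacture it from $\t$. The dot is $X_i := -(\t\,II)+(II\,\t)$ after normalizing $\t\in Y_\k$; $T_{ii}$ is again the unique degree $-2$ endomorphism up to scalar, not a ``quadratic expression in $\t$''. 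Your proposed construction would need an independent justification that the composite is nonzero and satisfies the right degree, and as written it is not clear it produces anything.

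\textbf{The transient ideal.} You leave ``transient'' as a residual piece ``factoring through weights at which the relation is automatic''. In the paper, transient is sharply defined: $\gamma\in\End^2(\1_\l)$ is transient iff $(\gamma,\alpha_i)_\l=0$ for all $i\in I$, where $(\cdot,\cdot)_\l$ is a pairing built from $T_{ii}$ (namely $T_{ii}(I\gamma I)T_{ii}=(\gamma,\alpha_i)_\l\,T_{ii}$). This is essential: Lemma \ref{LEM:EndE} shows such maps slide past $\E_i$ and $\F_i$, which is why one can quotient by them. Without a concrete definition there is no well-defined quotient, and the nilHecke relation (your (iii)) literally fails on the nose.

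\textbf{Global consistency (rescaling).} Your plan effectively checks relations weight-by-weight. But the $T_{ij}$'s are only defined up to a scalar in each weight space, and the bulk of the paper's difficulty is showing one can choose those scalars consistently across all of $X$: Step 3 rescales $T_{ii}I_\l$ so the $iii$ and $iij$ braid relations hold for all $\l$, Step 7 rescales to match $T_{iji}=T'_{iji}+t_{ij}(III)$ with a $\l$-independent $t_{ij}$ (Proposition \ref{PROP:tji}), and Step 8 rescales $T_{ij}I_\l$ so $T_{ijk}=T'_{ijk}$ for distinct $i,j,k$. These are not rank-two checks; they involve four indices at a time (Lemmas \ref{LEM:iiab}, \ref{LEM:ijab}) and conditions (\ref{co:vanish2}) on triangles and squares in $\Gamma$.

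\textbf{Two smaller points.} The paper's Cartan datum is simply laced, so only $\la i,j\ra\in\{0,-1\}$ arises; your case split through $\{0,-1,-2,-3\}$ is not within scope. And your claim that ``once the rank-two relations are established, the higher-rank relations follow formally'' is not true here: the $T_{ijk}$ braid relation for distinct $i,j,k$, and its compatibility with the rescaled $T_{ii}$ and $T_{ij}$, is an independent and nontrivial chunk of the argument.
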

\begin{Remark}
Transient maps are certain negligible 2-morphisms as defined in section \ref{sec:Xi}. If $\g=\sl_n
$ then this result holds without having to mod out by transient maps (section \ref{sec:transients}). 
\end{Remark}

The definition of a $(\g,\t)$ action is designed to be as minimal as possible so that it is easier to check in examples. For example, we do not require the existence of divided powers $\E_i^{(r)}$ and $\F_i^{(r)}$. These only appear {\it a posteriori} as a consequence of the affine nilHecke relations. We also do not require the Serre relation nor the commutativity relation $\E_i \E_j \cong \E_j \E_i$ when $\la \alpha_i, \alpha_j \ra = 0$. 

The proof of Theorem \ref{THM:main} is largely based on a sequence of $\Hom$-space calculations. For example, one can show by using adjunction together with isomorphisms (\ref{EQ:intro1}) and (\ref{EQ:intro2}) that $\Hom(\E_i \E_j, \E_j \E_i)$ is one-dimensional if $\la \alpha_i, \alpha_j \ra = 0$. Thus, up to rescaling, this gives us a map $T_{ij}: \E_i \E_j \rightarrow \E_j \E_i$ which subsequently turns out to be the isomorphism $\E_i \E_j \cong \E_j \E_i$. These $\Hom$-space calculations are performed in a series of Lemmas in the appendix. They reflect a certain rigidity of categorical $\g$ actions. 
 
Finally, Theorem \ref{THM:main} together with the main result of \cite{CLa} implies the following. 
\begin{Corollary}\label{COR:main}
A $(\g,\t)$ action induces a 2-representation in the sense of Khovanov-Lauda (modulo transient maps). 
\end{Corollary}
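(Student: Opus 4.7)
The plan is to combine Theorem \ref{THM:main} with the main result of \cite{CLa} as a black box, in three steps.

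First, apply Theorem \ref{THM:main} to the given $(\g,\t)$-action. This produces an action of the KLR algebra on endomorphisms of compositions of the $\E_i$'s, modulo transient maps. It supplies the upward strands, dots, and crossings of the Khovanov-Lauda 2-category $\sU(\g)$, together with all the KLR relations among them.

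Second, extract the remaining generating 2-morphisms of $\sU(\g)$---cups, caps, downward dots and crossings, sideways crossings---from the adjunction data packaged in $\t$. The biadjunctions between $\E_i$ and $\F_i$ (with appropriate shifts), whose units and counits are part of the $(\g,\t)$ data, supply the cups and caps directly; the mates of the KLR generators under these adjunctions supply the downward and sideways analogues. At this point every generator of $\sU(\g)$ is realized by a 2-morphism in our target.

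Third, invoke the main theorem of \cite{CLa}, whose content is precisely that, once the KLR relations and the categorical $\sl_2$ isomorphisms (\ref{EQ:intro1})--(\ref{EQ:intro2}) hold together with the adjunction data above, the remaining defining relations of $\sU(\g)$---in particular the bubble slide, infinite Grassmannian, and nondegeneracy relations---are automatic. The main obstacle was already overcome in establishing Theorem \ref{THM:main}; what remains is essentially organizational. The one genuine subtlety is that every step must be carried out modulo transient maps, which is why the corollary (like the main theorem) holds on the nose only in the $\sl_n$ case noted in the remark; in general, some rescaling of generators to align adjunction normalizations with those of Khovanov-Lauda may be required, but this is invisible at the level of the relations once transient maps are quotiented out.
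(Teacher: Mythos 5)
Your proposal is correct and takes the same approach as the paper: the paper's own argument for this corollary is simply to combine Theorem \ref{THM:main} with the main result of \cite{CLa}, which you do, with a reasonable gloss on what \cite{CLa} supplies (cups/caps from adjunction, and the automaticity of the remaining Khovanov--Lauda relations once the KLR relations and the $\sl_2$-decompositions hold).
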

\begin{Remark}
As before, if $\g=\sl_n$ then the condition on transient maps is not necessary. 
\end{Remark}
Recall that a 2-representation in this sense is a 2-functor $\dot{\mathcal{U}}_Q(\g) \rightarrow \K$. Here $\dot{\mathcal{U}}_Q(\g)$ is the 2-category defined by Khovanov-Lauda which categorifies $\g$ while $\K$ is the target category. 

\subsection{Applications}\label{sec:app}

In section \ref{sec:applications} we discuss three applications of Theorem \ref{THM:main}. 

The first application (section \ref{sec:vertex}) is to categorical vertex operators. In \cite{CLi2} we construct a $(\hg,\t)$ action on the homotopy category of a categorification $\sF_\Gamma$ of the Fock space for a particular Heisenberg algebra $\h_\Gamma$. Corollary \ref{COR:main} implies that this action can be lifted to a 2-representation of $\dot{\mathcal{U}}_Q(\hg)$ (in this case there are no transient maps to worry about). 

It is interesting to note that in this case it is very difficult to work with (or even define) divided powers $\E_i^{(k)}$ and $\F_i^{(k)}$. Likewise it is very difficult to check the Serre relation. Subsequently the computations in \cite{CLi2} are made tractable because a $(\g,\t)$ action does not require checking either of these relations. 

A second application is to geometric categorical $\g$ actions (section \ref{sec:quiver}). We show that such a geometric action, defined in \cite{CK3}, induces a $(\g,\t)$ action (essentially, a geometric categorical $\g$ action is a geometric way to define a $(\g,\t)$ action). This implies that the quiver Hecke algebras act on the 2-category $\K^n_{\Gr,m}$ constructed in \cite{CKL1,C} using coherent sheaves on certain varieties $Y(\l)$. These $Y(\l)$ are convolution varieties obtained from the affine Grassmannian of ${\rm PGL}_m$. As in the previous example, this action of the quiver Hecke algebras is difficult to see directly.

For our final application (section \ref{sec:knots}) we discuss how the rigidity of categorical $\g$ actions implies a rigidity for knot homologies. More precisely, in \cite{C} we explained how to use categorical $(\sl_\infty,\t)$ actions to define homological knot invariants. By Corollary \ref{COR:main} we know any such action lifts to a 2-representation of $\dot{\mathcal{U}}_Q(\sl_\infty)$. Subsequently this knot invariant does not depend on the particular 2-representation we choose. In particular, this means that various homological knot invariants defined by very different means (coherent sheaves, category $\O$, matrix factorizations, foams, etc.) must be isomorphic. 

\vspace{.5cm}
\noindent Acknowledgments. Research was supported by NSF grant DMS-1101439, NSERC grant and the Templeton foundation.

\section{$(\g,\t)$ actions}

\subsection{Notation} 

Fix a base field $\k$, which is not assumed to be of characteristic 0, nor algebraically closed. Let $\Gamma$ be a connected graph without multiple edges or loops and with finite vertex set $I$ (i.e. simply laced Dynkin diagram). In addition, fix the following data.
\begin{itemize}
\item a free $\Z$ module $X$  (the weight lattice),
\item for $i \in I$ an element $\alpha_i \in X$ (simple roots),
\item for $i \in I$ an element $\Lambda_i \in X$ (fundamental weights),
\item a symmetric non-degenerate bilinear form $\la \cdot,\cdot \ra$ on $X$.
\end{itemize}
These data should satisfy:
\begin{itemize}
\item the set $\{\alpha_i\}_{i \in I}$ is linearly independent,
\item $C_{i,j} = \la \alpha_i, \alpha_j \ra$ (the Cartan matrix) so that $\la \alpha_i, \alpha_i \ra = 2$ and for $i \neq j$, $\la \alpha_i, \alpha_j \ra \in \{0,-1\}$ depending on whether or not $i,j \in I$ are joined by an edge,
\item $\la \Lambda_i, \alpha_j \ra = \delta_{i,j}$ for all $i, j \in I$.
\end{itemize}
We will often abbreviate $\la \l, \alpha_i \ra = \l_i$ and $\la \alpha_i, \alpha_j \ra = \la i,j \ra$. The root lattice will be denoted $Y$ and $Y_\k := Y \otimes_\Z \k$. Associated to a Cartan datum we fix a choice of scalars $Q$ consisting of $t_{ij} \in \k^{\times}$ for all $i \ne j \in I$ such that $t_{ij} = t_{ji}$ if $\la i,j \ra = 0$. 

\subsection{Definition of $(\g,\t)$ actions}\label{sec:gaction}

Associated to $\Gamma$ we have a Kac-Moody Lie algebra $\g$. A $(\g,\t)$ action consists of a target graded, additive, $\k$-linear idempotent complete 2-category $\K$ where the objects (0-morphisms) are indexed by $\l \in X$ and equipped with 
\begin{enumerate}
\item 1-morphisms: $\E_i \1_\l = \1_{\l+\alpha_i} \E_i$ and $\F_i \1_{\l+\alpha_i} = \1_\l \F_i$ where $\1_\l$ is the identity 1-morphism of $\l$.
\item 2-morphisms: for each $\l \in X$, a linear map $Y_\k \rightarrow \End^2(\1_\l)$. 
\end{enumerate}
\begin{Remark}
We will abuse notation and denote by $\theta \in \End^2(\1_\l)$ the image of $\theta \in Y_\k$ under the linear maps above. 
\end{Remark}
By a graded 2-category we mean a 2-category whose 1-morphisms are equipped with an auto-equivalence $\la 1 \ra$. $\K$ is idempotent complete if for any 2-morphism $f$ with $f^2=f$ the image of $f$ is contained in $\K$. 

On this data we impose the following conditions.
\begin{enumerate}
\item \label{co:hom1} $\Hom(\1_\l, \1_\l \la l \ra)$ is zero if $l < 0$ and one-dimensional if $l=0$ and $\1_\l \ne 0$. Moreover, the space of maps between any two 1-morphisms is finite dimensional.
\item \label{co:adj} $\E_i$ and $\F_i$ are left and right adjoints of each other up to specified shifts. More precisely
\begin{enumerate}
\item $(\E_i \1_\l)_R \cong \1_\l \F_i \la \l_i + 1 \ra$
\item $(\E_i \1_\l)_L \cong \1_\l \F_i \la - \l_i -1 \ra$.
\end{enumerate}

\item \label{co:EF} We have
\begin{align*}
\E_i \F_i \1_\l &\cong \F_i \E_i \1_{\l} \bigoplus_{[\l_i]} \1_\l \ \ \text{ if } \l_i \ge 0 \\
\F_i \E_i \1_{\l} &\cong \E_i \F_i \1_{\l} \bigoplus_{[-\l_i]} \1_\l \ \ \text{ if } \l_i \le 0
\end{align*}

\item \label{co:EiFj} If $i \ne j \in I$ then $\F_j \E_i \1_\l \cong \E_i \F_j \1_\l$.

\item \label{co:theta} If $\l_i \ge 0$ then map $(I \t I) \in \End^2(\E_i \1_\l \F_i)$ induces an isomorphism between $\l_i+1$ (resp. zero) of the $\l_i+2$ summands $\1_{\l+\alpha_i}$ when $\la \t, \alpha_i \ra \ne 0$ (resp. $\la \t, \alpha_i \ra = 0$). If $\l_i \le 0$ then the analogous result holds for $(I \t I) \in \End^2(\F_i \1_\l \E_i)$.  

\item \label{co:vanish1} If $\alpha = \alpha_i$ or $\alpha = \alpha_i + \alpha_j$ for some $i,j \in I$ with $\la i,j \ra = -1$ then $\1_{\l+r \alpha} = 0$ for $r \gg 0$ or $r \ll 0$.

\item \label{co:vanish2} If $\delta = \alpha_i + \alpha_j + \alpha_k$ with $i,j,k \in I$ a triangle in the Dynking diagram or $\delta = \alpha_i + \alpha_j + \alpha_k + \alpha_\l$ with $i,j,k,l \in I$ forming a square then $\1_{\l+r \delta} = 0$ for $r \gg 0$ and $\la \l, \delta \ra > 0$ if $\1_\l \ne 0$. 

\item \label{co:new} Suppose $i \ne j \in I$ and $\l \in X$. If $\1_{\l+\alpha_i}$ and $\1_{\l+\alpha_j}$ are nonzero then $\1_{\l}$ and $\1_{\l+\alpha_i+\alpha_j}$ are also nonzero.
\end{enumerate}

One thing to notice is that there are no divided powers $\E_i^{(r)}$ or $\F_i^{(r)}$ mentioned in the definition above. This is because their existence follows {\it a posteriori}. In certain examples (such as \cite{CLi2}) these divided powers are complicated and it is helpful to not have to deal with them. Here are a few more remarks about the conditions above. 
\begin{itemize} 
\item The condition $\1_\l=0$ (or $\1_\l \ne 0$) means that the object in $\K$ indexed by $\l$ is zero (or nonzero). 
\item Condition (\ref{co:theta}) is necessary in order to avoid ``degenerate'' examples of categorical $\g$ actions (c.f. \cite[Remark 5.19]{CR}). 
\item Condition (\ref{co:vanish2}) is only used in the proofs of Lemmas \ref{LEM:ijab} and \ref{LEM:homijk}. 
\item Condition (\ref{co:new}) is only used in a few places to shorten the argument (mostly in section \ref{sec:2}). It is not used, for instance, to prove the Serre relation. It is possible to remove this condition but it would make several arguments more cumbersome and lengthy. 
\end{itemize}

\subsection{The quiver Hecke algebra action}

Given a $(\g,\t)$ action on $\K$, an action of the quiver Hecke algebra (a.k.a. KLR algebra) $R_Q$ on $\K$ consists of a choice of 2-morphisms:
\begin{enumerate}
\item $X_i: \E_i \1_\l \rightarrow \E_i \1_\l \la 2 \ra$ for each $i \in I$, $\l \in X$,
\item $T_{ij}: \E_i \E_j \1_\l \rightarrow \E_j \E_i \1_\l \la - \la i,j \ra \ra$ for each $i,j \in I$, $\l \in X$. 
\end{enumerate}
These 2-morphisms must sastisfy the following relations:
\begin{enumerate}
\item $T_{ii}$ and $X_i$ satisfy the affine nilHecke relations
\begin{align*}
T_{ii}(X_iI) = (IX_i)T_{ii} + II & \text{ and } (X_iI)T_{ii} = T_{ii}(IX_i) + II \in \End(\E_i \E_i), \\ 
T_{ii}^2 = 0 \in \End^{-4}(\E_i \E_i) &\text{ and } (T_{ii}I)(IT_{ii})(T_{ii}I) = (IT_{ii})(T_{ii}I)(IT_{ii}) \in \End^{-6}(\E_i \E_i \E_i).
\end{align*}
\item If $i \ne j \in I$ then 
$$(IX_i)T_{ij} = T_{ij}(X_iI) \text{ and } (X_jI)T_{ij} = T_{ij}(IX_j) \in \Hom(\E_i \E_j, \E_j \E_i \la -\la i,j \ra + 2\ra).$$
\item If $i,j \in I$ with $\la i,j \ra = -1$ then 
\begin{align*}
(T_{ji}I)(IT_{ii})(T_{ij}I) &= (IT_{ij})(T_{ii}I)(IT_{ji}) + t_{ij} (III) \in \End(\E_i \E_j \E_i) \\
(T_{ji})(T_{ij}) &= t_{ij} (X_i I) + t_{ji}(IX_j) \in \End^2(\E_i \E_j) \\
(T_{ij})(T_{ji}) &= t_{ij} (IX_i) + t_{ji}(X_j I) \in \End^2(\E_j \E_i).
\end{align*}
\item If $i,j \in I$ with $\la i,j \ra = 0$ then $(T_{ji})(T_{ij}) = t_{ij}(II) \in \End(\E_i \E_j)$.
\item If $i,j,k \in I$ with $i \ne k$ then 
$$(T_{jk}I)(IT_{ik})(T_{ij}I) = (IT_{ij})(T_{ik}I)(IT_{jk}) : \E_i \E_j \E_k \rightarrow \E_k \E_j \E_i \la - \ell_{ijk} \ra$$
where $\ell_{ijk} = \la i,j \ra + \la i,k \ra + \la j,k \ra$
\item Far apart maps that do not interact with each other commute. For instance, 
$$(X_iI)(IX_j) = (IX_j)(X_iI) \text{ and } (T_{ij}I)(IIX_k)=(IIX_k)(T_{ij}I).$$
\end{enumerate}

The following is the main result in this paper. 

\begin{Theorem}\label{THM:main}
Modulo transients, a $(\g,\t)$ action carries an action of a quiver Hecke algebra $R_Q$.  
\end{Theorem}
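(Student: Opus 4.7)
The plan is to identify, for each generator of the quiver Hecke algebra $R_Q$, a candidate 2-morphism already visible in the $(\g,\t)$ data, and then to verify each defining relation by reducing it to an equality in a $\Hom$-space whose graded dimension we can compute. The main engine is a $\Hom$-space calculation: given any $\Hom$-space between two compositions of $\E_i$'s applied to $\1_\l$ (with an arbitrary shift), the adjunctions in (\ref{co:adj}) let us rewrite it as a $\Hom$-space out of $\1_\l$ into a composition of $\E$'s and $\F$'s, and iterated applications of the categorified commutator (\ref{co:EF}) together with (\ref{co:EiFj}) decompose the latter into a sum of pieces of the form $\Hom(\1_{\l'},\1_{\l'}\la l\ra)$, whose dimensions are pinned down by (\ref{co:hom1}). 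This is essentially the content of the appendix lemmas alluded to in the introduction.

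For the generators, one takes $X_i := \t\cdot I_{\E_i\1_\l}$, the whiskering of $\t\in \End^2(\1_{\l+\alpha_i})$ onto $\E_i\1_\l$; condition (\ref{co:theta}) guarantees that this acts non-degenerately on the decomposition of $\E_i\F_i\1_\l$ from (\ref{co:EF}). For $T_{ij}$ with $i\ne j$ and $\la i,j\ra \in \{0,-1\}$, the engine above shows that $\Hom(\E_i\E_j\1_\l,\ \E_j\E_i\1_\l\la -\la i,j\ra\ra)$ is one-dimensional whenever $\1_\l \ne 0$, invoking (\ref{co:vanish1}), (\ref{co:vanish2}) and (\ref{co:new}) to rule out stray contributions from unwanted weights; we pick a generator and rescale so that the quadratic identities involving the scalars $t_{ij}$ hold. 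For $T_{ii}$, the same analysis shows $\Hom(\E_i\E_i\1_\l,\E_i\E_i\1_\l\la -2\ra)$ is one-dimensional, and $T_{ii}$ is pinned down by requiring the affine nilHecke relation $T_{ii}(X_iI) - (IX_i)T_{ii} = II$.

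With the generators in hand, each KLR relation $L = R$ lives in a $\Hom$-space $H$ whose dimension in the relevant degree is again controlled by the engine. The quadratic and far-commuting relations then follow by matching scalars in a one-dimensional space. The cubic braid-type relations, namely the nilHecke braid, the mixed relation at $\la i,j\ra = -1$ yielding the $t_{ij}(III)$ term on the right, and the three-colour braid $(T_{jk}I)(IT_{ik})(T_{ij}I) = (IT_{ij})(T_{ik}I)(IT_{jk})$, are the most delicate because they live between length-three compositions; the vanishing hypotheses (\ref{co:vanish1}) and (\ref{co:vanish2}) are designed precisely to prevent spurious summands from appearing in the relevant $\Hom$-space decomposition.

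The main obstacle is that for general Kac-Moody $\g$ the cubic $\Hom$-spaces above may fail to be strictly one-dimensional, carrying an extra tail of 2-morphisms that vanish under all compositions with non-identity $\E$'s and $\F$'s; these are the transient maps. Consequently the relations hold only modulo the two-sided ideal of transient 2-morphisms, which is precisely what the statement asserts. The closing step is to verify that this ideal is preserved by horizontal composition with the constructed $X_i$, $T_{ij}$ and their adjoints, so that the KLR action descends to a well-defined action on the quotient 2-category; for $\g=\sl_n$ the independent argument of section \ref{sec:transients} shows that no non-zero transient maps occur and the action holds on the nose.
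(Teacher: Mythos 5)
Your high-level frame---use adjunction and the categorified commutator to compute $\Hom$-spaces, extract $X_i$ and $T_{ij}$ from one-dimensional spaces, verify the KLR relations degree by degree, and pass to the quotient by transients---is broadly the paper's strategy. But two of your concrete claims are wrong and a third, central step is missing.

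First, $X_i$ cannot be the single whiskering $\t\cdot I_{\E_i\1_\l}$. If you take that candidate, the operators $(X_iI)$ and $(IX_i)$ in $\End(\E_i\E_i)$ live at different vertical positions and there is no reason for $T_{ii}(X_iI)-(IX_i)T_{ii}$ to be the identity; in fact for generic $\l$ this difference is what you would call a transient and is not invertible. The paper instead puts $X_i := -(\t II)+(II\t)$ with $\la\t,\alpha_i\ra=1$, i.e.\ the \emph{difference} of the two whiskerings, and the nilHecke relation is only reached after first establishing (via Proposition~\ref{PROP:new1}) an identity of the form $\mathrm{id}=a\cdot T_{ii}(I\t I)+a\cdot(I\t I)T_{ii}+T_{ii}(\tau)$ with the middle insertion $(I\t I)$, not an outer one. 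Second, your claim that the cubic $T$-relations are the delicate ones and drive the need for ``modulo transients'' is backwards. The cubic $\Hom$-spaces such as $\End^{-6}(\E_i\E_i\E_i)$ and $\Hom(\E_i\E_j\E_k,\E_k\E_j\E_i\la-\ell_{ijk}\ra)$ are shown to be at most one-dimensional, so once both sides are known to be nonzero the cubic relations (\ref{EQ:B}), (\ref{EQ:A}), (\ref{eq:RIII3}) hold exactly, with no transients needed. The relations that only hold modulo transients are precisely the ones involving $X$'s---(\ref{EQ:KLR1})--(\ref{EQ:KLR3}), (\ref{EQ:C}), (\ref{EQ:D})---because $\End(\E_i\E_i)$ and $\End^2(\E_i\E_j)$ have larger dimension and the decomposition of a given 2-morphism into whiskerings is determined only up to a transient ambiguity.

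Third, you have suppressed the single most labour-intensive issue in the proof: once you fix each $T_{ij}I_\l$ separately up to a scalar in a one-dimensional $\Hom$-space, there is no reason those scalars can be chosen coherently across all weights so that the braid-type relations hold simultaneously. The paper spends Steps~3, 7 and~8 on exactly this point. The rescaling $T_{ij}I_\l\leadsto T_{ij}I_{\l+\alpha_k}$ is individually forced by each cubic relation, but when you travel around a loop in the weight lattice you a priori pick up a multiplicative defect; Propositions~\ref{PROP:rescale} and~\ref{PROP:welldefined2} reduce this cocycle problem to a length-four non-vanishing result (Lemmas~\ref{LEM:iiab} and~\ref{LEM:ijab}) which is itself a substantial computation using the pitchfork relations and the $\E_j$-rank lemmas. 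Saying ``we pick a generator and rescale so that the quadratic identities hold'' elides the entire issue. Without this consistency argument the construction does not produce an $R_Q$-action---only a collection of 2-morphisms satisfying each relation at a single weight.
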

\begin{Remark}
Transients are certain negligible 2-morphisms defined in section \ref{sec:Xi}. However, if $\g=\sl_n$, then we show in section \ref{sec:transients} that Theorem \ref{THM:main} holds without having to mod out by transients. 
\end{Remark}

Unfortunately, in Theorem \ref{THM:main} we cannot say for what choice of $Q$ the algebra $R_Q$ acts. One exception is when $\Gamma$ is a tree in which case any two choices of $Q$ are equivalent (so there is no ambiguity).

\subsection{Outline of proof}

Many of the arguments are based on a series of $\Hom$-space calculations. These calculations only involve properties in the definition of a $(\g,\t)$ action. In particular, they do not use results proved in the main body of the paper. For this reason we have separated them and placed them in the appendix. The common argument used is to prove these lemmas involves repeatedly applying adjunction (condition \ref{co:adj}) and then using the commutator relation between $\E$'s and $\F$'s (conditions \ref{co:EF} and \ref{co:EiFj}) to simplify. 

The proof of Theorem \ref{THM:main} follows a sequence of 10 steps where we gradually prove the quiver Hecke algebra relations. The order of these steps is important as we often use earlier results in later proofs. We now briefly describe these 10 steps. 
\begin{itemize}
\item Step \#0. Define 2-morphisms $T_{ij}, u_{ij}, v_{ij}$ and adjunctions $\adj_i,\adj^i$. The maps $T_{ij}$ will be subsequently rescaled but nonetheless they can be used to define a pairing 
$$(\cdot,\cdot)_\l: \End^2(\1_\l) \otimes_\k Y_\k \rightarrow \k.$$
\item Step \#1. Study the structure of $\End(\E_i \E_i)$. In the process show that $T_{iii}$ and $T'_{iii}$ are nonzero which allows one to prove a preliminary version of the affine nilHecke relations (Proposition \ref{PROP:new1}). Proposition \ref{PROP:new1} also implies that $\E_i \E_i \cong \oplus_{[2]} \E_i^{(2)}$ which is used later. 
\item Step \#2. Show that $T_{iij},T'_{iij},T_{jii},T'_{jii}$ are all nonzero. Show that $\E_i \E_j \cong \E_j \E_i$ if $\la i,j \ra = 0$. 
\item Step \#3. Rescale maps $T_{ii}$ so that $T_{iii}=T'_{iii}$, $T_{iij}=T'_{iij}$ and $T_{jii}=T'_{jii}$. 
\item Step \#4. Define transient maps (from here until step \#9 in section \ref{sec:transients} we will work modulo transients). Rescale $\alpha_i \in \End^2(\1_\l)$ so that $(\alpha_i,\alpha_i)_\l=2$. Use this to define $X_i \in \End^2(\E_i)$. 
\item Step \#5. Prove the affine nilHecke relations. 
\item Step \#6. Prove the Serre relation $\E_i \E_j \E_i \cong \E_i^{(2)} \E_j \oplus \E_j \E_i^{(2)}$ if $\la i,j \ra = -1$. 
\item Step \#7. If $\la i,j \ra = -1$ rescale $T_{ij}$ so that, for some $t_{ij},t_{ji} \in \k^\times$, we have
\begin{align*}
T_{iji} = T'_{iji} + t_{ij}(III) \in \End(\E_i \E_j \E_i) \ \ &\text{ and } \ \ T_{jij} = T'_{jij} + t_{ji}(III) \in \End(\E_j \E_i \E_j) \\ 
T_{ji}T_{ij} = t_{ij}(X_iI) + t_{ji}(IX_j) \in \End^2(\E_i\E_j) \ \ &\text{ and } \ \ T_{ij}T_{ji} = t_{ij}(IX_i)+t_{ji}(X_jI) \in \End^2(\E_j\E_i)
\end{align*}
\item Step \#8. Rescale maps $T_{ij}$ again so that $T_{ijk} = T'_{ijk}$ when $i,j,k$ are distinct. 
\item Step \#9. Show that if $\g=\sl_n$ one can redefine the $X$'s so that the quiver Hecke algebra relations hold on the nose and not just modulo transients. 
\end{itemize}
In the sequence of steps above we tried to prove as many of the relations involving $T$'s as possible before moving onto relations involving $X$'s. The former are usually easier to prove because the spaces of maps involved are smaller. For example, $\dim \End^{-6}(\E_i \E_i \E_i) \le 1$ so the relation $T_{iii} = T'_{iii}$ is almost immediate once we know that $T_{iii}$ and $T'_{iii}$ are both nonzero. On the other hand, the relation $T_{ji}T_{ij} = t_{ij}(X_iI) + t_{ji}(IX_j)$ is much more difficult because $\dim \End^2(\E_i \E_j)$ is relatively large. 

\section{Step \#0 -- Preliminary definitions and properties}\label{sec:defs}

\subsection{Notation and assumptions}

From now on we will assume that if $\l,\mu \in X$ correspond to nonzero weights spaces inside $\K$ then $\l-\mu$ belongs to the root lattice $Y$ (in other words, $\l$ and $\mu$ belong to the same coset in $X/Y$). This assumption is only for notational convenience and not an additional condition because $\E$'s and $\F$'s only go between weights in the same coset of $X/Y$. 

For a weight $\l$ we will denote by $\1_\l$ the identity 1-morphism of $\l$ in $\K$. The identity 2-morphism of $\1_\l$ will be denoted $I_\l$. Sometimes, to shorten notation, we will omit $I_\l$. 

Given two 2-morphisms $f,g$ in $\K$ we write $f \sim g$ to mean that $f$ equals some nonzero multiple of $g$. If $\A$ and $\B$ are 1-morphisms in $\K$ then $\End^d(\A)$ will be short hand for $\Hom(\A,\A \la d \ra)$ and likewise $\Hom^d(\A,\B)$ for $\Hom(\A,\B \la d \ra)$.

The fact that the space of maps between any two 1-morphisms in $\K$ is finite dimensional means that the Krull-Schmidt property holds. This means that any 1-morphism has a unique direct sum decomposition (see section 2.2 of \cite{Ri}). In particular, this means that if $\A,\B,{\sf C}$ are morphisms and $V$ is a $\Z$-graded vector space then we have the following cancellation laws (see section 4 of \cite{CK3}):
\begin{align*}
\A \oplus \B \cong \A \oplus {\sf C} & \Rightarrow \B \cong {\sf C} \\
\A \otimes_\k V \cong \B \otimes_\k V & \Rightarrow \A \cong \B.
\end{align*}

Suppose that $\A$ is a 1-morphism in $\K$ with $\End^0(\A) \cong \k$ and that $X,Y$ are arbitrary 1-morphisms. Then a 2-morphism $f : \X \rightarrow \Y $ gives rise to a bilinear pairing $ \Hom(\A, \X) \times \Hom(\Y, \A) \rightarrow \Hom(\A,\A) \cong \k $. We define the $\A$-rank of $f$ to be the rank of this bilinear pairing.

We can also define $\A$-rank as follows. Choose (non-canonical) direct sum decompositions $\X = \A \otimes_\k V \oplus \B$ and $\Y = \A \otimes_\k V' \oplus \B'$ where $V, V' $ are $\k$ vector spaces and $\B, \B'$ do not contain $\A$ as a direct summand. Then one of the matrix coefficients of $ f $ is a map $ \A \otimes_\k V \rightarrow \A \otimes_\k V' $, which (since $\End^0(\A) \cong \k$) is equivalent to a linear map $ V \rightarrow V' $.  The $\A$-rank of $f$ equals the rank of this linear map. We define the total $\A$-rank of $f$ as the sum of all the $\A \la d \ra$-ranks as $d$ varies over $\Z$. In this paper this will always turn out to be finite.

For $n \ge 1$ we denote by $[n]$ the quantum integer $q^{n-1} + q^{n-3} + \dots + q^{-n+3} + q^{-n+1}$. By convention $[-n] = -[n]$ and $[0]=0$. More generally,
$$\qbin{n}{k} := \frac{[n]\dots[1]}{([n-k] \dots [1])([k]\dots[1])}.$$
If $f = f_aq^a \in {\mathbb N}[q,q^{-1}]$ and $\A$ is a 1-morphism in $\K$ we write $\oplus_f \A$ for the direct sum $\oplus_{a \in \Z} \A^{\oplus f_a} \la a \ra$. For example, $\oplus_{[n]} \A = \oplus_{k=0}^{n-1} \A \la n-1-2k \ra$.

\subsection{Definition of $T_{ij}$}

Using Lemmas \ref{LEM:homEEs}, \ref{LEM:Ts1} and \ref{LEM:Ts2} we can fix nonzero maps 
$$T_{ij}: \E_i \E_j \1_\l \rightarrow \E_j \E_i \1_\l \la - \la i,j \ra \ra$$ 
for any $\l \in X$, $i,j \in I$. This can be done uniquely up to a nonzero multiple. For the moment we choose this multiple arbitrarily. 

To shorten notation, for $i,j,k \in I$ (not necessarily distinct) we will denote 
$$T_{ijk} := (T_{jk}I)(IT_{ik})(T_{ij}I), T'_{ijk} := (IT_{ij})(T_{ik}I)(IT_{jk}) \in \Hom(\E_i \E_j \E_k, \E_k \E_j \E_i \la - \ell_{ijk} \ra)$$
where $\ell_{ijk} := \la i,j \ra + \la i,k \ra + \la j,k \ra$. 

\subsection{Definition of $u_{ij}$ and $v_{ij}$}

Using Lemma \ref{LEM:EF-FEhom} we fix maps the following morphisms which span the corresponding one-dimensional spaces:
\begin{align*}
u_{ji}: \F_j \E_i \1_\l \rightarrow \E_i \F_j \1_\l \ \ & \text{ and } \ \ v_{ij}: \E_i \F_j \1_\l \rightarrow \F_j \E_i \1_\l \\
\adj_i: \F_i \E_i \1_\l \rightarrow \1_\l \la \l_i+1 \ra \ \ & \text{ and } \ \ \adj^i: \1_\l \rightarrow  \F_i \E_i \1_\l \la \l_i+1 \ra \\
\adj_i: \E_i \F_i \1_\l \rightarrow \1_\l \la -\l_i+1 \ra \ \ & \text{ and } \ \ \adj^i: \1_\l \rightarrow \E_i \F_i \1_\l \la -\l_i+1 \ra
\end{align*}
These are uniquely defined up to a nonzero multiple. Note that if $i \ne j$ then $u_{ji}$ and $v_{ij}$ are isomorphisms since $\E_i \F_j \cong \F_j \E_i$. 

\begin{Lemma}\label{lem:uii}
The compositions $(v_{ii})(u_{ii}) \1_\l \in \End(\F_i \E_i \1_\l)$ and $(u_{ii})(v_{ii}) \1_\l \in \End(\E_i \F_i \1_\l)$ are equal to some nonzero multiple of the identity if $\l_i \ge 0$ and $\l_i \le 0$ respectively.
\end{Lemma}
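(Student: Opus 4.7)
The plan is to identify $u_{ii}$ and $v_{ii}$, up to nonzero scalars, with the canonical inclusion and projection furnished by the direct-summand relation between $\F_i \E_i \1_\l$ and $\E_i \F_i \1_\l$ coming from condition~(\ref{co:EF}). Once this is accomplished, each claimed composition will reduce to an explicit nonzero multiple of the identity.

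Focusing first on $\l_i \ge 0$, I would fix a decomposition
$$\E_i \F_i \1_\l \;\cong\; \F_i \E_i \1_\l \;\oplus\; \bigoplus_{[\l_i]} \1_\l$$
from condition~(\ref{co:EF}) and denote by $\iota: \F_i \E_i \1_\l \rightarrow \E_i \F_i \1_\l$ and $\pi: \E_i \F_i \1_\l \rightarrow \F_i \E_i \1_\l$ the resulting inclusion and projection. By construction, these are nonzero degree-$0$ 2-morphisms satisfying $\pi\,\iota = I_{\F_i\E_i\1_\l}$. Lemma~\ref{LEM:EF-FEhom} supplies one-dimensional Hom spaces $\Hom^0(\F_i\E_i\1_\l, \E_i\F_i\1_\l)$ and $\Hom^0(\E_i\F_i\1_\l, \F_i\E_i\1_\l)$, which contain the defining maps $u_{ii}$ and $v_{ii}$; hence $u_{ii} = a\,\iota$ and $v_{ii} = b\,\pi$ for some $a,b \in \k^\times$, and
$$v_{ii}\,u_{ii} \;=\; ab\,\pi\,\iota \;=\; ab\cdot I_{\F_i\E_i\1_\l}.$$

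The case $\l_i \le 0$ is entirely symmetric: condition~(\ref{co:EF}) now gives $\F_i \E_i \1_\l \cong \E_i \F_i \1_\l \oplus \bigoplus_{[-\l_i]} \1_\l$, and the roles of inclusion and projection are swapped. The same one-dimensionality argument identifies $v_{ii}$ with a nonzero scalar multiple of the inclusion $\E_i \F_i \1_\l \hookrightarrow \F_i \E_i \1_\l$ and $u_{ii}$ with a nonzero scalar multiple of the matching projection, so $u_{ii}\,v_{ii}$ is a nonzero multiple of $I_{\E_i\F_i\1_\l}$. The only subtle point is confirming that the structural inclusion and projection genuinely lie in the specific one-dimensional Hom spaces supplied by Lemma~\ref{LEM:EF-FEhom}; this is immediate on degree grounds, since all four maps $\iota, \pi, u_{ii}, v_{ii}$ are degree-$0$ 2-morphisms between the same pair of 1-morphisms and thus live in the same Hom space.
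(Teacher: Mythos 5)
Your proof is correct and follows essentially the same route as the paper: both identify $u_{ii}$ and $v_{ii}$ (up to nonzero scalars) with the structural inclusion and projection of the direct summand $\F_i\E_i\1_\l \subset \E_i\F_i\1_\l$ and conclude $(v_{ii})(u_{ii})\sim\mathrm{id}$. The only cosmetic difference is that you cite the one-dimensionality of $\Hom^0(\F_i\E_i\1_\l,\E_i\F_i\1_\l)$ directly from Lemma~\ref{LEM:EF-FEhom}, while the paper re-derives the key vanishing $\Hom(\F_i\E_i\1_\l,\bigoplus_{[\l_i]}\1_\l)=0$ by adjunction from Lemma~\ref{LEM:homEs}; these amount to the same thing.
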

\begin{proof}
If $\l_i \ge 0$ then $\E_i \F_i \1_\l \cong \F_i \E_i \1_\l \oplus_{[\l_i]} \1_\l$ where by adjunction 
\begin{align*}
\Hom(\F_i \E_i \1_\l, \oplus_{[\l_i]} \1_\l) 
&\cong \Hom(\E_i \1_\l, \oplus_{[\l_i]} \E_i \1_\l \la -\l_i - 1\ra) \\
&\cong \bigoplus_{r=0}^{\l_i-1} \Hom(\E_i \1_\l, \E_i \1_\l \la -2-2r \ra). 
\end{align*}
This is zero by Lemma \ref{LEM:homEs}. Thus the map $u_{ii} I_\l$ must be the inclusion $\F_i \E_i \1_\l \hookrightarrow \E_i \F_i \1_\l$. Similarly, the map $v_{ii} I_\l$ must be the projection $\E_i \F_i \1_\l \twoheadrightarrow \F_i \E_i \1_\l$. Subsequently, the composition $(v_{ii})(u_{ii}) I_\l$ is some nonzero multiple of the identity map on $\F_i \E_i \1_\l$. The case of $(u_{ii})(v_{ii}) I_\l$ when $\l_i \le 0$ is proved similarly. 
\end{proof}

\subsection{The ``pitchfork'' relations}

\begin{Lemma}\label{LEM:fork}
For $i,j \in I$ the following pairs of compositions are equal up to a nonzero multiple
\begin{align}
\label{eq:c1} & \begin{cases} \E_i \E_j \1_\l \F_i \xrightarrow{T_{ij}I} \E_j \E_i \1_\l \F_i \la -d_{ij} \ra \xrightarrow{I \adj_i} \E_j \1_{\l+\alpha_i} \la - \l_i - 1 - d_{ij} \ra  \text{ and } \\
\E_i \E_j \1_\l \F_i \xrightarrow{Iv_{ji}} \E_i \F_i \1_{\l+\alpha_i+\alpha_j} \E_j \xrightarrow{\adj_i I} \E_j \1_{\l+\alpha_i} \la -\l_i - 1 - d_{ij} \ra \end{cases} \\
\label{eq:c2} & \begin{cases} \E_j \1_{\l} \xrightarrow{I \adj^i} \E_j \F_i \E_i \1_\l \la \l_i+1 \ra \xrightarrow{v_{ji} I} \F_i \E_j \E_i \1_\l \la \l_i+1 \ra \text{ and } \\
\E_j \1_\l \xrightarrow{\adj^i I} \F_i \E_i \E_j \1_\l \la \l_i+d_{ij}+1 \ra \xrightarrow{IT{ij}} \F_i \E_j \E_i \1_\l \la \l_i+1 \ra \end{cases} \\
\label{eq:c3} & \begin{cases} \F_i \E_j \E_i \1_\l \xrightarrow{u_{ij} I} \E_j \F_i \E_i \1_\l \xrightarrow{I \adj_i} \E_j \1_\l \la \l_i+1 \ra \text{ and } \\
\F_i \E_j \E_i \1_\l \xrightarrow{I T_{ji}} \F_i \E_i \1_{\l+\alpha_j} \E_j \la -d_{ij} \ra \xrightarrow{\adj_i I} \E_j \1_\l \la \l_i+1 \ra \end{cases} \\
\label{eq:c4} & \begin{cases} \E_j \1_{\l+\alpha_i} \xrightarrow{I \adj^i} \E_j \E_i \F_i \1_{\l+\alpha_i} \la -\l_i-1 \ra \xrightarrow{T_{ji}I} \E_i \E_j \F_i \1_{\l+\alpha_i} \la -\l_i-1-d_{ij} \ra \text{ and } \\
\E_j \1_{\l+\alpha_i} \xrightarrow{\adj^i I} \E_i \F_i \E_j \1_{\l+\alpha_i} \la -\l_i-1-d_{ij} \ra \xrightarrow{Iu_{ij}} \E_i \E_j \F_i \1_{\l+\alpha_i} \la -\l_i-1-d_{ij} \ra \end{cases}
\end{align}
where $d_{ij} := \la i,j \ra$. Moreover, each one these compositions is nonzero if and only if $\1_\l, \1_{\l+\alpha_i}, \1_{\l+\alpha_j}$ and $\1_{\l+\alpha_i+\alpha_j}$ are all nonzero. 
\end{Lemma}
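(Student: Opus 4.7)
The plan is to reduce all four pitchfork identities to the one-dimensionality of certain $\Hom$-spaces established in the appendix, and then to identify each of the two compositions in a given pair with a nonzero element of that one-dimensional space.

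First I would apply the adjunction $(\E_i)_R = \F_i\la \l_i+1\ra$ of condition (\ref{co:adj}) to move the outer $\F_i$ in pair (\ref{eq:c1}) from the source onto the target. The resulting $\Hom$-space becomes
$$\Hom(\E_i\E_j\1_\l,\,\E_j\E_i\1_\l\la -d_{ij}\ra),$$
which by Lemmas \ref{LEM:homEEs}, \ref{LEM:Ts1} and \ref{LEM:Ts2} is one-dimensional and generated by $T_{ij}$, provided all four of $\1_\l, \1_{\l+\alpha_i}, \1_{\l+\alpha_j}, \1_{\l+\alpha_i+\alpha_j}$ are nonzero. An analogous adjunction, this time moving an $\E$ past units instead of counits, rewrites each of pairs (\ref{eq:c2}), (\ref{eq:c3}) and (\ref{eq:c4}) as a pair of maps living in a one-dimensional $\Hom$-space of the same general form.

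Next I would trace each composition through the adjunction bijection. For the first composition in (\ref{eq:c1}), the counit $\adj_i$ on the right simply cancels an $\E_i\F_i$ pair, so the composition is transported to $T_{ij}\1_\l$ itself, which is nonzero by its definition in Section \ref{sec:defs}. For the second composition, the map $Iv_{ji}$ becomes, after adjunction, the mate of $v_{ji}$ obtained by sliding the unit of $(\E_i,\F_i)$ through it; because $u_{ji}$ and $v_{ji}$ were chosen to span the one-dimensional spaces of Lemma \ref{LEM:EF-FEhom} and satisfy the nondegeneracy of Lemma \ref{lem:uii}, this mate is a nonzero element of the same one-dimensional $\Hom$-space $\Hom(\E_i\E_j\1_\l,\E_j\E_i\1_\l\la -d_{ij}\ra)$. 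Two nonzero elements of a one-dimensional space are scalar multiples of each other, proving the equality. The arguments for (\ref{eq:c2})--(\ref{eq:c4}) are entirely parallel, interchanging units and counits and the roles of $u$ and $v$. The nonvanishing clause then follows immediately: if any of the four surrounding weight spaces is zero the source or target of the composition is zero, while if all four are nonzero, Step~1 gives a nonzero generator and Step~2 identifies each composition with that generator up to a nonzero scalar.

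The main obstacle I expect is the case $i=j$, where $u_{ii}$ and $v_{ii}$ are not isomorphisms but, by Lemma \ref{lem:uii} together with condition (\ref{co:EF}), the inclusion of and projection onto the summand $\F_i\E_i\1_\l$ inside $\E_i\F_i\1_\l$. Here the adjunction mate of $v_{ii}$ is not an obvious braiding, and one must verify that under the adjunction bijection it lands on the generator $T_{ii}$ of $\Hom(\E_i\E_i,\E_i\E_i\la -2\ra)$ rather than on a map factoring through one of the $[\l_i]$ summands cut out by the decomposition in (\ref{co:EF}). This amounts to running the argument of Lemma \ref{lem:uii} inside the adjunction bijection and bookkeeping grading shifts, but no new input beyond the structure already set up in Section \ref{sec:defs} should be required.
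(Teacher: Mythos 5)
Your proposal is correct and follows essentially the same argument as the paper: use adjunction to identify the relevant Hom-space with $\Hom(\E_i\E_j\1_\l,\E_j\E_i\1_\l\la -d_{ij}\ra)$, invoke Lemmas \ref{LEM:homEEs}, \ref{LEM:Ts1} and \ref{LEM:Ts2} for one-dimensionality (and the nonvanishing criterion), and then observe that the two compositions are the adjunction mates of $T_{ij}$ and of $v_{ji}$ respectively, hence nonzero. The obstacle you flag for $i=j$ is not actually present: the argument nowhere requires $u_{ii},v_{ii}$ to be isomorphisms, only nonzero, because adjunction is a linear isomorphism of Hom-spaces, so a nonzero $v_{ii}$ has a nonzero mate; since $\Hom^{-2}(\E_i\E_i\1_\l)$ is already known to be one-dimensional by Lemma \ref{LEM:homEEs}, one never needs to track whether anything factors through the $[\l_i]$ summands of condition (\ref{co:EF}).
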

\begin{proof}
We prove that the two compositions in (\ref{eq:c1}) are equal (the proof of the others is the same). First, note that 
\begin{align*}
\Hom(\E_i \E_j \1_\l \F_i, \E_j \1_{\l+\alpha_j} \la -\l_i-1- d_{ij} \ra) 
& \cong \Hom(\E_i \E_j \1_\l, \E_j (\1_\l \F_i)_L \la -\l_i-1-d_{ij} \ra) \\
& \cong \Hom(\E_i \E_j \1_\l, \E_j \E_i \1_\l \la -d_{ij} \ra)
\end{align*}
which, by Lemmas \ref{LEM:homEEs}, \ref{LEM:Ts1} and \ref{LEM:Ts2}, is at most one-dimensional. Moreover, it is nonzero if and only if $\1_\l,\1_{\l+\alpha_i},\1_{\l+\alpha_j},\1_{\l+\alpha_i+\alpha_j}$ are all nonzero. So it remains to show that our two compositions in (\ref{eq:c1}) are nonzero. But, by adjunction, these two compositions are equivalent to $T_{ij} \in \Hom(\E_i \E_j \1_\l, \E_j \E_i \1_\l \la -d_{ij} \ra)$ and $v_{ji} \in \Hom(\E_j \F_i \1_{\l+\alpha_i}, \F_i \E_j \1_{\l+\alpha_i})$. Both of these are nonzero if $\1_\l, \1_{\l+\alpha_i}, \1_{\l+\alpha_j}, \1_{\l+\alpha_i+\alpha_j}$ are nonzero. 
\end{proof}

\begin{Corollary}\label{COR:nonzero}
For $i,j,k \in I$ not necessarily distinct and $\gamma \in \End^d(\E_j \E_k \1_{\l+\alpha_i})$ the compositions 
\begin{align}
\label{EQ:nonzero1}
(\gamma I)(IT_{ik})(T_{ij}I) & \in \Hom^{d-d_{ij}-d_{ik}}(\E_i \E_j \E_k \1_\l, \E_k \E_j \E_i \1_\l) \text{ and } \\ 
\label{EQ:nonzero2} 
(I \gamma)(v_{ji}I)(Iv_{ki}) & \in \Hom^d(\E_j \E_k \F_i \1_{\l+\alpha_i}, \F_i \E_j \E_k \1_{\l+\alpha_i})
\end{align}
are either both zero or both nonzero. Similarly for 
\begin{align*}
(I \gamma)(T_{ji}I)(IT_{ki}) & \in \Hom^{d-d_{ij}-d_{ik}}(\E_j \E_k \E_i \1_{\l+\alpha_i}, \E_i \E_j \E_k \1_{\l+\alpha_i}) \text{ and } \\
(\gamma I)(Iu_{ik})(u_{ij}I) & \in \Hom^d(\F_i \E_j \E_k \1_\l, \E_j \E_k \F_i \1_\l).
\end{align*}
\end{Corollary}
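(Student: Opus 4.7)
My approach is to show that the two displayed compositions in each pair correspond to one another under a chain of adjunction isomorphisms and applications of the pitchfork relations (Lemma \ref{LEM:fork}). Since adjunction gives a genuine isomorphism on Hom-spaces (in particular preserving vanishing), and since each pitchfork equates its two sides up to a nonzero scalar, such a chain will force the two compositions to vanish simultaneously, which is exactly the content of the corollary.

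Focusing on the first pair, write $f_1 := (\gamma I)(IT_{ik})(T_{ij}I)$ and $f_2 := (I\gamma)(v_{ji}I)(Iv_{ki})$. First I would pass to the adjoint $\tilde f_1$ of $f_1$ that moves the leftmost $\E_i$ of the domain to $\F_i$ on the left of the codomain, obtained as $(I_{\F_i} f_1)(\adj^i I_{\E_j \E_k})$ using the unit $\adj^i \colon \1 \to \F_i \E_i \la \cdot \ra$ at the weight $\l+\alpha_j+\alpha_k$; since the adjunction is an iso on Hom-spaces, $\tilde f_1 = 0 \iff f_1 = 0$. Next, two applications of pitchfork \eqref{eq:c2} --- once tensored with $I_{\E_k}$ on the right (applied to the $\E_j$ strand), once with $I_{\E_j}$ on the left (applied to the $\E_k$ strand) --- rewrite the piece $(IT_{ij}I)(\adj^i II)$ appearing inside $\tilde f_1$ as a nonzero multiple of $(v_{ji}II)(Iv_{ki}I)(II\adj^i)$; in between the two pitchforks one uses the interchange law in $\K$ to commute $v_{ji}$ past $T_{ik}$ (these act on disjoint strands). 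The net result is
\[
\tilde f_1 \;\sim\; (I\gamma I)(v_{ji}II)(Iv_{ki}I)(II\adj^i) \;=\; (f_2 \, I_{\E_i})(II\adj^i).
\]

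Finally I would verify that the linear map $h \mapsto (h \, I_{\E_i})(II\adj^i)$ is itself the right-side adjunction isomorphism
\[
\Hom(\E_j \E_k \F_i \1_{\l+\alpha_i}, \F_i \E_j \E_k \1_{\l+\alpha_i}) \xrightarrow{\;\sim\;} \Hom(\E_j \E_k \1_\l, \F_i \E_j \E_k \E_i \1_\l \la \cdot \ra),
\]
with inverse given by postcomposition with the counit $\adj_i$ tensored with $I_{\F_i}$; the zig-zag identities satisfied by the adjunction data of condition (\ref{co:adj}) make this verification routine. Chaining the three equivalences yields $f_1 = 0 \iff f_2 = 0$. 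The second pair is handled in the exact same fashion, using pitchforks \eqref{eq:c3} and \eqref{eq:c4} and the maps $u_{ij}, u_{ik}$ in place of $v_{ji}, v_{ki}$: one takes the adjoint of the first composition (moving the rightmost $\E_i$ to $\F_i$) and recognizes the outcome, after the analogous pair of pitchforks, as the image of the second composition under the appropriate adjunction iso.

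The main obstacle will be careful bookkeeping of degree shifts and of the weight at which each unit/counit is applied; every pitchfork invocation and every adjunction contributes a specific shift, and these must add up consistently at each stage. A minor subtlety is that Lemma \ref{LEM:fork} only produces equalities up to nonzero scalar --- in fact both sides vanish whenever any of the relevant $\1_\mu$'s is zero, by the last assertion of that lemma --- so the chain of $\sim$'s correctly transmits the dichotomy ``both zero or both nonzero'' without our ever needing to compute the actual scalars that appear.
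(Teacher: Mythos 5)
Your proposal is correct and follows essentially the same route as the paper: two applications of the pitchfork relation \eqref{eq:c2} (respectively \eqref{eq:c3}, \eqref{eq:c4}) slide the unit $\adj^i$ through the $\E_j$ and $\E_k$ strands while converting $T$'s to $v$'s (respectively $u$'s), and the bridge between $f_1$ and $f_2$ is a pair of adjunction isomorphisms on Hom-spaces. The only difference is presentational: the paper applies the unit on the far left and the counit on the far right simultaneously (invoking the zig-zag identity to clean up afterwards), whereas you perform the two adjunctions one at a time and identify the intermediate expression as the adjoint of $f_2$; also, your phrase ``rewrite the piece $(IT_{ij}I)(\adj^i II)$'' understates what is being rewritten, since the second pitchfork necessarily consumes the $T_{ik}$ factor as well, but your ``net result'' line gets the composite right.
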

\begin{proof}
The composition in (\ref{EQ:nonzero1} is zero if and only if the composition 
$$\E_j \E_k \F_i \1_{\l+\alpha_i} \xrightarrow{\adj^i III} \F_i \E_i \E_j \E_k \F_i \1_{\l+\alpha_i} \xrightarrow{I[(\gamma I)(IT_{ik})(T_{ij}I)]I} \F_i \E_k \E_j \E_i \F_i \1_{\l+\alpha_i} \xrightarrow{III \adj_i} \F_i \E_j \E_k \1_{\l+\alpha_i}$$
is zero (we omit the shifts to simplify the notation). Using Lemma \ref{LEM:fork} twice, we find that this composition is (up to a nonzero multiple) equal to 
$$\E_j \E_k \F_i \1_{\l+\alpha_i} \xrightarrow{II \adj^i I} \E_j \E_k \F_i \E_i \F_i \1_{\l+\alpha_i} \xrightarrow{III \adj_i} \E_j \E_k \F_i \1_{\l+\alpha_i} \xrightarrow{(I \gamma)(v_{ji}I)(Iv_{ki})} \F_i \E_j \E_k \1_{\l+\alpha_i}$$ 
which (up to a multiple) is the same as (\ref{EQ:nonzero2}). Thus (\ref{EQ:nonzero1}) and (\ref{EQ:nonzero2}) are either both zero or both nonzero. The equivalence of the second pair of compositions follows similarly. 
\end{proof}

\subsection{Some properties of $\t$'s}

\begin{Lemma}\label{LEM:bubbles1}
Suppose $i \in I$ and $\t \in Y_\k$ so that $\la \t, \alpha_i \ra \ne 0$. Then the compositions
\begin{align*}
\1_\l \xrightarrow{\adj^i} \E_i \1_{\l-\alpha_i} \F_i \la -\l_i+1 \ra \xrightarrow{I \t^{\l_i-1} I} \E_i \1_{\l-\alpha_i} \F_i \la \l_i-1 \ra \xrightarrow{\adj_i} \1_\l \ \ & \text{ if } \l_i \ge 1 \\
\1_\l \xrightarrow{\adj^i} \F_i \1_{\l+\alpha_i} \E_i \la \l_i+1 \ra \xrightarrow{I \t^{-\l_i-1} I} \F_i \1_{\l+\alpha_i} \E_i \la -\l_i-1 \ra \xrightarrow{\adj_i} \1_\l \ \ & \text{ if } \l_i \le -1 
\end{align*}
are both to a nonzero multiple of the identity map in $\End(\1_\l)$. 
\end{Lemma}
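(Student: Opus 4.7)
The plan is to use condition (\ref{co:hom1}) --- which gives $\End^0(\1_\l) \cong \k$ whenever $\1_\l \ne 0$ --- to reduce the claim to nonvanishing, then exploit the decomposition of $\E_i \F_i \1_\l$ from (\ref{co:EF}) together with the $\t$-nondegeneracy from condition (\ref{co:theta}). I focus on $\l_i \ge 1$; the case $\l_i \le -1$ is symmetric.

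First I would decompose $\E_i \1_{\l - \alpha_i} \F_i = \E_i \F_i \1_\l \cong \F_i \E_i \1_\l \oplus \bigoplus_{k=0}^{\l_i - 1} U_k$ with $U_k := \1_\l \la \l_i - 1 - 2k \ra$ by (\ref{co:EF}). By a degree count using (\ref{co:hom1}) together with the Hom-space calculations in the appendix (Lemmas \ref{LEM:homEs} and \ref{LEM:EF-FEhom}, of the same flavor as the proof of Lemma \ref{lem:uii}), I would identify the adjunctions: $\adj^i$ is a nonzero scalar multiple of the inclusion $\iota_{U_0}$ of the top summand, and $\adj_i$ is a nonzero scalar multiple of the projection $\pi_{U_{\l_i - 1}}$ onto the bottom summand. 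This reduces the composition to the matrix coefficient of $I\t^{\l_i - 1} I$ from $U_0$ to $U_{\l_i - 1}$.

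Next I would use the interchange law to write $I\t^{\l_i - 1} I = (I\t I)^{\l_i - 1}$. A matrix entry of $I\t I$ from $U_k$ to $U_{k'}$ lies in $\Hom^{2(k - k') + 2}(\1_\l, \1_\l)$, which by (\ref{co:hom1}) vanishes for $k' > k + 1$ and is one-dimensional when $k' = k + 1$. Condition (\ref{co:theta}) applied at the weight $\l - \alpha_i$ (valid for $\l_i \ge 2$) asserts that the induced map on the $\bigoplus_k U_k$-block has rank $\l_i - 1$, forcing every super-diagonal entry to be a nonzero scalar. Expanding $(I\t I)^{\l_i - 1}$ as a sum over length-$(\l_i - 1)$ paths of matrix entries, the degree constraint forces every step to advance the index by exactly one (each step advances $k$ by at most one, and the total advance over $\l_i - 1$ steps must be $\l_i - 1$). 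Consequently the unique contributing path is the straight-line climb $U_0 \to U_1 \to \cdots \to U_{\l_i - 1}$, whose contribution is a product of $\l_i - 1$ nonzero scalars.

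The case $\l_i = 1$ reduces directly to $\adj_i \circ \adj^i$, which is a nonzero scalar times $\pi_{U_0} \circ \iota_{U_0} = I_{\1_\l}$ by the identification above. The $\l_i \le -1$ case is handled symmetrically, using the second decomposition in (\ref{co:EF}) and the second clause of (\ref{co:theta}). The main obstacle will be verifying rigorously that $\adj^i$ and $\adj_i$ genuinely factor through the extreme $U$-summands (with no surviving contribution coming from the $\F_i \E_i \1_\l$-summand) and that detour paths through $\F_i \E_i \1_\l$ do not contribute to the relevant matrix coefficient of $(I\t I)^{\l_i - 1}$. Both issues are to be resolved by iterated use of the appendix Hom-vanishing lemmas together with the adjunction isomorphisms in (\ref{co:adj}), which ensure that the relevant Hom-spaces between $\1_\l$-summands and $\F_i \E_i \1_\l$ in the degrees forced by the path budget $2(\l_i - 1)$ vanish.
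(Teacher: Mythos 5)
Your proof is correct and follows essentially the same route as the paper: identify $\adj^i$ and $\adj_i$ (via the appendix Hom-space lemmas) with the inclusion of the lowest-degree $\1_\l$-summand and the projection onto the highest-degree one, then use condition (\ref{co:theta}) together with the degree count to see that $(I\theta I)^{\l_i - 1}$ carries the former isomorphically onto the latter. You spell out the ``only the straight-line climb contributes'' and ``no detour through $\F_i\E_i\1_\l$'' points more explicitly than the paper does, but these are precisely the facts the paper's terser argument is implicitly relying on, and the Lemmas you cite suffice to discharge them.
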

\begin{proof}
If $\l_i \ge 0$ then $\1_\l \la \l_i-1 \ra \xrightarrow{\adj^i} \E_i \F_i \1_\l \cong \F_i \E_i \1_\l \bigoplus_{[\l_i]} \1_\l$ is the inclusion of $\1_\l \la \l_i-1 \ra$ into the lowest degree summand $\1_\l$ on the right side. By assumption (\ref{co:theta}) on $\t$, the map $(I \t^{\l_i-1} I) \in \End^{2(\l_i-1)}(\E_i \1_{\l-\alpha_i} \F_i)$ induces an isomorphism between the lowest degree and highest degree summands of $\1_\l$ inside $\E_i \F_i \1_\l$. 

Finally, $\F_i \E_i \1_\l \bigoplus_{[\l_i]} \1_\l \cong \E_i \F_i \1_\l \xrightarrow{\adj_i} \1_\l \la -\l_i-1 \ra$ is the projection from the highest degree summand of $\1_\l$ in $\E_i \F_i \1_\l$ onto $\1_\l \la -\l_i-1 \ra$. Thus composing the three maps gives an isomorphism $\1_\l \xrightarrow{\sim} \1_\l$ which, by condition (\ref{co:hom1}), must be a multiple of the identity. The case $\l_i \le 0$ is similar. 
\end{proof}

\begin{Lemma} \label{LEM:bubbles2} 
Suppose $i \in I$ and $\t \in Y_\k$ so that $\la \t, \alpha_i \ra \ne 0$. Then the maps 
\begin{align*}
u_{ii} \bigoplus_{r=0}^{\l_i-1} [(I \t^r I) \circ \adj^i]: \F_i \E_i \1_\l \bigoplus_{[\l_i]} \1_\l \xrightarrow{\sim} \E_i \1_{\l-\alpha_i} \F_i & \ \ \text{ if } \l_i \ge 0 \ \ \text{ and }  \\
v_{ii} \bigoplus_{r=0}^{-\l_i-1} [(I \t^r I) \circ \adj^i]: \E_i \F_i \1_\l \bigoplus_{[-\l_i]} \1_\l \xrightarrow{\sim} \F_i \1_{\l+\alpha_i} \E_i & \ \ \text{ if } \l_i \le 0
\end{align*}
are isomorphisms. 
\end{Lemma}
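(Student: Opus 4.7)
The plan is to prove the $\l_i \ge 0$ case; the $\l_i \le 0$ case is symmetric via the other half of condition (\ref{co:theta}). By (\ref{co:EF}) the source and target of the claimed map $\Phi$ are abstractly isomorphic in $\K$, so by Krull-Schmidt invertibility of $\Phi$ reduces to showing that its matrix -- with respect to the decomposition $\F_i \E_i \1_\l \oplus \bigoplus_{r=0}^{\l_i-1} \1_\l \la \l_i-1-2r \ra$ of both sides -- is invertible. I would establish this by showing the matrix is triangular with nonzero diagonal entries.

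For triangularity, the ``mixed'' entries between the $\F_i \E_i$-summand and a bubble summand lie, via adjunction (\ref{co:adj}), in $\Hom$-spaces of the form $\Hom^{<0}(\E_i \1_\l, \E_i \1_\l)$, which vanish by Lemma \ref{LEM:homEs}. The bubble-to-bubble entry at position $(r',r)$ with $r<r'$ lies in $\Hom(\1_\l, \1_\l \la 2(r-r')\ra)=0$ by condition (\ref{co:hom1}). For the diagonal: the $(\F_i\E_i,\F_i\E_i)$-entry is the $\F_i\E_i$-component of $u_{ii}$, a nonzero multiple of the identity by the argument already used in Lemma \ref{lem:uii}. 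The same Hom-vanishings applied to $\adj^i$ force its only nonzero component to land in the top bubble summand ($s=0$) as a nonzero scalar, giving a nonzero $(0,0)$-entry.

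For the remaining diagonals $(r,r)$ with $1 \le r \le \l_i-1$, write $(I\t^r I)\circ \adj^i = (I\t I)^r \circ \adj^i$ and let $c_s$ denote the scalar for the ``principal'' matrix component of $(I\t I)$ from bubble summand $s$ to bubble summand $s+1$. By the same degree counting using (\ref{co:hom1}), all other components of $(I\t I)$ can only preserve or lower the bubble index, so the $s=r$ component of the iterate is exactly $c_0c_1\cdots c_{r-1}$ times the nonzero $s=0$-component of $\adj^i$. To show this is nonzero it suffices that each $c_s \ne 0$ for $s=0,\dots,\l_i-2$; this follows from Lemma \ref{LEM:bubbles1}, which says $\adj_i \circ (I\t^{\l_i-1}I)\circ \adj^i$ is a nonzero scalar, since the same unfolding identifies that scalar (up to nonzero factors coming from $\adj^i$ and $\adj_i$) with the full product $c_0c_1\cdots c_{\l_i-2}$, and nonzeroness of this product forces each factor to be nonzero. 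The trivial cases $\l_i=0,1$ yield no bubbles or a diagonal matrix directly and need no iteration.

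The main obstacle is this unfolding step: one must verify that the non-principal components of $(I\t I)$ (possibly landing in the $\F_i\E_i$-summand or in lower bubble summands) cannot contaminate the leading $s=r$ component of the iterate, so that the $(r,r)$-entry is precisely the product $c_0\cdots c_{r-1}$ of principal scalars, and then extract individual nonzeroness from the known nonzeroness of the full product. Once this is done, the matrix is triangular with nonzero diagonal, hence invertible, and $\Phi$ is an isomorphism.
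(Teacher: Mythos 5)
Your argument is correct, and it diverges from the paper's at exactly one point: how to see that the diagonal entries of the matrix of $\Phi$ are nonzero. The paper reads this off directly from condition (\ref{co:theta}): since $\adj^i$ maps isomorphically onto the top (lowest-degree) bubble summand, and $(I\t I)$ has its $s \to s+1$ transition components isomorphic by that hypothesis, each $(I\t^r I)\circ\adj^i$ hits bubble $r$ isomorphically without further ado. You instead pass to Lemma~\ref{LEM:bubbles1}, identify its nonvanishing scalar with $c_0 \cdots c_{\l_i-2}$ up to nonzero factors coming from $\adj^i$ and $\adj_i$ (after the same unfolding of the matrix of $I\t I$ into Hessenberg form), and conclude each $c_s$ is nonzero because $\k$ has no zero divisors. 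This is a genuinely different derivation of the same fact, trading a direct appeal to condition (\ref{co:theta}) for an appeal to a consequence of it already on record. The main cost is that your unfolding step --- verifying that off-principal components of $I\t I$ cannot contaminate the leading $s=r$ component of the iterate --- must be invoked twice (once to identify the bubble scalar with the scalar product, once for the diagonal of the iterate), while the paper uses it, implicitly, only once. Your worry about that step is well-founded but resolves cleanly: bubble index rises by at most one per application of $I\t I$ by degree reasons via condition (\ref{co:hom1}), and by Lemma \ref{LEM:EF-FEhom} escape into $\F_i\E_i$ is possible only from the bottom bubble $s=\l_i-1$ with re-entry only at $s=0$; so the unique path from bubble $0$ to bubble $r$ in $r$ steps (with $r\le\l_i-1$) goes through the principal components, as you assert.
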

\begin{proof}
Suppose $\l_i \ge 0$ (the case $\l_i \le 0$ is similar). By Lemma \ref{lem:uii} the map $u_{ii}$ is the inclusion of the summand $\F_i \E_i \1_\l$ into $\E_i \F_i \1_\l$. 

On the other hand, $\adj^i: \1_\l \la \l_i-1 \ra \rightarrow \E_i \F_i \1_\l$ is the inclusion of $\1_\l \la \l_i-1 \ra$ as the lowest degree summand $\1_\l$ inside $\E_i \F_i \1_\l$. This means that, by condition (\ref{co:theta}), the composition 
$$\1_\l \la \l_i-1-2r \ra \xrightarrow{\adj^i} \E_i \1_{\l-\alpha_i} \F_i \la -2r \ra \xrightarrow{I \t^r I} \E_i \1_{\l-\alpha_i} \F_i$$
is an isomorphism between $\1_\l$ and the degree $-\l_i+1+2r$ summand of $\1_\l$ inside $\E_i \1_{\l-\alpha_i} \F_i$ (for $r=0, \dots, \l_i-1$). Since $\End^l(\1_\l) = 0$ for $l < 0$ we get that the composition
$$\bigoplus_{r=0}^{\l_i-1} [(I \t^r I) \circ \adj^i]: \bigoplus_{[\l_i]} \1_\l \rightarrow \E_i \1_{\l-\alpha_i} \F_i \cong \F_i \E_i \1_\l \bigoplus_{[\l_i]} \1_\l \rightarrow \bigoplus_{[\l_i]} \1_\l$$ 
(where the rightmost map is a projection) must be an upper triangular matrix with isomorphisms on the diagonal. The result follows. 
\end{proof}

Next we have the following description of $\End^2(\E_i)$. 

\begin{Lemma}\label{LEM:EndE} 
Choose $\t \in Y_\k$ so that $\la \t, \alpha_i \ra \ne 0$. Then the space $\End^2(\1_{\l+\alpha_i} \E_i \1_{\l})$ is spanned by 
\begin{enumerate}
\item $(II \t)$ and elements $(\gamma II)$ for some $\gamma \in \End^2(\1_{\l + \alpha_i})$, if $\l_i \ge -1$, 
\item $(\t II)$ and elements $(II \gamma)$ for some $\gamma \in \End^2(\1_\l)$, if $\l_i \le -1$.
\end{enumerate}
\end{Lemma}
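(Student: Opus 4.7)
The two cases are symmetric under the $\E_i \leftrightarrow \F_i$ involution (swapping $\l$ and $\l+\alpha_i$), so it suffices to prove case (i); assume $\l_i \ge -1$.

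The first step is adjunction. Using $\E_i \dashv \F_i \la \l_i+1 \ra$ on the target $\E_i$ gives
$$\End^2(\E_i \1_\l) \;\cong\; \Hom\bigl(\1_\l,\; \F_i \E_i \1_\l \la \l_i+3\ra\bigr),$$
under which the generator $(II\theta)$ corresponds to $\theta \circ \adj^i$ and $(\gamma II)$ corresponds to $(I \gamma I) \circ \adj^i$ for $\gamma \in \End^2(\1_{\l+\alpha_i})$.

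Next, I would invoke Lemma \ref{LEM:bubbles2} at weight $\l$, which expresses $\F_i \E_i \1_\l$ in terms of $\E_i \F_i \1_\l$ plus shifted copies of $\1_\l$ (the precise form depending on the sign of $\l_i$). Taking $\Hom(\1_\l,-)$ and applying left adjunction on the inner $\E_i$ to the $\E_i \F_i \1_\l$ piece relates $\End^2(\E_i \1_\l)$ to a combination of an ``$\E_i \F_i$ part'' of the form $\End^{2\l_i+2}(\F_i \1_\l) \cong \End^{2\l_i+2}(\E_i \1_{\l-\alpha_i})$ and ``bubble parts'' coming from $\End^d(\1_\l)$ for various $d$.

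Finally, I would match generators: the $(\gamma II)$'s account for the $\E_i \F_i$ contribution after tracking shifts, while condition (\ref{co:theta}) on $\theta$ (using $\la \theta, \alpha_i \ra \ne 0$ to identify the various $\1$-summands of $\E_i \F_i$) ensures that the bubble contributions collapse to a single one-dimensional piece spanned by $(II\theta)$. The main obstacle is the careful bookkeeping of adjunction shifts and invoking condition (\ref{co:theta}) precisely enough to reduce the multiple bubble summands down to just $(II\theta)$.
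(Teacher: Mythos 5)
There is a genuine gap in the choice of adjunction, and it breaks the rest of the argument for all but one value of $\l_i$ in case (i).

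You apply the right adjoint $(\E_i\1_\l)_R \cong \1_\l\F_i\la\l_i+1\ra$ to the source $\E_i$ and land in $\Hom(\1_\l,\F_i\E_i\1_\l\la\l_i+3\ra)$, which sits at weight $\l$. But in the regime $\l_i \ge 0$ condition (\ref{co:EF}) and Lemma \ref{LEM:bubbles2} go the \emph{other} way: it is $\E_i\F_i\1_\l$ that decomposes as $\F_i\E_i\1_\l \oplus_{[\l_i]}\1_\l$; the 1-morphism $\F_i\E_i\1_\l$ is the ``indecomposable side'' and cannot be expressed as $\E_i\F_i\1_\l$ plus bubbles. So your proposed next step --- invoking Lemma \ref{LEM:bubbles2} to peel off bubble summands from $\F_i\E_i\1_\l$ --- is simply unavailable whenever $\l_i \ge 0$, i.e.\ for all of case (i) except the boundary value $\l_i = -1$.

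The paper avoids this by using the left adjoint $(\E_i\1_\l)_L\cong\1_\l\F_i\la-\l_i-1\ra$ instead: one gets $\End^2(\E_i\1_\l)\cong\Hom(\1_{\l+\alpha_i},\E_i\F_i\1_{\l+\alpha_i}\la-\l_i+1\ra)$. This lands at weight $\l+\alpha_i$, where $\la\l+\alpha_i,\alpha_i\ra = \l_i+2\ge 1$, so $\E_i\F_i\1_{\l+\alpha_i}$ is now on the decomposable side of Lemma \ref{LEM:bubbles2} and splits as $\F_i\E_i\1_{\l+\alpha_i}\oplus_{[\l_i+2]}\1_{\l+\alpha_i}$. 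After discarding negative-degree bubble terms and (by Lemma \ref{LEM:homEs}) the $\F_i\E_i$ contribution, one is left with $\End^0(\1_{\l+\alpha_i})\oplus\End^2(\1_{\l+\alpha_i})$, which traces back through $u_{ii}$ and $\adj^i$ to the generators $(II\t)$ and $(\gamma II)$ respectively. The fix to your proposal is therefore to switch adjoints so that the adjunction's landing weight $\mu$ satisfies $\mu_i > 0$ if $\l_i\ge-1$ (and $\mu_i<0$ in the symmetric case), so that the relevant $\E_i\F_i$ or $\F_i\E_i$ really is the side that decomposes.
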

\begin{proof}
Suppose $\l_i \ge 0$. First we have
\begin{align}
\nonumber \Hom(\E_i \1_\l, \E_i \1_\l \la 2 \ra)
\nonumber \cong& \Hom(\1_{\l + \alpha_i}, \E_i (\E_i \1_\l)_L \la 2 \ra) \\
\nonumber \cong& \Hom(\1_{\l + \alpha_i}, \E_i \F_i \1_{\l + \alpha_i} \la -\l_i+1 \ra) \\
\nonumber \cong& \Hom(\1_{\l + \alpha_i}, \bigoplus_{r=0}^{\l_i+1} \1_{\l+\alpha_i} \la 2-2r \ra \bigoplus \F_i \E_i \1_{\l+\alpha_i} \la -\l_i+1 \ra) \\
\nonumber \cong& \bigoplus_{r=0}^{\l_i+1} \Hom(\1_{\l + \alpha_i}, \1_{\l+\alpha_i} \la 2-2r \ra) \bigoplus \Hom((\1_{\l+\alpha_i} \F_i)_L, \E_i \1_{\l+\alpha_i} \la -\l_i+1 \ra) \\
\label{EQ:X} \cong& \End(\1_{\l+\alpha_i}) \bigoplus \End^2(\1_{\l+\alpha_i}) \bigoplus \Hom(\E_i \1_{\l+\alpha_i}, \E_i \1_{\l+\alpha_i} \la -2\l_i-2 \ra).
\end{align}
The right hand term above is zero by Lemma \ref{LEM:homEs}. If $f \in \End^2(\E_i \1_\l)$ then we denote the map induced by adjunction $f' \in \Hom(\1_{\l+\alpha_i}, \E_i \F_i \1_{\l+\alpha_i} \la -\l_i+1 \ra)$ and the induced maps (via the isomorphisms above) in $\End(\1_{\l+\alpha_i})$ and $\End^2(\1_{\l+\alpha_i})$ by $f_0$ and $f_1$ respectively.

Using the isomorphism
$$\E_i \F_i \1_{\l+\alpha_i} \xrightarrow{\sim} \F_i \E_i \1_{\l+\alpha_i} \bigoplus_{[\l_i+2]} \1_{\l+\alpha_i}$$
from Lemma \ref{LEM:bubbles2} we see that $f'$ is a sum of compositions 
$$\1_{\l+\alpha_i} \xrightarrow{f_{1-r}} \1_{\l+\alpha_i} \la 2-2r \ra \xrightarrow{\adj^i} \E_i \1_\l \F_i  \la -\l_i+1-2r \ra \xrightarrow{I \t^r I} \E_i \1_\l \F_i \la -\l_i+1 \ra.
$$
where $r = 0,1$.  Consequently, by adjunction, $f$ is spanned by compositions of the form
$$\1_{\l+\alpha_i} \E_i \1_\l \xrightarrow{f_{1-r} II} \1_{\l+\alpha_i} \E_i \1_\l \la 2-2r \ra \xrightarrow{II \t^r} \1_{\l+\alpha_i} \E_i \1_\l \la 2 \ra.$$
Finally, $f_0$ must be some multiple of the identity by Lemma \ref{LEM:homEs} and the result follows. 

The case $\l_i=-1$ follows as above except that (\ref{EQ:X}) is now $\End(\1_{\l+\alpha_i}) \oplus \End(\E_i \1_{\l+\alpha_i})$. The case $\l_i \le -1$ is similar except that the first step is $\Hom(\E_i \1_\l, \E_i \1_\l \la 2 \ra) \cong \Hom((\E_i \1_\l)_L \E_i \1_\l, \1_\l \la 2 \ra)$.
\end{proof}

\subsection{A natural pairing}

For any $i \in I$ and $\t \in \End^2(\1_\l)$ we have $T_{ii}(I \t I) T_{ii} = c T_{ii} \in \End^{-2}(\E_i \1_\l \E_i)$ for some $c \in \k$ (this is simply because $\dim \End^{-2}(\E_i \1_\l \E_i) \le 1$ by Lemma \ref{LEM:homEs}). 

\begin{framed}
\noindent Definition: For each $\l \in X$ we define the pairing $(\cdot, \cdot)_\l: \End^2(\1_\l) \otimes_\k Y_\k \rightarrow \k$ by $(\t, \alpha_i)_\l := c$ which we extend linearly. 
\end{framed}

\begin{Remark}
The definition of $(\cdot,\cdot)_\l$ depends on $T_{ii}$ but this dependence is mild. For example, rescaling $T_{ii}$ (as we do in section \ref{sec:2}) does not changes whether or not $(\t, \alpha_i)_\l$ is zero. 
\end{Remark}

\section{Step \#1 -- The structure of $\End(\E_i \E_i)$}

We begin by studying the structure of $\End(\E_i \E_i)$ and ultimately show that there exist $\E_i^{(2)}$ such that $\E_i \E_i \cong \oplus_{[2]} \E_i^{(2)}$.

\subsection{Some technical Lemmas}

For the remainder of this section we fix $i \in I$ and $\t \in Y_\k$ such that $\la \t, \alpha_i \ra \ne 0$. We also use the convention that a claim such as ``$f \in \End(\A)$ is nonzero'' assumes the obviously necessary condition that $\A$ is nonzero. We begin with a few technical results.

\begin{Lemma}\label{LEM:new1}
If $\t \in Y_\k$ with $\la \t, \alpha_i \ra \ne 0$ then $T_{ii}(I \t^{|\l_i|+1} I)T_{ii} \in \End^{2|\l_i|-2}(\E_i \1_\l \E_i)$ is nonzero. 
\end{Lemma}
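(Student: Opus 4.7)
The plan is to reduce the nonvanishing of $T_{ii}(I\t^{|\l_i|+1}I)T_{ii}$ to the bubble identity established in Lemma \ref{LEM:bubbles1}, which provides that (for $\l_i \ne 0$) the composition $\adj_i \circ (I\t^{|\l_i|-1}I) \circ \adj^i$ is a nonzero multiple of $I_\l$ under the hypothesis $\la \t, \alpha_i \ra \ne 0$.

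First, by the symmetry between $\E_i$ and $\F_i$ in the definition of a $(\g,\t)$ action, I would reduce to the case $\l_i \ge 0$. The main step is then to use adjunction (condition (\ref{co:adj})) to convert the endomorphism $T_{ii}(I\t^{\l_i+1}I)T_{ii}$ of $\E_i\1_\l\E_i$ into a 2-morphism whose nonvanishing is detected by a bubble. Applying the pitchfork relations (Lemma \ref{LEM:fork} with $i=j$) to slide each $T_{ii}$ across an adjunction unit/counit, one replaces $T_{ii}$ up to a nonzero scalar by either $u_{ii}$ or $v_{ii}$; by Lemma \ref{lem:uii} these are the inclusion and projection that exhibit $\F_i\E_i\1_\l$ as a direct summand of $\E_i\F_i\1_\l$. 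After these two sliding steps, the composition becomes, up to a nonzero scalar, the tensor product of identities with the bubble $(\adj_i)\circ(I\t^{\l_i-1}I)\circ(\adj^i)$. Lemma \ref{LEM:bubbles1} then identifies the latter as a nonzero multiple of $I_\l$, which yields that $T_{ii}(I\t^{\l_i+1}I)T_{ii}$ is nonzero.

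The main obstacle is the careful bookkeeping of scalars and grading shifts during the pitchfork manipulations. In particular, the power $|\l_i|+1$ in our composition must match up with the power $|\l_i|-1$ appearing in Lemma \ref{LEM:bubbles1}, and one has to verify that the two ``extra'' factors of $\t$ are absorbed by the grading shifts $-d_{ii}=-2$ contributed by the two $T_{ii}$'s as they slide across adjunctions, rather than simply vanishing. Another delicate point is the boundary case $\l_i = 0$, where the claim reduces to the nondegeneracy statement $(\t,\alpha_i)_\l \ne 0$ for the pairing defined from $T_{ii}$; for this case one may need a direct argument using Lemma \ref{LEM:bubbles2}, which explicitly describes how a $\t$ with $\la \t,\alpha_i\ra \ne 0$ mixes the $\1_\l$-summands of $\E_i\1_{\l-\alpha_i}\F_i$ at neighboring weights.
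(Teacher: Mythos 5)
Your proposal uses exactly the ingredients and strategy of the paper's proof: take an adjoint, use the pitchfork relations (Lemma~\ref{LEM:fork}) to slide each $T_{ii}$ past an adjunction unit/counit and replace it with $u_{ii}$ or $v_{ii}$, cancel those via $v_{ii}u_{ii}\sim\mathrm{id}$ (Lemma~\ref{lem:uii}), and finish by applying Lemma~\ref{LEM:bubbles1}. So the overall route is the same.

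However, both of the ``obstacles'' you flag stem from a misreading of where the residual bubble lives. No factors of $\t$ are absorbed during the pitchfork manipulations: the exponent stays $\l_i+1$ throughout, and $\t$ remains at weight $\l$. What happens is that after the $T_{ii}$'s turn into $u_{ii},v_{ii}$ and cancel, the remaining $\E_i\F_i$ bubble sits at weight $\l+\alpha_i$ (not at $\l$), with $\t^{\l_i+1}$ at weight $\l$ inside it. Lemma~\ref{LEM:bubbles1}, applied at weight $\l+\alpha_i$ where $\la\l+\alpha_i,\alpha_i\ra=\l_i+2$, prescribes precisely the exponent $(\l_i+2)-1=\l_i+1$, so the powers line up as written with nothing to absorb. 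This also removes the worry about $\l_i=0$: since $\l_i\ge 0$ forces $\la\l+\alpha_i,\alpha_i\ra\ge 2\ge 1$, both Lemma~\ref{lem:uii} (which needs $\l_i\ge 0$) and Lemma~\ref{LEM:bubbles1} (which needs the pairing at the bubble's weight to be at least $1$) apply uniformly across $\l_i\ge 0$, and no separate argument via Lemma~\ref{LEM:bubbles2} is required.
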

\begin{proof}
Let us suppose $\l_i \ge 0$ (the case $\l_i \le 0$ is the same). Consider the following composition 
$$\E_i \1_\l \xrightarrow{II \adj^i} \E_i \1_\l \E_i \F_i \xrightarrow{(T_{ii}I)(I \t^{\ell+1} II)(T_{ii}I)} \E_i \1_\l \E_i \F_i \xrightarrow{II \adj_i} \E_i \1_\l$$
where we omit the shifts for convenience. We can use Lemma \ref{LEM:fork} to rewrite this composition as 
$$\E_i \1_\l \xrightarrow{adj^i II} \E_i \1_\l \F_i \E_i \xrightarrow{(IIv_{ii})(I \t^{\l_i+1} II)(IIu_{ii})} \E_i \1_\l \F_i \E_i \xrightarrow{\adj_i II} \E_i \1_\l.$$
Now, since $\l_i \ge 0$ we have $v_{ii}u_{ii} \sim id$ and so this simplifies to give 
$$\E_i \1_\l \xrightarrow{(\adj^i II)} \E_i \1_\l \F_i \E_i \xrightarrow{(I \t^{\l_i+1} II)} \E_i \1_\l \F_i \E_i \xrightarrow{(\adj_i I)} \E_i \1_\l.$$
By Lemma \ref{LEM:bubbles1} this is (up to rescaling) equal to the identity map on $\E_i \1_\l$. 
\end{proof}

\begin{Lemma}\label{LEM:new2}
If $\l_i \le -2$ then $\End(\E_i \E_i \1_\l)$ is spanned by $T_{ii}(II \rho)$ where $\rho \in \End^2(\1_\l)$ together with another $3$-dimensional space. Likewise, if $\l_i \ge 2$ then $\End(\1_\l \E_i \E_i)$ is spanned by $T_{ii}(\rho II)$ where $\rho \in \End^2(\1_\l)$ together with another $3$-dimensional space. 
\end{Lemma}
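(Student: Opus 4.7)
The plan is to follow the adjunction-based strategy used in Lemma \ref{LEM:EndE}. I would start with the $\l_i \le -2$ case. Right-adjoining the leftmost $\E_i$ gives
\[
\End(\E_i \E_i \1_\l) \cong \Hom(\E_i \1_\l, \F_i \E_i \E_i \1_\l \la \l_i + 3 \ra).
\]
Since $\l_i \le -2$, both $\l_i \le 0$ and $(\l + \alpha_i)_i = \l_i + 2 \le 0$, so condition (\ref{co:EF}) applies twice (first to the outer $\F_i \E_i$ pair, then to the inner), giving
\[
\F_i \E_i \E_i \1_\l \cong \E_i \E_i \F_i \1_\l \oplus \bigoplus_{[-\l_i]} \E_i \1_\l \oplus \bigoplus_{[-\l_i-2]} \E_i \1_\l.
\]

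Correspondingly, $\End(\E_i \E_i \1_\l)$ splits as three Hom groups. The second and third unwind, using $\End^k(\E_i \1_\l) = 0$ for $k < 0$, into $\End^2(\E_i \1_\l) \oplus \End^0(\E_i \1_\l)$ and $\End^0(\E_i \1_\l)$ (the latter being zero when $\l_i = -2$). By Lemma \ref{LEM:EndE} applied in the range $\l_i \le -1$, $\End^2(\E_i \1_\l)$ is spanned by $(\t II)$ together with maps $(II \gamma)$ for $\gamma \in \End^2(\1_\l)$.

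For the remaining piece, $\Hom(\E_i \1_\l, \E_i \E_i \F_i \1_\l \la \l_i+3\ra)$, I expect the $T_{ii}(II \rho)$ family to appear directly. Concretely, I would write an element of this Hom as the adjoint of an endomorphism of $\E_i \E_i \1_\l$ built from cups, an insertion of $\rho \in \End^2(\1_\l)$, and $T_{ii}$; applying the pitchfork relations of Lemma \ref{LEM:fork} together with Lemma \ref{lem:uii} to collapse the resulting cup--cap composition should identify the answer with a nonzero multiple of $T_{ii}(II \rho)$. Moreover, the $(II\gamma)$ subfamily inside $\End^2(\E_i \1_\l)$ from the second summand should land in the same $T_{ii}(II\rho)$ family under the adjunction, rather than contributing independent dimensions. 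After these identifications, the surviving directions, spanned by the images of $(\t II)$, of $\End^0(\E_i \1_\l)$ from the second summand, and of $\End^0(\E_i \1_\l)$ from the third summand (or, when $\l_i = -2$, by a piece produced inside the first summand), assemble into the claimed 3-dimensional complement.

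The $\l_i \ge 2$ case is the mirror argument, using left adjunction of the rightmost $\E_i$ and decomposing $\F_i \E_i$ pairs from the opposite side; the same bookkeeping produces the family $T_{ii}(\rho II)$ for $\rho \in \End^2(\1_\l)$. The main obstacle will be the explicit matching in the previous paragraph: tracing through the adjunction to confirm that a map factoring through $\E_i \E_i \F_i \1_\l$, and also the $(II\gamma)$ line of $\End^2(\E_i \1_\l)$, both really equal $T_{ii}(II\rho)$ (for appropriate $\rho$) up to nonzero scalars, rather than some other composition in $\End(\E_i \E_i \1_\l)$. This is a diagrammatic calculation in the spirit of Lemmas \ref{LEM:bubbles1} and \ref{LEM:bubbles2}, and will be the technically delicate step.
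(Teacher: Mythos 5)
Your overall strategy — right-adjoining the leftmost $\E_i$, decomposing $\F_i\E_i\E_i\1_\l$ by two applications of condition (\ref{co:EF}), reducing to $\End^2(\E_i\1_\l)\oplus\End^0(\E_i\1_\l)^{\oplus\le 2}$ plus a degree-shifted $\End$ of $\E_i^2$, and then invoking Lemma~\ref{LEM:EndE} to split $\End^2(\E_i\1_\l)$ into $(\t II)$ and $(II\gamma)$ — is exactly the paper's adjunction argument (the paper works out $\l_i\ge 2$; yours is the mirror), and your final dimension count is right. The one place you go astray is in the middle paragraph: you ``expect the $T_{ii}(II\rho)$ family to appear directly'' in the first summand $\Hom(\E_i\1_\l,\E_i\E_i\F_i\1_\l\la\l_i+3\ra)$, and then worry that the $(II\gamma)$ subfamily of the second summand may double-count it. But a further adjunction simplifies that first summand to $\End^{2\l_i+2}(\E_i\E_i\1_{\l-\alpha_i})$, which is zero for $\l_i\le-3$ and at most one-dimensional for $\l_i=-2$ (by Lemma~\ref{LEM:homEEs}); it cannot possibly house the whole $\End^2(\1_\l)$-parameter family $\{T_{ii}(II\rho)\}$. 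The $T_{ii}(II\rho)$'s live entirely in the $(II\gamma)$ part of the second summand, so there is no double-counting issue to resolve — and the diagrammatic ``tracing back'' you outline should be aimed there, not at the first summand. With that reassignment your ``other $3$-dimensional space'' ($(\t II)$, the two $\End^0(\E_i\1_\l)$'s, with the first summand replacing the third when $\l_i=-2$) matches the paper's.
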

\begin{proof}
Let us suppose we are in the second case where $\l_i \ge 2$ (the first case is the same). The same adjunction argument as in the proof of Lemma \ref{LEM:homEEs} shows that 
$$\End(\1_\l \E_i \E_i) \cong \Hom(\1_{\l+\alpha_i} \E_i \E_i, \1_{\l+\alpha_i} \E_i \E_i \la -2\l_i + 2 \ra) \oplus \Hom(\1_{\l} \E_i, \bigoplus_{[2][\l_i-1]} \1_{\l} \E_i \la -\l_i+3 \ra).$$
If $\l_i \ge 3$ then the middle term above is zero while all but three terms in the direct sum on the right are zero. The surviving three terms are $\End^2(\1_\l \E_i) \oplus \End(\1_\l \E_i)^{\oplus 2} \cong \End^2(\1_\l \E_i) \oplus \k^{\oplus 2}$. The term $\End^2(\1_\l \E_i \1_{\l-\alpha_i})$ is spanned by $II \t$ for a choice of $\t \in Y_\k$ with $\la \t, \alpha_i \ra \ne 0$ and by $\rho II$ where $\rho \in \End^2(\1_\l)$ is arbitrary. If one traces back through the isomorphisms the maps $\rho II$ correspond to maps $T_{ii}(\rho II) \in \End(\1_\l \E_i \E_i)$. The result follows. 

If $\l_i = 2$ then the middle term above is one-dimensional and the direct sum on the right is just $\Hom(\1_\l \E_i, \oplus_{[2]} \1_\l \E_i \la 1 \ra) \cong \End^2(\1_\l \E_i) \oplus \End(\1_\l \E_i)$. The same argument as above now applies. 
\end{proof}

\subsection{Some non-vanishing results}

\begin{Lemma}\label{LEM:2}
If $\l_i \ge -1$ then $T_{iii} I_{\l-\alpha_i}$ is nonzero while if $\l_i \le -1$ then $T'_{iii} I_{\l-\alpha_i}$ is nonzero.
\end{Lemma}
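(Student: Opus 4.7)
We treat the case $\l_i\ge -1$ for $T_{iii} I_{\l-\alpha_i}$; the case $\l_i\le -1$ for $T'_{iii} I_{\l-\alpha_i}$ is symmetric, using the mirror pitchfork relations (\ref{eq:c2}) and (\ref{eq:c4}) of Lemma \ref{LEM:fork} in place of (\ref{eq:c1}) and (\ref{eq:c3}).

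The strategy is to take the partial trace of $T_{iii}$ over its leftmost $\E_i$-strand, reducing the question to the nonvanishing of a composition in a simpler endomorphism space that we can simplify directly. Fix $\theta\in Y_\k$ with $\la\theta,\alpha_i\ra\ne 0$ and consider
$$\sigma := (\adj_i\,II)\circ(I\,T_{iii})\circ(\adj^i\,II)\colon \E_i^2\1_{\l-\alpha_i}\longrightarrow \E_i^2\1_{\l-\alpha_i}\la 2\l_i\ra,$$
where $\adj^i\colon \1_{\l+\alpha_i}\to \F_i\E_i\1_{\l+\alpha_i}$ and $\adj_i\colon \F_i\E_i\1_{\l+\alpha_i}\to \1_{\l+\alpha_i}$ are the unit and counit of the left $\F_i\dashv\E_i$ adjunction. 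By naturality of the partial trace construction, $\sigma$ is the partial trace of $T_{iii} I_{\l-\alpha_i}$ and in particular vanishes whenever $T_{iii} I_{\l-\alpha_i}$ does, so it suffices to show $\sigma\ne 0$.

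Unpacking $T_{iii} = (T_{ii}I)(IT_{ii})(T_{ii}I)$ inside $\sigma$ and applying the pitchfork relations of Lemma \ref{LEM:fork} repeatedly to commute $\adj^i$ and $\adj_i$ past the interior $T_{ii}$'s converts the adjunction maps into intertwiners $u_{ii}$ and $v_{ii}$. Using Lemma \ref{lem:uii} to simplify $v_{ii}\circ u_{ii}$ to a nonzero scalar multiple of the identity, and Lemmas \ref{LEM:bubbles1} and \ref{LEM:bubbles2} to identify the resulting ``bubble'' composition as a nonzero scalar multiple of an appropriate power of $\theta$, we rewrite $\sigma$ as a nonzero scalar multiple of a composition of the form
$$T_{ii}\,(I\,\theta^{k}\,I)\,T_{ii}\in \End^{2\l_i}(\E_i\1_\l\E_i),$$
for a suitable $k$ determined by $\l_i$ (matching degrees gives $k=\l_i+2$). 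The nonvanishing of this composition then follows either directly from Lemma \ref{LEM:new1} or from a small variant of its proof: the argument of Lemma \ref{LEM:new1} --- pre- and post-composing with $II\adj^i$ and $II\adj_i$, applying pitchfork, then collapsing via Lemma \ref{lem:uii} --- goes through at this $k$ as well, since the resulting identity map on $\E_i\1_\l$ is trapped under a nonzero multiple of $\theta^{k}\circ \adj^i$ by Lemma \ref{LEM:bubbles1}.

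The main obstacle is the bookkeeping in the pitchfork/bubble chain identifying $\sigma$ as a nonzero scalar multiple of $T_{ii}(I\theta^{k}I)T_{ii}$. Each step is a direct application of Lemma \ref{LEM:fork}, \ref{lem:uii}, \ref{LEM:bubbles1}, or \ref{LEM:bubbles2}, but the correct choice of adjunction (left vs.~right unit/counit), the scalar multiples picked up at each pitchfork move, and the tracking of degree shifts all require care; the boundary cases with small $|\l_i|$, where the Hom spaces become degenerate, may need separate treatment (e.g.\ $\l_i=-1$ gives $k=1$ and forces $\sigma$ to be a multiple of $T_{ii}$ via the pairing $(\theta,\alpha_i)_\l$).
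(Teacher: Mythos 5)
Your proposal takes a genuinely different route than the paper—closing the leftmost $\E_i$ strand of $T_{iii}$ via a partial trace, rather than using the adjunction-rotation trick of Corollary~\ref{COR:nonzero} (which turns $T_{iii}$ into a composition of $u_{ii}$'s and then collapses them against $v_{ii}$'s via Lemma~\ref{lem:uii}). The high-level idea of reducing the nonvanishing of a crossing to a bubble-type computation is in the spirit of the paper, but the reduction has gaps.

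The claim that $\sigma$ simplifies to a nonzero multiple of $T_{ii}(I\,\theta^{\l_i+2}\,I)\,T_{ii}$ is asserted, not verified, and is not obviously true: closing a strand of $T_{iii}=(T_{ii}I)(IT_{ii})(T_{ii}I)$ produces a curl that, even in the idealized affine nilHecke picture, resolves into a \emph{sum} of dotted and bubbled terms rather than a single dotted crossing. You would need to show, via the pitchfork relations of Lemma~\ref{LEM:fork} and the explicit bubble computations of Lemmas~\ref{LEM:bubbles1} and \ref{LEM:bubbles2}, that all but one of the resulting terms vanish. This is precisely the calculation that Corollary~\ref{COR:nonzero} is engineered to sidestep.

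Even granting the reduction, the invocation of Lemma~\ref{LEM:new1} does not go through. That lemma establishes the nonvanishing of $T_{ii}(I\,\theta^{|\l_i|+1}\,I)\,T_{ii}$, whereas your degree count forces $k=\l_i+2$; these never coincide. For $\l_i\ge 0$ your target is one degree higher than what Lemma~\ref{LEM:new1} covers, and the proof of that lemma hinges on power $|\l_i|+1$ being exactly what makes the residual bubble degree zero so that Lemma~\ref{LEM:bubbles1} applies, so your suggested ``small variant of its proof'' is not obviously available. Most seriously, for $\l_i=-1$ your reduction demands $T_{ii}(I\,\theta\,I)\,T_{ii}\ne 0$, i.e.\ $(\theta,\alpha_i)_\l\ne 0$, which at this point in the paper is \emph{not} known: it is the content of Corollary~\ref{COR:new1}, whose proof explicitly invokes Lemma~\ref{LEM:2}. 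Relying on it here would be circular. A minor but telling bookkeeping slip: you describe $\adj^i\colon\1_{\l+\alpha_i}\to\F_i\E_i\1_{\l+\alpha_i}$ and $\adj_i\colon\F_i\E_i\1_{\l+\alpha_i}\to\1_{\l+\alpha_i}$ as unit and counit of a single adjunction $\F_i\dashv\E_i$, but the unit of $\F_i\dashv\E_i$ has target $\E_i\F_i$, not $\F_i\E_i$; you are in fact mixing the unit of $\E_i\dashv\F_i$ with the counit of $\F_i\dashv\E_i$, and their degree shifts differ, which is exactly the kind of detail that makes the resolution of the curl subtle. The paper's route via Corollary~\ref{COR:nonzero} and the $u/v$ collapse circumvents all of these issues.
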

\begin{Remark} 
The adjunction argument below does not tell us whether $T_{iii} I_{\l-\alpha_i}$ and $T'_{iii} I_{\l-\alpha_i}$ are nonzero if $\l_i < -1$ and $\l_i > -1$ respectively. 
\end{Remark}
\begin{proof}
To show that $T'_{iii} I_{\l-\alpha_i} \ne 0$ it suffices to show that 
$$\F_i \E_i \E_i \1_\l \xrightarrow{u_{ii} I} \E_i \F_i \E_i \1_\l \xrightarrow{Iu_{ii}} \E_i \E_i \F_i \1_\l \xrightarrow{T_{ii}I} \E_i \E_i \F_i \1_\l \la -2 \ra$$
is nonzero. This is a consequence of Corollary \ref{COR:nonzero} where we take $i=j=k$ and $\gamma = T_{ii}$. 

Suppose $\l_i \le -2$ (the case $\l_i=-1$ is a little different). Precomposing with $(v_{ii}I)(Iv_{ii})$ and using Lemma \ref{lem:uii} we obtain the map $T_{ii}I \in \End^{-2}(\E_i \E_i \F_i \1_\l)$ (up to a nonzero multiple). This map is clearly nonzero since we can precompose it with $\E_i$ and simplify to obtain several copies of $T_{ii} I_{\l-\alpha_i}$ which are nonzero.

If $\l_i=-1$ we have
$$\E_i \1_\l \oplus \F_i \E_i \E_i \1_\l \cong \E_i \F_i \E_i \1_\l \cong \E_i \1_\l \oplus \E_i \E_i \F_i \1_\l$$
which means that $\F_i \E_i \E_i \1_\l \cong \E_i \E_i \F_i \1_\l$. Then the same argument as in Lemma \ref{lem:uii} shows that the composition $(Iu_{ii}) \circ (u_{ii}I)$ is an isomorphism. The argument used above when $\l_i \le -2$ now applies. 

This concludes the proof that $T'_{iii} I_{\l-\alpha_i} \ne 0$ if $\l_i \le -1$. The nonvanishing of $T_{iii} I_{\l-\alpha_i}$ if $\l_i \ge -1$ is proved similarly. 
\end{proof}

\begin{Lemma}\label{LEM:new3}
Suppose $\l_i \ge -1$. If
\begin{equation}\label{EQ:rel1}
id =  a_1 (IT_{ii}I)(II \t II) + a_2 (II \t II)(IT_{ii}I) + (IT_{ii}I)(\gamma) \in \End(\1_{\l+\alpha_i} \E_i \1_\l \E_i \1_{\l-\alpha_i})
\end{equation}
where $\gamma = a_3 (IIII \t) + (\rho IIII)$ for some $\rho \in \End^2(\1_{\l+\alpha_i})$ and $a_1,a_2,a_3 \in \k$ then $T_{ii}(I \t I)T_{ii} \in \End^{-2}(\E_i \1_\l \E_i)$ and $T'_{iii} I_{\l-\alpha_i}$ are both nonzero. Conversely, if $T_{ii}(I \t I)T_{ii} \in \End^{-2}(\E_i \1_{\l-\alpha_i} \E_i)$ and $T'_{iii} I_{\l-2\alpha_i}$ are both nonzero then (\ref{EQ:rel1}) holds. 

Similarly, suppose $\l_i \le 1$. If 
\begin{equation}\label{EQ:rel2}
id = a_1 (IT_{ii}I)(II \t II) + a_2 (II \t II)(IT_{ii}I) + (IT_{ii}I)(\gamma) \in \End(\1_{\l+\alpha_i} \E_i \1_\l \E_i \1_{\l-\alpha_i})
\end{equation}
where $\gamma = a_3 (\t IIII) + (IIII \rho)$ for some $\rho \in \End^2(\1_{\l-\alpha_i})$ then $T_{ii}(I \t I)T_{ii} \in \End^{-2}(\E_i \1_\l \E_i)$ and $I_{\l+\alpha_i} T_{iii}$ are both nonzero. Conversely, if $T_{ii}(I \t I)T_{ii} \in \End^{-2}(\E_i \1_{\l+\alpha_i} \E_i)$ and $I_{\l+2\alpha_i} T_{iii}$ are both nonzero then (\ref{EQ:rel2}) holds. 
\end{Lemma}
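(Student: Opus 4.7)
I will outline the first case $\l_i \ge -1$; the second case $\l_i \le 1$ is symmetric via the left-right duality inherent in adjunction.

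For the converse direction (both maps nonzero implies (\ref{EQ:rel1})), my plan is to compute a spanning set for $\End(\1_{\l+\alpha_i}\E_i\1_\l\E_i\1_{\l-\alpha_i})$ by adjunction, following the same technique used in Lemma \ref{LEM:new2}. After using condition (\ref{co:adj}) to move one $\E_i$ across, one converts the endomorphism space into a $\Hom$-space involving $\E_i\F_i$ compositions, which then decomposes via condition (\ref{co:EF}) into pieces one can enumerate. The resulting spanning set contains exactly the four types of maps that appear in (\ref{EQ:rel1}): $IT_{ii}I$, $II\t II$, $IIII\t$, and $\rho IIII$ for $\rho \in \End^2(\1_{\l+\alpha_i})$. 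The nonvanishing of $T_{ii}(I\t I)T_{ii}$ and $T'_{iii}I_{\l-2\alpha_i}$ ensures that the ``pivotal'' coefficients $a_1$ and $a_3$, detected through the pairing $(\cdot,\cdot)_\l$ and via the adjunction between $\E_i\E_i$ and $\E_i\E_i\E_i$, must be nonzero, thus yielding the decomposition (\ref{EQ:rel1}).

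For the forward direction, the plan is to use (\ref{EQ:rel1}) as a substitution rule and post-compose both sides with chosen maps to extract each of $T_{ii}(I\t I)T_{ii}$ and $T'_{iii}$ as a nontrivial component. To show $T_{ii}(I\t I)T_{ii}\ne 0$: sandwich (\ref{EQ:rel1}) between two copies of $IT_{ii}I$ with $|\l_i|$ intermediate factors of $II\t II$, letting the composition telescope into a multiple of $T_{ii}(I\t^{|\l_i|+1}I)T_{ii}$, which is nonzero by Lemma \ref{LEM:new1}. Were $T_{ii}(I\t I)T_{ii}$ zero, the telescoped product would vanish, contradicting Lemma \ref{LEM:new1}. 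For $T'_{iii}I_{\l-\alpha_i}\ne 0$: tensor (\ref{EQ:rel1}) on the right with $\E_i$ and compose with a $T_{ii}$-factor positioned so that the $(IT_{ii}I)$-term of the relation fuses with the external $T_{ii}$ to build $T'_{iii}$ up to error terms that vanish by dimension counts; a parallel contradiction argument against Lemma \ref{LEM:new1} then forces $T'_{iii}\ne 0$.

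The main obstacle is performing these manipulations without access to the affine nilHecke relations, which are themselves being bootstrapped from this very lemma. In particular, we cannot simplify $T_{ii}^2$ to $0$; all reductions must be justified using only the $\Hom$-space dimension bounds of Lemmas \ref{LEM:homEs} and \ref{LEM:EndE}, combined with the explicit non-vanishing criteria of Lemma \ref{LEM:new1} and Lemma \ref{LEM:2}. Controlling the $\rho$-type contributions in $\End^2(\1_{\l+\alpha_i})$ precisely enough to match the exact form of $\gamma$ in (\ref{EQ:rel1}), and ensuring that the ``outside'' terms (weight-$\l+\alpha_i$ vs.\ weight-$\l-\alpha_i$) do not get entangled, is the most delicate step of the bookkeeping.
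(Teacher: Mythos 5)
Your high-level strategy is broadly aligned with the paper's — Lemma~\ref{LEM:new2} provides the dimension bound that makes the converse work, and the forward direction proceeds by composing (\ref{EQ:rel1}) with carefully chosen maps — but several of the specifics you describe are wrong or unnecessarily complicated, and one important structural feature of the statement goes unaddressed.

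For the forward direction on $T_{ii}(I\t I)T_{ii}$, no telescoping is needed: since $T_{ii}^2 \in \End^{-4}(\E_i\E_i)$ vanishes for degree reasons (this is the observation that replaces the not-yet-established affine nilHecke relations), composing (\ref{EQ:rel1}) with $(IT_{ii}I)$ on the right immediately kills every term except the $a_1$ one and yields $(IT_{ii}I)=a_1(IT_{ii}I)(II\t II)(IT_{ii}I)$, whence both $a_1\ne 0$ and $T_{ii}(I\t I)T_{ii}\ne 0$. Your proposed sandwich with $|\l_i|$ copies of $(II\t II)$ does not cleanly telescope anyway, because $\t\in\End^2(\1_\l)$ and $T_{ii}$ act on overlapping factors and do not commute in the way your sketch assumes.

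For $T'_{iii}I_{\l-\alpha_i}\ne 0$, the paper's contradiction is against Lemma~\ref{LEM:2}, not Lemma~\ref{LEM:new1}: assuming $T'_{iii}I_{\l-\alpha_i}=0$, one repeatedly substitutes (\ref{EQ:rel1}) into $T'_{iii}(II\t I)=0$, simplifies using the vanishing assumption, and after multiplying by $(T_{ii}I)$ arrives at $a_1 T_{iii}I_{\l-\alpha_i}=0$, contradicting the non-vanishing of $T_{iii}$ guaranteed by Lemma~\ref{LEM:2} when $\l_i\ge -1$. Your ``error terms vanish by dimension counts'' is too vague here; the reductions only go through because you keep reusing the assumption $T'_{iii}=0$ to delete terms, and the final contradiction comes from the explicit non-vanishing result, not a dimension bound.

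For the converse, the dimension count from Lemma~\ref{LEM:new2} is indeed the backbone, but the hard part is establishing linear independence of the three maps $(IT_{ii}I)(II\t II)$, $(II\t II)(IT_{ii}I)$, $(IT_{ii}I)(IIII\t)$ modulo the $(IT_{ii}I)(\rho IIII)$ space. The paper does this by choosing the minimal $e$ with $T_{ii}(I\t^e I)T_{ii}\ne 0$ (Lemma~\ref{LEM:new1}) and composing with $(II\t^{e-1}II)(IT_{ii}I)$ to isolate each of $c_1,c_2$; then $c_3$ is killed by a specific composition on $\E_i\E_i\1_{\l-\alpha_i}\E_i$ whose non-vanishing is controlled by the hypothesis $T'_{iii}I_{\l-2\alpha_i}\ne 0$. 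The pairing $(\cdot,\cdot)_\l$ that you invoke plays no role in this argument.

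Finally, you should note the weight shift between the two directions of the lemma: the forward direction concludes non-vanishing at $\l$ and $\l-\alpha_i$, while the converse assumes non-vanishing at the \emph{shifted} weights $\l-\alpha_i$ and $\l-2\alpha_i$. This is not sloppiness but a deliberate design that enables the induction in Corollary~\ref{COR:new1}, and any proof attempt must keep careful track of exactly which weights the hypotheses and conclusions live at.
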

\begin{proof}
Suppose $\l_i \ge 0$ (the case $\l_i \le 0$ is the same). Composing (\ref{EQ:rel1}) on the right with $(IT_{ii}I)$ we get $(IT_{ii}I) = a_1 (IT_{ii}I)(II \t II)(IT_{ii}I)$ which implies that $a_1 \ne 0$ and $T_{ii}(I \t I)T_{ii} \in \End^{-2}(\E_i \1_\l \E_i)$ is nonzero. Note that we also get $a_2 \ne 0$ by multiplying on the left with $(IT_{ii}I)$. 

Now suppose $T'_{iii} I_{\l-\alpha_i} = 0$ and consider 
$$T'_{iii}(II \t I) = (IT_{ii})(T_{ii}I)(IT_{ii})(II \t I) \in \End^{-4}(\E_i \E_i \1_\l \E_i).$$
On the one hand this is zero while on the other we can use (\ref{EQ:rel1}) to rewrite it as
\begin{align*}
a_1^{-1}(IT_{ii})(T_{ii}I) - a_2a_1^{-1} (IT_{ii})(T_{ii}I)(II \t I)(IT_{ii}) 
&= a_1^{-1}(IT_{ii})(T_{ii}I) - a_2 a_1^{-1}(IT_{ii})(II \t I)(T_{ii}I)(IT_{ii}) \\
&= a_1^{-1}(IT_{ii})(T_{ii}I) - a_2^2 a_1^{-2}(T_{ii}I)(IT_{ii}) 
\end{align*}
where we use that $T'_{iii} I_{\l-\alpha_i}=0$ on several occasions to simplify above. Thus we get that 
$$a_1 (IT_{ii})(T_{ii}I) = a_2^2 (T_{ii}I)(IT_{ii}) \in \End^{-4}(\E_i \E_i \E_i \1_{\l-\alpha_i}).$$ 
Multiplying by $(T_{ii}I)$ on the left gives $a_1 T_{iii} I_{\l-\alpha_i} = 0$ which is a contradiction by Lemma \ref{LEM:2}. 

To prove the converse suppose we have 
\begin{equation}\label{EQ:linind}
c_1 (IT_{ii}I)(II \t II) + c_2 (II \t II)(IT_{ii}I) + c_3 (IT_{ii}I)(IIII \t) + (IT_{ii}I)(\rho IIII) = 0
\end{equation}
inside $\End(\1_{\l+\alpha_i} \E_i \1_\l \E_i \1_{\l-\alpha_i})$ for some $\rho \in \End^2(\1_{\l+\alpha_i})$ and $c_1,c_2,c_3 \in \k$. Let $e \ge 1$ be the smallest integer such that $T_{ii}(I \t^e I)T_{ii} \in \End^{2e-4}(\E_i \1_\l \E_i)$ is nonzero (such an $e$ exists by Lemma \ref{LEM:new1}). Then composing (\ref{EQ:linind}) on the right with $(II \t^{e-1} II)(IT_{ii}I)$ all but the first terms vanish and we get $c_1=0$. Similarly, composing on the left gives $c_2=0$. 

Next, consider the composition 
$$\E_i \E_i \1_{\l-\alpha_i} \E_i \xrightarrow{(II \t I)(IT_{ii})} \E_i \E_i \1_{\l-\alpha_i} \E_i \xrightarrow{T_{iii}} \E_i \E_i \1_{\l-\alpha_i} \E_i.$$
On the one hand, since $T'_{iii} I_{\l-2\alpha_i} \ne 0$ it follows that $T'_{iii} I_{\l-\alpha_i} \sim T_{iii} I_{\l-\alpha_i}$ and hence the composition above equals (up to rescaling)
$$\E_i \E_i \1_{\l-\alpha_i} \E_i \xrightarrow{(IT_{ii})(II \t I)(IT_{ii})} \E_i \E_i \1_{\l-\alpha_i} \E_i \la -2 \ra \xrightarrow{(IT_{ii})(T_{ii}I)} \E_i \E_i \1_{\l-\alpha_i} \E_i \la -6 \ra.$$
Now, since $T_{ii}(I \t I)T_{ii} \in \End^{-2}(\E_i \1_{\l-\alpha_i} \E_i)$ is nonzero, it must be equal to $T_{ii}$ (up to rescaling). So the composition above becomes $T'_{iii} I_{\l-2\alpha_i}$ which is nonzero. 

On the other hand, if $c_3 \ne 0$ then we can rewrite the composition as 
$$\1_{\l+\alpha_i} \E_i \E_i \E_i \xrightarrow{c_3^{-1}(IT_{ii})(\rho III)} \1_{\l+\alpha_i} \E_i \E_i \E_i \xrightarrow{T_{iii}} \1_{\l+\alpha_i} \E_i \E_i \E_i \la -6 \ra$$ 
which equals zero. We conclude that $c_3$ must be zero. 

This means that modulo maps of the form $(IT_{ii}I)(\rho IIII) \in \End(\1_{\l+\alpha_i} \E_i \1_\l \E_i \1_{\l-\alpha_i})$ the first three maps in (\ref{EQ:linind}) are linearly independent. It follows by Lemma \ref{LEM:new2} that $id \in \End(\E_i \1_\l \E_i)$ must be a linear combination as in (\ref{EQ:rel1}). This concludes the proof. 
\end{proof}

\begin{Corollary}\label{COR:new1}
The maps $T_{iii} I_\l$ and $T'_{iii} I_\l$ as well as $T_{ii}(I \t I)T_{ii} \in \End^{-2}(\E_i \1_\l \E_i)$ are all nonzero.
\end{Corollary}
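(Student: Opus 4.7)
The three non-vanishing statements in the corollary are tightly coupled through Lemma \ref{LEM:new3}, whose biconditional characterizes the identity relations (\ref{EQ:rel1}) and (\ref{EQ:rel2}) as equivalent to joint non-vanishing of the middle term $T_{ii}(I\t I)T_{ii} \in \End^{-2}(\E_i\1_\l\E_i)$ with $T'_{iii}$ (respectively $T_{iii}$) at the appropriate shifted weights. My plan is to treat the three claims as a single coupled system and propagate non-vanishing along each $\alpha_i$-string, which is finite by condition (\ref{co:vanish1}). The seeds come from Lemma \ref{LEM:2}, which gives $T_{iii}I_\mu \neq 0$ for $\mu_i \ge -3$ and $T'_{iii}I_\mu \neq 0$ for $\mu_i \le -3$ (so both are nonzero at $\mu_i = -3$), and Lemma \ref{LEM:new1} specialized to $\l_i = 0$, which gives the middle term in the critical degree $-2$ at that weight.

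The inductive step uses Lemma \ref{LEM:new3} as a propagator. Its converse direction for (\ref{EQ:rel1}), applicable when $\l_i \ge -1$, says that joint non-vanishing of the middle term at $\l-\alpha_i$ and of $T'_{iii}$ at $\l-2\alpha_i$ forces the identity (\ref{EQ:rel1}) at weight $\l$; the forward direction then immediately yields non-vanishing of the middle term at $\l$ together with $T'_{iii}$ at $\l-\alpha_i$. Composing these two directions gives a one-step upward propagation of the pair (middle term, $T'_{iii}$) along the $\alpha_i$-string. Symmetrically, (\ref{EQ:rel2}) supplies a one-step downward propagation for the pair (middle term, $T_{iii}$). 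Chaining from the seeds supplied above covers the whole $\alpha_i$-string and establishes all three claims for every $\l$ with the relevant 1-morphisms nonzero.

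The main obstacle is that $\alpha_i$-strings on which every weight has $\l_i$ odd do not contain $\l_i = 0$, so Lemma \ref{LEM:new1} does not directly provide the middle term in degree $-2$ at any weight of such a string. For these strings I would use the coincidence of $T_{iii}$ and $T'_{iii}$ at $\mu_i = -3$ given by Lemma \ref{LEM:2}, combined with the structural description of $\End(\E_i\1_\mu\E_i) \cong \End(\E_i\E_i\1_{\mu-\alpha_i})$ from Lemma \ref{LEM:new2}, to rerun the linear-independence argument from the proof of the converse direction of Lemma \ref{LEM:new3}: joint non-vanishing of $T_{iii}$ and $T'_{iii}$ at $\mu_i = -3$ forces a decomposition of the identity in which the middle term must contribute nontrivially, yielding the base case. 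Once this seed is in place, the propagation above carries through along the odd-parity string in the same way as for even-parity strings, concluding the proof.
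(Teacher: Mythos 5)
Your high-level plan matches the paper's: use Lemma~\ref{LEM:new3} as a two-directional propagator of the pair (middle term, $T'_{iii}$) along each $\alpha_i$-string, seeding from a distinguished weight; and you correctly spotted the parity obstruction -- strings on which $\l_i$ is always odd never see $\l_i=0$, so Lemma~\ref{LEM:new1} never contributes the degree $-2$ term directly. That observation is exactly what forces the paper to treat $\l_i=-1$ separately. But the details of your base case contain a genuine gap, and in fact \emph{both} parities need more than the seeds you list.

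The issue is that the propagator consumes a \emph{linked pair}: the converse of Lemma~\ref{LEM:new3} at a weight $\l$ needs $T_{ii}(I\t I)T_{ii}\in\End^{-2}(\E_i\1_{\l-\alpha_i}\E_i)$ \emph{and} $T'_{iii}I_{\l-2\alpha_i}$, i.e., the middle term and $T'_{iii}$ at two \emph{consecutive} weights. Your even-parity seed gives the middle term at $\l_i=0$ (Lemma~\ref{LEM:new1}) but $T'_{iii}$ only at $\nu_i\le -3$ (Lemma~\ref{LEM:2}); these are two steps apart, so no linked pair is in hand and the chain cannot start. Your odd-parity seed at $\mu_i=-3$ gives $T'_{iii}I_\mu\ne 0$ and $T_{iii}I_\mu\ne 0$, but nothing about the middle term at $\mu$, which is what the pair requires. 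In neither case is a pair established without further work. The paper instead proves the pair directly at $\l_i=-1$ and $\l_i=0$: for $\l_i=-1$ it argues by contradiction (assume $T_{ii}(I\t I)T_{ii}=0$, extract a nontrivial relation from the dimension count in Lemma~\ref{LEM:new2}, kill the coefficients $b_0,b_1,b_2$ by composing against $T_{ii}(I\t^2 I)T_{ii}\ne 0$ from Lemma~\ref{LEM:new1}, then get a contradiction from the composition (\ref{EQ:new8})); for $\l_i=0$ it observes that the 3-dimensional piece of $\End(\E_i\1_\l\E_i)$ from Lemma~\ref{LEM:new2} is spanned differently (replacing $id$ by a map $T_{ii}(\beta)T_{ii}$) and reruns the forward direction of Lemma~\ref{LEM:new3} with a correspondingly modified $\gamma$.

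The specific claim ``joint non-vanishing of $T_{iii}$ and $T'_{iii}$ at $\mu_i=-3$ forces a decomposition of the identity in which the middle term must contribute nontrivially'' is the crux of your odd-parity base case, and it is not a consequence of anything you have cited. The maps $T_{iii}I_\mu,T'_{iii}I_\mu$ live in $\End^{-6}(\E_i\E_i\E_i\1_\mu)$, whereas $T_{ii}(I\t I)T_{ii}$ lives in $\End^{-2}(\E_i\1_\mu\E_i)\cong\End^{-2}(\E_i\E_i\1_{\mu-\alpha_i})$; there is no immediate implication from the former to the latter, and ``rerunning the linear-independence argument from the converse of Lemma~\ref{LEM:new3}'' is not available because that argument assumes the middle term is nonzero (at the shifted weight) -- precisely what you are trying to prove. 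You need to actually carry out the contradiction argument the paper gives (or an analogue of it at $\mu_i=-3$); the sketch as written leaves this step undone.
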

\begin{proof}
Suppose $\l_i \ge 0$ (the case $\l_i \le 0$ is the same). By Lemma \ref{LEM:new3} it suffices to show that $T'_{iii} I_{\l-\alpha_i}$ and $T_{ii}(I \t I)T_{ii} \in \End^{-2}(\E_i \1_\l \E_i)$ are both nonzero when $\l_i=-1$ and $\l_i=0$.

Suppose $\l_i = -1$. We know $T'_{iii} I_{\l-\alpha_i} \ne 0$ by Lemma \ref{LEM:2}. Now suppose $T_{ii}(I \t I)T_{ii} \in \End^{-2}(\E_i \1_\l \E_i)$ is zero. By Lemma \ref{LEM:new2} we know that there exists a nontrivial linear relation
\begin{equation}\label{EQ:lindep}
b_0 \cdot id + b_1 (IT_{ii}I)(II \t II) + b_2 (II \t II)(IT_{ii}I) + b_3 (IT_{ii}I)(\t IIII) + (IT_{ii}I)(IIII \rho) = 0
\end{equation}
inside $\End(\1_{\l+\alpha_i} \E_i \1_\l \E_i \1_{\l-\alpha_i})$ for some $b_0,b_1,b_2,b_3 \in \k$. Composing on the left by $(IT_{ii}I)(II \t II)$ and on the right by $(II \t II)(IT_{ii}I)$ all but the first terms vanish and we get $b_0 (I T_{ii} I)(II \t^2 II)(I T_{ii} I) = 0$. By Lemma \ref{LEM:new1} this means $b_0$. 

Next, composing on the right with $(II \t II)(IT_{ii}I)$ we get $b_1 (I T_{ii} I)(II \t^2 II)(I T_{ii} I) = 0$ which means $b_1=0$. Likewise, composing on the left gives $b_2=0$. Subsequently, we are left with 
$$b_3 (IT_{ii}I)(\t III) + (IT_{ii}I)(III \rho) = 0 \in \End(\1_{\l+\alpha_i} \E_i \E_i \1_{\l-\alpha_i}).$$
To see this cannot be the case consider the composition 
\begin{equation}\label{EQ:new8}
\E_i \1_{\l+\alpha_i} \E_i \E_i \xrightarrow{(T_{ii}I)} \E_i \1_{\l+\alpha_i} \E_i \E_i \la -2 \ra \xrightarrow{(I \t^2 II)} \E_i \1_{\l+\alpha_i} \E_i \E_i \la 2 \ra \xrightarrow{T_{iii}} \E_i \1_{\l+\alpha_i} \E_i \E_i \la -4 \ra.
\end{equation}
On the one hand this is equal to 
$$\E_i \1_{\l+\alpha_i} \E_i \E_i \xrightarrow{(T_{ii}I)(I \t^2 II)(T_{ii}I)} \E_i \1_{\l+\alpha_i} \E_i \E_i \xrightarrow{(T_{ii}I)(IT_{ii})} \E_i \1_{\l+\alpha_i} \E_i \E_i \la 4 \ra.$$
By the same argument as in the proof of Lemma \ref{LEM:2} this composition is equivalent to (after composing with the appropriate adjunction maps and applying Lemma \ref{LEM:fork}) 
$$(IT_{ii})(II \t^2 I)(IT_{ii}) \in \End(\F_i \E_i \1_{\l+\alpha_i} \E_i).$$
Composing on the left with $\E_i$ and simplifying gives 3 copies of $(T_{ii})(I \t^2 I)(T_{ii}) \in \End(\E_i \1_{\l+\alpha_i} \E_i)$ which is nonzero by Lemma \ref{LEM:new1}. Thus the composition in (\ref{EQ:new8}) is nonzero. 

On the other hand, (\ref{EQ:new8}) is equal to (up to rescaling)
$$(T'_{iii}I)(I \t^2 III)(T_{ii}II) = b_3^{-2} (T'_{iii}I)(T_{ii}II)(IIII \rho^2) = 0 \in \End^{-4}(\E_i \1_{\l+\alpha_i} \E_i \E_i \1_{\l-\alpha_i}).$$
Thus we arrive at a contradiction which means that $T_{ii}(I \t I)T_{ii} \in \End^{-2}(\E_i \1_\l \E_i)$ must be nonzero. 

Next, suppose $\l_i=0$. A more careful look at the argument used to prove Lemma \ref{LEM:new2} reveals that if you follow through the adjunctions when $\l_i > 0$ then the 3-dimensional subspace of $\End(\E_i \1_\l \E_i)$ when $\l_i > 0$ is spanned by $T_{ii}(I \t I), (I \t I)T_{ii}$ and $id$. More precisely, the first step in the adjunction gives 
$$\Hom(\E_i \1_\l, \E_i \1_\l \E_i \F_i \la -\l_i+1 \ra) \cong \Hom(\E_i \1_\l, \E_i \1_\l \F_i \E_i \la -\l_i+1 \ra \oplus_{[\l_i]} \E_i \1_\l \la -\l_i+1 \ra)$$
and the identity map $id$ corresponds to the copy of $\E_i$ in the highest degree ({\it i.e.} in degree zero) in the right hand summation. However, if $\l_i=0$ the direct sum vanishes. In this case the 3-dimensional space is spanned by $T_{ii}(I \t I), (I \t I)T_{ii}$ as before but instead of $id$ one finds a map $T_{ii}(\beta)T_{ii}$ for some $\beta \in \End^4(\E_i \1_\l \E_i)$. We could make $\beta$ explicit but we do not need to. The only thing to notice now is that we must have a relation as in (\ref{EQ:rel1}) except that 
$$\gamma = (IT_{ii}I)(I \beta I)(IT_{ii}I) + (\rho IIII) \in \End(\1_{\l+\alpha_i} \E_i \1_\l \E_i \1_{\l-\alpha_i}).$$ 
However, it is easy to check that the rest of the argument in Lemma \ref{LEM:new3} goes through exactly as before to show that $T_{ii}(I \t I)T_{ii} \in \End^{-2}(\E_i \1_\l \E_i)$ and $T'_{iii} I_{\l-\alpha_i}$ are still nonzero. 
\end{proof}

\subsection{Divided powers}

\begin{Proposition}\label{PROP:new1}
We have
\begin{equation}\label{EQ:5}
id = a \cdot (I \t I)T_{ii} + a \cdot T_{ii}(I \t I) + T_{ii}(\tau) \in \End(\E_i \1_\l \E_i)
\end{equation}
where $a \ne 0$ and 
\begin{equation}\label{EQ:new2}
\tau = \begin{cases}
(III \rho) + b \cdot (\t III) \in \End^2(\1_{\l+\alpha_i} \E_i \E_i \1_{\l-\alpha_i}) & \text{ if } \l_i \le 0 \\
(\rho III) + b \cdot (III \t) \in \End^2(\1_{\l+\alpha_i} \E_i \E_i \1_{\l-\alpha_i}) & \text{ if } \l_i \ge 0
\end{cases}
\end{equation}
for some $b \in \k$ and $\rho \in \End^2(\1_{\l-\alpha_i})$ or $\rho \in \End^2(\1_{\l+\alpha_i})$ respectively. 
\end{Proposition}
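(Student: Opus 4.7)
The plan is to upgrade the linear relation provided by Lemma \ref{LEM:new3} by forcing its two leading coefficients to coincide.

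First, Corollary \ref{COR:new1} supplies all the nonvanishing hypotheses needed for the converse direction of Lemma \ref{LEM:new3}: both $T_{ii}(I\t I)T_{ii}$ and (the appropriate one of) $T_{iii}$, $T'_{iii}$ are nonzero. Lemma \ref{LEM:new3} then produces scalars $a_1, a_2 \in \k^\times$, a scalar $a_3 \in \k$, and some $\rho$ of the required type so that
$$id = a_1 T_{ii}(I\t I) + a_2 (I\t I)T_{ii} + T_{ii}(\gamma) \in \End(\E_i \1_\l \E_i),$$
where $\gamma$ takes exactly the shape stated in that Lemma, whose two summands involve either $\t$ or $\rho$ acting on an outer weight space $\1_{\l \pm \alpha_i}$.

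The key remaining task is to establish $a_1 = a_2$. To do this I would precompose this identity with $T_{ii}$ on each side. Two simplifications drive the argument. First, $T_{ii}^2 = 0$: this is a degree statement, since $T_{ii}^2 \in \End^{-4}(\E_i \E_i \1_\l)$ and the Hom-space bounds in the appendix (Lemma \ref{LEM:homEs} and its companions) force this space to vanish. Second, $T_{ii}(\gamma)T_{ii} = 0$: this uses the explicit form of $\gamma$, because each of its summands acts only on an outer identity $\1_{\l \pm \alpha_i}$ and therefore commutes past $T_{ii}$, so the triple composition reduces to a multiple of $T_{ii}^2 = 0$. After these cancellations, composition on the left gives $T_{ii} = a_2 \cdot T_{ii}(I\t I)T_{ii}$, and composition on the right gives $T_{ii} = a_1 \cdot T_{ii}(I\t I)T_{ii}$. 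By Corollary \ref{COR:new1} together with the one-dimensionality of $\End^{-2}(\E_i \1_\l \E_i)$ coming from Lemmas \ref{LEM:homEEs}, \ref{LEM:Ts1}, \ref{LEM:Ts2}, the map $T_{ii}(I\t I)T_{ii}$ is a nonzero scalar multiple of $T_{ii}$, so both equations can hold only when $a_1 = a_2$.

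Setting $a := a_1 = a_2$ and $b := a_3$, and translating the two cases of $\gamma$ in Lemma \ref{LEM:new3} into the corresponding shapes of $\tau$ in the Proposition, yields the desired identities (\ref{EQ:5}) and (\ref{EQ:new2}). The main obstacle in this plan is establishing the two clean vanishings $T_{ii}^2 = 0$ and $T_{ii}(\gamma)T_{ii} = 0$; these are the only inputs beyond Lemma \ref{LEM:new3}, and they must be isolated carefully from the nilHecke relations themselves, which are the target of Step \#5 and are not yet available at this point.
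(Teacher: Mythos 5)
Your proof is essentially the paper's own: invoke Corollary \ref{COR:new1} to trigger the converse clause of Lemma \ref{LEM:new3}, obtain the linear relation with coefficients $a_1,a_2$, and then compose with $T_{ii}$ on each side to force $a_1=a_2$. You are also right to isolate the two vanishings that make the cancellation work — $T_{ii}^2=0$ and $T_{ii}(\gamma)T_{ii}=0$ — the paper leaves these entirely implicit, and your justification for them is correct: $T_{ii}^2\in\End^{-4}(\E_i\E_i)$ vanishes for degree reasons (the relevant companion is Lemma \ref{LEM:homEEs}, whose adjunction argument gives $\End^{d}(\E_i\E_i)=0$ for $d<-2$, not Lemma \ref{LEM:homEs}), and the summands of $\gamma$ act on the outer identity $\1_{\l\pm\alpha_i}$, hence slide past $T_{ii}$ and reduce $T_{ii}(\gamma)T_{ii}$ to a multiple of $T_{ii}^2=0$. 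The one thing you flag as a lingering obstacle is in fact already resolved by exactly these two observations, so there is no gap.
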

\begin{proof}
By combining Lemma \ref{LEM:new3} and Corollary \ref{COR:new1} we know that 
$$id = a_1 \cdot (I \t I)T_{ii} + a_2 \cdot T_{ii}(I \t I) + T_{ii}(\tau)$$
for some nonzero $a_1, a_2 \in \k$. Composing with $T_{ii}$ on the left one gets $T_{ii} = a_1 \cdot T_{ii}(I \t I)T_{ii}$ while composing on the right gives $T_{ii} = a_2 \cdot T_{ii}(I \t I)T_{ii}$. Thus $a_1=a_2$ and the result follows. 
\end{proof}

\begin{Corollary}\label{COR:new}
There exists $\E_i^{(2)} \1_{\l-\alpha_i}$ so that $\E_i \E_i \1_{\l-\alpha_i} \cong \oplus_{[2]} \E_i^{(2)} \1_{\l-\alpha_i}$ and likewise for $\F_i$'s. 
\end{Corollary}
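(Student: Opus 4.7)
The plan is to use the relation (\ref{EQ:5}) from Proposition \ref{PROP:new1} to manufacture an idempotent in $\End^0(\E_i \E_i \1_{\l-\alpha_i})$, invoke idempotent completeness of $\K$ to split off the image, and then show that the two resulting summands are $\la 1 \ra$ and $\la -1 \ra$ shifts of a common object $\E_i^{(2)}\1_{\l-\alpha_i}$.

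First I would set $e := a \cdot T_{ii}(I\t I) \in \End^0(\E_i\E_i\1_{\l-\alpha_i})$ with $a$ as in equation (\ref{EQ:5}). The identity $a T_{ii}(I\t I) T_{ii} = T_{ii}$ is already observed in the proof of Proposition \ref{PROP:new1} (by composing (\ref{EQ:5}) on either side with $T_{ii}$), so a direct calculation gives
\[
e^2 \;=\; a^2 T_{ii}(I\t I) T_{ii}(I\t I) \;=\; a \cdot [\, a T_{ii}(I\t I) T_{ii}\,] (I\t I) \;=\; a T_{ii}(I\t I) \;=\; e.
\]
Idempotent completeness of $\K$ then yields $\E_i\E_i\1_{\l-\alpha_i} \cong \A \oplus \B$ with $\A := \operatorname{Im}(e)$ and $\B := \operatorname{Im}(\id - e)$.

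Next, I would identify $\A$ and $\B$ as shifts of a common object by using $T_{ii}$ and $(I \t I)$ to build mutually inverse maps up to the shift $\la -2 \ra$. The key observation is that $T_{ii}: \E_i \E_i \1_{\l-\alpha_i} \to \E_i\E_i\1_{\l-\alpha_i}\la -2 \ra$ sends $\B$ into $\A \la -2 \ra$: for $y \in \B$, the identity $a T_{ii}(I\t I)T_{ii} = T_{ii}$ gives $e(T_{ii}(y)) = a T_{ii}(I\t I)T_{ii}(y) = T_{ii}(y)$, so $T_{ii}(y) \in \A\la -2 \ra$. In the other direction the composition $\A \la -2 \ra \hookrightarrow \E_i\E_i\1_{\l-\alpha_i}\la -2 \ra \xrightarrow{a(I\t I)} \E_i\E_i\1_{\l-\alpha_i} \twoheadrightarrow \B$ provides the inverse; the verification uses (\ref{EQ:5}) together with the parallel observation that $T_{ii}(\tau)$ lands in $\A$ (since $e \circ T_{ii}(\tau) = T_{ii}(\tau)$ by the same computation) and therefore projects to zero in $\B$. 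Setting $\E_i^{(2)}\1_{\l-\alpha_i} := \A \la -1 \ra$ so that $\A \cong \E_i^{(2)}\la 1\ra$ and $\B \cong \E_i^{(2)}\la -1\ra$ yields $\E_i\E_i\1_{\l-\alpha_i} \cong \oplus_{[2]} \E_i^{(2)}\1_{\l-\alpha_i}$.

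For the $\F_i$ statement I would pass through biadjunction (condition (\ref{co:adj})): taking the right (or left) adjoint of $e$ turns it into an idempotent on $\F_i\F_i$ (up to shifts), and the same splitting argument defines $\F_i^{(2)}$ as its image. The step I expect to be the main obstacle is the identification $\B \cong \A \la -2 \ra$: constructing the inverse maps and checking their composites really are identities requires careful bookkeeping of how $e$ and $\id - e$ interact with residual terms like $T_{ii}(\tau)$ and $T_{ii}^2$-type expressions, all of which is accessible through the same $\Hom$-space analysis underlying the preceding lemmas.
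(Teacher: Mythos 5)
Your proposal is essentially the paper's argument, just unpacked: the paper also extracts an idempotent from (\ref{EQ:5}), splits $\E_i\E_i$ by idempotent completeness, and identifies the two pieces as shifts of each other via $T_{ii}$ and $(I\t I)$. The difference is purely cosmetic. The paper massages (\ref{EQ:5}) into the nilHecke commutator relation $id = T_{ii}(X_iI) - (IX_i)T_{ii}$ with $X_iI := (\rho III)+a(II\t I)$ and $IX_i := a(II\t I)+b(III\t)$, and then cites the standard nilHecke decomposition with idempotents $T_{ii}(X_iI)$ and $-(IX_i)T_{ii}$. Your $e = aT_{ii}(I\t I)$ is exactly $T_{ii}(X_iI)$ stripped of the $(\rho III)T_{ii}$ correction; these two idempotents have the same image (they satisfy $e\,T_{ii}(X_iI) = T_{ii}(X_iI)$ and $T_{ii}(X_iI)\,e = e$, using $aT_{ii}(I\t I)T_{ii}=T_{ii}$ and $T_{ii}^2=0$), so you are cutting out the same summand by a slightly different idempotent. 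Your direct verification of $e^2=e$ from $aT_{ii}(I\t I)T_{ii}=T_{ii}$ is correct.

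One remark worth making explicit, which you half-flag as the "main obstacle": your check of $g\circ T_{ii}|_\B = \id_\B$ goes through cleanly with only (\ref{EQ:5}) and your observation that $T_{ii}(\tau)$ lands in $\A$ (apply $e$ to it and use $aT_{ii}(I\t I)T_{ii}=T_{ii}$), but the other composite $T_{ii}|_\B\circ g$ works out to $\id_{\A\la -2\ra} - a^2 T_{ii}^2(I\t I)^2$, so you genuinely need $T_{ii}^2 = 0$, i.e.\ $\End^{-4}(\E_i\E_i\1_{\l-\alpha_i})=0$. This is not a gap peculiar to your argument: the paper's invocation of the "standard" nilHecke splitting of $NH_2$ likewise presupposes $T^2=0$ since that is one of the defining relations of $NH_2$. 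The vanishing $\End^{-4}(\E_i\E_i)=0$ is provable by the same adjunction reduction underlying Lemma \ref{LEM:homEEs} (it is not stated there, but the same induction gives vanishing below degree $-2$), so both proofs lean on the same elementary Hom-space fact without spelling it out. The reduction of the $\F_i$ statement to the $\E_i$ one by adjunction is also correct.
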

\begin{proof}
Suppose $\l_i \ge 0$ (the case $\l_i \le 0$ is the same). In the notation of Proposition \ref{PROP:new1} define
$$X_i I_\l := (\rho II) + a(II \t) \in \End(\1_{\l+\alpha_i} \E_i \1_{\l}) \ \ \text{ and } \ \ I_\l X_i := a (\t II) + b(II \t) \in \End(\1_\l \E_i \1_{\l-\alpha_i}).$$
Then it is easy to check that the affine nilHecke relations
$$id = T_{ii}(X_iI) - (IX_i)T_{ii} = (X_iI)T_{ii} - T_{ii}(IX_i) \in \End(\E_i \E_i)$$ 
holds. It is now a standard fact that $\E_i \1_\l \E_i \cong \E_i^{(2)} \1_{\l-\alpha_i} \la -1 \ra \oplus \E_i^{(2)} \1_{\l-\alpha_i} \la 1 \ra$. More precisely, the two summands are given by orthogonal idempotents $T_{ii}(X_iI)$ and $-(IX_i)T_{ii}$. The isomorphism between them is given by $T_{ii}$ and $(X_iI)$ (which explains why they lie in different degrees). The same result for $\F_i$'s follows by adjunction. 
\end{proof}

\begin{Remark} 
The definitions of $X_i$ above are not the ones we will use later. They are only used here in order to prove the decomposition of $\E_i \E_i$. 
\end{Remark}

\section{Step \#2 -- Nonvanishing of $T_{iij}$}

The next step is to show that $T_{iij},T_{jii}$ as well as $T'_{iij},T'_{jii}$ are all nonzero. 

\begin{Proposition}\label{PROP:homiij} 
Suppose $i,j \in I$ are distinct. Then $T_{jii} I_\l, T'_{jii} I_\l, T_{iij} I_\l$ and $T'_{iij} I_\l$ are all nonzero if and only if $\1_{\l+\eps_i \alpha_i + \eps_j \alpha_j} \ne 0$ for $\eps_i \in \{0,1,2\}$ and $\eps_j \in \{0,1\}$. 
\end{Proposition}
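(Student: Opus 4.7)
The $(\Rightarrow)$ direction is immediate. Each of the four compositions factors through a sequence of 1-morphisms of the form $\E_{?}\E_{?}\E_{?}\1_{\l}$ whose source, target and intermediate weights are precisely $\l+\eps_i\alpha_i+\eps_j\alpha_j$ with $\eps_i \in \{0,1,2\}$, $\eps_j \in \{0,1\}$. Thus, if any of these weight spaces in $\K$ vanishes, the corresponding 1-morphism (and hence the whole composition) is zero.

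For the converse I would concentrate on $T_{iij}$; the three remaining cases follow by the symmetric arguments (swapping source and target, or replacing $T$ with $T'$). The plan has two ingredients. First, an appendix $\Hom$-space lemma, in the same spirit as Lemmas \ref{LEM:homEEs}, \ref{LEM:Ts1} and \ref{LEM:Ts2}, showing that under the weight hypothesis the space
$$\Hom\bigl(\E_i\E_i\E_j\1_\l,\,\E_j\E_i\E_i\1_\l\la -\ell_{iij}\ra\bigr)$$
is at most one-dimensional. The argument is the standard one: peel off the leftmost $\E_i$ via adjunction, commute the resulting $\F_i$ past $\E_j$ using condition (\ref{co:EiFj}), repeat, and then apply Lemma \ref{LEM:homEs} (or its three-factor analogue) to cut the surviving pieces down to a single copy of $\End^0(\1_{?})$. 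The weight hypothesis is what keeps all the $\E_i\F_i$/$\F_i\E_i$ simplifications (\ref{co:EF}) well-defined, which is why exactly the eight listed nonvanishing conditions appear.

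Second, I need to show that the specific composition $T_{iij}=(T_{ij}I)(IT_{ij})(T_{ii}I)$ is a nonzero representative of this one-dimensional space. Here I would use Corollary \ref{COR:nonzero} as a transport device, applied once for each occurrence of a $T_{ij}$. Each application trades a tensor factor $\E_j$ appearing on one side of a $T$ for a tensor factor $\F_j$ appearing on the other side, via the maps $u_{ij}, v_{ij}$; since $i\neq j$ these maps are \emph{isomorphisms} (condition (\ref{co:EiFj}) together with Lemma \ref{LEM:EF-FEhom}), so no information is lost in the trade. After two such applications the $\E_j$ has been pushed out of the way and the residual composition collapses, up to a nonzero scalar and up to these isomorphisms, to $T_{ii}\,I_{\l+\alpha_j}$. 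By Corollary \ref{COR:new1} this last map is nonzero provided $\1_{\l+\alpha_j}, \1_{\l+\alpha_i+\alpha_j}$ and $\1_{\l+2\alpha_i+\alpha_j}$ are all nonzero, which is part of the weight hypothesis. Hence $T_{iij}I_\l$ is nonzero.

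The main obstacle I foresee is a bookkeeping one. Corollary \ref{COR:nonzero} is literally stated with an \emph{endomorphism} $\gamma$, whereas the natural $\gamma$ I want to plug in (namely $T_{ij}$) is not an endomorphism. There are two ways around this: (a) observe that the proof of Corollary \ref{COR:nonzero} only uses Lemma \ref{LEM:fork} plus adjunction and so goes through verbatim when $\gamma$ is merely a 2-morphism between two 1-morphisms; or (b) factor $T_{iij}$ into two successive reductions in which the intermediate $\gamma$ is the identity, and only apply the Corollary as stated. Route (b) is the safer of the two, and this is what I would write up. Once this is in place, the $T'_{iij}, T_{jii}, T'_{jii}$ cases require only minor notational adjustments: the $T'$ case uses the second pair of equivalent compositions in Corollary \ref{COR:nonzero}, and the $jii$ cases reverse the direction of the argument, but in each case the end of the reduction is again Corollary \ref{COR:new1} applied to an appropriate $T_{ii}I_{\l+\alpha_j}$.
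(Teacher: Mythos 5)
Your overall plan---prove the $(\Rightarrow)$ direction by reading the weights off the composition, and prove the converse by using Corollary~\ref{COR:nonzero} to trade each $T$ involving two distinct indices for a $v$-isomorphism and then reduce to a non-vanishing of $T_{ii}$---is exactly the route the paper takes. However there are several places where what you have written would not quite close.

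First, the choice of case matters more than you acknowledge. You propose to prove the $T_{iij}$ case, but $T_{iij}=(T_{ij}I)(IT_{ij})(T_{ii}I)$ only pattern-matches Corollary~\ref{COR:nonzero} if one takes $\gamma=T_{ij}$, which is not an endomorphism. The paper instead proves the $T_{jii}$ case, where $T_{jii}=(T_{ii}I)(IT_{ji})(T_{ji}I)$ matches the first pair in Corollary~\ref{COR:nonzero} with $\gamma=T_{ii}\in\End^{-2}(\E_i\E_i\1_{\l+\alpha_j})$, an honest endomorphism, so the corollary applies verbatim. (Similarly $T'_{iij}$ matches the second pair with $\gamma=T_{ii}$, while $T_{iij}$ and $T'_{jii}$ need the slightly more general argument you flag.) You correctly foresee the mismatch, but of your two proposed fixes, route~(b) does not obviously work: Corollary~\ref{COR:nonzero} transposes both $T$'s through the adjunction wrapping in a single application, and ``applying it twice with $\gamma=\mathrm{id}$'' is not a composition of two instances of the statement as written---non-vanishing of a composite of two transports is not the same as non-vanishing of each transport separately. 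Route~(a) is sound (the proof of Corollary~\ref{COR:nonzero} uses only Lemma~\ref{LEM:fork} and adjunction, neither of which cares whether $\gamma$ is an endomorphism), but it is extra work that the paper's choice of $T_{jii}$ avoids entirely.

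Second, your proposed ``1-dimensionality'' $\Hom$-space lemma plays no role in this Proposition; it is the ingredient in the \emph{next} result (Corollary~\ref{COR:4}, that $T_{iij}\sim T'_{iij}$ etc.), where one needs $\dim\Hom(\E_j\E_i\E_i\1_\l,\E_i\E_i\E_j\1_\l)\le 1$. Here one needs only non-vanishing, not a bound.

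Third, your final step glosses over a $\F_j$. After the $v$-isomorphisms are stripped, the residual map is $(IT_{ii})\in\End^{-2}(\F_j\E_i\E_i\1_{\l+\alpha_j})$ (or $(T_{ii}I)$ on $\E_i\E_i\F_j\1_{\l+\alpha_j}$), not the bare $T_{ii}I_{\l+\alpha_j}$. Non-vanishing of that does not literally follow from Corollary~\ref{COR:new1}, which is about $T_{iii}$, $T'_{iii}$ and $T_{ii}(I\t I)T_{ii}$. What is really needed is that $\F_j\E_i\E_i\1_{\l+\alpha_j}\ne 0$ (together with the $\E_i^2\cong\oplus_{[2]}\E_i^{(2)}$ decomposition from Corollary~\ref{COR:new}, so that $IT_{ii}$ acts as a shifted identity on the $\F_j\E_i^{(2)}$-summands). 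The paper establishes $\E_i\E_i\F_j\1_{\l+\alpha_j}\ne 0$ directly, by pre- or post-composing with $\F_i\F_i$ according to the sign of $\l_i$ and simplifying using the weight hypothesis. That is where the eight non-vanishing conditions actually enter, and it is the step you should supply explicitly. Finally, the paper handles $\la i,j\ra=0$ as a separate (trivial) case because $T_{ij}$ is then invertible; your argument works uniformly, but you should note the two cases since the shifts differ.
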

\begin{proof}
One direction is immediate since one can easily check that the weights $\l+\eps_i \alpha_i + \eps_j \alpha_j$ all appear in every one of the compositions $T_{jii} I_\l, T'_{jii} I_\l, T_{iij} I_\l$ or $T'_{iij} I_\l$.

For the converse we prove the case of $T_{jii} I_\l$ (the other cases are the same). If $\la i,j \ra = 0$ then $T_{ij}$ and $T_{ji}$ are invertible and the claim is obvious. If $\la i,j \ra = -1$ then using Corollary \ref{COR:nonzero} it suffices to prove that the composition
$$\E_i \E_i \F_j \1_{\l+\alpha_j} \xrightarrow{Iv_{ij}} \E_i \F_j \E_i \1_{\l+\alpha_j} \xrightarrow{v_{ij}I} \F_j \E_i \E_i \1_{\l+\alpha_j}$$
is nonzero. Since $v_{ij}$ is an isomorphism it remains to show that $\E_i \E_i \F_j \1_{\l+\alpha_j} \ne 0$. If $\l_i \le -2$ then we compose on the left with $\F_i \F_i$ and simplify to get several copies of $\F_j \1_{\l+\alpha_j}$. This is nonzero since $\E_j \1_\l \ne 0$. Similarly, if $\l_i \ge -1$ then we compose $\E_i \E_i \F_j \1_{\l+\alpha_j} \cong \F_j \1_{\l+\alpha_j+2\alpha_i} \E_i \E_i$ on the right with $\F_i \F_i$ to obtain several copies of $\1_{\l+2\alpha_i} \F_j$ which are again nonzero. 
\end{proof}

\begin{Corollary}\label{COR:4}
For $i,j \in I$ we have $T_{iij} \sim T'_{iij}$ and $T_{jii} \sim T'_{jii}$. 
\end{Corollary}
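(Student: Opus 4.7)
The plan is to exhibit both $T_{iij}$ and $T'_{iij}$ as nonzero vectors in a Hom-space that turns out to be at most one-dimensional; then proportionality is automatic. The same strategy handles the pair $T_{jii}, T'_{jii}$.

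\textbf{Setup.} Both $T_{iij}$ and $T'_{iij}$ are 2-morphisms $\E_i\E_i\E_j\1_\l \to \E_j\E_i\E_i\1_\l\la -\ell_{iij}\ra$ with $\ell_{iij} = 2 + 2\la i,j\ra$. If any of the weight spaces $\1_{\l+\eps_i\alpha_i+\eps_j\alpha_j}$ (for $\eps_i \in \{0,1,2\}$, $\eps_j \in \{0,1\}$) vanishes, then both compositions pass through a zero object and there is nothing to prove. So we may assume all these weight spaces are nonzero. In this case Proposition \ref{PROP:homiij} tells us that $T_{iij}I_\l$ and $T'_{iij}I_\l$ are both nonzero.

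\textbf{Dimension count.} The main step is therefore to establish
\[
\dim \Hom^{-\ell_{iij}}(\E_i\E_i\E_j\1_\l,\ \E_j\E_i\E_i\1_\l) \le 1.
\]
To do this I would use adjunction to pull one of the $\E$'s across to an $\F$, producing a space of the form $\Hom(\E_i\E_i\1_{\l+\alpha_j}\E_j, \F_j\E_j\E_i\E_i\1_\l\la *\ra)$, and then apply the commutation relations (\ref{co:EF}) and (\ref{co:EiFj}) to rewrite the right-hand side as a sum of pieces of the form $\E_j\F_j \cdots$ or $\1\cdots$. Repeatedly applying adjunction (as is done throughout Section \ref{sec:defs} and in the appendix Hom-space lemmas \ref{LEM:homEEs}, \ref{LEM:Ts1}, \ref{LEM:Ts2}) reduces the computation to that of $\Hom^*$-spaces between compositions of one $\E_i$ and one $\E_j$ (or pure $\E_i$'s), all of which are already known to be at most one-dimensional in the relevant degree. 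Alternatively one can shortcut this by invoking the decomposition $\E_i\E_i \cong \oplus_{[2]}\E_i^{(2)}$ from Corollary \ref{COR:new} to break the Hom-space into four graded summands of $\Hom^*(\E_i^{(2)}\E_j,\E_j\E_i^{(2)})$ and then use the appendix lemmas to show only one summand contributes in degree $-\ell_{iij}$.

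\textbf{Conclusion.} Once the one-dimensionality is in hand, the two nonzero elements $T_{iij}$ and $T'_{iij}$ of this line must be proportional, giving $T_{iij} \sim T'_{iij}$. The argument for $T_{jii} \sim T'_{jii}$ is entirely parallel (it is essentially obtained by swapping the roles of source and target, or by taking adjoints of the above). The main obstacle is bookkeeping in the adjunction reduction: the middle factor $\E_j$ couples nontrivially with the two $\E_i$'s, so one needs to carefully track degrees and apply condition (\ref{co:theta}) together with Lemmas \ref{LEM:bubbles1}--\ref{LEM:bubbles2} when adjunction produces bubble-like terms. In principle, however, no new input beyond the appendix Hom-space calculations and the pitchfork relations of Lemma \ref{LEM:fork} is needed.
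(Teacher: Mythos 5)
Your proposal is correct and, via the ``alternative shortcut'' you sketch, takes essentially the same route as the paper: the paper proves Corollary~\ref{COR:4} by (a) invoking Proposition~\ref{PROP:homiij} for nonvanishing of both $T_{jii}$ and $T'_{jii}$, and (b) using the decomposition $\E_i\E_i \cong \oplus_{[2]}\E_i^{(2)}$ from Corollary~\ref{COR:new} together with Lemma~\ref{LEM:temp} to bound $\dim \Hom(\E_j\E_i\E_i\1_\l, \E_i\E_i\E_j\1_\l)$ by~$1$. Your ``primary'' plan of re-deriving the dimension bound from scratch via adjunction would reprove a special case of the appendix Lemma~\ref{LEM:temp} and is unnecessary once Corollary~\ref{COR:new} is in hand — the divided-power route is the cleaner argument, and it is the one the paper records.
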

\begin{proof}
To prove that $T_{jii} \sim T'_{jii}$ suppose $\la i,j \ra = -1$ (the case $\la i,j \ra = 0$ is clear). By Lemma \ref{LEM:temp} we know $\dim \Hom(\E_j \E_i^{(2)} \1_\l, \E_i^{(2)} \E_j \la d \ra)$ is zero if $d < 2$ and at most one-dimensional if $d=2$. From this it follows that $\dim \Hom(\E_j \E_i \E_i \1_\l, \E_i \E_i \E_j \1_\l) \le 1$. On the other hand, we know that 
$$T_{jii} I_\l, T'_{jii} I_\l \in \Hom(\E_j \E_i \E_i \1_\l, \E_i \E_i \E_j \1_\l)$$ 
are nonzero by Lemma \ref{PROP:homiij}. So we must have $T_{jii} I_\l \sim T'_{jii} I_\l$. The proof that $T_{jii} I_\l \sim T'_{jii} I_\l$ is the same. 
\end{proof}

\begin{Corollary}\label{COR:5}
Suppose $i,j \in I$ and $\t \in Y_\k$ so that $\la \t, \alpha_j \ra = 0$ and $(\t, \alpha_i)_\l \ne 0$. 
\begin{enumerate}
\item If $\l_j \ge -1$ then $(II \t) = (\gamma II) \in \End(\1_{\l+\alpha_j} \E_j \1_\l)$ for some $\gamma \in \End^2(\1_{\l+\alpha_j})$. Moreover, if $\E_i \E_i \E_j \1_{\l-\alpha_i} \ne 0$ then $(\gamma, \alpha_i)_{\l+\alpha_j} \ne 0$. 
\item If $\l_j \le 1$ then $(\t II) = (II \gamma) \in \End(\1_\l \E_j \1_{\l-\alpha_i})$ for some $\gamma \in \End^2(\1_{\l-\alpha_j})$. Morevover, if $\1_{\l+\alpha_i} \E_j \E_i \E_i \ne 0$ then $(\gamma, \alpha_i)_{\l-\alpha_j} \ne 0$.
\end{enumerate}
\end{Corollary}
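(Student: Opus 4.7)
The plan is to reduce part (i) to showing a single coefficient vanishes, using Lemma \ref{LEM:EndE}, and then invoke condition (\ref{co:theta}) to establish that vanishing. By nondegeneracy of the bilinear form on $Y_\k$, pick some $\t' \in Y_\k$ with $\la \t', \alpha_j \ra \ne 0$. Since $\l_j \ge -1$, Lemma \ref{LEM:EndE} writes
\[
(II \t) = c \cdot (II \t') + (\gamma II) \in \End^2(\1_{\l+\alpha_j} \E_j \1_\l)
\]
for some $c \in \k$ and $\gamma \in \End^2(\1_{\l+\alpha_j})$. Part (i) of the corollary reduces to showing $c = 0$.

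To see $c = 0$, I would retrace the adjunction in the proof of Lemma \ref{LEM:EndE} to identify $c$ with the coefficient in a specific $\1_{\l+\alpha_j}$ summand of the image of the composition $\1_{\l+\alpha_j} \xrightarrow{\adj^j} \E_j \1_\l \F_j \xrightarrow{I \t I} \E_j \1_\l \F_j$ (shifts suppressed) under the decomposition $\E_j \1_\l \F_j \cong \F_j \E_j \1_{\l+\alpha_j} \oplus_{[\l_j+2]} \1_{\l+\alpha_j}$ of Lemma \ref{LEM:bubbles2}. The key observation is condition (\ref{co:theta}) of the $(\g,\t)$-action: when $\la \t, \alpha_j \ra = 0$, the operator $(I \t I) \in \End^2(\E_j \1_\l \F_j)$ induces no isomorphism between any pair of the $\1_{\l+\alpha_j}$ summands. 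Combined with the fact that $\adj^j$ is (by Lemma \ref{LEM:bubbles2}) the inclusion of the extremal $\1_{\l+\alpha_j}$ summand, this forces the $\1_{\l+\alpha_j}$ component of the composition, hence $c$, to vanish. Part (ii) follows by the left--right symmetric argument, using the other half of condition (\ref{co:theta}) applied to $\F_j \1_\l \E_j$.

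For the nonvanishing statement on $(\gamma, \alpha_i)_{\l+\alpha_j}$, the plan is to transport the nonvanishing of $T_{ii}(I \t I)T_{ii} \in \End^{-2}(\E_i \1_\l \E_i)$ (guaranteed by $(\t, \alpha_i)_\l \ne 0$) across $\E_j$ using the just-established identity $(II \t) = (\gamma II)$ on $\E_j \1_\l$. Concretely, one horizontally composes with an $\E_j$ and slides it past the two $\E_i$'s via $T_{ji}$ morphisms; the hypothesis $\E_i \E_i \E_j \1_{\l-\alpha_i} \ne 0$ ensures the resulting 1-morphism is nonzero, so the transport does not trivialize the nonzero map. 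Part (ii) follows symmetrically. The main obstacle I anticipate is the boundary cases $\l_j = -1$ in (i) and $\l_j = 1$ in (ii), where the $\Hom$-space decomposition underlying Lemma \ref{LEM:EndE} has a slightly different shape (with an $\End(\E_j \1_{\l+\alpha_j})$ term appearing in place of $\End^2(\1_{\l+\alpha_j})$) and condition (\ref{co:theta}) must be invoked on the $\F_j \1_\l \E_j$ side; matching the corresponding extremal summands and verifying vanishing there will require some extra bookkeeping.
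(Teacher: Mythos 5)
Your treatment of the first claim in each part---that $(II\t)$ equals $(\gamma II)$ (or $(\t II) = (II\gamma)$)---is sound and usefully more explicit than the paper, which only cites Lemma~\ref{LEM:EndE} without remarking that condition~(\ref{co:theta}) is what forces the coefficient $c$ of $(II\t')$ to vanish. Your plan of tracing back through the adjunction in Lemma~\ref{LEM:EndE}, using the decomposition of Lemma~\ref{LEM:bubbles2} and the fact that $\adj^j$ lands in the extremal $\1_{\l+\alpha_j}$ summand, is exactly the right mechanism, and your flagging of the $\l_j = \pm 1$ boundary cases (where the $\Hom$-space shape in the proof of Lemma~\ref{LEM:EndE} changes) is appropriate caution.

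The second part---showing $(\gamma, \alpha_i)_{\l+\alpha_j} \ne 0$---has a genuine gap. You write that after ``sliding'' an $\E_j$ past the two $\E_i$'s via $T_{ji}$ morphisms, ``the hypothesis $\E_i \E_i \E_j \1_{\l-\alpha_i} \ne 0$ ensures the resulting 1-morphism is nonzero, so the transport does not trivialize the nonzero map.'' This inference is a non sequitur: a nonzero $\Hom$-space can contain plenty of zero morphisms, so knowing that some relevant 1-morphism is nonzero does not force the composed 2-morphism to be nonzero. What actually makes the argument work in the paper is a dual computation of a \emph{single} composition
\[
\E_i \E_i \E_j \1_{\l-\alpha_i} \xrightarrow{(IT_{ij})(T_{ii}I)} \E_i \E_j \1_\l \E_i \xrightarrow{(II\t I)} \E_i \E_j \1_\l \E_i \xrightarrow{(IT_{ii})(T_{ji}I)} \E_j \E_i \E_i \1_{\l-\alpha_i}.
\]
Simplifying one way (bracketing the inner $\t$ between $T_{ii}$'s on the $\E_i \1_\l \E_i$ side) gives $(\t,\alpha_i)_\l\,T'_{iij}$, and simplifying the other way (using $(II\t)=(\gamma II)$ to slide $\t$ across $\E_j$, then bracketing between $T_{ii}$'s on the $\E_i \1_{\l+\alpha_j}\E_i$ side) gives $(\gamma,\alpha_i)_{\l+\alpha_j}\,T_{iij}$. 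Equating and invoking $T_{iij} \sim T'_{iij} \ne 0$ (Proposition~\ref{PROP:homiij} and Corollary~\ref{COR:4}, which is precisely where the hypothesis $\E_i\E_i\E_j\1_{\l-\alpha_i} \ne 0$ enters, along with $\E_i\1_\l\E_i \ne 0$ from $(\t,\alpha_i)_\l\ne 0$) then pins down $(\gamma,\alpha_i)_{\l+\alpha_j} \ne 0$. Your ``transport'' picture is a reasonable heuristic, but without explicitly building this composition and computing it two ways, the conclusion is not justified.
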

\begin{proof}
Suppose $\l_i \ge -1$. The fact that $(II \t) = (\gamma II)$ for some $\gamma \in \End^2(\1_{\l+\alpha_j})$ follows from Lemma \ref{LEM:EndE}. It remains to show that $(\gamma, \alpha_i)_{\l+\alpha_j} \ne 0$. To do this consider the following composition 
\begin{equation}\label{EQ:21}
\E_i \E_i \E_j \xrightarrow{(IT_{ij})(T_{ii}I)} \E_i \E_j \1_\l \E_i \xrightarrow{II \t I} \E_i \E_j \1_\l \E_i \xrightarrow{(IT_{ii})(T_{ji}I)} \E_j \E_i \E_i
\end{equation}
where we omit the grading shifts for convenience. On the one hand this is equal to 
\begin{align*}
(IT_{ii})(II \t I)(T_{iij}) 
&\sim (IT_{ii})(II \t I)(T'_{iij}) \\
&= [(IT_{ii})(II \t I)(IT_{ii})](T_{ij}I)(IT_{ij}) \\
&= (\t, \alpha_i)_\l T'_{iij}.
\end{align*}
Notice that $T_{iij} I_{\l-\alpha_i} \sim T'_{iij} I_{\l-\alpha_i}$ are nonzero since $\E_i \E_i \E_j \1_{\l-\alpha_i} \ne 0$ by assumption and $\E_i \1_\l \E_i\ne 0$ because $(\t, \alpha_i)_\l \ne 0$. 

On the other hand, the composition in (\ref{EQ:21}) is equal to 
$$\E_i \E_i \E_j \xrightarrow{(IT_{ij})(T_{ii}I)} \E_i \1_{\l+\alpha_j} \E_j \E_i \xrightarrow{I \gamma II} \E_i \1_{\l+\alpha_j} \E_j \E_i \xrightarrow{(IT_{ii})(T_{ji}I)} \E_j \E_i \E_i$$
which we can simplify as above to get $(\gamma, \alpha_i)_{\l+\alpha_j} T_{iij}$. Since $(\t, \alpha_i)_\l \ne 0$ we get $(\gamma, \alpha_i)_{\l+\alpha_j} \ne 0$. 

The case $\l_i \le 1$ is proved in the same way.
\end{proof}

As a final consequence of Corollary \ref{COR:5} we have the following result which will be used on several occasions later (particularly in the proof of the Serre relation). 

\begin{Lemma}\label{LEM:rank}
Fix $i,j \in I$ with $\la i,j \ra = -1$. If $\l_i \le -1$ then $\E_j$-ranks of 
$$\F_i \E_i \E_j \1_\l \xrightarrow{IT_{ij}I} \F_i \E_j \E_i \1_\l \la 1 \ra \ \ \text{ and } \ \ \F_i \E_j \E_i \1_\l \xrightarrow{IT_{ji}I} \F_i \E_i \E_j \1_\l \la 1 \ra$$ 
are both $-\l_i$. Similarly, if $\l_i \ge 0$ then the $\E_j$-ranks of 
$$\E_i \E_j \1_\l \F_i \xrightarrow{T_{ij}II} \E_j \E_i \1_\l \F_i \la 1 \ra \ \ \text{ and } \ \ \E_j \E_i \1_\l \F_i \xrightarrow{T_{ji}II} \E_i \E_j \1_\l \F_i \la 1 \ra$$ 
are both $\l_i+1$. 
\end{Lemma}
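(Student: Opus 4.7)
The plan is to decompose the source and target into $\E_j$-isotypic parts via conditions (\ref{co:EF}) and (\ref{co:EiFj}), bound the rank from above by counting graded multiplicities, and then verify nonvanishing on each matching diagonal summand via the pitchfork identities of Lemma~\ref{LEM:fork} together with a bubble calculation. At weight $\l+\alpha_j$ we have $(\l+\alpha_j)_i=\l_i-1\le -2$, so combined with $\F_i\E_j\cong\E_j\F_i$ we get
\[
\F_i\E_i\E_j\1_\l \cong \E_i\E_j\F_i\1_\l \oplus \bigoplus_{[1-\l_i]}\E_j\1_\l, \quad \F_i\E_j\E_i\1_\l \cong \E_j\E_i\F_i\1_\l \oplus \bigoplus_{[-\l_i]}\E_j\1_\l.
\]
The graded $\E_j\1_\l$-multiplicities are $[1-\l_i]$ in the source and $q\cdot[-\l_i]$ in the target (after the $\la 1\ra$ shift); their difference equals $q^{\l_i}$, a single extra summand in the source. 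Hence the $\E_j$-rank of $IT_{ij}I$ is at most $-\l_i$.

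To attain this bound I will exhibit, for each $r=0,\ldots,-\l_i-1$, a nonzero diagonal entry of the rank matrix. Fix $\t\in Y_\k$ with $\la\t,\alpha_i\ra\ne 0$. By Lemma~\ref{LEM:bubbles2} applied at weight $\l+\alpha_j$ and tensored with $I_{\E_j}$, the $\E_j$-inclusions into the source are $\iota^A_r := (I\t^r II)(\adj^i II)$ for $r=0,\ldots,-\l_i$, where $\t$ acts at weight $\l+\alpha_i+\alpha_j$. Similarly, Lemma~\ref{LEM:bubbles2} at weight $\l$ together with the isomorphism $B\cong\E_j\F_i\E_i\la 1\ra$ from $v_{ji}$ (invertible since $i\ne j$) yields inclusions $\iota^B_s$ and dual projections $\pi^B_s$ for $s=0,\ldots,-\l_i-1$. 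For $r=0$, pitchfork Lemma~\ref{LEM:fork}(c2) identifies $(IT_{ij})(\adj^i I)\sim(v_{ji}I)(I\adj^i)$, so $(IT_{ij}I)\iota^A_0$ is a nonzero scalar multiple of $\iota^B_0$, giving a nonzero $(0,0)$-entry. For $r\ge 1$, since $T_{ij}$ acts on $\E_i\E_j$ while $(I\t^r II)$ acts on the identity 1-morphism at the disjoint weight $\l+\alpha_i+\alpha_j$, the two commute by the interchange law, yielding
\[
(IT_{ij}I)\iota^A_r = (I\t^r II)(IT_{ij}I)\iota^A_0 \sim (I\t^r II)\iota^B_0.
\]

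The key technical step is to show that the diagonal entry $\pi^B_r\circ(I\t^r II)\circ\iota^B_0\in\End^0(\E_j\1_\l)\cong\k$ is a nonzero scalar. Using the iso $B\cong\E_j\F_i\E_i\la 1\ra$, both $\iota^B_0$ and $\pi^B_r$ become $I_{\E_j}\otimes(\text{maps on }\F_i\E_i\1_\l)$, and the task reduces to transporting the $\t$-insertion at weight $\l+\alpha_i+\alpha_j$ across $\E_j$ to an insertion at weight $\l+\alpha_i$. Corollary~\ref{COR:5} is designed for precisely this: it allows one to replace $(II\t)$ by some $(\gamma II)$ with $\gamma\in\End^2$ at the adjacent weight still pairing nontrivially with $\alpha_i$ (the pairing condition holds by the choice of $\t$). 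After transport the entry reduces to a bubble $\adj_i\circ(I\t^{-\l_i-1}I)\circ\adj^i$ on $\1_\l$, which is a nonzero multiple of the identity by Lemma~\ref{LEM:bubbles1}. For the map $IT_{ji}I:\F_i\E_j\E_i\1_\l\to\F_i\E_i\E_j\1_\l\la 1\ra$ the argument is symmetric, using pitchfork Lemma~\ref{LEM:fork}(c3) in place of (c2); and the case $\l_i\ge 0$ of the second pair of maps follows dually via the biadjunction $\E_i\dashv\F_i\la\l_i+1\ra$, which converts the $\F_i$-bulge configurations into $\E_i$-bulge ones. The main obstacle is the careful verification that $\t$-transport via Corollary~\ref{COR:5} preserves enough structure to feed into the bubble computation at each $r$; in particular, one may need to verify that the weight conditions in Corollary~\ref{COR:5} are met in our setting (or, failing that, to choose $\t$ judiciously so that the relevant $(\gamma,\alpha_i)$-pairings are nonzero).
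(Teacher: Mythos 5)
Your strategy---decompose via the $\E\F$/$\F\E$ relations, parametrize the $\E_j\1_\l$ summands with $\t$-bubbles (Lemma~\ref{LEM:bubbles2}), use the pitchfork relations and Corollary~\ref{COR:5} to transport the $\t$-insertion across $\E_j$, and close with a bubble (Lemma~\ref{LEM:bubbles1})---is exactly the paper's strategy, so you have identified the right ingredients. There are, however, two concrete places where the write-up as given does not quite go through.

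First, the choice of $\t$ must be made at the outset with \emph{both} $\la\t,\alpha_i\ra\ne 0$ \emph{and} $\la\t,\alpha_j\ra=0$; you only impose the former and then flag the latter as something ``one may need to verify.'' It is not optional. Without $\la\t,\alpha_j\ra=0$, the map $(II\t)\in\End^2(\1_\mu\E_j\1_{\mu-\alpha_j})$ does not lie in the image of $\gamma\mapsto(\gamma II)$ --- by Lemma~\ref{LEM:EndE} it has a component in the orthogonal direction --- so the transport step breaks and the reduction to a bubble fails. Such a $\t$ always exists (e.g.\ $\t=2\alpha_i+\alpha_j$ when $\la i,j\ra=-1$), and the paper fixes it immediately.

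Second, your index bookkeeping in the rank matrix is inconsistent. You attempt to exhibit for each $r$ a ``nonzero diagonal entry $\pi^B_r\circ(I\t^rII)\circ\iota^B_0$'' but then claim it reduces to the bubble $\adj_i\circ(I\t^{-\l_i-1}I)\circ\adj^i$. If you literally push the $r$ $\t$-insertions through and hit them against the $r$ $\gamma$-insertions implicit in $\pi^B_r$, you get $\gamma^{2r}$, not $\gamma^{-\l_i-1}$. The correct picture (which is what the paper records) is that the $(\ell_1,\ell_2)$-entry carries $\gamma^{\ell_1+\ell_2}$, and by degree $(I\adj_i)(II\gamma^{\ell_1+\ell_2}I)(I\adj^i)$ vanishes for $\ell_1+\ell_2<-\l_i-1$ and is invertible exactly when $\ell_1+\ell_2=-\l_i-1$ (Lemma~\ref{LEM:bubbles1}). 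In other words the nonzero entries lie on the \emph{anti}-diagonal of the $(-\l_i+1)\times(-\l_i)$ matrix, forced by degree matching, not on the ``same index'' diagonal. This anti-diagonal/upper-triangular structure is what gives rank $-\l_i$; as stated, your diagonal entries do not line up with the degree constraint.

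A minor remark on the upper bound: counting the explicit $\E_j\1_\l$ summands from the two decompositions is a sensible shortcut, but strictly speaking you would need to rule out additional $\E_j\1_\l$ direct summands hiding inside $\E_i\E_j\F_i\1_\l$ and $\E_j\E_i\F_i\1_\l$. The paper sidesteps this by computing the full matrix rather than bounding from above; if you want to retain your upper-bound step you should justify the multiplicity count.
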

\begin{proof}
Let us consider the first map 
\begin{equation}\label{EQ:map}
IT_{ij}: \F_i \E_i \E_j \1_\l \rightarrow \F_i \E_j \E_i \1_\l
\end{equation}
(we will omit the grading shifts for convenience). Fix $\t \in Y_\k$ so that $\la \t, \alpha_i \ra \ne 0$ but $\la \t, \alpha_j \ra = 0$. The left side of (\ref{EQ:map}) is isomorphic to $\E_i \E_j \F_i \1_\l \oplus_{[-\l_i+1]} \E_j \1_\l$ where, by Corollary \ref{LEM:bubbles2}, the isomorphism involving the $\E_j \1_\l$ summands is given by 
\begin{equation}\label{EQ:30A}
\E_j \1_\l \xrightarrow{\adj^i I} \F_i \1_\mu \E_i \E_j \xrightarrow{I \t^{\ell_1} II} \F_i \1_\mu \E_i \E_j
\end{equation}
where $\mu = \l+\alpha_i+\alpha_j$ and $\ell_1 \in \{0,1,\dots,-\l_i\}$. 

Now, suppose $\mu_j \le 1$ (the case $\mu_j \ge -1$ is similar). Then by Corollary \ref{COR:5} we have 
$$(\t II) = (II \gamma) \in \End^2(\1_\mu \E_j \1_{\mu-\alpha_j})$$
for some $\gamma \in \End^2(\1_{\mu-\alpha_j})$ with $(\gamma, \alpha_i)_{\mu-\alpha_j} \ne 0$. 

Now, the right side of (\ref{EQ:map}) is isomorphic to $\E_j \E_i \F_i \1_\l \oplus_{[-\l_i-1]} \E_j \1_\l$ where the isomorphism involving $\E_j \1_\l$ summands is given by 
\begin{equation}\label{EQ:30B}
\F_i \E_j \E_i \1_\l \xrightarrow{u_{ij}I} \E_j \F_i \1_{\l+\alpha_i} \E_i \xrightarrow{II \gamma^{\ell_2} I} \E_j \F_i \1_{\l+\alpha_i} \E_i \xrightarrow{I \adj_i} \E_j \1_\l
\end{equation}
where $\ell_2 \in \{0,1,\dots,-\l_i-1\}$. The composition of (\ref{EQ:30A}) and (\ref{EQ:30B}) is given by 
\begin{align*}
(I \adj_i)(II \gamma^{\ell_2} I)(u_{ij}I)(IT_{ij})(I\t^{\ell_1}II)(\adj^i I)
&= (I \adj_i)(II  \gamma^{\ell_1+\ell_2} I)(u_{ij}I)(IT_{ij})(\adj^i I) \\
&\sim (I \adj_i)(II \gamma^{\ell_1+\ell_2} I)(u_{ij}I)(v_{ji}I)(I \adj^i) \\
&\sim (I \adj_i)(II \gamma^{\ell_1+\ell_2} I)(I \adj^i)
\end{align*}
where we used the second relation in Lemma \ref{LEM:fork} to get the second line. This last map is the composition 
$$\E_j \1_\l \xrightarrow{I \adj^i} \E_j \F_i \1_{\l+\alpha_i} \E_i \xrightarrow{II \gamma^{\ell_1+\ell_2} I} \E_j \F_i \1_{\l+\alpha_i} \E_i \xrightarrow{I \adj_i} \E_j \1_\l.$$
By degree reasons this composition is zero if $\ell := \ell_1+\ell_2 < -\l_i-1$ and, by Lemma \ref{LEM:bubbles1}, a nonzero multiple of the identity if $\ell = -\l_i-1$. 

Now, $IT_{ij}$ from (\ref{EQ:map}) induces a map between summands $\E_j \1_\l$ on both sides which is represented by a matrix of size $(-\l_i+1) \times (-\l_i)$ whose $(\ell_1,\ell_2)$-entry is the composition of (\ref{EQ:30A}) and (\ref{EQ:30B}). By the calculation above this matrix is upper triangular and carries invertible maps on the (almost) diagonal. Thus it induces a map whose $\E_j$-rank is $-\l_i$. This proves that the map
$$\F_i \E_i \E_j \1_\l \xrightarrow{IT_{ij}I} \F_i \E_j \E_i \1_\l \la 1 \ra$$
has $\E_j$-rank $-\l_i$. The $\E_j$-ranks of the other three maps are calculated similarly. 
\end{proof}

\begin{Lemma}\label{LEM:rank2}
Fix $i,j \in I$ with $\la i,j \ra = 0$. If $\l_i \le -1$ then $\E_j$-ranks of
$$\F_i \E_i \E_j \1_\l \xrightarrow{IT_{ij}I} \F_i \E_j \E_i \1_\l \ \ \text{ and } \ \ \F_i \E_j \E_i \1_\l \xrightarrow{IT_{ji}I} \F_i \E_i \E_j \1_\l$$
are both $-\l_i$. Similarly, if $\l_i \ge -1$ then the $\E_j$-ranks of
$$\E_i \E_j \1_\l \F_i \xrightarrow{T_{ij}II} \E_j \E_i \1_\l \F_i \ \ \text{ and } \ \ \E_j \E_i \1_\l \F_i \xrightarrow{T_{ji}II} \E_i \E_j \1_\l \F_i $$
are both $\l_i+2$.
\end{Lemma}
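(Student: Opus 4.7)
My plan is to follow the proof of Lemma \ref{LEM:rank} almost verbatim, with small adjustments for $\la i,j \ra = 0$. This change kills the grading shift carried by $T_{ij}$, which explains why $\la 1 \ra$ is absent from the statement; the maps $u_{ij}$ and $v_{ji}$ are already isomorphisms since $i \ne j$, so nothing new there; the only real effect is that both sides of the map under consideration carry the same number of $\E_j \1_\l$ summands, giving a square rather than rectangular matrix.

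Consider the first claim with $\l_i \le -1$ and the map $IT_{ij}I : \F_i \E_i \E_j \1_\l \to \F_i \E_j \E_i \1_\l$. Fix $\t \in Y_\k$ with $\la \t, \alpha_i \ra \ne 0$. Applying Lemma \ref{LEM:bubbles2} to the factor $\F_i \E_i \1_{\l+\alpha_j}$, where $(\l+\alpha_j)_i = \l_i$ because $\la i,j \ra = 0$, decomposes the source as $\E_i \F_i \E_j \1_\l \oplus_{[-\l_i]} \E_j \1_\l$, with the inclusion of the $\ell_1$-th copy of $\E_j \1_\l$ given by $(I \t^{\ell_1} II)(\adj^i II)$. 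For the target, first rewrite $\F_i \E_j \E_i \1_\l \cong \E_j \F_i \E_i \1_\l$ via the isomorphism $u_{ij}I$ from condition (\ref{co:EiFj}), then apply Lemma \ref{LEM:bubbles2} to $\F_i \E_i \1_\l$ to get $\E_j \E_i \F_i \1_\l \oplus_{[-\l_i]} \E_j \1_\l$. Both sides thus carry exactly $-\l_i$ copies of $\E_j \1_\l$, matching the claimed rank.

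The $(\ell_1,\ell_2)$ matrix entry of $IT_{ij}I$ restricted to these summands is the composition
\[
(I \adj_i)(II \gamma^{\ell_2} I)(u_{ij}I)(IT_{ij})(I \t^{\ell_1} II)(\adj^i II),
\]
where $\gamma \in \End^2(\1_{\l+\alpha_i})$ is supplied by Corollary \ref{COR:5}. Using Corollary \ref{COR:5} a second time to transport $\t^{\ell_1}$ from the left of $\E_j$ (at weight $\l+\alpha_i+\alpha_j$) to $\gamma^{\ell_1}$ on its right — invoking whichever of parts (i) or (ii) applies given the sign of $(\l+\alpha_i+\alpha_j)_j$, one of which always does — and combining the two powers of $\gamma$, the expression simplifies up to a nonzero scalar to $(I \adj_i)(II \gamma^{\ell_1+\ell_2} I)(u_{ij}I)(IT_{ij})(\adj^i II)$. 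The pitchfork relation (\ref{eq:c2}) of Lemma \ref{LEM:fork} then identifies $(IT_{ij})(\adj^i II)$ with $(v_{ji}I)(II \adj^i)$, and since $u_{ij}v_{ji}$ is a nonzero multiple of the identity on $\F_i \E_j$, the full composition collapses to $(I \adj_i)(II \gamma^{\ell_1+\ell_2} I)(II \adj^i)$, acting only on the $\F_i \E_i \1_\l$ factor.

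By Lemma \ref{LEM:bubbles1} applied at weight $\l$ (where $\l_i \le -1$), this final composition is a nonzero multiple of the identity when $\ell_1+\ell_2 = -\l_i -1$ and vanishes for smaller sums by condition (\ref{co:hom1}). The resulting $(-\l_i) \times (-\l_i)$ matrix is therefore anti-triangular with nonzero anti-diagonal, hence of full rank $-\l_i$. The argument for $IT_{ji}I$ in the same weight range is symmetric. The case $\l_i \ge -1$ is handled by decomposing via $\E_i \F_i$ (at weight $\l+\alpha_i+\alpha_j$ or $\l+\alpha_i$) instead of $\F_i \E_i$, yielding $\l_i+2$ copies of $\E_j \1_{\l+\alpha_i}$ on each side, and the parallel computation gives rank $\l_i + 2$. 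The main point of care is the bookkeeping around which version of Corollary \ref{COR:5} to invoke and at which weight; no genuinely new ideas are required.
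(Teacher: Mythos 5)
Your proposal is correct and takes the same route as the paper, which simply defers to the proof of Lemma~\ref{LEM:rank} with the caveat of tracking gradings for $\la i,j\ra = 0$: you apply Lemma~\ref{LEM:bubbles2} to the $\F_i\E_i$ (or $\E_i\F_i$) factor on each side, use Corollary~\ref{COR:5} to reposition the $\t$-insertions next to the $\gamma$-insertions, simplify via the pitchfork relation~(\ref{eq:c2}) and $u_{ij}v_{ji}\sim\id$, and conclude from Lemma~\ref{LEM:bubbles1} that the resulting matrix is anti-triangular with invertible anti-diagonal. The one genuine adjustment---that the change $\la i,j\ra=-1 \leadsto 0$ makes both decompositions carry the same number ($-\l_i$, resp.\ $\l_i+2$) of $\E_j$-summands, so the matrix is square rather than nearly-square---is exactly what you note and verify.
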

\begin{proof}
The proof is similar to that of Lemma \ref{LEM:rank} (the only difference is keeping track of the gradings correctly given that $\la i,j \ra = 0$). 
\end{proof}

\begin{Corollary}
If $\la i,j \ra = 0$ then $T_{ij}: \E_i \E_j \1_\l \rightarrow \E_j \E_i \1_\l$ is an isomorphism. 
\end{Corollary}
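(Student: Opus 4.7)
The plan is to show that the composition $T_{ji}T_{ij} \in \End^0(\E_i\E_j\1_\l)$ is a nonzero scalar multiple of the identity; by the symmetric argument for $T_{ij}T_{ji} \in \End^0(\E_j\E_i\1_\l)$, this will make $T_{ij}$ an isomorphism with inverse proportional to $T_{ji}$. Since $\la i,j\ra=0$, no grading shifts arise.

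Assume without loss of generality $\l_i\ge -1$; the case $\l_i\le -1$ is handled symmetrically using the other half of Lemma \ref{LEM:rank2}. I would first produce nonvanishing of $T_{ji}T_{ij}$ by composing with $\F_i$ on the right. By Lemma \ref{LEM:rank2}, both $T_{ij}II$ and $T_{ji}II$ on $\E_i\E_j\1_\l\F_i$ and $\E_j\E_i\1_\l\F_i$ have $\E_j$-rank exactly $\l_i+2$. Combining condition (\ref{co:EiFj}) in the form $\E_j\F_i\cong\F_i\E_j$ with the commutator $\E_i\F_i\1_\mu\cong\F_i\E_i\1_\mu\oplus\bigoplus_{[\mu_i]}\1_\mu$ at $\mu=\l+\alpha_i+\alpha_j$ (so $\mu_i=\l_i+2\ge 1$), one obtains
$$\E_i\E_j\1_\l\F_i \cong \F_i\E_i\E_j\1_{\l+\alpha_i}\oplus\bigoplus_{[\l_i+2]}\E_j\1_{\l+\alpha_i},$$
with an analogous decomposition of $\E_j\E_i\1_\l\F_i$. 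The visible $\bigoplus_{[\l_i+2]}\E_j\1_{\l+\alpha_i}$ summands sit in matching internal degrees on both sides; Lemma \ref{LEM:rank2} then forces each of $T_{ij}II$ and $T_{ji}II$ to restrict to an isomorphism between these matched visible blocks, whence $(T_{ji}II)(T_{ij}II)$ is itself an isomorphism on this block and in particular nonzero. Consequently $T_{ji}T_{ij}\ne 0$.

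To upgrade nonvanishing to ``a unit'', I would establish $\End^0(\E_i\E_j\1_\l)\cong\k$ by an adjunction computation in the spirit of the appendix Lemmas \ref{LEM:homEEs}, \ref{LEM:Ts1}, \ref{LEM:Ts2}: using biadjunction to move $\E_i$ across as $\F_i$ and then applying the $\E\F$ commutator together with condition (\ref{co:EiFj}), the endomorphism space collapses down to one of the form $\End^0(\1_\nu)\cong\k$. Given one-dimensionality, the nonzero $T_{ji}T_{ij}$ is a nonzero scalar times the identity, so $T_{ij}$ has a left inverse; the symmetric calculation on $\E_j\E_i\1_\l$ produces a right inverse, so $T_{ij}$ is an isomorphism.

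The main obstacle is the bookkeeping in the $\E_j$-rank matching: a priori, $\F_i\E_i\E_j\1_{\l+\alpha_i}$ could contain further $\E_j\1_{\l+\alpha_i}$-summands arising from possible $\1_{\l+\alpha_i+\alpha_j}$-summands of $\F_i\E_i\1_{\l+\alpha_i+\alpha_j}$, and the total $\E_j$-rank of $\l_i+2$ might in principle spread across visible and hidden summands. Handling this requires checking symmetrically on both sides so that the visible blocks retain the full rank $\l_i+2$ forced by Lemma \ref{LEM:rank2}.
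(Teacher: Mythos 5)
Your proposal reproduces the paper's proof almost verbatim: compose with $\F_i$ on the right, use Lemma \ref{LEM:rank2} to conclude that $T_{ji}T_{ij}$ is nonzero, invoke Lemma \ref{LEM:Ts2} for one-dimensionality of $\End(\E_i\E_j\1_\l)$, and run the symmetric argument for $T_{ij}T_{ji}$. The bookkeeping obstacle you flag at the end is in fact already resolved inside the proof of Lemma \ref{LEM:rank} (which Lemma \ref{LEM:rank2} mirrors): there the induced matrix between the explicit $\bigoplus_{[\l_i+2]}\E_j$ blocks is shown to be upper triangular with invertible near-diagonal entries, so the rank lives on the visible block rather than spreading into $\F_i\E_i\E_j\1_{\l+\alpha_i}$.
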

\begin{proof}
Suppose $\l_i \ge -1$ (the case $\l_i \le 1$ is the same). By Lemma \ref{LEM:rank2} the map $T_{ij}II: \E_i \E_j \1_\l \F_i \rightarrow \E_j \E_i \1_\l \F_i$ induces an isomorphism between all $\l_i+2 > 0$ summands $\E_i$ on either side. Likewise for $T_{ji}II: \E_j \E_i \1_\l \F_i \rightarrow \E_i \E_j \1_\l \F_i$. Thus the composition $T_{ji}T_{ij} \in \End(\E_i \E_j \1_\l)$ is nonzero. 

Since $\dim \End(\E_i \E_j \1_\l) \le 1$ by Lemma \ref{LEM:Ts2} it follows $T_{ji}T_{ij}$ is some multiple of the identity. Similarly we can argue that $T_{ij}T_{ji}$ is also some multiple of the identity. It follows that $T_{ij}$ must be an isomorphism. 
\end{proof}

\section{Step \#3 -- Rescaling $T_{ii}$}\label{sec:2}

Fix $i \in I$. We will now explain how to rescale each $T_{ii} I_\l$ so that for any $j \ne i$ we have
\begin{framed}
\begin{align}
\label{EQ:1} (T_{ii}I)(IT_{ii})(T_{ii}I) I_\l = (IT_{ii})(T_{ii}I)(IT_{ii}) I_\l &\in \End^{-6}(\E_i \E_i \E_i \1_\l) \\
\label{EQ:2} (IT_{ji})(T_{ji}I)(IT_{ii}) I_\l = (T_{ii}I)(IT_{ji})(T_{ji}I) I_\l &\in \Hom^{- 2 - 2 \la i,j \ra}(\E_j \E_i \E_i \1_\l, \E_i \E_i \E_j \1_\l) \\
\label{EQ:3} (T_{ij}I)(IT_{ij})(T_{ii}I) I_\l = (IT_{ii})(T_{ij}I)(IT_{ij}) I_\l &\in \Hom^{- 2 - 2 \la i,j \ra}(\E_i \E_i \E_j \1_\l, \E_j \E_i \E_i \1_\l).
\end{align}
\end{framed}

\noindent The key observation that allows us to do this is that given $T_{ii} I_\l$ there is a unique way of rescaling $T_{ii} I_{\l+\alpha_j}$ so that (\ref{EQ:1}) holds if $i=j$ and (\ref{EQ:2}), (\ref{EQ:3}) hold if $i \ne j$. We denote this rescaling $T_{ii} I_\l \leadsto T_{ii} I_{\l+\alpha_j}$. Similarly one also has the rescaling $T_{ii} I_{\l+\alpha_j} \leadsto T_{ii} I_\l$. 

\begin{Proposition}\label{PROP:rescale}
One can rescale all $T_{ii} I_\l$ so that relations (\ref{EQ:1}), (\ref{EQ:2}) and (\ref{EQ:3}) hold. 
\end{Proposition}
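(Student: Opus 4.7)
The plan is to interpret the uniqueness of the specified rescalings as a parallel-transport operation along edges of the weight lattice, and then reduce well-definedness of this transport to a cocycle condition on atomic 2-faces. Fix a connected component of $\{\l \in X : \1_\l \neq 0\}$, pick a basepoint $\l_0$, and fix $T_{ii} I_{\l_0}$ arbitrarily. For every other nonzero weight $\mu$ in the same component, transport the normalization along a path $\l_0 = \mu_0, \mu_1, \dots, \mu_n = \mu$ with $\mu_{k+1} - \mu_k = \pm \alpha_{j_k}$ by iteratively applying $T_{ii} I_{\mu_k} \leadsto T_{ii} I_{\mu_{k+1}}$, where the rescaling uses (\ref{EQ:1}) when $j_k = i$ and (\ref{EQ:2})/(\ref{EQ:3}) when $j_k \neq i$. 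Note that the inverse rescaling $T_{ii} I_{\l + \alpha_j} \leadsto T_{ii} I_\l$ is the inverse scaling by the uniqueness clause, so the transport is well-defined on each edge.

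The remaining task is to show path-independence. Since $Y$ is a free $\Z$-module, the cycle space of the weight graph is generated by the atomic squares $\{\l, \l+\alpha_j, \l+\alpha_j+\alpha_k, \l+\alpha_k\}$ for $j,k \in I$. Condition (\ref{co:new}) ensures that whenever two adjacent corners of such a square are nonzero so are the other two, so within the connected component these squares span the cycle space and the ambient $2$-complex is simply connected. Thus path-independence follows from the plaquette identity: starting from $T_{ii} I_\l$, the two composed rescalings
\[
T_{ii} I_{\l} \leadsto T_{ii} I_{\l + \alpha_j} \leadsto T_{ii} I_{\l + \alpha_j + \alpha_k}
\quad \text{and} \quad
T_{ii} I_{\l} \leadsto T_{ii} I_{\l + \alpha_k} \leadsto T_{ii} I_{\l + \alpha_j + \alpha_k}
\]
agree as elements of the one-dimensional space $\End^{-2}(\E_i \E_i \1_{\l + \alpha_j + \alpha_k})$ (at most one-dimensional by Lemmas \ref{LEM:homEEs}, \ref{LEM:Ts1}, \ref{LEM:Ts2}).

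I would check this plaquette identity by case analysis on how many of $j,k$ equal $i$. In each case, one writes out the two induced rescalings via (\ref{EQ:1})--(\ref{EQ:3}) and uses the previously established ``bare'' braid-like identities $T_{iii} \sim T'_{iii}$, $T_{iij} \sim T'_{iij}$, $T_{jii} \sim T'_{jii}$ (Corollaries \ref{COR:new1}, \ref{COR:4}) together with the one-dimensionality of the relevant $\Hom$-spaces to collapse both paths to the same canonical multiple of the common image. When $j = k = i$ this is essentially a compatibility check on a ``ladder'' of four copies of the nilHecke relation, while when exactly one of $j,k$ equals $i$ it is a compatibility of one (\ref{EQ:1})-type relation with one (\ref{EQ:2})/(\ref{EQ:3})-type relation, and in all cases the computation takes place in a $\Hom$-space that has already been shown to be one-dimensional.

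The main obstacle I expect is the fully mixed case $j \neq i \neq k$ with $j \neq k$, where two braid-like rescalings sit on different strands and one has to verify that the scalars they independently assign at the corner $\l + \alpha_j + \alpha_k$ agree; I anticipate this requires combining the two triangle-relations on a $5$-strand composition $\E_i \E_i \E_j \E_k$ and using adjunction together with Corollary \ref{COR:nonzero} to pair each path with a single map in $\Hom(\E_i \E_i \E_j \E_k, \E_j \E_k \E_i \E_i)$ whose dimension is already known to be at most one. Once the plaquette check is complete in every subcase, triviality of $H^1$ of the simply-connected $2$-complex produces a single globally consistent rescaling on the connected component, and repeating the construction on each connected component yields the required global choice of $T_{ii} I_\l$.
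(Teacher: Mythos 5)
Your proposal captures the same strategy as the paper's proof: reduce well-definedness of the rescaling to a consistency check on atomic squares (the ``plaquette identity''), and use condition (\ref{co:new}) to ensure those squares have all four corners nonzero whenever the relevant edges do. The paper formalizes the reduction combinatorially via the switch and drop operations $S_a$, $D_a$ on cycle sequences $\uck$ rather than in the topological language of $H^1 = 0$, but the two framings are equivalent. You also correctly locate the hard part: the fully distinct case $j,k,i$ pairwise distinct, which the paper handles by showing the six-fold composition $(IIT_{ii})(IT_{ia}I)(IIT_{ia})(T_{ib}II)(IT_{ib}I)(IIT_{ab})$ on $\E_i\E_i\E_a\E_b$ is nonzero (Lemma \ref{LEM:iiab}) and then exploiting one-dimensionality exactly as you describe (Lemma \ref{LEM:rescale}, Corollary \ref{COR:rescale}).

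Two minor cautions. First, condition (\ref{co:new}) concerns two \emph{diagonally opposite} corners of the square (namely $\1_{\l+\alpha_i}$ and $\1_{\l+\alpha_j}$ forcing $\1_\l$ and $\1_{\l+\alpha_i+\alpha_j}$), not adjacent ones; this reading is what the paper's Corollary \ref{COR:rescale} actually uses, and your argument needs it stated that way. Second, your proof defers the plaquette identity itself to a case analysis that you sketch but do not carry out in detail; in the paper this is the bulk of the work (Lemmas \ref{LEM:iiab}, \ref{LEM:rescale}, Corollary \ref{COR:rescale}), so the proposal as written is a correct outline that still owes the core computation. The direction you indicate --- adjunction and Corollary \ref{COR:nonzero}, paired with a known one-dimensional $\Hom$-space --- is the right one.
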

\begin{proof}
Consider a sequence of rescalings
\begin{equation}\label{EQ:24}
T_{ii} I_\l \leadsto T_{ii} I_{\l+c_1 \alpha_{k_1}} \leadsto T_{ii} I_{\l + c_1 \alpha_{k_1} + c_2 \alpha_{k_2}} \leadsto \dots \leadsto T_{ii} I_{\l+ \sum_\ell c_\ell \alpha_{k_\ell}} = T_{ii} I_\l
\end{equation}
where $c_\ell = \pm 1$ and $\sum_\ell c_\ell \alpha_{k_\ell} = 0$. We need to show that this sequence ends up trivially rescaling $T_{ii} I_\l$ (multiplication by one). Let us encode the sequence of rescalings from (\ref{EQ:24}) as $\uck = (c_1 k_1, \dots, c_m k_m)$. There are two operations that one can perform on such a sequence.
\begin{itemize}
\item The switch operation $S_a$ given by 
$$(c_1 k_1, \dots, c_a k_a, c_{a+1}k_{a+1}, \dots,c_m k_m) \mapsto (c_1 k_1, \dots, c_{a+1} k_{a+1}, c_a k_a, \dots, c_m k_m)$$ 
if $c_a+c_{a+1}=0$ and $k_a \ne k_{a+1}$. 
\item The drop operation $D_a$ given by 
$$(c_1 k_1, \dots, c_a k_a, c_{a+1}k_{a+1}, \dots,c_m k_m) \mapsto (c_1 k_1, \dots, c_{a-1}k_{a-1}, c_{a+2} k_{a+2}, \dots, c_m k_m)$$ 
if $c_a k_a + c_{a+1} k_{a+1} = 0$. 
\end{itemize}
The way $\uck$ and $S_a \cdot \uck$ ends up rescaling $T_{ii} I_\l$ are the same due to Corollary \ref{COR:rescale}. The way $\uck$ and $D_a \cdot \uck$ rescales $T_{ii} I_\l$ are also the same (essentially by definition). Finally, it is easy to see that the actions of $S_a$ and $D_a$ can be used to transform any sequence $\underline{c \alpha}$ into the trivial sequence of length zero. This completes the proof.
\end{proof}

\begin{Proposition}\label{PROP:homijk}
Suppose $i,j,k \in I$ are distinct. Then $T_{ijk} I_\l$ and $T'_{ijk} I_\l$ are nonzero if and only if $\1_{\l+\eps_i\alpha_i+\eps_j\alpha_j+\eps_k\alpha_k} \ne 0$ for $\eps_i,\eps_j,\eps_k \in \{0,1\}$. 
\end{Proposition}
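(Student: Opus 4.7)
The plan is to mirror the structure of the proof of Proposition \ref{PROP:homiij}. The necessity direction is routine: expanding the three-fold compositions $T_{ijk} = (T_{jk}I)(IT_{ik})(T_{ij}I)$ and $T'_{ijk} = (IT_{ij})(T_{ik}I)(IT_{jk})$, the sources, targets and intermediate 1-morphisms pass through precisely the eight weights $\l + \eps_i\alpha_i + \eps_j\alpha_j + \eps_k\alpha_k$ with $\eps_i,\eps_j,\eps_k \in \{0,1\}$, so vanishing of any of them kills the composition.

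For the sufficiency direction, I would apply Corollary \ref{COR:nonzero} with the non-identity choice $\gamma := T_{jk}$. Although the Corollary is stated for $\gamma \in \End$, its proof rests purely on Lemma \ref{LEM:fork} and adjunction and goes through verbatim for an arbitrary 2-morphism $\gamma$. This yields
\[
T_{ijk}\, I_\l \ne 0 \iff (IT_{jk})(v_{ji}I)(Iv_{ki}) \colon \E_j \E_k \F_i \1_{\l+\alpha_i} \to \F_i \E_k \E_j \1_{\l+\alpha_i} \ne 0.
\]
Because $i$ is distinct from $j$ and $k$, the maps $v_{ji}, v_{ki}$ are isomorphisms, and the right-hand side is equivalent to the non-vanishing of the whiskered 2-morphism $\F_i T_{jk}\, I_{\l+\alpha_i}$. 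The map $T_{jk}\, I_{\l+\alpha_i}$ itself is nonzero by the weight hypothesis via the defining hom-space lemmas (\ref{LEM:homEEs}, \ref{LEM:Ts1}, \ref{LEM:Ts2}). Setting $\mu := \l+\alpha_i+\alpha_j+\alpha_k$ and using the decomposition $\E_i \F_i \1_\mu \cong \F_i \E_i \1_\mu \oplus_{[\mu_i]} \1_\mu$, the restriction of $\E_i \F_i T_{jk}$ to a nontrivial identity summand is a nonzero multiple of $T_{jk}$, so whenever $\mu_i > 0$ we get $\E_i \F_i T_{jk} \ne 0$ and hence $\F_i T_{jk} \ne 0$. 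The non-vanishing of $T'_{ijk}$ is treated in parallel using the second form of Corollary \ref{COR:nonzero}.

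The main obstacle will be the boundary cases $\mu_i \le 0$. For $\mu_i < 0$ only the mirror decomposition $\F_i \E_i \1_\mu \cong \E_i \F_i \1_\mu \oplus_{[-\mu_i]} \1_\mu$ carries an identity summand, and it naturally witnesses the non-vanishing of $\E_i T_{jk}$ rather than of the desired $\F_i T_{jk}$. I would bridge this gap using the dual half of Corollary \ref{COR:nonzero} (swapping $v_{ji}, v_{ki}$ for $u_{ij}, u_{ik}$), which packages the question into an equivalent one about a $T_{jk}$-type 2-morphism at a nearby weight. The most delicate case is $\mu_i = 0$, where neither decomposition of $\E_i \F_i \1_\mu$ nor of $\F_i \E_i \1_\mu$ contains an identity summand; here I expect to invoke conditions (\ref{co:vanish2}) and (\ref{co:new}) to shift $\l$ by a suitable multiple of $\alpha_i$ to a nearby weight at which $\mu_i \ne 0$ and the relevant $\1$'s remain nonzero, then transport the conclusion back.
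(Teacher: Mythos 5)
Your reduction via Corollary \ref{COR:nonzero} with $\gamma = T_{jk}$ to the non-vanishing of $IT_{jk}\colon \F_i \E_j \E_k \1_{\l+\alpha_i} \to \F_i \E_k \E_j \1_{\l+\alpha_i}$ is exactly what the paper does, and your handling of the case $\mu_i \ge 1$ (equivalently $\l_i \ge 1$ in the hardest, all-edges case) by composing with $\E_i$ on the left and projecting onto an identity summand of $\E_i \F_i \1_\mu$ is also the paper's argument. The necessity direction is likewise correct.

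However, there is a real gap in the remaining boundary cases, and your proposed fix does not work. When $\l_i \le 0$, showing $\E_i T_{jk} \ne 0$ via the mirror decomposition of $\F_i \E_i \1_\mu$ does not yield $\F_i T_{jk} \ne 0$ (there is no clean way to pass between them because $\E_i$ does not commute with $\E_j$ or $\E_k$), and the ``dual half'' of Corollary \ref{COR:nonzero} again only repackages $T_{jk}$ with $\F_i$ on one side or the other, not with $\E_i$. Moreover, the suggestion to ``shift $\l$ by a multiple of $\alpha_i$'' and transport the conclusion back has no justification: the non-vanishing of $T_{ijk} I_{\l'}$ at a neighboring weight $\l'$ says nothing directly about $T_{ijk} I_\l$. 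What condition (\ref{co:vanish2}) actually buys is that $\l_i + \l_j + \l_k > 0$, so \emph{some} $\l_a$ is $\ge 1$; when that $a$ is $j$ or $k$ rather than $i$, the correct move is to compose $IT_{jk}$ with $\F_j$ (resp.\ $\F_k$) on the \emph{right} and compute the $\F_i \E_k$-rank (resp.\ $\F_i \E_j$-rank) of the resulting map, which is positive by Lemma \ref{LEM:rank}. This rank argument is the missing ingredient you need to close the case $\l_i \le 0$.
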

\begin{proof}
One direction is immediate since one can easily check that the weights $\l+\eps_i\alpha_i+\eps_j\alpha_j+\eps_k\alpha_k$ all appear in both $T_{ijk} I_\l$ and $T'_{ijk} I_\l$. 

For the converse there are various cases to consider depending on whether $i,j,k \in I$ are joined by an edge. We will consider the most difficult case when $\la i,j \ra = \la i,k \ra = \la j,k \ra = -1$ as the other cases are similar but easier. Note that $\l_i+\l_j+\l_k > 0$ so at least one of $\l_i,\l_j,\l_k$ must be positive. 

To show $T_{ijk} I_\l \ne 0$ it suffices to show, using Corollary \ref{COR:nonzero}, that the following composition is nonzero 
$$\E_j \E_k \F_i \1_{\l+\alpha_i} \xrightarrow{Iv_{ki}} \E_j \F_i \E_k \1_{\l+\alpha_i} \xrightarrow{v_{ji}I} \F_i \E_j \E_k \1_{\l+\alpha_i} \xrightarrow{IT_{jk}} \F_i \E_k \E_j \1_{\l+\alpha_i}.$$
Since $v_{ki}$ and $v_{ji}$ are isomorphisms we conclude that 
\begin{equation}\label{eq:ijk1}
T_{ijk} I_\l \ne 0 \Leftrightarrow IT_{jk}: \F_i \E_j \E_k \1_{\l+\alpha_i} \rightarrow \F_i \E_k \E_j \1_{\l+\alpha_i} \la 1 \ra \text{ is nonzero.}
\end{equation}
If $\la \l+\alpha_i+\alpha_j+\alpha_k, \alpha_i \ra \ge 1 \Leftrightarrow \l_i \ge 1$ then we can compose with $\E_i$ and simplify to get that $IT_{jk}$ is nonzero if $T_{jk}$ is nonzero. This is nonzero since $\1_{\l+\alpha_i}, \1_{\l+\alpha_i+\alpha_j}, \1_{\alpha_i+\alpha_k}, \1_{\alpha_i+\alpha_j+\alpha_k}$ are all nonzero. 

On the other hand, if $\la \l+\alpha_i+\alpha_j,\alpha_j \ra \ge 2 \Leftrightarrow \l_j \ge 1$ we can compose with $\F_j$ on the right to get 
$$IT_{jk}I: \F_i \E_j \E_k \F_j \1_{\l+\alpha_i+\alpha_j} \rightarrow  \F_i \E_k \E_j \F_j \1_{\l+\alpha_i+\alpha_j}.$$
By Lemma \ref{LEM:rank} the $\F_i \E_k$-rank of this map is $\l_j > 0$ which proves it is nonzero. 

Finally, if $\l_k \ge 1$ we compose with $\F_k$ instead of $\F_j$ on the right and find that the $\F_i \E_j$-rank is positive. The result follows. 
\end{proof}

\begin{Corollary}\label{COR:6}
For distinct $i,j,k \in I$ we have $T_{ijk} \sim T'_{ijk}$. 
\end{Corollary}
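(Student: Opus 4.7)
The plan is to imitate the proof of Corollary \ref{COR:4}. First, by Proposition \ref{PROP:homijk}, both $T_{ijk} I_\l$ and $T'_{ijk} I_\l$ live in the space $\Hom(\E_i \E_j \E_k \1_\l, \E_k \E_j \E_i \1_\l \la -\ell_{ijk}\ra)$ and are nonzero under exactly the same condition, namely that $\1_{\l+\eps_i\alpha_i+\eps_j\alpha_j+\eps_k\alpha_k}\ne 0$ for all $\eps_*\in\{0,1\}$. If this condition fails, then both maps are zero and the statement is vacuous; otherwise it suffices to show that the Hom space above is at most one-dimensional, which will force $T_{ijk}\sim T'_{ijk}$.

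The dimension bound should be obtained by the standard adjunction/commutation bootstrapping used throughout the appendix. Concretely, I would apply condition (\ref{co:adj}) to rewrite
$$\Hom(\E_i \E_j \E_k \1_\l, \E_k \E_j \E_i \1_\l \la -\ell_{ijk}\ra) \cong \Hom(\E_j \E_k \1_\l, \F_i \E_k \E_j \E_i \1_\l \la \cdots\ra),$$
then commute $\F_i$ past $\E_k$ and $\E_j$ using condition (\ref{co:EiFj}) (possible since $i\ne j,k$), and finally use condition (\ref{co:EF}) to split $\F_i\E_i$ into an $\E_i\F_i$ piece plus finitely many identity summands $\oplus_{[|\l_i|]}\1$. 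Iterating this sort of move to slide all $\F$'s to the right and all $\E$'s to the left, the calculation reduces to a finite sum of Hom spaces among simpler 1-morphisms (of the form $\End(\1_\mu)$, $\End(\E_*\1_\mu)$, or $\Hom(\E_a\1_\mu,\E_b\1_\mu)$), each of which is controlled by condition (\ref{co:hom1}) together with the earlier Lemmas \ref{LEM:homEs}, \ref{LEM:homEEs}, \ref{LEM:Ts1}, \ref{LEM:Ts2} on Hom spaces between short $\E$-strings.

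The main obstacle is the bookkeeping across cases depending on the signs of $\l_i,\l_j,\l_k$ and the edge pattern of $\{i,j,k\}$ inside the Dynkin diagram; the hardest subcase is when all three pairs $(i,j),(i,k),(j,k)$ are edges (the triangle), where condition (\ref{co:vanish2}) must be invoked to eliminate the potentially ``extra'' contributions coming from the summands of $\F_*\E_*$. I would expect the appendix to already provide a dedicated Hom-space lemma, directly analogous to the Lemma \ref{LEM:temp} invoked in the proof of Corollary \ref{COR:4}, asserting that
$$\dim \Hom(\E_i \E_j \E_k \1_\l,\, \E_k \E_j \E_i \1_\l \la -\ell_{ijk}\ra) \le 1$$
for distinct $i,j,k$. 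Once this is in hand, the proof of the corollary is a single line: Proposition \ref{PROP:homijk} produces two nonzero elements in a one-dimensional space, so they must be proportional, i.e.\ $T_{ijk}\sim T'_{ijk}$.
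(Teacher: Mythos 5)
Your proposal matches the paper's proof exactly: the paper invokes Proposition \ref{PROP:homijk} for nonvanishing together with the appendix's Lemma \ref{LEM:homijk}, which supplies precisely the one-dimensionality bound you anticipated (and whose proof does indeed hinge on condition (\ref{co:vanish2}) in the triangle case, exactly as you predicted). Nothing to add.
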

\begin{proof}
This follows from Proposition \ref{PROP:homijk} together with Lemma \ref{LEM:homijk} which states that 
$$\dim \Hom(\E_i \E_j \E_k \1_\l, \E_k \E_j \E_i \1_\l \la \ell_{ijk} \ra) \le 1.$$
\end{proof}

\begin{Lemma}\label{LEM:iiab}
Suppose $i,a,b \in I$ with $a \ne b$ and $b \ne i$ and let $\ell_{iiab} = 2 \la i,a+b \ra + 2$. Then 
\begin{equation}\label{EQ:iiab}
(IIT_{ii})(IT_{ia}I)(IIT_{ia})(T_{ib}II)(IT_{ib}I)(IIT_{ab}) : \E_i \E_i \E_a \E_b \1_\l \rightarrow \E_b \E_a \E_i \E_i \1_\l \la - \ell_{iiab} \ra
\end{equation}
is nonzero if and only if $\1_{\l+\eps_i\alpha_i+\eps_a\alpha_a+\eps_b\alpha_b}$ for $\eps_i \in \{0,1,2\}$ and $\eps_a,\eps_b \in \{0,1\}$. 
\end{Lemma}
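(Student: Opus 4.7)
The ``only if'' direction is immediate: each weight $\l+\eps_i\alpha_i+\eps_a\alpha_a+\eps_b\alpha_b$ with $\eps_i\in\{0,1,2\}$ and $\eps_a,\eps_b\in\{0,1\}$ appears as the weight of a 1-morphism at some intermediate stage of the composition in (\ref{EQ:iiab}), so if any such 1-morphism is zero, the whole composition factors through zero.

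For the converse, assume all these 1-morphisms are nonzero. The plan is to factor the composition as $\Phi = \Phi_a\circ\Phi_b$, where (suppressing grading shifts)
\begin{align*}
\Phi_b &= (T_{ib}II)(IT_{ib}I)(IIT_{ab}) : \E_i\E_i\E_a\E_b\1_\l \to \E_b\E_i\E_i\E_a\1_\l,\\
\Phi_a &= (IIT_{ii})(IT_{ia}I)(IIT_{ia}) : \E_b\E_i\E_i\E_a\1_\l \to \E_b\E_a\E_i\E_i\1_\l.
\end{align*}
I would treat each of $\Phi_a$ and $\Phi_b$ as a three-strand braid with one ``carried'' strand and invoke Corollary \ref{COR:nonzero} to trade the $T_{ib}$'s and $T_{ia}$'s for the corresponding $v_{bi}$'s and $v_{ai}$'s; since $a,b\neq i$, these $v$'s are isomorphisms. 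After this reduction, $\Phi_b$ becomes, up to invertible factors, a map controlled by the single braid $T_{ab}$, which is nonzero by Lemma \ref{LEM:Ts2} under our weight hypotheses (combined with Proposition \ref{PROP:homiij} to absorb the trailing $\E_i\E_a$ factor); and $\Phi_a$ becomes, up to invertible factors, a map controlled by $T_{ii}$ together with adjunction bubbles, which is nonzero by Corollary \ref{COR:new1} together with Lemma \ref{LEM:bubbles1}.

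To combine the two factors and conclude that $\Phi$ itself is nonzero, I would use a Hom-space estimate analogous to Lemma \ref{LEM:homijk} in the four-strand setting: decomposing $\E_i\E_i\cong \bigoplus_{[2]}\E_i^{(2)}$ via Corollary \ref{COR:new} reduces the question to a three-strand problem on $\E_i^{(2)}\E_a\E_b$, and one checks (by repeated adjunction as in Lemmas \ref{LEM:homEEs} and \ref{LEM:homijk}) that the relevant Hom-space in the correct degree is at most one-dimensional. Any nonzero realisation of the full braid word is then pinned down up to a scalar, and the individual nonvanishing of $\Phi_a$ and $\Phi_b$ together with Krull--Schmidt cancellation in $\K$ forces $\Phi = \Phi_a\circ\Phi_b$ to be nonzero.

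The main obstacle will be the bookkeeping required to apply Corollary \ref{COR:nonzero} in the presence of the carried fourth strand, and ensuring that the Hom-space estimate really cuts the dimension down to one. This is most easily handled by tensoring on the right with $\F_i\F_a\F_b$-type adjunction data and using the pitchfork relations of Lemma \ref{LEM:fork} to redistribute the $T$'s into a form to which Corollary \ref{COR:nonzero} applies cleanly.
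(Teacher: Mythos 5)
Your factorization $\Phi=\Phi_a\circ\Phi_b$ and the accompanying claim that nonvanishing of each factor plus a one-dimensional $\Hom$-space estimate forces $\Phi\ne0$ is the step that does not hold up. The $\Hom$-space where $\Phi$ lives is $\Hom^{-\ell_{iiab}}(\E_i\E_i\E_a\E_b\1_\l,\E_b\E_a\E_i\E_i\1_\l)$, but $\Phi_a$ and $\Phi_b$ live in two different $\Hom$-spaces and do not commute; knowing each is nonzero and that the target space is at most one-dimensional says nothing about whether the composite is zero. There is no Krull--Schmidt cancellation that rescues this: two nonzero maps with a one-dimensional ambient $\Hom$-space for their composite can perfectly well compose to zero. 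The place where an ``each factor nonzero $\Rightarrow$ composite nonzero'' argument is legitimate is when the two factors act on disjoint tensor positions (so that they actually commute), and that is precisely what the paper arranges: instead of splitting the original word, it conjugates the \emph{entire} composition (\ref{EQ:iiab}) by $\adj^i$ and $\adj_i$ to peel off one $\E_i$-strand, rewrites via the pitchfork relations (Lemma~\ref{LEM:fork}) so that the peeled strand contributes an invertible $v_{ai}$ or $v_{bi}$, and then repeats this once more. After two such reductions one is left with $T_{ab}IT_{ii}$ on $\E_a\E_b\F_i\F_i$, where $T_{ab}$ and $T_{ii}$ genuinely occupy disjoint slots and the composite therefore reduces (after decomposing $\F_i\F_i\cong\oplus_{[2]}\F_i^{(2)}$) to a case analysis on $\l_i$ using Lemma~\ref{LEM:rank}/Proposition~\ref{PROP:homiij}. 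Your intuition that Corollary~\ref{COR:nonzero} and the pitchforks are the right tools is sound, but they must be applied to the whole word to change its \emph{shape} into something with a disjoint-slot factorization, not used as a black box to justify composing two nonvanishing maps in the same position. To salvage your write-up, replace the $\Phi_a\circ\Phi_b$ split with the iterated adjunction/pitchfork reduction of the full word, as in the proof of Corollary~\ref{COR:nonzero} applied twice in succession.
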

\begin{proof}
One direction is immediate since one can easily check that the weights $\l+\eps_i\alpha_i+\eps_a\alpha_a+\eps_b\alpha_b$ all appear in the composition (\ref{EQ:iiab}) (after possibly having to use the relations $T_{iia} \sim T'_{iia}, T_{iib} \sim T'_{iib}$ and $T_{iab} \sim T'_{iab}$). 

Let us suppose $a \ne i$ (the case $a=i$ is similar but easier). For the converse consider the composition 
$$\E_i \E_a \E_b \F_i \xrightarrow{\adj^i IIII} \F_i \E_i \E_i \E_a \E_b \F_i \xrightarrow{I {\tiny{(\ref{EQ:iiab})}}I} \F_i \E_a \E_b \E_i \E_i \F_i \xrightarrow{IIII \adj_i} \F_i \E_a \E_b \E_i$$
where we omit the grading shift for convenience. It suffices to show this is nonzero. By using Lemma \ref{LEM:fork} we can rewrite it as 
$$\E_i \E_a \E_b \F_i \xrightarrow{T'_{iab} I} \E_b \E_a \E_i \F_i \xrightarrow{IIv_{ii}} \E_b \E_a \F_i \E_i \xrightarrow{Iv_{ai}I} \E_b \F_i \E_a \E_i \xrightarrow{v_{bi}II} \F_i \E_b \E_a \E_i.$$
This is just an analogue of Corollary \ref{COR:nonzero}. 

Since $v_{ai}$ and $v_{bi}$ are invertible we are left with showing that 
\begin{equation}\label{EQ:26}
\E_i \E_a \E_b \F_i \1_{\l+\alpha_i} \xrightarrow{T'_{iab} II} \E_b \E_a \E_i \F_i \1_{\l+\alpha_i} \xrightarrow{IIv_{ii}I} \E_b \E_a \F_i \E_i \1_{\l+\alpha_i}
\end{equation}
is nonzero. To do this we apply the same argument again, namely we consider the composition
$$\E_a \E_b \F_i \F_i \xrightarrow{\adj^i IIII} \F_i \E_i \E_a \E_b \F_i \F_i \xrightarrow{I {\tiny{(\ref{EQ:26})}} I} \F_i \E_b \E_a \F_i \E_i \F_i \xrightarrow{IIII \adj_i} \F_i \E_b \E_a \F_i$$
and show it is nonzero. Using the same methods as before this composition is equal to (up to rescaling)
\begin{equation}\label{EQ:27}
\E_a \E_b \F_i \F_i \xrightarrow{IIT_{ii}} \E_a \E_b \F_i \F_i \xrightarrow{T_{ab}II} \E_b \E_a \F_i \F_i \xrightarrow{(v_{bi} II)(Iv_{ai}I)} \F_i \E_b \E_a \F_i
\end{equation}
where the first map $T_{ii} \in \End^{-2}(\F_i \F_i)$ is the adjoint of $T_{ii} \in \End^{-2}(\E_i \E_i)$ (we abuse notation a bit). Note that to show this we need one more relation not included in Lemma \ref{LEM:fork}, namely that
$$\E_i \F_i \F_i \xrightarrow{v_{ii}I} \F_i \E_i \F_i \xrightarrow{I \adj_i} \F_i  \ \ \text{ and } \ \ \E_i \F_i \F_i \xrightarrow{IT_{ii}} \E_i \F_i \F_i \xrightarrow{\adj_i I} \F_i$$
are equal to each other (up to rescaling). However, this is easy to prove as in Lemma \ref{LEM:fork} since the space of maps is one-dimensional and by adjunction neither composition is zero. Finally, since $v_{bi}$ and $v_{ai}$ are invertible, to show that (\ref{EQ:26}) is nonzero it suffices to show that $T_{ab}IT_{ii}: \E_a \E_b \1_\l \F_i \F_i \rightarrow \E_b \E_a \1_\l \F_i \F_i$ is nonzero or equivalently that 
\begin{equation}\label{EQ:28}
T_{ab}II: \E_a \E_b \1_\l \F_i^{(2)} \rightarrow \E_b \E_a \1_\l \F_i^{(2)} 
\end{equation}
is nonzero. If $\l_i \le -2$ then we can compose (\ref{EQ:28}) on the right with $\E_i^{(2)}$ to get several copies of $T_{ab}: \E_b \E_a \1_\l \rightarrow \E_a \E_b \1_\l$ which is nonzero. In fact, this argument can be extended to the case $\l_i=-1$ by composing with $\E_i^{(3)}$ on the right to obtain the summand $T_{ab}I: \E_a \E_b \1_\l \E_i \rightarrow \E_b \E_a \1_\l \E_i$ which is nonzero using the same arguments used to show $T_{abi}$ is nonzero. 

Finally, if $\l_i \ge 0$ then an analogous argument as above reduces the nonvanishing of (\ref{EQ:iiab}) to showing that 
\begin{equation}\label{EQ:29}
IIT_{ab}: \F_i^{(2)} \1_{\mu} \E_a \E_b \rightarrow \F_i^{(2)} \1_\mu \E_b \E_a 
\end{equation} 
is nonzero, where $\mu = \l+2\alpha_i+\alpha_a+\alpha_b$. Composing on the left this time with $\E_i^{(2)}$ we get several copies of $IT_{ab}: \1_\mu \E_a \E_b \rightarrow \1_\mu \E_b \E_a$ (which is straight-forward to check is nonzero) as long as 
$$\la \mu, \alpha_i \ra \ge 2 \Leftrightarrow \l_i \ge -2 - \la a+b,i \ra.$$
This holds if $\l_i \ge 0$ which concludes the proof that (\ref{EQ:iiab}) is nonzero. 
\end{proof}

\begin{Lemma}\label{LEM:rescale}
Suppose $i,a,b \in I$ so that $T_{ii} I_{\l+\eps_a \alpha_a + \eps_b \alpha_b}$ are all nonzero for $\eps_a,\eps_b \in \{0,1\}$. Then the two sequences 
\begin{equation}\label{EQ:rescale}
T_{ii} I_\l \leadsto T_{ii} I_{\l+\alpha_a} \leadsto T_{ii} I_{\l+\alpha_a+\alpha_b} \ \ \text{ and } \ \ T_{ii} I_\l \leadsto T_{ii} I_{\l+\alpha_b} \leadsto T_{ii} I_{\l+\alpha_a+\alpha_b}
\end{equation}
lead to the same rescaling of $T_{ii} I_{\l+\alpha_a+\alpha_b}$. 
\end{Lemma}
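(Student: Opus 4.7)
The plan is to use the nonvanishing of the composition (\ref{EQ:iiab}) from Lemma \ref{LEM:iiab} as a rigidity argument that forces the two rescaling sequences in (\ref{EQ:rescale}) to agree.

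First I would reinterpret the rescaling operation $T_{ii} I_\mu \leadsto T_{ii} I_{\mu+\alpha_j}$ in sliding terms. By definition, it is the unique rescaling of $T_{ii} I_{\mu+\alpha_j}$ that makes relations (\ref{EQ:2}) and (\ref{EQ:3}) hold for the pair $(i,j)$. Equivalently, in any composition containing both $T_{ii} I_\mu$ and $T_{ii} I_{\mu+\alpha_j}$ sandwiched around a pair $(T_{ji}, T_{ji})$ or $(T_{ij}, T_{ij})$, the (correctly rescaled) $T_{ii}$'s may be moved past these swap maps.

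Second, I would apply this to the composition (\ref{EQ:iiab}), which has $T_{ii}$ applied at the rightmost position (weight $\l$). This composition admits a second ``dual'' form in which the same sequence of $T_{ia}, T_{ib}, T_{ab}$ swaps is applied but $T_{ii}$ sits at the leftmost position instead (weight $\l+\alpha_a+\alpha_b$):
\[
\Phi := (IT_{ia}I)(IIT_{ia})(T_{ib}II)(IT_{ib}I)(IIT_{ab})(T_{ii}II) : \E_i\E_i\E_a\E_b\1_\l \to \E_b\E_a\E_i\E_i\1_\l.
\]
Starting from the version in (\ref{EQ:iiab}) with $T_{ii} I_\l$ at the right, I slide $T_{ii}$ to the left through the pair of $T_{ia}$'s (using the $(i,a)$-rescaling to turn $T_{ii} I_\l$ into $T_{ii} I_{\l+\alpha_a}$), then through the pair of $T_{ib}$'s (using the $(i,b)$-rescaling at weight $\l+\alpha_a$ to turn $T_{ii} I_{\l+\alpha_a}$ into $T_{ii} I_{\l+\alpha_a+\alpha_b}$), and finally past $IIT_{ab}$, which commutes trivially with $T_{ii}II$ since the two 2-morphisms act on disjoint strands. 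The result is exactly $\Phi$, with the $T_{ii}$ on the left being the element produced by the first sequence in (\ref{EQ:rescale}).

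Third, I would perform the identical reduction in the opposite order: slide first past the $T_{ib}$'s (using the $(i,b)$-rescaling at weight $\l$), then past the $T_{ia}$'s (using the $(i,a)$-rescaling at weight $\l+\alpha_b$). This again equates (\ref{EQ:iiab}) with $\Phi$, but now the $T_{ii}$ at the left is the one produced by the second sequence in (\ref{EQ:rescale}). Since both sliding processes produce $\Phi$ from the same starting composition, and that starting composition is \emph{nonzero} by Lemma \ref{LEM:iiab}, the two resulting rescaled maps $T_{ii} I_{\l+\alpha_a+\alpha_b}$ appearing in $\Phi$ must coincide, which is precisely the claim.

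The main obstacle will be verifying cleanly that each ``slide'' past a $T_{ij}$-pair is indeed governed by the single rescaling factor defined in (\ref{EQ:2})--(\ref{EQ:3}) and not by a more complicated composite scalar; in particular one has to check that the ``leftover'' swaps appearing between the $T_{ia}$'s and $T_{ib}$'s do not interfere, and that the argument is uniform over the cases $\la i,a\ra = 0$ and $\la i,a\ra = -1$ (and similarly for $b$). Once the sliding is justified, nonvanishing of (\ref{EQ:iiab}) does all the remaining work.
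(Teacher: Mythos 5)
Your overall strategy is the same as the paper's: use the nonvanishing of the composition (\ref{EQ:iiab}) established in Lemma \ref{LEM:iiab} to force the two rescalings to agree. The first reduction (sliding $T_{ii}$ through the $T_{ia}$ pair, then the $T_{ib}$ pair, then past $T_{ab}$) is exactly the first chain of equalities in the paper's proof, culminating in (\ref{EQ:quad4}).

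However, there is a genuine gap in your second reduction. You assert that one can ``slide first past the $T_{ib}$'s'' in (\ref{EQ:iiab}), but in that composition the $T_{ii}$ at weight $\l$ sits next to the $(IT_{ia}I)(IIT_{ia})$ pair, not the $T_{ib}$ pair --- the $T_{ia}$ swaps are in the way. Relations (\ref{EQ:2})--(\ref{EQ:3}) only permit sliding $T_{ii}$ across an adjacent $T_{ji}$ pair, so your second chain of slides cannot be carried out in (\ref{EQ:iiab}) as written. One must first reorder the $T_{ia},T_{ib},T_{ab}$ swaps so that the $T_{ib}$ pair becomes adjacent to $T_{ii}$, and at this stage of the argument (Step \#3) such reordering is only available up to unknown nonzero scalars via $T_{iab}\sim T'_{iab}$ and $T'_{iab}$ at the shifted weight (Corollary \ref{COR:6}); the honest equalities $T_{ijk}=T'_{ijk}$ are only proved in Step \#8. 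So your claimed identity ``$(\ref{EQ:iiab})=\Phi$ via the second route'' does not hold as stated; what one actually gets is a rescaled version of $\Phi$, with an explicit factor $c_1c_2$. The crux of the paper's proof --- sliding $T_{ab}$ to the left, picking up $c_1 c_2$, performing the slides, then sliding $T_{ab}$ back --- is precisely designed to manage and eliminate these extra constants, and it is exactly at that point that the nonvanishing of (\ref{EQ:iiab}) is invoked to conclude $c_1c_2=1$. You flag the concern about ``leftover swaps'' at the end but never resolve it, and resolving it is not a routine verification: it is the substance of the proof. As it stands, the second reduction is not justified.
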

\begin{proof}
Assume $a \ne b$ (otherwise the claim is tautological). Suppose $a,b \ne i$ and consider 
\begin{equation}\label{EQ:quad}
(IIT_{ii})(IT_{ia}I)(IIT_{ia})(T_{ib}II)(IT_{ib}I)(IIT_{ab}) : \E_i \E_i \E_a \E_b \1_\l \rightarrow \E_b \E_a \E_j \E_i \1_\l
\end{equation}
where we omit the grading shifts for convenience. On the one hand, we can write this as 
\begin{align}
\nonumber (IT'_{iia})(T_{ib}II)(IT_{ib}I)(IIT_{ab}) =& (IT_{iia})(T_{ib}II)(IT_{ib}I)(IIT_{ab}) \\
\nonumber =& (IT_{ia}I)(IIT_{ia})[(IT_{ii}I)(T_{ib}II)(IT_{ib}I)](IIT_{ab}) \\
\nonumber =& (IT_{ia}I)(IIT_{ia})(T'_{iib}I)(IIT_{ab}) \\
\nonumber =& (IT_{ia}I)(IIT_{ia})(T_{iib}I)(IIT_{ab}) \\
\label{EQ:quad4} =& (IT_{ia}I)(IIT_{ia})(T_{ib}II)(IT_{ib}I)(T_{ii}II)(IIT_{ab})
\end{align}
where we rescaled $T_{ii} I_\l \leadsto T_{ii} I_{\l+\alpha_a} \leadsto T_{ii} I_{\l + \alpha_a+\alpha_b}$ in order to get the first and fourth equalities.

On the other hand, by Corollary \ref{COR:6}, we know that $T'_{iab} I_\l = c_1 T_{iab} I_\l$ and $T'_{iab} I_{\l+\alpha_i} = c_2 T_{jab} I_{\l+\alpha_i}$ for some $c_1, c_2 \in \k^\times$. Thus, one can rewrite (\ref{EQ:quad}) as
$$c_1 c_2 (T_{ab}II)(IIT_{ii})(IT_{ib}I)(IIT_{ib})(T_{ia}II)(IT_{ia}I)$$
where we slide the $T_{ab}$ from the far right to the far left. Then the same sequence of equalities as above shows that this is equal to
\begin{equation}\label{EQ:quad3}
c_1 c_2 (T_{ab}II)(IT_{ib}I)(IIT_{ib})(T_{ia}II)(IT_{ia}I)(T_{ii}II)
\end{equation}
where this time we had to rescale $T_{ii} I_\l \leadsto T_{ii} I_{\l+\alpha_b} \leadsto T_{ii} I_{\l + \alpha_a+\alpha_b}$. Finally, sliding back the $(T_{ab}II)$ we find that (\ref{EQ:quad3}) is equal to (\ref{EQ:quad4}). Since by Lemma \ref{LEM:iiab} the composition in (\ref{EQ:quad}) is nonzero it follows that $c_1c_2=1$ and so the two sequences of rescalings must be the same.

Finally, suppose one of $a,b$ is equal to $i$ (we can assume $a=i$). The same argument as above works since, by Corollary \ref{COR:4}, we have $T'_{iib} I_\l = c_1 T_{iib} I_\l$ and $T'_{iib} I_{\l+\alpha_i} = c_2 T_{iib} I_{\l+\alpha_i}$ for some $c_1,c_2 \in \k^\times$. 
\end{proof}

\begin{Corollary}\label{COR:rescale}
Suppose $i,a,b \in I$ so that $T_{ii} I_\l$ and $T_{ii} I_{\l+\alpha_a-\alpha_b}$ are nonzero. Then the two sequences 
\begin{equation}\label{EQ:rescale2}
T_{ii} I_\l \leadsto T_{ii} I_{\l+\alpha_a} \leadsto T_{ii} I_{\l+\alpha_a-\alpha_b} \ \ \text{ and } \ \ T_{ii} I_\l \leadsto T_{ii} I_{\l-\alpha_b} \leadsto T_{ii} I_{\l+\alpha_a-\alpha_b}
\end{equation}
lead to the same rescaling of $T_{ii} I_{\l+\alpha_a-\alpha_b}$. 
\end{Corollary}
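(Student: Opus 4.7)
The plan is to derive Corollary \ref{COR:rescale} as a direct consequence of Lemma \ref{LEM:rescale} by invoking that lemma at the shifted weight $\mu := \lambda - \alpha_b$ and then using that each rescaling step has a well-defined inverse. Concretely, I would apply Lemma \ref{LEM:rescale} with the pair $(a,b)$ at the weight $\mu$, which gives that the two rescaling routes
\[
T_{ii}I_\mu \leadsto T_{ii}I_{\mu+\alpha_a} \leadsto T_{ii}I_{\mu+\alpha_a+\alpha_b}
\quad\text{and}\quad
T_{ii}I_\mu \leadsto T_{ii}I_{\mu+\alpha_b} \leadsto T_{ii}I_{\mu+\alpha_a+\alpha_b}
\]
produce the same rescaling of $T_{ii}I_{\mu+\alpha_a+\alpha_b} = T_{ii}I_{\lambda+\alpha_a}$. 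That is, the rescaling $\lambda-\alpha_b \leadsto \lambda-\alpha_b+\alpha_a \leadsto \lambda+\alpha_a$ agrees with $\lambda-\alpha_b \leadsto \lambda \leadsto \lambda+\alpha_a$.

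Next I would observe that the rescaling operation $T_{ii}I_\nu \leadsto T_{ii}I_{\nu+\alpha_j}$ is invertible: since it is defined by the unique nonzero multiple making relations (\ref{EQ:1})--(\ref{EQ:3}) hold, the reverse rescaling $T_{ii}I_{\nu+\alpha_j} \leadsto T_{ii}I_\nu$ coincides with the inverse scalar. In particular, the rescaling $T_{ii}I_\lambda \leadsto T_{ii}I_{\lambda-\alpha_b}$ is inverse to $T_{ii}I_{\lambda-\alpha_b} \leadsto T_{ii}I_\lambda$, and likewise $T_{ii}I_{\lambda+\alpha_a} \leadsto T_{ii}I_{\lambda+\alpha_a-\alpha_b}$ is inverse to $T_{ii}I_{\lambda+\alpha_a-\alpha_b} \leadsto T_{ii}I_{\lambda+\alpha_a}$. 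Rewriting the Lemma \ref{LEM:rescale} equality with these inverses substituted in yields
\[
\bigl(T_{ii}I_\lambda \leadsto T_{ii}I_{\lambda-\alpha_b} \leadsto T_{ii}I_{\lambda+\alpha_a-\alpha_b}\bigr) \;=\; \bigl(T_{ii}I_\lambda \leadsto T_{ii}I_{\lambda+\alpha_a} \leadsto T_{ii}I_{\lambda+\alpha_a-\alpha_b}\bigr),
\]
which is exactly the desired equality of the two sequences in (\ref{EQ:rescale2}).

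The main obstacle is verifying the nonvanishing hypotheses required to apply Lemma \ref{LEM:rescale} at $\mu = \lambda-\alpha_b$. That lemma needs $T_{ii}I_{\mu+\epsilon_a\alpha_a+\epsilon_b\alpha_b}$ nonzero for all four choices of $(\epsilon_a,\epsilon_b)\in\{0,1\}^2$, which translates to nonvanishing at the four weights $\lambda-\alpha_b,\,\lambda,\,\lambda+\alpha_a-\alpha_b,\,\lambda+\alpha_a$. The corollary's hypothesis only gives the middle two. To handle this I would argue that if $T_{ii}I_{\lambda-\alpha_b}$ (respectively $T_{ii}I_{\lambda+\alpha_a}$) vanishes, then the rescaling step through that weight is vacuous (as it relates identically-zero maps), so the corresponding sequence in (\ref{EQ:rescale2}) must be interpreted as a composition in which one factor is trivial; in this degenerate case one can appeal directly to condition (\ref{co:new}) in the definition of a $(\g,\t)$ action, together with the hypotheses that $\1_\lambda$ and $\1_{\lambda+\alpha_a-\alpha_b}$ are nonzero (forced by $T_{ii}I_\lambda,\,T_{ii}I_{\lambda+\alpha_a-\alpha_b}\neq 0$), to either propagate nonvanishing to the two corner weights or to conclude that both sides of (\ref{EQ:rescale2}) rescale trivially. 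In either case the two sequences agree, completing the proof.
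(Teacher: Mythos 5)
Your overall plan matches the paper's: apply Lemma \ref{LEM:rescale} at $\mu = \lambda-\alpha_b$ and use that each rescaling step $\leadsto$ is invertible. The gap is in how you handle the nonvanishing hypotheses for that lemma. You extract only that $\1_\lambda$ and $\1_{\lambda+\alpha_a-\alpha_b}$ are nonzero, but the hypothesis $T_{ii}I_\lambda \neq 0$ is strictly stronger: it forces $\E_i\E_i\1_\lambda \neq 0$, hence $\1_{\lambda+\ell\alpha_i} \neq 0$ for all $\ell\in\{0,1,2\}$, and likewise for $\lambda+\alpha_a-\alpha_b$. Applying condition (\ref{co:new}) at $\nu = \lambda - \alpha_b + \ell\alpha_i$ for $\ell=0,1,2$ then yields $\1_{\lambda - \alpha_b + \ell\alpha_i} \neq 0$ and $\1_{\lambda + \alpha_a + \ell\alpha_i} \neq 0$, which gives $\E_i\E_i\1_{\lambda-\alpha_b}\neq 0$ and $\E_i\E_i\1_{\lambda+\alpha_a}\neq 0$; by Lemma \ref{LEM:homEEs} this is equivalent to the corner $T_{ii}$'s being nonzero. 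So under the stated hypotheses the degenerate case you are worried about never occurs, and Lemma \ref{LEM:rescale} applies directly.

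Your fallback reading of the degenerate case is also not coherent. If $T_{ii}I_{\lambda+\alpha_a}$ (say) were zero, then the step $T_{ii}I_{\lambda+\alpha_a} \leadsto T_{ii}I_{\lambda+\alpha_a-\alpha_b}$ would not be constrained by relations (\ref{EQ:1})--(\ref{EQ:3}) at all, since both sides of those relations would vanish identically; the composite rescaling through $\lambda+\alpha_a$ would then be undefined, not ``trivially equal'' to the other route. The correct move is to establish the corner nonvanishing outright, as sketched above, which is precisely what the paper does.
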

\begin{proof}
This is an immediate consequence of Lemma \ref{LEM:rescale} as long as we can show that $T_{ii} I_{\l+\alpha_a}$ and $T_{ii} I_{\l-\alpha_b}$ are nonzero. By assumption we know $\E_i \E_i \1_\l$ and $\E_i \E_i \1_{\l+\alpha_a-\alpha_b}$ are nonzero. This implies that $\1_{\l+\ell \alpha_i}$ and $\1_{\l+\alpha_a-\alpha_b+\ell \alpha_i}$ are all nonzero for $\ell \in \{0,1,2\}$. By condition \ref{co:new} this means that 
$$\1_{\l+\alpha_a+\ell \alpha_i} \ne 0 \text{ for } \ell \in \{0,1,2\} \Rightarrow \E_i \E_i \1_{\l+\alpha_a} \ne 0 \Rightarrow T_{ii} I_{\l+\alpha_a} \ne 0$$
and likewise $T_{ii} I_{\l-\alpha_b} \ne 0$. 
\end{proof}

\section{Step \#4 -- The definition of $X_i$}\label{sec:Xi}

We will call $\gamma \in \End^2(\1_\l)$ a {\it transient map} if $(\gamma, \alpha_i)_\l = 0$ for all $i \in I$. The reason for this terminology is that by Lemma \ref{LEM:EndE} such maps can be moved past any $\E_i$ or $\F_i$. From hereon until section \ref{sec:transients} we will work modulo transient maps. In particular, results will hold modulo transient maps unless explicitly indicated otherwise. 

\begin{Lemma}\label{LEM:pairing1}
For $\t \in Y_\k$, if $\la \t, \alpha_i \ra = 0$ then $(\t, \alpha_i)_\l = 0$. 
\end{Lemma}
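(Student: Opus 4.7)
The pairing value $(\bar\t,\alpha_i)_\l$ is by definition the scalar $c \in \k$ satisfying $T_{ii}(I\bar\t I)T_{ii} = c\cdot T_{ii} \in \End^{-2}(\E_i \1_\l \E_i)$, so it suffices to show $c = 0$ when $\la \t,\alpha_i\ra = 0$. My plan is to mimic the adjunction/pitchfork strategy from the proof of Lemma \ref{LEM:new1}, applied to the first power of $\bar\t$: close $T_{ii}(I\bar\t I)T_{ii}$ into a loop using $\adj^i$ and $\adj_i$, and evaluate the resulting composition two different ways.

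Concretely, I would consider
\begin{align*}
\E_i\1_{\l-\alpha_i} \xrightarrow{II\adj^i} \E_i\1_{\l-\alpha_i}\E_i\F_i \xrightarrow{(T_{ii}I)(I\bar\t II)(T_{ii}I)} \E_i\1_{\l-\alpha_i}\E_i\F_i\la -2\ra \xrightarrow{II\adj_i} \E_i\1_{\l-\alpha_i}\la\star\ra,
\end{align*}
with the obvious analogue using $\F_i\E_i$ in place of $\E_i\F_i$ when $\l_i \le 0$. On the one hand, functoriality turns the middle into $c\cdot (T_{ii}I)$, so the whole composition equals $c$ times $(II\adj_i)(T_{ii}I)(II\adj^i)$. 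On the other hand, pushing the two $\adj$'s past their neighbouring $T_{ii}$'s using the pitchfork relations (Lemma \ref{LEM:fork}) and invoking $v_{ii}u_{ii}\sim \id$ (Lemma \ref{lem:uii}), the same composition is (up to a nonzero scalar) the ``bubble composition''
\[
\E_i\1_{\l-\alpha_i} \xrightarrow{\adj^i II} \E_i\1_{\l-\alpha_i}\F_i\E_i \xrightarrow{I\bar\t II} \E_i\1_{\l-\alpha_i}\F_i\E_i \xrightarrow{\adj_i II} \E_i\1_{\l-\alpha_i},
\]
which is $\id_{\E_i}$ tensored with the bubble $\adj_i\circ (I\bar\t I)\circ \adj^i \in \End^\bullet(\1_\l)$. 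By Lemma \ref{LEM:bubbles2} the adjunctions $\adj^i, \adj_i$ factor through the extreme identity summands of $\E_i\F_i \cong \F_i\E_i \oplus \bigoplus_{[\l_i]}\1$, and by condition (\ref{co:theta}) the hypothesis $\la \t,\alpha_i\ra = 0$ prevents $(I\bar\t I)$ from inducing any isomorphism between adjacent identity summands. Hence the bubble vanishes, and equating the two evaluations forces $c\cdot (II\adj_i)(T_{ii}I)(II\adj^i) = 0$.

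The main obstacle is this last nonvanishing of $(II\adj_i)(T_{ii}I)(II\adj^i)$: by degree count it lies in $\End^{-2\l_i}(\E_i\1_{\l-\alpha_i})$, which for $\l_i > 0$ vanishes automatically by the $\Hom$-space bounds of Lemma \ref{LEM:homEs}, so the direct argument only closes at $\l_i = 0$. For nonzero $\l_i$ one must either insert an appropriate power of a chosen $\bar\theta$ (with $\la\theta,\alpha_i\ra\ne 0$) before closing the loop, as in Lemma \ref{LEM:new1}, and then track how the $\bar\t$-factor picks up $\la \t,\alpha_i\ra = 0$ from the relevant bubble component; or else reduce to the $\l_i=0$ case by bilinearity of the pairing together with a sliding argument in the spirit of Corollary \ref{COR:5} to compare pairings at different weights in the same coset of $Y$.
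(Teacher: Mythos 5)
Your overall plan---close the bubble $T_{ii}(I\t I)T_{ii}$ with $\adj^i,\adj_i$, move the adjunctions past the $T_{ii}$'s via the pitchfork relations, and read off a condition-(\ref{co:theta}) bubble---is a natural first idea, and you have correctly located its failure point: the closed-up loop sits in $\End^{-2\l_i}(\E_i)$, which is trivially zero for $\l_i\ne 0$, so the identity yields no information precisely when you need it. That much is accurate. But the proposal stops there; neither of your two repair routes is actually carried out, and I don't think either closes the gap as stated. For fix (a), if you splice in a power $\t'^{\,\l_i}$ with $\la\t',\alpha_i\ra\ne 0$ so the loop-closure has degree $0$, you still need to show the resulting bubble vanishes; condition (\ref{co:theta}) only controls how $I\t I$ acts between the $\1_\l$ summands of $\E_i\F_i\1_\l$, while the bubble of $\t\t'^{\,\l_i}$ can have contributions that pass through the $\F_i\E_i\1_\l$ summand, and ruling those out is exactly the missing step. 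For fix (b), Corollary \ref{COR:5} only slides $\t$ past $\E_j$ for $j\ne i$; such a slide changes $\l$ by $\alpha_j$, hence leaves $\la\l,\alpha_i\ra$ unchanged (since $\la i,j\ra=0$ there), so it cannot reduce to the $\l_i=0$ case, and in the case $\la i,j\ra=-1$ the corollary has an extra hypothesis $(\t,\alpha_i)_\l\ne 0$ which is the opposite of what you are trying to prove.

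The paper's actual proof is structurally different and sidesteps the degree obstruction entirely by arguing with ranks rather than bubbles. Since $\dim\End^{-2}(\E_i\1_\l\E_i)\le 1$, the statement $T_{ii}(I\t I)T_{ii}=0$ is equivalent to $I\t I$ inducing zero between the two summands $\E_i^{(2)}\1_\l\la 1\ra$ of $\E_i\E_i$ (this uses the $\E_i^{(2)}$ decomposition from Corollary \ref{COR:new}, established in Step \#1). If that induced map were nonzero, then the $\E_i$-rank of $I\t II\in\End^2(\E_i\1_\l\E_i\F_i)$ would be at least the number of $\E_i$-summands in $\E_i^{(2)}\F_i\1_\l$, namely $\l_i+1>0$. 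But writing $\E_i\E_i\F_i\cong\E_i\F_i\E_i$ and reading $I\t II$ in the form $\End^2(\E_i\1_\l\F_i\E_i)$, condition (\ref{co:theta}) says the $\E_i$-rank is zero because $\la\t,\alpha_i\ra=0$. That contradiction completes the proof, uniformly in $\l_i$, with no loop-closure and hence no degree issue. If you want to salvage your bubble approach, the point you would need to supply is precisely the one the paper's rank argument makes automatic: that the off-summand ($\F_i\E_i$) contributions cannot rescue the bubble.
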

\begin{proof}
Suppose $\l_i \ge 0$ (the case $\l_i \le 0$ is the same). We need to show that $T_{ii}(I \t I) T_{ii} = 0$. This is equivalent to showing that the map $I \t I \in \End^2(\E_i \1_\l \E_i)$ induces zero between the summands $\E_i^{(2)} \1_\l \la 1 \ra$ on either side. To see this suppose it is nonzero and consider the map $I \t II \in \End^2(\E_i \1_\l \E_i \F_i)$. Then the $\E_i$-rank of this map is at least the number of copies of $\E_i$ in $\E_i^{(2)} \F_i \1_\l$ which is $\l_i+1 > 0$. 

On the other hand, the $\E_i$-rank of this map is the same as that of $I \t II \in \End^2(\E_i \1_\l \F_i \E_i)$ which is zero since $\la \t, \alpha_i \ra = 0$ (condition (\ref{co:theta})). This is a contradiction. 
\end{proof}

\noindent {\bf Definition.} For each $i \in I, \l \in X$ rescale the definition of $\alpha_i \in \End^2(\1_\l)$ so that $(\alpha_i, \alpha_i)_\l = 2$. 

\begin{Lemma}\label{LEM:pairing2}
For $\t \in Y_\k$, $i \in I$ and $\l \in X$ we have $(\t, \alpha_i)_\l = \la \t, \alpha_i \ra$. 
\end{Lemma}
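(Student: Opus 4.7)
The plan is to use the splitting of $Y_\k$ into the line spanned by $\alpha_i$ and its orthogonal complement with respect to $\la \cdot, \alpha_i \ra$, and then exploit linearity of the pairing $(\cdot, \alpha_i)_\l$ together with the two facts already in hand: the normalization $(\alpha_i, \alpha_i)_\l = 2$ just imposed, and the vanishing result from Lemma \ref{LEM:pairing1}.

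Concretely, given any $\t \in Y_\k$, I would decompose
$$\t = \frac{\la \t, \alpha_i \ra}{2} \alpha_i + \t', \qquad \text{where } \t' := \t - \frac{\la \t, \alpha_i \ra}{2} \alpha_i.$$
A quick check shows $\la \t', \alpha_i \ra = 0$ since $\la \alpha_i, \alpha_i \ra = 2$. Because $(\cdot, \alpha_i)_\l$ arises from the bilinear map $\End^2(\1_\l) \otimes_\k Y_\k \to \k$, it is $\k$-linear in the first slot, so
$$(\t, \alpha_i)_\l = \frac{\la \t, \alpha_i \ra}{2}(\alpha_i, \alpha_i)_\l + (\t', \alpha_i)_\l.$$
By the rescaling of $\alpha_i \in \End^2(\1_\l)$ at the end of the previous subsection, $(\alpha_i, \alpha_i)_\l = 2$, and by Lemma \ref{LEM:pairing1}, $(\t', \alpha_i)_\l = 0$. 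Substituting gives $(\t, \alpha_i)_\l = \la \t, \alpha_i \ra$.

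I don't anticipate any real obstacle: the content of the lemma has already been absorbed into the earlier vanishing result and the normalization. The only subtlety worth mentioning is that the identification of $\t \in Y_\k$ with its image in $\End^2(\1_\l)$ is via a $\k$-linear map (condition (2) in the definition of a $(\g,\t)$ action), so the decomposition above transports faithfully from $Y_\k$ to $\End^2(\1_\l)$, and linearity of the pairing in its first argument is genuine rather than just formal.
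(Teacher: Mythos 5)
Your proof is correct and uses the same essential ingredients as the paper's: linearity of $(\cdot,\alpha_i)_\l$ in the first slot, the normalization $(\alpha_i,\alpha_i)_\l = 2$, and the vanishing of $(\t',\alpha_i)_\l$ for $\la\t',\alpha_i\ra = 0$ from Lemma \ref{LEM:pairing1}. The paper reduces to $\t = \alpha_j$ by linearity and then, in the nontrivial case $\la j,i\ra = -1$, applies Lemma \ref{LEM:pairing1} to $\alpha_i + 2\alpha_j$; unwinding that step one sees it is exactly your orthogonal decomposition specialized to $\t = \alpha_j$, namely $\alpha_j = -\tfrac12\alpha_i + \tfrac12(\alpha_i + 2\alpha_j)$. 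So your version is the same argument, carried out in one uniform step rather than reduced to the simple-root case.
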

\begin{proof}
It suffices to show this for $\t = \alpha_j$ where $j \in I$. First, if $\la j,i \ra = 0$ then by Lemma \ref{LEM:pairing1} we have $(\alpha_j, \alpha_i)_\l=0$. On the other hand, if $\la j,i \ra = -1$ then $\la i+2j, i \ra = 0$ which, by Lemma \ref{LEM:pairing1} again, means that $(\alpha_i+2\alpha_j,\alpha_i)_\l = 0 \Rightarrow (\alpha_j, \alpha_i)_\l = -1$. 
\end{proof}

\begin{Remark}
If $\t \in Y_\k$ belongs to the radical $R_\k \subset Y_\k$ of the bilinear form $\la \cdot, \cdot \ra$ then, by Lemma \ref{LEM:pairing2}, $\t \in \End^2(\1_\l)$ is transient.
\end{Remark}

\begin{Proposition}\label{PROP:1}
If $\la \t, \alpha_i \ra = 0$ then $II\t = \t II \in \End^2(\1_{\l+\alpha_i} \E_i \1_\l)$. 
\end{Proposition}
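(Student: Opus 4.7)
My plan is to expand $II\t$ in the Lemma \ref{LEM:EndE} basis for $\End^2(\E_i\1_\l)$, show the coefficient of $II\t_0$ vanishes, and then argue the remaining $\gamma II$-component matches $\t II$ modulo transients. If $\t$ lies in the radical of $\la \cdot, \cdot \ra$ on $Y_\k$, then by the Remark following Lemma \ref{LEM:pairing2} its image in every $\End^2(\1_\mu)$ is transient, so both $II\t$ and $\t II$ are transient and the claim holds modulo transients. Otherwise I fix $i' \in I$ with $\la\t,\alpha_{i'}\ra \ne 0$; necessarily $i' \ne i$. I may assume $\l_i \ge -1$, as the case $\l_i \le -1$ is symmetric via Lemma \ref{LEM:EndE}(ii). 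Fixing $\t_0 \in Y_\k$ with $\la\t_0,\alpha_i\ra \ne 0$ and using Lemma \ref{LEM:EndE}(i), I write
\[
II\t = c(II\t_0) + \gamma II \in \End^2(\1_{\l+\alpha_i}\E_i\1_\l)
\]
with $c \in \k$ and $\gamma \in \End^2(\1_{\l+\alpha_i})$ uniquely determined. Since $\t II$ is itself already of this form (with $c = 0$ and $\gamma$ replaced by the image $\t \in \End^2(\1_{\l+\alpha_i})$), the proposition reduces, modulo transients, to $c = 0$ together with $\gamma - \t$ being transient.

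The first of these, $c = 0$, will be a direct application of Corollary \ref{COR:5}(i): with strand-index $j$ set to our $i$ and auxiliary index set to $i'$, and using Lemma \ref{LEM:pairing2} to recast $\la\t,\alpha_{i'}\ra \ne 0$ as $(\t,\alpha_{i'})_\l \ne 0$, the corollary gives $II\t = \gamma II$, eliminating the $II\t_0$ component.

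For the second, transience of $\gamma - \t$, I need $(\gamma,\alpha_j)_{\l+\alpha_i} = \la\t,\alpha_j\ra$ for every $j \in I$. For $j$ with $\la\t,\alpha_j\ra \ne 0$, I plan to rerun the composition (\ref{EQ:21}) from the proof of Corollary \ref{COR:5}(i) with its auxiliary index taken to be $j$; the identity $(\gamma,\alpha_j)_{\l+\alpha_i}\,T_{iij} = (\t,\alpha_j)_\l\,T'_{iij}$ that falls out, combined with the Step 3 rescaling $T_{iij} = T'_{iij}$ and Lemma \ref{LEM:pairing2}, collapses to exactly what is needed. For $j \ne i$ with $\la\t,\alpha_j\ra = 0$, I linearize: pick $\eps \in Y_\k$ with $\la\eps,\alpha_i\ra = 0$ and $\la\eps,\alpha_j\ra \ne 0$ (for instance $\eps = \alpha_j$ when $\la i,j\ra = 0$, or $\eps = \alpha_i + 2\alpha_j$ when $\la i,j\ra = -1$), apply the previous sub-case to both $\eps$ and $\t + \eps$, and subtract using that $\gamma$ depends linearly on $\t$, which is immediate from the uniqueness of the Lemma \ref{LEM:EndE} decomposition.

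The main obstacle is the remaining case $j = i$, where neither Corollary \ref{COR:5} nor the linearization applies (no $\eps$ can be orthogonal to $\alpha_i$ yet pair nontrivially with it). Here I plan to horizontally compose $II\t = \gamma II$ with $I_{\E_i}$ on the left to obtain $III\t = I\gamma II \in \End^2(\E_i \E_i \1_\l)$, then pre- and post-compose with the copy of $T_{ii}$ on $\E_i \E_i \1_\l$ whose middle weight is $\l+\alpha_i$. By interchange the left side becomes $(III\t)\,T_{ii}^2$, while by the pairing definition the right side becomes $(\gamma,\alpha_i)_{\l+\alpha_i}\,T_{ii}$. The crucial input is $T_{ii}^2 = 0$: this I plan to establish by combining the decomposition $\E_i \E_i \cong \oplus_{[2]} \E_i^{(2)}$ of Corollary \ref{COR:new} with vanishing of negative-degree endomorphisms of the indecomposable $\E_i^{(2)}$, which forces $\End^{-4}(\E_i \E_i \1_\l) = 0$. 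Then $(\gamma,\alpha_i)_{\l+\alpha_i}\,T_{ii} = 0$, and since $T_{ii}$ spans the one-dimensional $\End^{-2}(\E_i \E_i \1_\l)$, we conclude $(\gamma,\alpha_i)_{\l+\alpha_i} = 0 = \la\t,\alpha_i\ra$, finishing the proof.
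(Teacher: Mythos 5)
Your skeleton matches the paper's: apply Corollary~\ref{COR:5}(i) to get $II\t = \gamma II$ (your ``$c=0$'' is just a restatement of that conclusion), and then reduce the proposition, modulo transients, to $(\gamma-\t,\alpha_j)_{\l+\alpha_i}=0$ for each $j\in I$, exploiting the Step~3 equality $T_{jji}=T'_{jji}$. Your first sub-case ($\la\t,\alpha_j\ra\neq0$) is the paper's composition argument verbatim (the $T_{iij}$ in your displayed identity should read $T_{jji}$, but that is only a notational slip). Where you genuinely diverge is in the remaining two sub-cases. The paper runs the \emph{same} composition through $T_{jji}$ for every $j$ -- including $j=i$ and $j$ with $\la\t,\alpha_j\ra=0$ -- because that argument equates $(\t,\alpha_j)_\l T_{jji}$ with $(\gamma,\alpha_j)_{\l+\alpha_i}T_{jji}$ without ever needing the scalar on either side to be nonzero. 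You instead linearize in $\t$ to reduce the $j\neq i$, $\la\t,\alpha_j\ra=0$ case to the nonzero case, and you dispatch $j=i$ via $T_{ii}^2=0$, deduced from Corollary~\ref{COR:new} together with Lemma~\ref{LEM:homE2s}. Both are legitimate: for $\l_i\ge-1$ the $\gamma$ in $II\t=\gamma II$ is uniquely determined (if $\delta II=0\in\End^2(\1_{\l+\alpha_i}\E_i\1_\l)$, compose with $\F_i$ on the right and project onto a $\1_{\l+\alpha_i}$ summand of $\E_i\F_i\1_{\l+\alpha_i}$, which exists since $\l_i+2\ge1$, to force $\delta=0$), so linearity of $\t\mapsto\gamma$ holds; and the $T_{ii}^2=0$ shortcut is cleaner than the paper's route and remains valid even in degenerate weights.

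There is, however, a real gap. Your first sub-case -- and therefore your linearized sub-case, which feeds off it -- tacitly assumes that $T_{jji}$ at the relevant weight is nonzero, so that the identity $(\gamma,\alpha_j)_{\l+\alpha_i}T_{jji}=(\t,\alpha_j)_\l T_{jji}$ gives equality of scalars. The paper devotes roughly half of its proof to the possibility $T_{jji}I_{\l-\alpha_j}=0$: it shows that then either $\E_j\E_j\1_{\l+\alpha_i-\alpha_j}=0$, in which case $(\cdot,\alpha_j)_{\l+\alpha_i}$ is identically zero and the claim is vacuous, or $\E_j\E_j\1_{\l-\alpha_j}=0$, which via condition~(\ref{co:new}) is driven to a contradiction unless $\l_i=-1$, and that last possibility is absorbed by switching to the symmetric $\l_i\le1$ version of the argument. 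As written, your proof has no analogue of this boundary analysis, so the $j\ne i$ branch is incomplete precisely where these nonvanishing hypotheses fail.
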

\begin{proof}
Suppose $\l_i \ge -1$ (the case $\l_i \le -1$ is similar). Then by Corollary \ref{COR:5} we know $(II \t) = (\gamma II)$ for some $\gamma \in \End^2(\1_{\l+\alpha_i})$. Since we are working modulo transient maps it suffices to show that $(\gamma-\t,\alpha_j)_{\l+\alpha_i} = 0$ for all $j \in I$. 

To see this one follows the same argument as in Corollary \ref{COR:5} (the main difference being that now we know $T_{jji} = T'_{jji}$ rather than $T_{jji} \sim T'_{jji}$). More precisely, we consider the composition 
$$\E_j \E_j \E_i \xrightarrow{(IT_{ji})(T_{jj}I)} \E_j \E_i \1_\l \E_j \xrightarrow{II \t I} \E_j \E_i \1_\l \E_j \xrightarrow{(IT_{jj})(T_{ij}I)} \E_i \E_j \E_j$$
where we omit the shifts for convenience. Then, using the same argument as in Corollary \ref{COR:5}, this composition equals $(\t,\alpha_j)_\l T_{jji}$ while on the other hand it equals $(\gamma, \alpha_j)_{\l+\alpha_i} T_{jji}$. Since $(\t,\alpha_j)_\l = \la \t, \alpha_j \ra = (\t,\alpha_j)_{\l+\alpha_i}$ we get $(\gamma-\t,\alpha_j)_{\l+\alpha_i} = 0$ as long as $T_{jji} I_{\l-\alpha_j} \ne 0$.

If $T_{jji} I_{\l-\alpha_j} = 0$ then either $\E_j \E_j \1_{\l+\alpha_i-\alpha_j} = 0$ or $\E_j \E_j \1_{\l-\alpha_j} = 0$. In the first case we have $(\phi, \alpha_j)_{\l+\alpha_i}$ for any $\phi \in \End^2(\l+\alpha_i)$ and so, in particular, $(\t-\gamma, \alpha_j)_{\l+\alpha_i} = 0$. Otherwise we have $\1_{\l+\alpha_i+\eps_j \alpha_j} \ne 0$ for $\eps_j \in \{-1,0,1\}$ and $\1_\l \ne 0$ which implies $\1_{\l-\alpha_j} \ne 0$ by condition \ref{co:new}. Using that $\E_j \E_j \1_{\l-\alpha_j} = 0$ we get $\1_{\l+\alpha_j} = 0$. However, $\1_{\l+\alpha_i+\alpha_j} \ne 0$ which leads to a contradiction if $\la \l+\alpha_i+\alpha_j,i \ra \ge 1 \Leftrightarrow \l_i \ge 0$. 

Thus we are done unless $\l_i=-1$ and $\E_j \E_j \1_{\l-\alpha_j} = 0$. This last case is taken care by the analogous argument for $\l_i \le 1$ which argues in the same way as above starting with the fact that $(\t II) = (II \gamma) \in \End^2(\1_\l \E_i \1_{\l-\alpha_i})$ for some $\gamma \in \End^2(\1_{\l-\alpha_i})$. 
\end{proof}

\begin{framed}
\noindent Definition: Choose $\t \in Y_\k$ so that $\la \t, \alpha_i \ra = 1$ and let $X_i := - (\t II) + (II \t) \in \End^2(\1_{\l+\alpha_i} \E_i \1_\l)$. 
\end{framed}

\begin{Corollary}\label{COR:2}
If $i \ne j$ then we have 
\begin{align*}
(IX_i)T_{ij} &= T_{ij}(X_iI) \in \Hom(\E_i \E_j \1_\l, \E_j \E_i \1_\l \la - \la i,j \ra + 2 \ra), \\
(X_iI)T_{ji} &= T_{ji}(IX_i) \in \Hom(\E_j \E_i \1_\l, \E_i \E_j \1_\l \la - \la i,j \ra + 2 \ra).
\end{align*}
\end{Corollary}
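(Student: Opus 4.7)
The plan is to derive a single ``master identity'' valid for every $\t \in Y_\k$ and then make a judicious choice of $\t$ to extract both equalities at once.

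For any $\t \in Y_\k$, write $X_i^\t := -(\t II) + (II\t) \in \End^2(\E_i \1_\l)$ and similarly $X_j^\t$, so that in particular $X_i^\t = X_i$ when $\la \t, \alpha_i \ra = 1$. A direct expansion shows that in $\End^2(\E_j \E_i \1_\l)$ the ``middle'' contributions $I\t_{\l+\alpha_i}I$ cancel in pairs, leaving
$$IX_i^\t + X_j^\t I = -(\t_{\l+\alpha_i+\alpha_j}\, II) + (II\, \t_\l),$$
and the analogous formula $IX_j^\t + X_i^\t I = -(\t_{\l+\alpha_i+\alpha_j}\, II) + (II\, \t_\l)$ holds in $\End^2(\E_i \E_j \1_\l)$. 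The common right-hand side is a whiskering of 2-morphisms of the identity 1-morphisms $\1_\l$ and $\1_{\l+\alpha_i+\alpha_j}$, so it commutes with both $T_{ij}$ and $T_{ji}$ by the interchange law. Rearranging yields the master identities
\begin{align*}
(IX_i^\t)T_{ij} - T_{ij}(X_i^\t I) &= T_{ij}(IX_j^\t) - (X_j^\t I)T_{ij}, \\
(X_i^\t I)T_{ji} - T_{ji}(IX_i^\t) &= T_{ji}(X_j^\t I) - (IX_j^\t)T_{ji},
\end{align*}
valid for every $\t \in Y_\k$.

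Now choose $\t \in Y_\k$ with $\la \t, \alpha_i \ra = 1$ and $\la \t, \alpha_j \ra = 0$; such a $\t$ exists because the functionals $\la \cdot, \alpha_i \ra$ and $\la \cdot, \alpha_j \ra$ on $Y_\k$ are linearly independent. For this $\t$, the left-hand sides of the master identities are precisely the differences claimed to vanish (the right-hand side of the defining formula for $X_i$ is independent of the choice of $\t$ satisfying $\la \t, \alpha_i \ra = 1$ modulo transients, by Proposition \ref{PROP:1}); and Proposition \ref{PROP:1} applied to $\E_j$ forces $X_j^\t = 0$ modulo transients, so both right-hand sides vanish modulo transients. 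This proves both relations.

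The main obstacle is spotting the cancellation that collapses $IX_i^\t + X_j^\t I$ to a 2-morphism supported only at the endpoint weights $\1_\l$ and $\1_{\l+\alpha_i+\alpha_j}$; once that is seen, the argument reduces to the interchange law and Proposition \ref{PROP:1}. If one were worried about the existence of $\t$ with the prescribed pairings, one can instead run the master identity with two choices of $\t$ differing by an element $\rho$ with $\la \rho, \alpha_i \ra = 0$: the left-hand sides agree modulo transients while the right-hand sides differ by $\la \rho, \alpha_j \ra \cdot [T_{ij}(IX_j) - (X_jI)T_{ij}]$, which therefore vanishes modulo transients, and substituting back gives the desired relation.
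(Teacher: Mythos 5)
Your argument is correct and rests on exactly the same ingredients as the paper's proof: a $\t \in Y_\k$ with $\la \t, \alpha_i \ra = 1$ and $\la \t, \alpha_j \ra = 0$, together with Proposition~\ref{PROP:1} applied twice (once to identify $X_i^\t$ with $X_i$ modulo transients, once to kill $X_j^\t$). The paper chooses such a $\gamma$ and slides it past $\E_j$ directly; you package the same sliding as the vanishing $X_j^\t = 0$ after first deriving a symmetric ``master identity'' via the interchange law. The cancellation that collapses $IX_i^\t + X_j^\t I$ to an endpoint 2-morphism, and the fact that endpoint 2-morphisms commute with $T_{ij}$ and $T_{ji}$ by interchange, are exactly the implicit moves in the paper's two displayed equalities, so this is essentially the same argument with a slightly slicker, symmetrized organization.
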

\begin{proof}
Choose $\gamma \in Y_\k$ so that $\la \gamma, \alpha_i \ra = 1$ and $\la \gamma, \alpha_j \ra = 0$. Then $\la \t-\gamma, \alpha_i \ra = 0$ and so by Proposition \ref{PROP:1} we have $(\t II) - (\gamma II) = (II \t) - (II \gamma) \in \End(\1_{\l+\alpha_i} \E_i \1_\l)$. This means that $X_i = -(\gamma II) + (II \gamma)$. But then, since $\la \gamma, \alpha_j \ra = 0$, we have 
\begin{align*}
(I \gamma II) &= (III \gamma ) \in \End^2(\E_i \1_{\l+\alpha_j} \E_j \1_\l) \\ 
(\gamma III) &= (II \gamma I) \in \End^2(\1_{\l+\alpha_i+\alpha_j} \E_j \1_{\l+\alpha_i} \E_i)
\end{align*}
from which we get $T_{ij}(X_i I) = (I X_i)T_{ij}$. The second relation follows similarly. 
\end{proof}

\begin{Remark}
The argument in Corollary \ref{COR:2} shows that the definition of $X_i$ does not depend on our choice of $\t$. 
\end{Remark}

\section{Step \#5 -- Action of the affine nilHecke algebra}\label{sec:1}

Fix $i \in I$. We will now prove that the affine nilHecke algebra on products of $\E_i$'s. This argument is essentially a simplification and strengthening of the one in \cite{CKL2}.

\begin{Proposition}\label{PROP:2}
Inside $\End(\E_i \E_i)$ we have (modulo transient maps)
$$T_{ii}(X_iI) = (IX_i)T_{ii} + II \ \ \text{ and } \ \ (X_iI)T_{ii} = T_{ii}(IX_i) + II.$$
\end{Proposition}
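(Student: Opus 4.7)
The plan is to derive both nilHecke identities from Proposition \ref{PROP:new1} applied at weight $\mu = \l+\alpha_i$, combined with the explicit form of $X_i$ from Step 4. A direct expansion of $X_i = -(\t II) + (II\t)$ shows that the two differences $T_{ii}(X_iI) - (IX_i)T_{ii}$ and $(X_iI)T_{ii} - T_{ii}(IX_i)$ are actually equal as 2-morphisms, so it suffices to verify one of the two relations.

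I begin by using the specific $\t \in Y_\k$ with $\la\t,\alpha_i\ra = 1$ that defines $X_i$. Lemma \ref{LEM:pairing2} gives $(\t, \alpha_i)_\mu = 1$ for every $\mu$, which forces the constant $a$ in Proposition \ref{PROP:new1} to equal $1$; applied at weight $\mu = \l+\alpha_i$ and written in five-slot notation on $\E_i\E_i\1_\l$, this reads
\[
I = (II\t II)T_{ii} + T_{ii}(II\t II) + T_{ii}\tau,
\]
with $\tau = (\rho IIII) + b(IIII\t)$ in the case $\l_i \ge -2$ (and the mirrored form $\tau = (IIII\rho) + b(\t IIII)$ in the case $\l_i \le -2$) for some $\rho$ in an appropriate $\End^2(\1_\bullet)$ and $b \in \k$. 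Expanding the nilHecke expression and noting that $\t$ in the leftmost or rightmost weight slot commutes with $T_{ii}$ (they act on disjoint strands), then substituting and commuting $\rho, \t$ past $T_{ii}$ inside $T_{ii}\tau$, yields (for $\l_i \ge -2$)
\[
T_{ii}(X_iI) - (IX_i)T_{ii} - I = -\bigl[(\t + \rho) IIII + (1+b)(IIII\t)\bigr] T_{ii}.
\]
The nilHecke relation therefore reduces to showing this residual is transient, i.e.\ that $b = -1$ and $\t + \rho \in \End^2(\1_{\l+2\alpha_i})$ lies in the radical of $\la \cdot, \cdot \ra$.

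To pin down $b$, I would compose the identity of Proposition \ref{PROP:new1} with $T_{ii}$ on the left, use $T_{ii}(II\t II)T_{ii} = (\t,\alpha_i)_{\l+\alpha_i} T_{ii} = T_{ii}$, and extract the scalar equation which forces $b = -1$ modulo the freedom in $\rho$. To show $\t + \rho$ is transient, I would verify $(\t + \rho, \alpha_j)_{\l+2\alpha_i} = 0$ for each $j \in I$: for $j \neq i$ by testing against $T_{jj}(I\alpha_j I)$-type insertions at nearby weights and sliding radical elements through $\E_i$ via Proposition \ref{PROP:1}, and for $j = i$ by using $T_{ii}$ together with the normalization $(\alpha_i,\alpha_i)_\mu = 2$ from Step 4; in each case the answer reduces to a scalar that vanishes by Lemma \ref{LEM:pairing2}. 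The case $\l_i \le -2$ is handled by the mirrored argument. The main obstacle is this last extraction of $b$ and $\rho$: they are only pinned down modulo transient ambiguity, and the bookkeeping needed to verify the radical condition on $\t + \rho$ against every simple root is delicate. It is essential here that we work modulo transients, so that $\t + \rho$ need only lie in the radical of $\la\cdot,\cdot\ra$ rather than vanish outright.
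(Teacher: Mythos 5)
Your reduction is on track through the first half: starting from Proposition \ref{PROP:new1}, using $(\t,\alpha_i)_\mu = 1$ to get $a=1$, and expanding $X_i = -(\t II)+(II\t)$ to arrive at
\[
T_{ii}(X_iI) - (IX_i)T_{ii} - II = -\bigl[(\t+\rho)IIII + (1+b)(IIII\t)\bigr]T_{ii},
\]
is exactly what happens after unpacking the paper's equation $(X_iI)T_{ii} = T_{ii}(IX_i) + II + T_{ii}(\tau')$. The task of proving transience of the residual is also the right target.

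However, the step you describe for pinning down $b=-1$ does not work. You say to ``compose the identity of Proposition \ref{PROP:new1} with $T_{ii}$ on the left, use $T_{ii}(II\t II)T_{ii} = T_{ii}$, and extract the scalar equation which forces $b = -1$.'' But composing the identity
$id = (II\t II)T_{ii} + T_{ii}(II\t II) + T_{ii}\tau$
on the left with $T_{ii}$ annihilates the last two terms (because $T_{ii}^2=0$), leaving only $T_{ii} = T_{ii}(II\t II)T_{ii} = T_{ii}$ --- a tautology that gives no information about $b$. Composing on the right fails for the same reason, since $\rho$ and $\t$ sit in the outer weight slots and slide past $T_{ii}$, so again everything dies on $T_{ii}^2=0$. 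The coefficient $b$ simply cannot be extracted by any single-strand $T_{ii}$-composition at this weight. The paper handles this via a genuinely different mechanism: it passes to $\End(\E_i\E_i\E_i)$, composes the relation with $(T_{ii}I)(IT_{ii})$ on the left and $(IT_{ii})$ on the right, and uses the equality $T_{iii} = T'_{iii}$ (secured by the rescalings in Step \#3) to derive $T_{iii} = T'_{iii} + (\gamma,\alpha_i)T'_{iii}$, from which $(\gamma,\alpha_i)=0$. This $\E_i^3$-argument with the braid-like relation is the essential input you are missing, and there is no shortcut around it: the constraint on $b$ lives one strand deeper than your computation looks.

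For the $j\neq i$ case your sketch (testing against $T_{jj}$-type insertions) is at least aimed in the right direction --- the paper composes the relation with $\F_j$ on both sides, shuttles past $v_{ij}$ and $u_{ji}$, and contracts with $T'_{jj}$. So the architecture of the argument is recoverable, but the proposal as written has a specific unfixable hole at the $j=i$ step, and that is precisely the step the paper had to work hardest for.
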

\begin{proof}
Let us suppose we are trying to prove this for $\End(\E_i \1_\l \E_i)$ where $\l_i \ge 0$ (the case $\l_i \le 0$ is the same). By Lemma \ref{PROP:new1} we have a
\begin{equation}\label{EQ:6}
id = a \cdot (I \t I)T_{ii} + a \cdot T_{ii} (I \t I) + T_{ii}(\tau) \in \End(\E_i \1_\l \E_i)
\end{equation}
where $\tau = (\rho III) + b \cdot (III \t) \in \End^2(\1_{\l+\alpha_i} \E_i \E_i \1_{\l-\alpha_i})$. Composing on the left with $T_{ii}$ and using that $T_{ii}(I \t I)T_{ii} = (\t, \alpha_i)_\l T_{ii} = T_{ii}$ we get $T_{ii} = a T_{ii}$ which means $a=1$. Thus, we can rewrite (\ref{EQ:6}) as 
\begin{equation}\label{EQ:7}
(X_iI)T_{ii} = T_{ii}(IX_i) + II + T_{ii}(\tau') \in \End(\E_i \E_i)
\end{equation}
where $\tau' = (\gamma' III) + (III \gamma) \in \End(\1_{\l+\alpha_i} \E_i \E_i \1_{\l-\alpha_i})$. It remains to show that $(\gamma, \alpha_j)_{\l-\alpha_i} = 0 = (\gamma', \alpha_j)_{\l+\alpha_i}$ for all $j \in I$. 

First we show that $(\gamma, \alpha_i)_{\l-\alpha_i} = 0$. To do this consider 
$$(X_iII)(T_{ii}I) = (T_{ii}I)(IX_iI) + III + (T_{ii}I)(\tau' I) \in \End(\1_{\l+\alpha_i} \E_i \E_i \E_i)$$
and compose on the left with $(T_{ii}I)(IT_{ii})$ and on the right with $(IT_{ii})$. This gives us 
\begin{align*}
(T_{ii}I)(IT_{ii})(X_iII)(T_{ii}I)(IT_{ii}) 
=& T_{iii}(IX_iI)(IT_{ii}) + (T_{ii}I)(IT_{ii})(T_{ii}I)(\tau' I)(IT_{ii}) \\
(T_{ii}I)(X_iII)T'_{iii} =& T_{iii}(IX_iI)(IT_{ii}) + T_{iii}(\tau' I)(IT_{ii}) \\
(T_{ii}I)(X_iII)T_{iii} =& T'_{iii}(IX_iI)(IT_{ii}) + T'_{iii}(\tau' I)(IT_{ii}) \\ 
(T_{ii}I)(X_iII)(T_{ii}I)(IT_{ii})(T_{ii}I) =& T'_{iii} + (IT_{ii})(T_{ii}I)(IT_{ii}) \cdot (\gamma, \alpha_i)_{\l-\alpha_i} \\ 
T_{iii} =& T'_{iii} + (\gamma, \alpha_i)_{\l-\alpha_i} T'_{iii} 
\end{align*}
where we use $T_{ii}(X_i I)T_{ii} = T_{ii}$ and that
$$(IT_{ii})(\tau' I)(IT_{ii}) = (IT_{ii})[(\gamma' IIII) + (III \gamma I)](IT_{ii}) = 0 + (\gamma, \alpha_i)_{\l-\alpha_i} T_{ii}$$
to obtain the last two equalities. From this it follows that $(\gamma, \alpha_i)_{\l-\alpha_i} = 0$. The argument that $(\gamma', \alpha_i)_{\l+\alpha_i} = 0$ is similar. 

Next we show that $(\gamma, \alpha_j)_{\l-\alpha_i} = 0$ for $j \ne i$. To do this we first compose (\ref{EQ:7}) with $\F_j$ on either sides. The term involving $\gamma$ then becomes 
$$h := (IT_{ii}II)(III \gamma I) \in \End(\F_j \E_i \E_i \1_{\l-\alpha_i} \F_j).$$
Now consider the composition
\begin{equation}\label{EQ:8}
\E_i \E_i \F_j \1_{\l-\alpha_i} \F_j \xrightarrow{(v_{ij}III)(Iv_{ij}II)} \F_j \E_i \E_i \1_{\l-\alpha_i} \F_j \xrightarrow{h} \F_j \E_i \E_i \1_{\l-\alpha_i} \F_j \xrightarrow{(Iu_{ji}II)(u_{ji}III)} \E_i \E_i \F_j \1_{\l-\alpha_i} \F_j.
\end{equation}
We have
$$(Iu_{ji}I)(u_{ji}II)(IT_{ii}I) \sim (T_{ii}II)(Iu_{ji}I)(u_{ji}II): \F_j \E_i \E_i \1_{\l-\alpha_i} \rightarrow \E_i \E_i \F_j \1_{\l-\alpha_i}$$
which means that the composition in (\ref{EQ:8}) is (up to a multiple) equal to 
$$(T_{ii}III)(Iu_{ji}II)(u_{ji}III)(v_{ij}III)(Iv_{ij}II)(III \gamma I) \sim (T_{ii}III)(III \gamma I) \in \End(\E_i \E_i \F_j \1_{\l-\alpha_i} \F_j).$$
Finally, composing on the left and right with $IIT'_{jj}$ where $T'_{jj} \in \End^{-2}(\F_j\F_j)$ is the unique map we get (up to a multiple) 
$$(\gamma, \alpha_j)_{\l-\alpha_i} (T_{ii}II)(IIT'_{jj}) \in \End(\E_i\E_i\F_j\F_j).$$
On the other hand, performing the same manipulations with the others terms in (\ref{EQ:7}) ends up giving us zero. It follows that $(\gamma, \alpha_j)_{\l-\alpha_i} = 0$. A similar argument also shows $(\gamma', \alpha_j)_{\l+\alpha_i}=0$. 
\end{proof}

Thus, Proposition \ref{PROP:2} and Corollary \ref{COR:2} gives us the following relations (modulo transient maps):
\begin{framed}
\begin{align}
\label{EQ:KLR1}
T_{ii}(X_iI) &= (IX_i)T_{ii} + II \text{ and } (X_iI)T_{ii} = T_{ii}(IX_i) + II \in \End(\E_i \E_i), \\
\label{EQ:KLR2}
(IX_i)T_{ij} &= T_{ij}(X_iI) \in \Hom(\E_i \E_j, \E_j \E_i \la - \la i,j \ra + 2 \ra) \text{ if } i \ne j, \\
\label{EQ:KLR3}
(X_iI)T_{ji} &= T_{ji}(IX_i) \in \Hom(\E_j \E_i, \E_i \E_j \la - \la i,j \ra + 2 \ra) \text{ if } i \ne j.
\end{align}
\end{framed}

\begin{Corollary}\label{COR:bubbles3}
For any $i \in I$ the compositions
\begin{align*}
\1_\l \xrightarrow{\adj^i} \E_i \F_i \1_\l \la -\l_i+1 \ra \xrightarrow{X_i^{\l_i-1} I} \E_i \F_i \1_\l \la \l_i-1 \ra \xrightarrow{\adj_i} \1_\l \ \ & \text{ if } \l_i \ge 0 \ \ \text{ and } \\
\1_\l \xrightarrow{\adj^i} \F_i \E_i \1_\l \la \l_i+1 \ra \xrightarrow{I X_i^{-\l_i-1}} \F_i \E_i \1_\l \la -\l_i-1 \ra \xrightarrow{\adj_i} \1_\l \ \ & \text{ if } \l_i \le 0
\end{align*}
are equal to a nonzero multiple of the identity map in $\End(\1_\l)$.
\end{Corollary}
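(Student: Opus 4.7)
The plan is to reduce the statement to Lemma \ref{LEM:bubbles1} by expanding $X_i$ in its defining pieces and then killing off all but one summand by a degree argument. I treat the case $\l_i\ge 0$; the case $\l_i\le 0$ is entirely analogous with $\F_i\E_i$ in place of $\E_i\F_i$.

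First, recall that by definition $X_i\1_\l=-(\t II)+(II\t)\in\End^2(\1_\l\E_i\1_{\l-\alpha_i})$, where $\t\in Y_\k$ is chosen with $\la\t,\alpha_i\ra=1$. The two summands $(\t II)$ and $(II\t)$ horizontally compose with disjoint support, so they commute. By the binomial theorem
\[
X_i^{\l_i-1}I = \sum_{k=0}^{\l_i-1}\binom{\l_i-1}{k}(-1)^k\,(\t^k\,II\,\t^{\l_i-1-k}\,I)\in\End^{2\l_i-2}(\E_i\F_i\1_\l).
\]
Now I would plug this into the given composition $C:=\adj_i\circ(X_i^{\l_i-1}I)\circ\adj^i$ and treat it term by term.

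For each $k$, the leftmost factor $(\t^k III)$ acts on the outermost weight $\1_\l$, which is precisely the source of $\adj^i$ (and the target of $\adj_i$). By the interchange law for horizontal and vertical composition of 2-morphisms, this factor slides through $\adj^i$ and becomes a precomposition with $\t^k:\1_\l\to\1_\l\la 2k\ra$. Hence the $k$-th summand equals
\[
\binom{\l_i-1}{k}(-1)^k\,\bigl[\adj_i\circ(II\,\t^{\l_i-1-k}\,I)\circ\adj^i\bigr]\circ\t^k.
\]
The bracketed map lives in $\Hom(\1_\l,\1_\l\la -2k\ra)$, which vanishes for $k\ge 1$ by condition (\ref{co:hom1}). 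Therefore only the $k=0$ term survives.

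The surviving term is $\adj_i\circ(II\,\t^{\l_i-1}\,I)\circ\adj^i$, and Lemma \ref{LEM:bubbles1} (applied to this choice of $\t$, which satisfies $\la\t,\alpha_i\ra=1\ne 0$) identifies it with a nonzero multiple of the identity of $\1_\l$. This completes the argument for $\l_i\ge 0$. The case $\l_i\le 0$ is obtained by replacing $\E_i\F_i$ with $\F_i\E_i$, $(X_iI)$ with $(IX_i)$, and running the same binomial expansion and degree-vanishing argument; the only step that could present any difficulty is keeping track of which $\t$-factor sits on which weight, but no essential new input is required.
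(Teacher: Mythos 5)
Your proof is correct and follows essentially the same route as the paper's (very terse) proof: expand $X_i^{\l_i-1}$ using the definition $X_i = -(\t II) + (II\t)$, slide the outer $\t$-powers through the adjunction cup, kill all but the $k=0$ term by the degree constraint $\End^{\ell}(\1_\l)=0$ for $\ell<0$ (condition (\ref{co:hom1})), and reduce to Lemma \ref{LEM:bubbles1}. You have merely spelled out the bookkeeping (binomial expansion, the interchange-law slide of the outer $\t^k$ around the cup, the degree count $-2k$ of the bracketed map) that the paper compresses into one sentence.
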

\begin{proof}
This is an immediate consequence of Lemma \ref{LEM:bubbles1}, the fact that $\End^\ell(\1_\l) = 0$ if $\ell < 0$ and the definition of $X_i$ from section \ref{sec:Xi}.
\end{proof}

\section{Step \#6 -- The Serre relation}

In this section we fix $i,j \in I$ with $\la i,j \ra = -1$ and prove the Serre relation (Corollary \ref{COR:serre3}). The idea behind the proof is to first show that by adjunction there exist unique maps (up to rescaling) 
$$\E_i^{(2)} \E_j \oplus \E_j \E_i^{(2)} \rightarrow \E_i \E_j \E_i \rightarrow \E_i^{(2)} \E_j \oplus \E_j \E_i^{(2)}$$
whose composition is the identity (this last step is the hardest part). Separately, by using adjunction we know that $\dim \End(\E_i \E_j \E_i) \le 2$ from which the Serre relation follows. 

\begin{Lemma}\label{LEM:serre}
Suppose $i,j \in I$ with $\la i,j \ra = -1$. If $\E_j \E_i \1_\l = 0$ then $\E_j \E_i^{(2)} \1_\l = 0$. Likewise, if $\E_i \E_j \1_{\l+\alpha_i} = 0$ then $\E_i^{(2)} \E_j \1_\l = 0$. 
\end{Lemma}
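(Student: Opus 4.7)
The plan is to first reduce the claim to showing $\E_j\E_i\E_i\1_\l = 0$. Since $\E_i^{(2)}\1_\l$ is a direct summand of $\E_i\E_i\1_\l$ by Corollary~\ref{COR:new}, the 1-morphism $\E_j\E_i^{(2)}\1_\l$ is a direct summand of $\E_j\E_i\E_i\1_\l$. Writing $\E_j\E_i\E_i\1_\l = \E_j\E_i\1_{\l+\alpha_i}\cdot\E_i\1_\l$, it is enough to show that the hypothesis $\E_j\E_i\1_\l=0$ forces $\E_j\E_i\1_{\l+\alpha_i}=0$; informally, the vanishing must propagate upward by $\alpha_i$.

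To propagate the vanishing, I would adjoin $\F_i$ on the right of the hypothesis: $\E_j\E_i\F_i\1_{\l+\alpha_i} = 0$. Using the commutator relation~(\ref{co:EF}) to expand $\E_i\F_i\1_{\l+\alpha_i}$ and the identification $\E_j\F_i\cong\F_i\E_j$ from~(\ref{co:EiFj}), this decomposes as
$$0 \;=\; \F_i\E_j\E_i\1_{\l+\alpha_i} \;\oplus\; \bigoplus_{[\l_i+2]}\E_j\1_{\l+\alpha_i}$$
whenever $\l_i\ge -2$. Both summands must vanish; in particular, once $\l_i\ge -1$ the second summand is nonempty, and we conclude $\E_j\1_{\l+\alpha_i}=0$. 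Applying $\F_i$ once more to the hypothesis and repeating the same expansion of $\E_i\F_i^{2}\1_{\l+2\alpha_i}$ yields $\F_i\E_j\1_{\l+2\alpha_i}=0$. Since $\1_{\l+2\alpha_i}$ sits as the top-degree direct summand of $\E_i\F_i\1_{\l+2\alpha_i}$ via the commutator, a short computation extracts $\E_j\1_{\l+2\alpha_i}=0$, and then $\E_j\E_i^{(2)}\1_\l = \E_j\1_{\l+2\alpha_i}\cdot\E_i^{(2)}\1_\l = 0$.

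The degenerate ranges where the above commutator summands are empty (for instance $\l_i\le -2$ so that $[\l_i+2]=0$, or $\l_i\le -4$ so $[\l_i+4]=0$) are handled using condition~(\ref{co:vanish1}) together with condition~(\ref{co:new}): in these ranges $\1_{\l+r\alpha_i}=0$ for $r$ sufficiently negative, and propagating via~(\ref{co:new}) forces either $\1_{\l+\alpha_i}=0$ (so $\E_i\1_\l=0$ and the conclusion is trivial) or $\1_{\l+2\alpha_i+\alpha_j}=0$ (same). The symmetric statement $\E_i\E_j\1_{\l+\alpha_i}=0\Rightarrow\E_i^{(2)}\E_j\1_\l=0$ follows by the mirror argument, adjoining $\E_i$ on the left and using the commutator in the opposite direction. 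The hardest step is the final extraction of $\E_j\1_{\l+2\alpha_i}=0$ from $\F_i\E_j\1_{\l+2\alpha_i}=0$: one must identify $\E_j\1_{\l+2\alpha_i}$ as the ``top-degree'' summand inside $\E_j\E_i\F_i\1_{\l+2\alpha_i}$ produced by the commutator and verify that the residual term there is killed by the iterated vanishing hypothesis.
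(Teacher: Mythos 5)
There is a genuine gap, and it affects the only case that matters. Your plan rests on the claim that $\E_j\E_i\1_\l=0$ ``must propagate upward by $\alpha_i$'', i.e.\ forces $\E_j\E_i\1_{\l+\alpha_i}=0$. That is false in the non-degenerate case. By Lemma~\ref{LEM:nonvan}(ii), $\E_j\E_i\1_{\l+\alpha_i}\ne 0$ exactly when $\1_{\l+\alpha_i}$, $\1_{\l+2\alpha_i}$ and $\1_{\l+2\alpha_i+\alpha_j}$ are all nonzero -- and those are precisely the weights one must assume nonzero for the lemma to have content. In other words, when the lemma is nontrivial, the vanishing of $\E_j\E_i\E_i\1_\l$ comes from a composition of two \emph{nonzero} $1$-morphisms being zero, not from the second factor being zero on the nose. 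So the target you set yourself -- $\E_j\E_i\1_{\l+\alpha_i}=0$ -- is simply not achievable.

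The second problem compounds the first. Your commutator expansion of $\E_j\E_i\F_i\1_{\l+\alpha_i}$ into $\F_i\E_j\E_i\1_{\l+\alpha_i}\oplus\bigoplus_{[\l_i+2]}\E_j\1_{\l+\alpha_i}$ is only available via condition~(\ref{co:EF}) when $\l_i+2\ge 0$, and only produces a nontrivial direct sum when $\l_i\ge-1$. But as the paper's proof observes, if $\1_{\l+\alpha_i+\alpha_j}=0$ while $\1_{\l+2\alpha_i+\alpha_j}\ne 0$, then $\la\l+2\alpha_i+\alpha_j,\alpha_i\ra\le 0$, i.e.\ $\l_i\le -3$. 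So the single case your commutator argument covers never occurs under the hypotheses, and the range $\l_i\le -2$ which you dismiss as ``degenerate'' is in fact the whole ballgame. Your suggestion that conditions~(\ref{co:vanish1}) and~(\ref{co:new}) force $\1_{\l+\alpha_i}=0$ or $\1_{\l+2\alpha_i+\alpha_j}=0$ there is unsupported -- those conditions say nothing of the sort, and the hypothesis $\1_{\l+\alpha_i+\alpha_j}=0$ is entirely consistent with all eight surrounding weights being nonzero when $\l_i\le -3$.

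The correct mechanism (the one the paper uses) goes through divided powers and the $\sl_2^i$-Weyl symmetry rather than a single commutator. When $\l_i\le -3$ one exhibits $\E_i^{(2)}\1_\l$ as a direct summand of $\F_i^{(-\l_i-2)}\E_i^{(-\l_i)}\1_\l$, then slides $\E_j$ past $\F_i^{(-\l_i-2)}$ using $\E_j\F_i\cong\F_i\E_j$; the resulting factorization passes through weight $s_i\cdot\mu$ where $\mu=\l+\alpha_i+\alpha_j$, and since $\1_\mu=0$ implies $\1_{s_i\cdot\mu}=0$ this kills the whole composite. This is precisely the phenomenon your proposal cannot see, because it tries to locate the vanishing in one of the two tensor factors of $\E_j\E_i\cdot\E_i$ rather than in a retract of a longer $\F_i^{(b)}\E_i^{(a)}$ word.
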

\begin{proof}
We prove the first assertion (the second assertion is similar). If $\1_\l, \1_{\l+\alpha_i}, \1_{\l+2\alpha_i}$ or $\1_{\l+2\alpha_i+\alpha_j}$ are zero then we are done. So suppose they are all nonzero. 

Since $\E_j \E_i \1_\l = 0$ this means that $\1_\mu = 0$ where $\mu := \l+\alpha_i+\alpha_j$. Since $\1_{\mu+\alpha_i} \ne 0$ we must have $\la \mu+\alpha_i, \alpha_i \ra \le 0 \Leftrightarrow \l_i \le -3$ because otherwise $\1_{\mu+\alpha_i}$ is a direct summand of $\E_i \1_\mu \F_i = 0$.

Finally, if $\l_i \le -3$ then $\E_j \E_i^{(2)} \1_\l$ is a direct summand of 
$$\E_j \F_i^{(-\l_i-2)} \E_i^{(-\l_i)} \1_\l \cong \F_i^{(-\l_i-2)} \1_{s_i \cdot \mu} \E_j \E_i^{(-\l_i)} \1_\l.$$
Since, in general, $\1_\nu = 0$ if and only if $\1_{s_i \cdot \nu} = 0$ we find that the right side above is zero and hence $\E_j \E_i^{(2)} \1_\l = 0$. 
\end{proof}

\begin{Proposition}\label{PROP:Tijinonzero}
If $\E_j \E_i^{(2)} \1_\l \ne 0$ then $(T_{ji}I)(IT_{ii})(T_{ij}I) \in \End(\E_i \E_j \E_i \1_\l)$ is nonzero. Likewise, if $\E_i^{(2)} \E_j \1_\l \ne 0$ then $(IT_{ij})(T_{ii}I)(IT_{ji}) \in \End(\E_i \E_j \E_i \1_\l)$ is nonzero. Moreover, these two maps are linearly independent. 
\end{Proposition}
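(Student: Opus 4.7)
My plan is to split the proposition into the two non-vanishing statements and the independence statement, handling the first two symmetrically and then treating independence separately.

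First, to show $T_{iji} := (T_{ji}I)(IT_{ii})(T_{ij}I)$ is nonzero when $\E_j\E_i^{(2)}\1_\l \ne 0$, I would apply Corollary \ref{COR:nonzero} in the case $(i,j,k) = (i,j,i)$ with $\gamma = T_{ji}$. This identifies the non-vanishing of $T_{iji}$ with the non-vanishing of the companion map
$$(IT_{ji})(v_{ji}I)(Iv_{ii}) : \E_j\E_i\F_i\1_{\l+\alpha_i} \longrightarrow \F_i\E_i\E_j\1_{\l+\alpha_i}.$$
The key difference from the analog in Proposition \ref{PROP:homiij} is that only $v_{ji}$ is an isomorphism here (since $i\ne j$), whereas $v_{ii}$ is not. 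To extract useful information I would therefore apply Lemma \ref{LEM:bubbles2} to split $\E_i\F_i\1_{\l+\alpha_i}$ as $\F_i\E_i\1_{\l+\alpha_i} \oplus_{[-\l_i-2]}\1_{\l+\alpha_i}$ (or the analog when $\l_i+2\ge 0$) and track the image summand by summand. The hypothesis $\E_j\E_i^{(2)}\1_\l\ne 0$ is precisely what ensures the relevant summand remains nonzero, and the resulting map can then be detected by the $\E_j$-rank computation in Lemma \ref{LEM:rank}, in the same spirit as the proof of Proposition \ref{PROP:homijk}.

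The mirror statement $T'_{iji}\ne 0$ under the assumption $\E_i^{(2)}\E_j\1_\l\ne 0$ would follow by the same argument applied to the companion form of Corollary \ref{COR:nonzero} (the one starting with $(\gamma I)(Iu_{ik})(u_{ij}I)$), replacing the non-invertible $v_{ii}$ by $u_{ii}$ and using the other half of Lemma \ref{LEM:bubbles2}.

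For linear independence, I plan to exhibit a 2-morphism $\phi$ such that $\phi \circ T_{iji}$ is nonzero while $\phi \circ T'_{iji}$ vanishes (and then a symmetric choice $\phi'$ on the other side forces the coefficient of $T'_{iji}$ to vanish). A natural candidate is obtained by composing with the idempotent $T_{ii}(X_iI)$ or $-(IX_i)T_{ii}$ in $\E_i\E_i$ (from Corollary \ref{COR:new}), which picks out the summand $\E_i^{(2)}$: on the left this isolates an $\E_j\E_i^{(2)}$ component of $\E_j\E_i\E_i$ that $T_{iji}$ hits nontrivially (by Step 1), while the same projection post-composed with $T'_{iji}$ factors through the orthogonal $\E_i^{(2)}\E_j$ summand of $\E_i\E_i\E_j$ and so vanishes.

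The main obstacle will be Step 1: the bookkeeping of summands through Corollary \ref{COR:nonzero} and Lemma \ref{LEM:bubbles2}, together with the non-invertible $v_{ii}$, is the delicate technical heart of the argument. Once the rank count is cleanly set up, the symmetric Step 2 is automatic, and linear independence then follows from the orthogonality of the two $\E_i^{(2)}$-idempotents acting on opposite sides of $\E_j$.
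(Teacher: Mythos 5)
Your proposed route is a direct-image approach modeled on Proposition \ref{PROP:homiij}, and you correctly identify that the obstruction is the non-invertibility of $v_{ii}$. But you stop there: you do not resolve that obstruction, and the bookkeeping you defer to is exactly where the argument must be made (you flag this yourself, but a proof has to cross that river, not just find it). Moreover, Corollary \ref{COR:nonzero} as stated takes $\gamma \in \End^d(\E_j\E_k\1_{\l+\alpha_i})$ — an \emph{endomorphism} — so it does not apply verbatim with $\gamma = T_{ji}$, which changes the target to $\E_i\E_j$; you would at minimum need to re-derive the corollary in that generality. And it is not evident how Lemma \ref{LEM:rank}, which controls the $\E_j$-rank of a single $T_{ij}$ composed with an adjacent $\F_i$, detects the composite you are left with after the bubble decomposition of $\E_i\F_i$.

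The paper avoids all of this with a short indirection that your plan misses. Assume $T_{iji}=0$; then composing with $(X_iII)$ and sliding the dot using the Step~4 commutation $T_{ij}(X_iI)=(IX_i)T_{ij}$ and the Step~5 affine nilHecke relation $T_{ii}(X_iI)=(IX_i)T_{ii}+II$ produces
$$ T_{iji}(X_iII) \;=\; (\text{dot moves across})\cdot T_{iji} \;+\; (T_{ji}I)(T_{ij}I), $$
so $T_{iji}=0$ forces $(T_{ji}I)(T_{ij}I)=0$. This is the key reduction: it replaces the problematic $3$-strand braid by the $2$-strand composite $(T_{ji}I)(T_{ij}I)$, whose non-vanishing \emph{does} submit directly to Lemma \ref{LEM:rank} after tensoring with $\F_i$ on one side (using Lemma \ref{LEM:serre} to know $\E_j\E_i\1_\l\ne 0$), giving the contradiction. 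Linear independence is handled the same way: assuming $T_{iji}=cT'_{iji}$ and sliding a dot yields $(T_{ji}I)(T_{ij}I)=c(IT_{ij})(IT_{ji})$, and then a rank comparison (after $\F_i$) shows the left side has positive $\E_j\E_i$-rank while the right factors through $\F_i\E_i\E_i\E_j$ and hence has rank zero. Your idempotent idea for independence also runs into trouble because the relations $T_{iji}T'_{iji}=0$, $m(T_{iji})^2=T_{iji}$, etc.\ that make the $\E_i^{(2)}$-projectors behave orthogonally with respect to $T_{iji}$ and $T'_{iji}$ are established in Lemma \ref{LEM:4} and Corollary \ref{COR:serre2}, which come \emph{after} and depend on this Proposition; invoking them here would be circular.
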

\begin{proof}
Suppose $(T_{ji}I)(IT_{ii})(T_{ij}I) \in \End(\E_i \E_j \E_i \1_\l)$ is zero. Then composing with $(X_iII)$ we get that 
\begin{align*}
(T_{ji}I)(IT_{ii})(T_{ij}I)(X_iII) &= (T_{ji}I)(IT_{ii})(IX_iI)(T_{ij}I) \\
&= (T_{ji}I)(X_iII)(IT_{ii})(T_{ij}I) + (T_{ji}I)(T_{ij}I) \\
&= (X_iII)(T_{ji}I)(IT_{ii})(T_{ij}I) + (T_{ji}I)(T_{ij}I)
\end{align*}
must be zero, which implies that $(T_{ji}I)(T_{ij}I) \in \End^2(\E_i \E_j \E_i \1_\l)$ is zero. 

Now, if $\la \l+\alpha_i, \alpha_i \ra \le -1$ (meaning $\l_i \le -3$) then we compose on the left with $\F_i$ and by Lemma \ref{LEM:rank} find that the $\E_j \E_i \1_\l$-rank of $(IT_{ji}I)(IT_{ij}I) \in \End^2(\F_i \E_i \E_j \E_i \1_\l)$ is $-\l_i-2 \ge 1$. Since by Lemma \ref{LEM:serre} $\E_j \E_i^{(2)} \1_\l \ne 0 \Rightarrow \E_j \E_i \1_\l \ne 0$ this means that $(T_{ji}I)(T_{ij}I)$ cannot be zero (contradiction). 

Likewise, if $\la \l+\alpha_i, \alpha_i \ra \ge 0$ (meaning $\l_i \ge -2$) then we compose with $\F_i$ on the right and by Lemma \ref{LEM:rank} find that the $\E_j \E_i$-rank of $(T_{ji}II)(T_{ij}II) \in \End^2(\E_i \E_j \E_i \1_\l \F_i)$ is at least $\l_i+3 \ge 1$ so once again $(T_{ji}I)(T_{ij}I)$ cannot be zero (contradiction).

The case of $(IT_{ij})(T_{ii}I)(IT_{ji})$ is proved similarly. 

Finally, suppose $(T_{ji}I)(IT_{ii})(T_{ij}I) = c (IT_{ij})(T_{ii}I)(IT_{ji}) $ for some $c \in \k^\times$. Composing with $(X_iII)$ as above and simplifying we get that 
$$(T_{ji}I)(T_{ij}I) = c (IT_{ij})(IT_{ji}) \in \End^2(\E_i \E_j \E_i \1_\l).$$
If $\l_i \le -3$ then we compose with $\F_i$ on the left and find that the left hand side has positive $\E_j \E_i \1_\l$-rank. However, the right hand side factors through $\F_i \E_i \E_i \E_j \1_\l$ and hence must have zero $\E_j \E_i \1_\l$-rank. 
\end{proof}

\begin{Corollary}\label{COR:serre2}
There exist $m^\l_{iji}, n^\l_{iji} \in \k^\times$ so that
\begin{equation}\label{EQ:RIII}
m^\l_{iji} (T_{ji}I)(IT_{ii})(T_{ij}I) + n^\l_{iji} (IT_{ij})(T_{ii}I)(IT_{ji}) = (III) \in \End(\E_i \E_j \E_i \1_\l).
\end{equation}
\end{Corollary}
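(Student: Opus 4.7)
The plan is to use the two morphisms $A := (T_{ji}I)(IT_{ii})(T_{ij}I)$ and $B := (IT_{ij})(T_{ii}I)(IT_{ji})$ of Proposition \ref{PROP:Tijinonzero} as a basis of $\End(\E_i\E_j\E_i\1_\l)$, to decompose the identity in this basis, and then to show that both coefficients are nonzero. If $\E_i\E_j\E_i\1_\l = 0$ the statement is vacuous, so I assume it is nonzero. Iterating condition (\ref{co:new}) together with Lemma \ref{LEM:serre} then forces both $\E_j\E_i^{(2)}\1_\l$ and $\E_i^{(2)}\E_j\1_\l$ to be nonzero, so by Proposition \ref{PROP:Tijinonzero} the maps $A$ and $B$ are both nonzero and linearly independent.

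Next I would establish the upper bound $\dim\End(\E_i\E_j\E_i\1_\l) \le 2$ by a $\Hom$-space calculation analogous to Lemmas \ref{LEM:homEEs} and \ref{LEM:homijk}: peel an outer $\E_i$ using the adjunction of condition (\ref{co:adj}), commute the resulting $\F_i$ past $\E_j$ via condition (\ref{co:EiFj}), decompose $\F_i\E_i$ using condition (\ref{co:EF}), and iterate. Only $\Hom$-spaces of the form $\Hom(\1_\mu, \1_\mu \la d \ra)$ survive, which by condition (\ref{co:hom1}) vanish for $d<0$ and are at most one-dimensional for $d=0$. A careful accounting of the grading shifts gives the bound $2$. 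Combined with the previous paragraph, $\{A,B\}$ is then a basis of $\End(\E_i\E_j\E_i\1_\l)$, so I may write $\id = mA + nB$ for unique $m, n \in \k$.

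The main obstacle is showing $m, n \in \k^\times$. The tool is a commutator trick: apply $(X_iII)$ on the left of $\id = mA + nB$ and transport $X_i$ through the $T$'s using relations (\ref{EQ:KLR1})--(\ref{EQ:KLR3}). Each passage through a $T_{ij}$ or $T_{ji}$ is clean by (\ref{EQ:KLR2})--(\ref{EQ:KLR3}), while each passage through $T_{ii}$ produces the nilHecke residual $II$ from (\ref{EQ:KLR1}). After collecting these residuals and cancelling the main $(mA+nB)(IIX_i)$ piece against $(IIX_i)\cdot\id$, one arrives at the identity
$$(X_iII) - (IIX_i) \;=\; m\,(T_{ji}I)(T_{ij}I) \;+\; n\,(IT_{ij})(IT_{ji}) \;\in\; \End^2(\E_i\E_j\E_i\1_\l).$$
Suppose $m = 0$: the right-hand side then factors through $\E_i\E_i\E_j\1_\l \cong \oplus_{[2]}\E_i^{(2)}\E_j\1_\l$ via $(IT_{ji})$. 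To derive a contradiction I would compose on the right with $T_{ij}I : \E_i\E_j\E_i\1_\l \to \E_j\E_i\E_i\1_\l$ and transport $X_i$ once more; the left-hand side acquires a nonzero $\E_j\E_i^{(2)}$-component detected by an $\E_j$-rank argument in the spirit of Lemma \ref{LEM:rank} (applied to the two inner $\E_i$'s after an adjunction), whereas the right-hand side $n(IT_{ij})(IT_{ji})(T_{ij}I)$ still factors through $\E_i\E_i\E_j$ and therefore has no such component. This contradiction forces $m \ne 0$; the case $n \ne 0$ follows by the symmetric argument (compose with $IT_{ji}$ and detect the $\E_i^{(2)}\E_j$-content).
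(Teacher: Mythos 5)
Your overall skeleton matches the paper's: use Proposition~\ref{PROP:Tijinonzero} for linear independence of $T_{iji}$ and $T'_{iji}$, bound $\dim\End(\E_i\E_j\E_i\1_\l)\le 2$, write $\id = m\,T_{iji} + n\,T'_{iji}$, and then show $m,n\ne 0$. But there are two genuine gaps.

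First, your opening reduction is simply false. You assert that $\E_i\E_j\E_i\1_\l\ne 0$ together with condition~(\ref{co:new}) and Lemma~\ref{LEM:serre} forces both $\E_j\E_i^{(2)}\1_\l\ne 0$ and $\E_i^{(2)}\E_j\1_\l\ne 0$; it does not. Lemma~\ref{LEM:serre} gives implications in the wrong direction for this purpose, and condition~(\ref{co:new}) starts from two weights shifted by simple roots from a common base, which isn't what you have here. Concretely, take $\g=\sl_3$ acting with highest weight $\Lambda_1+\Lambda_2$ (the adjoint representation), $i=1$, $j=2$, $\l=-\alpha_1-\alpha_2$: then $\1_\l,\1_{\l+\alpha_1},\1_{\l+\alpha_1+\alpha_2},\1_{\l+2\alpha_1+\alpha_2}$ are all nonzero so $\E_1\E_2\E_1\1_\l\ne 0$, but $\1_{\l+2\alpha_1}=\1_{\alpha_1-\alpha_2}=0$, so $\E_2\E_1^{(2)}\1_\l=0$. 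In this situation $T_{iji}\1_\l=0$ automatically (its target factors through $\E_j\E_i\E_i\1_\l=0$), only $T'_{iji}\1_\l$ survives, $\id=n\,T'_{iji}\1_\l$ forces $n\ne 0$, and $m$ may be taken arbitrary in $\k^{\times}$. The paper handles these degenerate subcases explicitly (``the other cases are strictly easier''); you need to do the same rather than claim they don't occur.

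Second, your argument that $m,n\ne 0$ is both vague and, as written, ill-typed. The displayed relation $(X_iII)-(IIX_i)=m(T_{ji}I)(T_{ij}I)+n(IT_{ij})(IT_{ji})$ is correct (it is the identity derived in the proof of Lemma~\ref{LEM:B}), but the next step ``compose on the right with $T_{ij}I$'' and the expression $n(IT_{ij})(IT_{ji})(T_{ij}I)$ do not compose: $T_{ij}I$ maps $\E_i\E_j\E_i\to\E_j\E_i\E_i$, which is not the source of $IT_{ji}$. The promised ``$\E_j$-rank argument in the spirit of Lemma~\ref{LEM:rank}'' is never carried out and implicitly leans on the Serre decomposition $\E_i\E_j\E_i\cong\E_i^{(2)}\E_j\oplus\E_j\E_i^{(2)}$, which is Corollary~\ref{COR:serre3} and is established \emph{after} the present statement --- so the reasoning risks circularity even if the types were fixed. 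The paper's route is both shorter and cleaner: once $\id=m\,T_{iji}+n\,T'_{iji}$, multiply by $T_{iji}$ and observe $T_{iji}T'_{iji}=0$ because the inner block $(T_{ij}I)(IT_{ij})(T_{ii}I)=(IT_{ii})(T_{ij}I)(IT_{ij})$ by relation~(\ref{EQ:3}) and $T_{ii}^2=0$ by degree reasons; this gives $m\,T_{iji}^2=T_{iji}\ne 0$, hence $m\ne 0$, and symmetrically $n\ne 0$ (Lemma~\ref{LEM:4}). This also avoids the KLR relations~(\ref{EQ:KLR1})--(\ref{EQ:KLR3}) that only hold modulo transients, which is precisely why the remark after Corollary~\ref{COR:serre2} can claim the result holds on the nose.
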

\begin{Remark} 
This result is true without having to mod out by transient maps. This is because the key result that $\dim \End(\E_i \E_j \E_i \1_\l) \le 2$ holds without having to mod out.
\end{Remark}
\begin{proof}
Let us suppose $\E_i^{(2)} \E_j \1_\l$ and $\E_j \E_i^{(2)} \1_\l$ are both nonzero (the other cases are strictly easier). Recall that $T_{iji} = (T_{ji}I)(IT_{ii})(T_{ij}I)$ and $T'_{iji} = (IT_{ij})(T_{ii}I)(IT_{ji})$. By Corollary \ref{PROP:Tijinonzero} we know $T_{iji} I_\l$ and $T'_{iji} I_\l$ are nonzero and linearly independent. On the other hand, by Lemma \ref{LEM:3}, we have $\dim \End(\E_i \E_j \E_i \1_\l) \le 2$. Thus $T_{iji} I_\l$ and $T'_{iji} I_\l$ span this space and the identity morphism must be a linear combination of these two. 

Finally, to show that $m^\l_{iji}$ and $n^\l_{iji}$ are nonzero note that by Lemma \ref{LEM:4} we have $m^\l_{iji} T_{iji}^2 I_\l = T_{iji} I_\l$ and, in particular, $m_{iji}^\l \ne 0$. Likewise, we also get $n^\l_{iji} \ne 0$. 
\end{proof}

\begin{Lemma}\label{LEM:4}
We have $T_{iji} T'_{iji} I_\l = 0$ while $m_{iji}^\l (T_{iji})^2 I_\l = T_{iji} I_\l$ and $n_{iji}^\l (T'_{iji})^2 I_\l = T'_{iji} I_\l$.
\end{Lemma}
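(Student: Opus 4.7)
The plan is to reduce all three identities to the single vanishing $T_{iji} T'_{iji} I_\l = 0$. Granting this, the other two identities drop out of the relation $m^\l_{iji} T_{iji} + n^\l_{iji} T'_{iji} = III$ from Corollary~\ref{COR:serre2}: composing this on the left with $T_{iji}$ yields $m^\l_{iji} (T_{iji})^2 = T_{iji}$, and composing on the right with $T'_{iji}$ yields $n^\l_{iji} (T'_{iji})^2 = T'_{iji}$.

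For the vanishing itself, I would expand
\[
T_{iji} T'_{iji} = (T_{ji}I)(IT_{ii})(T_{ij}I)(IT_{ij})(T_{ii}I)(IT_{ji})
\]
and recognize the middle three factors $(T_{ij}I)(IT_{ij})(T_{ii}I)$ as exactly $T_{iij}\colon \E_i\E_i\E_j \to \E_j\E_i\E_i$. After the rescaling in Step~\#3 we have $T_{iij} = T'_{iij} = (IT_{ii})(T_{ij}I)(IT_{ij})$, so replacing $T_{iij}$ with $T'_{iij}$ produces two adjacent copies of $IT_{ii}$:
\[
T_{iji} T'_{iji} = (T_{ji}I)(IT_{ii})(IT_{ii})(T_{ij}I)(IT_{ij})(IT_{ji}) = (T_{ji}I)\bigl(I\, T_{ii}^2\bigr)(T_{ij}I)(IT_{ij})(IT_{ji}).
\]
This vanishes (modulo transients) because of the nilHecke relation $T_{ii}^2 = 0$ established as part of the affine nilHecke package in Step~\#5.

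The only concern is that $T_{ii}^2 = 0$ be genuinely at our disposal at this point; it is, as one of the affine nilHecke relations proved modulo transients in Step~\#5, and we have been working modulo transients since Step~\#4. Apart from that, the argument is a purely mechanical substitution using the braid identity $T_{iij} = T'_{iij}$, with no case analysis on the weight $\l$ needed.
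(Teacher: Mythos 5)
Your proof is correct and follows essentially the same route as the paper: expand $T_{iji}T'_{iji}$, use relation (\ref{EQ:3}) (that is, $T_{iij}=T'_{iij}$ after the Step~\#3 rescaling) to manufacture two adjacent copies of $IT_{ii}$, and then multiply the identity from Corollary~\ref{COR:serre2} on the left by $T_{iji}$ and on the right by $T'_{iji}$ to get the other two equalities. One small inaccuracy in your closing remark: $T_{ii}^2 = 0$ is not actually among the relations established in Step~\#5 (Proposition~\ref{PROP:2} only proves the commutation relations $T_{ii}(X_iI) = (IX_i)T_{ii}+II$ and its mirror); rather it holds for degree reasons, since $\End^{-4}(\E_i\E_i\1_\mu) = 0$ by the same adjunction induction as in Lemma~\ref{LEM:homEEs} (equivalently, both nonzero entries of the $2\times 2$ matrix of $T_{ii}$ acting on $\E_i^{(2)}\la 1\ra\oplus\E_i^{(2)}\la -1\ra$ land in $\End^{<0}(\E_i^{(2)})=0$ after squaring). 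In particular $T_{ii}^2=0$ holds on the nose, not just modulo transients, which is consistent with the paper's remark that the Serre-relation material in Step~\#6 does not require modding out.
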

\begin{proof}
We have 
$$T_{iji} T'_{iji} = (T_{ji}I)(IT_{ii})[(T_{ij}I)(IT_{ij})(T_{ii}I)](IT_{ji}) = (T_{ji}I)(IT_{ii})[(IT_{ii})(T_{ij}I)(IT_{ij})](IT_{ji}) = 0$$
where we used relation (\ref{EQ:3}) to rewrite the part in the brackets. Multiplying (\ref{EQ:RIII}) by $T_{iji}$ we get $m_{iji}^\l T_{iji}^2 I_\l = T_{iji} I_\l$ and likewise for the last relation. 
\end{proof}

\begin{Corollary}\label{COR:serre3}
We have $\E_i \E_j \E_i \cong \E_i^{(2)} \E_j \oplus \E_j \E_i^{(2)}$.
\end{Corollary}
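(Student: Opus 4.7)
The plan is to use Corollary~\ref{COR:serre2} and Lemma~\ref{LEM:4} to construct orthogonal idempotents in $\End(\E_i\E_j\E_i\1_\l)$ summing to the identity, and then identify their images as $\E_j\E_i^{(2)}\1_\l$ and $\E_i^{(2)}\E_j\1_\l$.

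First I would set $e_1 := m^\l_{iji} T_{iji}$ and $e_2 := n^\l_{iji} T'_{iji}$. Corollary~\ref{COR:serre2} gives $e_1 + e_2 = \id$, and Lemma~\ref{LEM:4} gives $e_1^2 = e_1$, $e_2^2 = e_2$, and $e_1 e_2 = 0$. The symmetric relation $e_2 e_1 = 0$ follows from
\[
T'_{iji}T_{iji} = (IT_{ij})(T_{ii}I)\bigl[(IT_{ji})(T_{ji}I)(IT_{ii})\bigr](T_{ij}I) = (IT_{ij})(T_{ii}^2 I)(IT_{ji})(T_{ji}I)(T_{ij}I) = 0,
\]
using relation~(\ref{EQ:2}) to rewrite the bracketed expression and the affine nilHecke relation $T_{ii}^2 = 0$ from Step~\#5. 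Since $\K$ is idempotent complete, this produces
\[
\E_i\E_j\E_i\1_\l \cong \A \oplus \B, \qquad \A := \mathrm{Im}(e_1),\ \B := \mathrm{Im}(e_2).
\]

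Next, the idempotent $e_1 = m^\l_{iji}(T_{ji}I)(IT_{ii})(T_{ij}I)$ factors through $\E_j\E_i\E_i\1_\l$, so by the usual splitting argument for factored idempotents $\A$ is a direct summand of $\E_j\E_i\E_i\1_\l$. By Corollary~\ref{COR:new}, $\E_j\E_i\E_i\1_\l \cong \bigoplus_{[2]} \E_j\E_i^{(2)}\1_\l$, so Krull--Schmidt forces $\A$ to be a direct sum of grade-shifted copies of $\E_j\E_i^{(2)}\1_\l$. Symmetrically, $\B$ is a direct sum of grade-shifted copies of $\E_i^{(2)}\E_j\1_\l$.

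To pin down $\A \cong \E_j\E_i^{(2)}\1_\l$ as a single copy at trivial shift, I would construct explicit degree-zero maps
\[
\phi: \E_j\E_i^{(2)}\1_\l \to \E_j\E_i\E_i\1_\l \xrightarrow{T_{ji}I} \E_i\E_j\E_i\1_\l, \qquad \psi: \E_i\E_j\E_i\1_\l \xrightarrow{T_{ij}I} \E_j\E_i\E_i\1_\l \to \E_j\E_i^{(2)}\1_\l,
\]
where the unlabelled arrows are the appropriately shifted inclusion and projection coming from the decomposition of Corollary~\ref{COR:new}. The hard part will be to verify that $\psi\phi$ is a nonzero scalar multiple of the identity on $\E_j\E_i^{(2)}\1_\l$; this should reduce, via the affine nilHecke relations~(\ref{EQ:KLR2})--(\ref{EQ:KLR3}) and the explicit form of the idempotents splitting $\E_i\E_i \cong \bigoplus_{[2]} \E_i^{(2)}$, to a tractable identity involving $X_i$. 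Once established, $\E_j\E_i^{(2)}\1_\l$ is a summand of $\A$; combining this with the analogous assertion $\E_i^{(2)}\E_j\1_\l \hookrightarrow \B$ and matching total multiplicities in $\A \oplus \B \cong \E_i\E_j\E_i\1_\l$ (which on the Grothendieck group level is precisely the classical Serre relation) forces equality $\A \cong \E_j\E_i^{(2)}\1_\l$ and $\B \cong \E_i^{(2)}\E_j\1_\l$.
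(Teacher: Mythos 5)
Your idempotent setup is close in spirit to the paper's argument, which also exhibits $\E_j\E_i^{(2)}\1_\l$ and $\E_i^{(2)}\E_j\1_\l$ directly as summands of $\E_i\E_j\E_i\1_\l$ via essentially your maps $\phi$ and $\psi$. The orthogonality computations for $e_1,e_2$ are correct: your derivation of $e_2e_1=0$ uses relation~(\ref{EQ:2}) and $T_{ii}^2=0$, the latter of which holds because $\End^{-4}(\E_i\E_i)=0$ by the decomposition $\E_i\E_i\cong\oplus_{[2]}\E_i^{(2)}$ together with Lemma~\ref{LEM:homE2s}. The ``hard part'' you flag—showing $\psi\phi$ is a nonzero scalar—is dealt with in the paper quite cleanly: since $\iota\pi\sim T_{ii}$, the composition $\psi\phi$ appears as a factor inside $T_{iji}^2 I_\l$, and Lemma~\ref{LEM:4} gives $m^\l_{iji}T_{iji}^2 I_\l=T_{iji}I_\l\ne 0$; nonvanishing combined with the one-dimensionality from Lemma~\ref{LEM:temp} then forces $\psi\phi$ to be a nonzero multiple of the identity. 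So that step is tractable, and you should spell it out.

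The genuine gap is in your final step. You propose to deduce $\A\cong\E_j\E_i^{(2)}\1_\l$ and $\B\cong\E_i^{(2)}\E_j\1_\l$ by ``matching total multiplicities in $\A\oplus\B\cong\E_i\E_j\E_i\1_\l$ (which on the Grothendieck group level is precisely the classical Serre relation).'' This is circular: the Serre relation—even at the level of the Grothendieck group—is not part of the definition of a $(\g,\t)$ action, and establishing it is exactly what this corollary is for, so you cannot assume it. The paper avoids this by invoking Lemma~\ref{LEM:3}, an adjunction computation giving the a priori bound $\dim\End(\E_i\E_j\E_i\1_\l)\le 2$. Once both $\E_j\E_i^{(2)}\1_\l$ and $\E_i^{(2)}\E_j\1_\l$ are known to be direct summands of $\E_i\E_j\E_i\1_\l$, this bound leaves no room for a further summand ${\sf R}\1_\l$. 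Replace your Grothendieck-group step with this $\Hom$-space estimate and the argument closes. (A smaller point: the Krull--Schmidt inference that $\A$ is built from grade shifts of $\E_j\E_i^{(2)}\1_\l$ also requires knowing that $\E_j\E_i^{(2)}\1_\l$ is indecomposable; this follows from $\End^0(\E_j\E_i^{(2)}\1_\l)\cong\k$, which is another adjunction calculation in the style of Lemma~\ref{LEM:temp} and should at least be acknowledged.)
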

\begin{proof}
Suppose $\E_j \E_i^{(2)} I_\l$ and $\E_j \E_i^{(2)} I_\l$ are nonzero (the cases when one or both are zero is strictly easier).  First consider the composition
\begin{equation}\label{EQ:22}
\E_j \E_i^{(2)} \1_\l \xrightarrow{(T_{ji}I)(I \iota)} \E_i \E_j \E_i \1_\l \xrightarrow{(I \pi)(T_{ij}I)} \E_j \E_i^{(2)} \1_\l
\end{equation}
where $\iota: \E_i^{(2)} \rightarrow \E_i \E_i \la -1 \ra$ is the natural inclusion and $\pi: \E_i \E_i \rightarrow \E_i^{(2)} \la -1 \ra$ the natural projection. 

Claim: the composition in (\ref{EQ:22}) is nonzero. Note that $\iota \circ \pi \in \End^{-2}(\E_i \E_i)$ is equal to $T_{ii}$ up to rescaling. So it suffices to show that the following composition is nonzero
$$\E_j\E_i\E_i \1_\l \xrightarrow{(IT_{ii})} \E_j\E_i\E_i \1_\l \xrightarrow{(T_{ij}I)(T_{ji}I)} \E_j \E_i \E_i \1_\l \xrightarrow{(IT_{ii})} \E_j \E_i \E_i \1_\l$$ 
where we omit shifts for convenience. This composition appears as a factor inside $T_{iji} T_{iji} \1_\l$ so it suffices to show $T_{iji} T_{iji} I_\l \ne 0$. This follows from Lemma \ref{LEM:4} since $m_{iji}^\l T_{iji} T_{iji} I_\l = T_{iji} I_\l$ and completes the proof of the claim. 

Since (\ref{EQ:22}) is nonzero it must be some multiple of the identity (by Lemma \ref{LEM:temp}). Thus $\E_j \E_i^{(2)} \1_\l$ is a direct summand of $\E_i \E_j \E_i \1_\l$. Likewise, one can also prove that $\E_i^{(2)} \E_j \1_\l$ is a direct summand of $\E_i \E_j \E_i \1_\l$. Thus 
$$\E_i \E_j \E_i \1_\l \cong \E_j \E_i^{(2)} \1_\l \oplus \E_i^{(2)} \E_j \1_\l \oplus {\sf R} \1_\l$$
for some 1-morphism ${\sf R}$. But by Lemma \ref{LEM:3} we know $\dim \End(\E_i \E_j \E_i \1_\l) \le 2$ which means ${\sf R} \1_\l = 0$ and the result follows. 
\end{proof}

\section{Step \#7 -- The $T_{iji}$ relation}

In this section we consider $i,j \in I$ such that $\la i,j \ra = -1$. We fix a subset $S_{ij} \subset I \setminus \{i,j\}$ such that $\{\alpha_k: k \in S_{ij}\} \cup \{\alpha_i,\alpha_j\}$ give a basis of $Y_\k/R_\k$ where $R_\k \subset Y_\k$ denotes the radical. By construction this means that $\la \cdot, \cdot \ra$ is nondegenerate on the subspace of $Y_\k$ spanned by $\{\alpha_k: k \in S_{ij}\} \cup \{\alpha_i,\alpha_j\}$. 

\begin{Lemma}\label{LEM:Tij}
We have
$$T_{ji} T_{ij} I_\l = (\phi^{ij}_\l III) + (III \t^{ij}_\l) : \1_{\l+\alpha_i+\alpha_j} \E_i \E_j \1_\l \rightarrow \1_{\l+\alpha_i+\alpha_j} \E_i \E_j \1_\l \la 2 \ra $$
where $\t^{ij}_\l$ and $\phi^{ij}_\l$ satisfy $(\t^{ij}_\l,\alpha_k)_\l + (\phi^{ij}_\l,\alpha_k)_{\l+\alpha_i+\alpha_j} = 0$ for any $k \in S_{ij}$. 
\end{Lemma}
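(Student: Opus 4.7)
The plan is to first show that every element of $\End^2(\E_i\E_j\1_\l)$ reduces modulo transients to a sum of endpoint bubbles, and then verify the pairing constraint on $T_{ji}T_{ij}$ by testing against $\E_k$-strands.

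First I would analyze $\End^2(\1_{\l+\alpha_i+\alpha_j}\E_i\E_j\1_\l)$ by applying Lemma \ref{LEM:EndE} successively to each strand. The conclusion is that a general endomorphism is, modulo transients, a linear combination of endpoint bubbles $(\phi III)$ and $(III\t)$, of $(X_iI)$ and $(IX_j)$, and of middle-weight bubbles involving some $\sigma\in\End^2(\1_{\l+\alpha_j})$. Next I would collapse the non-endpoint pieces using Proposition \ref{PROP:1}: from $X_i=-(\t II)+(II\t)$, with $\t\in Y_\k$ chosen so that $\la\t,\alpha_i\ra=1$ and $\la\t,\alpha_j\ra=0$, Proposition \ref{PROP:1} lets us slide the second summand of $X_iI$ past $\E_j$, giving $X_iI=(III\t)-(\t III)$ modulo transients, and similarly $IX_j=(III\t')-(\t' III)$ modulo transients. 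An analogous two-step sliding rewrites any middle bubble $(I\sigma II)$ as $(\phi III)+(III\t)$ modulo transients. This forces the decomposition $T_{ji}T_{ij}=(\phi III)+(III\t)$.

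The pair $\phi,\t$ is not uniquely determined: shifting $\phi\mapsto\phi+\gamma$ and $\t\mapsto\t-\gamma$ for any $\gamma\in Y_\k$ with $\la\gamma,\alpha_i\ra=\la\gamma,\alpha_j\ra=0$ preserves the decomposition, since such a $\gamma$ slides freely through both $\E_i$ and $\E_j$ by Proposition \ref{PROP:1}. However the class of $\phi+\t$ in $Y_\k/R_\k$ is invariant under this ambiguity, and by Lemma \ref{LEM:pairing2} the condition $(\t,\alpha_k)_\l+(\phi,\alpha_k)_{\l+\alpha_i+\alpha_j}=0$ is equivalent to $\la\phi+\t,\alpha_k\ra=0$; so it is a genuine constraint on $T_{ji}T_{ij}$ itself, not just a choice of presentation.

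To verify this constraint I would test $T_{ji}T_{ij}$ against an $\E_k$-strand for each $k\in S_{ij}$, by adjoining $\E_k\F_k$ (or $\F_k\E_k$, depending on the sign of $\l_k$) on the right of $\E_i\E_j\1_\l$ via $\adj^k$ and $\adj_k$, and applying $T_{ji}T_{ij}$ in the middle. Corollary \ref{COR:bubbles3} together with the definition of the pairing extracts from the $(III\t)$-summand the scalar $(\t,\alpha_k)_\l=\la\t,\alpha_k\ra$, while the $(\phi III)$-summand, once $\phi$ is slid toward the adjoined $\E_k\F_k$ bubble using Proposition \ref{PROP:1} (the slide produces $X_k$-type corrections with coefficient $\la\phi,\alpha_k\ra$), contributes $\la\phi,\alpha_k\ra$ with the opposite sign. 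Equating this computation with the parallel one obtained by adjoining on the left side of $\E_i\E_j\1_\l$ forces $\la\phi+\t,\alpha_k\ra=0$, as desired.

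The hard part will be the last step: arranging the $\E_k$-adjunctions and sliding maneuvers so that the two contributions appear cleanly with opposite signs and no parasitic terms survive. I expect this to rely on combining the pitchfork identities of Lemma \ref{LEM:fork}, the affine nilHecke relations (\ref{EQ:KLR1})--(\ref{EQ:KLR3}), the proportionality $T_{ijk}\sim T'_{ijk}$ from Corollary \ref{COR:6}, and a careful bookkeeping of which adjunction carries the factor $\la\cdot,\alpha_k\ra$ via Corollary \ref{COR:bubbles3}.
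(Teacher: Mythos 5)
Your derivation of the decomposition $T_{ji}T_{ij}I_\l = (\phi III)+(III\t)$ from Lemma \ref{LEM:EndE} and Proposition \ref{PROP:1} is correct, and it usefully makes explicit a step the paper states without proof. The gap is in the verification of the pairing constraint. You propose to adjoin a single $\E_k\F_k$ (or $\F_k\E_k$) bubble via $\adj^k,\adj_k$ and read off $(\t,\alpha_k)_\l$ using Corollary \ref{COR:bubbles3}. But $(\t,\alpha_k)_\l$ is \emph{defined} through $T_{kk}(I\t I)T_{kk}=(\t,\alpha_k)_\l T_{kk}$, which needs a pair of parallel $k$-strands with a crossing between them; a single-strand cap-cup cannot detect it. Corollary \ref{COR:bubbles3} is a statement about inserting powers of $X_k$ into such a bubble, a genuinely different computation, and the proposal offers no bridge from that back to the $T_{kk}$-defined pairing. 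A secondary weakness: the reduction of the constraint to $\la\phi+\t,\alpha_k\ra=0$ via Lemma \ref{LEM:pairing2} tacitly assumes $\phi$ is in the image of $Y_\k\to\End^2(\1_{\l+\alpha_i+\alpha_j})$, which you have not established -- $\phi$ is only known to lie in $\End^2(\1_{\l+\alpha_i+\alpha_j})$, and Lemma \ref{LEM:pairing2} is only proved for elements of $Y_\k$.

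The paper's mechanism is different and cleaner: compose with $\F_k\F_k$ on the right, use the one-dimensionality in Corollary \ref{cor:homijk} to slide the $T_{ij},T_{ji}$ past the $v$'s and $u$'s so that the composition becomes (up to a scalar) $(T_{ji}II)(T_{ij}II)$ on $\E_i\E_j\F_k\F_k$, and then pre- and post-compose with the crossing $T_{kk}\in\End^{-2}(\F_k\F_k)$. On one side the result is zero because $T_{kk}^{2}=0$. On the other side, writing $T_{ji}T_{ij}=(\phi III)+(III\t)$ and sliding $\phi$ and $\t$ toward the $k$-strands using a $\gamma\in Y_\k$ with $\la\gamma,\alpha_i\ra=\la\gamma,\alpha_j\ra=0$, $\la\gamma,\alpha_k\ra=1$, produces an interior $\gamma$-dot with coefficient $(\t,\alpha_k)_\l+(\phi,\alpha_k)_{\l+\alpha_i+\alpha_j}$, together with endpoint terms that are killed by the two $T_{kk}$'s. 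Comparing the two computations forces the sum to vanish. It is precisely this $T_{kk}^{2}=0$ annihilation, together with working with two $\F_k$ strands rather than a single bubble, that your argument is missing.
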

\begin{proof}
Any map $\1_{\l+\alpha_i+\alpha_j} \E_i \E_j \1_\l \rightarrow \1_{\l+\alpha_i+\alpha_j} \E_i \E_j \1_\l \la 2 \ra$ is of the form $\phi_\l^{ij} III + III \t_\l^{ij}$ for some $\t_\l^{ij} \in \End^2(\1_\l)$ and $\phi_\l^{ij} \in \End^2(\1_{\l+\alpha_i+\alpha_j})$. Now consider $k \in S_{ij}$. By nondegeneracy of $\la \cdot, \cdot \ra$ on $Y_\k/R_\k$ we can find $\gamma \in Y_\k$ so that $\la \gamma, \alpha_k \ra = 1$ while $\la \gamma, \alpha_i \ra = \la \gamma, \alpha_j \ra = 0$. 

Now consider the composition 
\begin{equation}\label{EQ:15}
\E_i \E_j \F_k \F_k \xrightarrow{(v_{ik}II)(Iv_{jk}I)} \F_k \E_i \E_j \F_k \xrightarrow{(IT_{ji}I)(IT_{ij}I)} \F_k \E_i \E_j \F_k \la 2 \ra \xrightarrow{(Iu_{kj}I)(u_{ki}II)} \E_i \E_j \F_k \F_k \la 2 \ra 
\end{equation}
On the one hand, since $k \ne i,j$ we can apply Corollary \ref{cor:homijk} to conclude that, 
$$(IT_{ij}I)(v_{ik}II)(Iv_{jk}I) \sim (v_{ik}II)(Iv_{jk}I)(T_{ij}II) \ \ \text{ and } \ \ (Iu_{kj}I)(u_{ki}II)(IT_{ji}I) \sim (T_{ji}II)(Iu_{kj}I)(u_{ki}II).$$
This means that, up to a multiple, the composition in (\ref{EQ:15}) is equal to 
$$\E_i \E_j \F_k \F_k \xrightarrow{(v_{ik}II)(Iv_{jk}I)(T_{ij}II)} \F_k \E_j \E_i \F_k \xrightarrow{(T_{ji}II)(Iu_{kj}I)(u_{ki}II)} \E_i \E_j \F_k \F_k.$$
Since $u_{kj}$ and $v_{jk}$ are inverses (up to a multiple) and likewise for $u_{ki}$ and $v_{ik}$ this composition is (up to a multiple) equal to $(T_{ji}II)(T_{ij}II) \in \End^2(\E_i \E_j \F_k \F_k)$. In particular, this means that composing (\ref{EQ:15}) on the left and right with $(IIT_{kk})$ we get zero. 

On the other hand, we can rewrite (\ref{EQ:15}) as
\begin{align}
\label{EQ:16} 
\E_i \E_j \F_k \F_k \xrightarrow{(v_{ik}II)(Iv_{jk}I)} \F_k \1_\mu \E_i \E_j \1_\l \F_k & \xrightarrow{(I \phi^{ij}_\l IIII) + (IIII \t^{ij}_\l I)} \F_k \1_\mu \E_i \E_j \1_\l \F_k \la 2 \ra \\
\nonumber & \xrightarrow{(Iu_{kj}I)(u_{ki}II)} \E_i \E_j \F_k \F_k \la 2 \ra
\end{align}
where $\mu = \l+\alpha_i+\alpha_j$. Now $(\t^{ij}_\l II) \in \End^2(\1_\l \F_k \1_{\l+\alpha_k})$ can be rewritten as $(II \rho) + a (\gamma II)$ for some $\rho \in \End^2(\1_{\l+\alpha_k})$ and where $a := (\t_\l^{ij}, \alpha_k)_\l$. Likewise, $(II \phi^{ij}_\l) \in \End^2(\1_{\mu-\alpha_k} \F_k \1_\mu)$ can we rewritten as $(\rho' II) + b(II \gamma)$ where $b := (\phi^{ij}_\l, \alpha_k)_{\l+\alpha_i+\alpha_j}$. Also, since $\la \gamma, \alpha_i \ra = \la \gamma, \alpha_j \ra = 0$ the map $b (I \gamma II) \in \End^2(\F_k \1_{\l+\alpha_i+\alpha_j} \E_i \E_j)$ is equal to $b (III \gamma) \in \End^2(\F_k \E_i \E_j \1_\l)$. Thus (\ref{EQ:16}) is equal to the composition 
$$\E_i \E_j \F_k \F_k \xrightarrow{(v_{ik}II)(Iv_{jk}I)} \F_k \E_i \E_j \F_k \xrightarrow{h} \F_k \E_i \E_j \F_k \la 2 \ra \xrightarrow{(Iu_{kj}I)(u_{ki}II)} \E_i \E_j \F_k \F_k \la 2 \ra$$
where $h$ is the map 
$$\1_{\mu-\alpha_k} \F_k \E_i \E_j \1_\l \F_k \1_{\l+\alpha_k} \xrightarrow{(\rho' IIIIII) + (IIIIII \rho) + (a+b)(IIII \gamma II)} \1_{\mu-\alpha_k} \F_k \E_i \E_j \1_\l \F_k \1_{\l+\alpha_k} \la 2 \ra.$$
Up to rescaling this is equal to 
$$\1_{\mu-\alpha_k} \E_i \E_j \F_k \1_\l \F_k \1_{\l+\alpha_k} \xrightarrow{(\rho' IIIIII) + (IIIIII \rho) + (a+b)(IIII \gamma II)} \1_{\mu-\alpha_k} \E_i \E_j \F_k \1_\l \F_k \1_{\l+\alpha_k} \la 2 \ra.$$
Composing on both sides with $IIT_{kk} \in \End^{-2}(\E_i \E_j \F_k \F_k)$ we find that we get zero if and only if $a+b = 0$. This concludes the proof. 
\end{proof}

\begin{Lemma}\label{LEM:A}
Using the notation from Corollary \ref{COR:serre2} and Lemma \ref{LEM:Tij} we have
\begin{enumerate}
\item $n_{iji}^\l \cdot (\phi^{ij}_\l, \alpha_i)_{\l+\alpha_i+\alpha_j} = 1$ if $\E_i^{(2)} \E_j \1_\l \ne 0$ and
\item $m_{jij}^{\l-\alpha_j} \cdot (\t^{ij}_\l, \alpha_j)_\l = 1$ if $\E_i \E_j^{(2)} \1_{\l-\alpha_j} \ne 0$. 
\end{enumerate}
\end{Lemma}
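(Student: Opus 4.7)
The plan is to compute $(T'_{iji})^2$ in two different ways and compare, exactly as in the proof of Lemma \ref{LEM:4}. On the one hand, Lemma \ref{LEM:4} already gives $n_{iji}^\l (T'_{iji})^2 = T'_{iji}$. On the other hand, expanding $T'_{iji} = (IT_{ij})(T_{ii}I)(IT_{ji})$ produces
\[
(T'_{iji})^2 = (IT_{ij})(T_{ii}I) \cdot I(T_{ji}T_{ij}) \cdot (T_{ii}I)(IT_{ji})
\]
where the middle factor $I(T_{ji}T_{ij})$ is an endomorphism of $\E_i\E_i\E_j\1_\l$. Applying Lemma \ref{LEM:Tij} to the inner $T_{ji}T_{ij}$ on $\E_i\E_j\1_\l$ decomposes this middle factor as a sum of two pieces: one where $\phi^{ij}_\l$ sits at the intermediate weight $\l+\alpha_i+\alpha_j$ (between the two $\E_i$'s), and one where $\t^{ij}_\l$ sits at the rightmost weight $\l$.

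The $\t^{ij}_\l$-piece commutes freely past the surrounding $(T_{ii}I)$'s and $(IT_{ij}), (IT_{ji})$'s, since all of these act on strands to the left of the rightmost weight; what remains is $(T_{ii}I)(T_{ii}I) = 0$ by the nilHecke relation $T_{ii}^2 = 0$, so this piece vanishes. The $\phi^{ij}_\l$-piece, on the other hand, is sandwiched exactly as $(T_{ii}I)(I\phi^{ij}_\l I)(T_{ii}I)$ at the middle weight $\l+\alpha_i+\alpha_j$, which by the very definition of the pairing $(\cdot,\cdot)_{\l+\alpha_i+\alpha_j}$ equals $(\phi^{ij}_\l, \alpha_i)_{\l+\alpha_i+\alpha_j}$ times $T_{ii}$. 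The surviving outer factors then reassemble to give $(T'_{iji})^2 = (\phi^{ij}_\l, \alpha_i)_{\l+\alpha_i+\alpha_j} \cdot T'_{iji}$. Comparing with Lemma \ref{LEM:4} and cancelling $T'_{iji}$ (which is nonzero whenever $\E_i^{(2)}\E_j\1_\l \ne 0$ by Proposition \ref{PROP:Tijinonzero}) yields part (i).

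Part (ii) follows from the completely analogous computation of $(T_{jij})^2$ acting on $\E_j\E_i\E_j\1_{\l-\alpha_j}$. Expanding $T_{jij} = (T_{ij}I)(IT_{jj})(T_{ji}I)$ and using Lemma \ref{LEM:Tij} again for $T_{ji}T_{ij}$ on $\E_i\E_j\1_\l$ (note the rightmost endpoint of the inner $\E_i\E_j$ is indeed $\l$, since $\l-\alpha_j+\alpha_j = \l$), one sees that this time the roles of $\phi^{ij}_\l$ and $\t^{ij}_\l$ are reversed: the $\phi^{ij}_\l$-piece sits at the leftmost weight and commutes past $(IT_{jj})$'s to meet $(IT_{jj})^2=0$, while the $\t^{ij}_\l$-piece now occupies the middle weight $\l$ of the $\E_j\E_j$ pair, where it becomes $(\t^{ij}_\l, \alpha_j)_\l \cdot T_{jj}$ by the pairing definition. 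Thus $(T_{jij})^2 = (\t^{ij}_\l, \alpha_j)_\l \cdot T_{jij}$, and comparing with $m^{\l-\alpha_j}_{jij} T_{jij}^2 = T_{jij}$ from (the $i \leftrightarrow j$ swap of) Lemma \ref{LEM:4} gives the second identity, provided $T_{jij} I_{\l-\alpha_j}$ is nonzero, i.e.\ $\E_i\E_j^{(2)}\1_{\l-\alpha_j} \ne 0$.

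The only real obstacle is bookkeeping the weights carefully so that one correctly identifies at which weight each of $\phi^{ij}_\l$ and $\t^{ij}_\l$ lands, and so that the sandwich in the surviving piece matches exactly the sandwich that defines the pairing $(\cdot,\cdot)_\mu$ at the appropriate weight $\mu$. The asymmetry between parts (i) and (ii) — namely why (i) uses $T'_{iji}$ at $\l$ and (ii) uses $T_{jij}$ at $\l-\alpha_j$ — comes precisely from demanding that the nilHecke square $T_{ii}^2$ (resp.\ $T_{jj}^2$) appears next to the term one wants to kill, leaving the other term to produce the desired pairing.
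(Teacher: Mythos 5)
Your proof is correct and follows essentially the same route as the paper: compute $(T'_{iji})^2$ (resp. $(T_{jij})^2$), expand the inner $T_{ji}T_{ij}$ via Lemma~\ref{LEM:Tij}, kill the part at the far weight with $T_{ii}^2 = 0$ (resp. $T_{jj}^2 = 0$), recognize the remaining sandwich as the defining pairing, and compare with Lemma~\ref{LEM:4}. The weight bookkeeping — $\phi^{ij}_\l$ landing at $\l+\alpha_i+\alpha_j$ between the two $\E_i$'s in the first computation, and $\t^{ij}_\l$ landing at $\l$ between the two $\E_j$'s in the second — is exactly what makes the asymmetry between (i) and (ii), just as you observe.
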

\begin{proof}
On the one hand we have $n_{iji}^\l (T'_{iji})^2 I_\l = T'_{iji} I_\l$ by Lemma \ref{LEM:4}. On the other hand,
$$(T'_{iji})^2 I_\l = (IT_{ij})(T_{ii}I)[(IT_{ji})(IT_{ij})](T_{ii}I)(IT_{ji}) I_\l$$
which is equal to the composition 
$$\E_i \E_j \E_i \1_\l \xrightarrow{(T_{ii}I)(IT_{ji})} \E_i \1_{\mu} \E_i \E_j \1_\l \la -1 \ra \xrightarrow{(I \phi^{ij}_\l III) + (IIII \t^{ij}_\l)} \E_i \1_{\mu} \E_i \E_j \1_\l \la 1 \ra \xrightarrow{(IT_{ij})(T_{ii}I)} \E_i \E_j \E_i \1_\l$$
where $\mu = \l+\alpha_i+\alpha_j$. Here we used Lemma \ref{LEM:Tij} to rewrite the part in the brackets. 

Now $(T_{ii}I)(IIII \t^{ij}_\l)(T_{ii}I) = 0$ whereas $(T_{ii}I)(I \phi^{ij}_\l III)(T_{ii}I) = (\phi^{ij}_\l, \alpha_i)_{\l+\alpha_i+\alpha_j} (T_{ii}I)$. Thus 
$$(n_{iji}^\l)^{-1} T'_{iji} I_\l = (\phi^{ij}_\l, \alpha_i)_{\l+\alpha_i+\alpha_j} (IT_{ij})(T_{ii}I)(IT_{ji})T'_{iji} I_\l = (\phi^{ij}_\l, \alpha_i)_{\l+\alpha_i+\alpha_j} T'_{iji} I_\l$$ 
and hence $n_{iji}^\l (\phi^{ij}_\l, \alpha_i)_{\l+\alpha_i+\alpha_j} = 1$. The second relation follows by considering $T_{jij}$ instead of $T'_{iji}$. 
\end{proof}

\begin{Lemma}\label{LEM:B}
Using the notation from Corollary \ref{COR:serre2} and Lemma \ref{LEM:Tij} we have
\begin{enumerate}
\item $m_{iji}^{\l-\alpha_i} \cdot (\phi^{ij}_\l, \alpha_i)_{\l+\alpha_i+\alpha_j} = -1$ assuming both terms are nonzero and 
\item $n_{iji}^\l \cdot (\t_\l^{ji}, \alpha_i)_\l = -1$ assuming both terms are nonzero.
\end{enumerate}
\end{Lemma}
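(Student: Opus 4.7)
The plan is to adapt the strategy of Lemma~\ref{LEM:A} by post-composing the Serre decomposition $m^{\l'}_{iji}T_{iji} + n^{\l'}_{iji}T'_{iji} = III$ of Corollary~\ref{COR:serre2} with $(IIX_i)$ at an auxiliary weight $\l'$, then specializing $\l'$ to produce each part. The first step is to derive, using the affine nilHecke relation $T_{ii}(IX_i) = (X_iI)T_{ii} - II$ from Proposition~\ref{PROP:2} applied to the middle $T_{ii}$, together with the KLR commutation relations~(\ref{EQ:KLR2}) and~(\ref{EQ:KLR3}) that allow $X_i$ to pass through $T_{ij}$ or $T_{ji}$ on disjoint factors without correction:
\begin{align*}
T_{iji}(IIX_i) &= (X_iII)\,T_{iji} - (T_{ji}I)(T_{ij}I),\\
T'_{iji}(IIX_i) &= (X_iII)\,T'_{iji} - (IT_{ij})(IT_{ji}).
\end{align*}
Substituting into the Serre identity composed with $(IIX_i)$, the two $(X_iII)$-pieces recombine via the Serre identity itself, leaving
\[
m^{\l'}_{iji}(T_{ji}T_{ij})I \;+\; n^{\l'}_{iji}\,I(T_{ij}T_{ji}) \;=\; (X_iII) - (IIX_i) \;\in\; \End^2(\E_i\E_j\E_i\1_{\l'})
\]
modulo transients.

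Applying Lemma~\ref{LEM:Tij} and its $i \leftrightarrow j$ analogue, together with $X_i = -\t II + II\t$ for any $\t \in Y_\k$ with $\la\t,\alpha_i\ra = 1$, both sides decompose into contributions located at the four distinct weights $\l'+2\alpha_i+\alpha_j$, $\l'+\alpha_i+\alpha_j$, $\l'+\alpha_i$, and $\l'$: the left-hand side carries $m^{\l'}\phi^{ij}_{\l'+\alpha_i}$, $n^{\l'}\phi^{ji}_{\l'}$, $m^{\l'}\t^{ij}_{\l'+\alpha_i}$, $n^{\l'}\t^{ji}_{\l'}$, while the right-hand side carries $-\t$, $+\t$, $+\t$, $-\t$ at these same four weights. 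For part~(2) I would set $\l' = \l$ and extract the rightmost-weight ($=\l$) contribution: extend the identity by an extra $\E_i$ on the right (obtaining an endomorphism of $\E_i\E_j\E_i\E_i\1_{\l-\alpha_i}$) and sandwich both sides with $(IIIT_{ii})$. The piece $n^\l_{iji}\t^{ji}_\l$ sits between the two $T_{ii}$'s and produces $n^\l_{iji}(\t^{ji}_\l,\alpha_i)_\l\,T_{ii}$ by the definition of the pairing, while the other three LHS and three non-rightmost RHS pieces commute freely past one $T_{ii}$ and vanish through $(T_{ii})^2 = 0$ (modulo transients). On the right only the rightmost $-\t$ survives, giving $-T_{ii}$; cancelling $T_{ii}$ gives $n^\l_{iji}(\t^{ji}_\l,\alpha_i)_\l = -1$. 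For part~(1) the analogous extraction at $\l' = \l-\alpha_i$ and at the leftmost weight $\l+\alpha_i+\alpha_j$ (extend by an $\E_i$ on the left, sandwich with $(T_{ii}III)$) isolates the $m^{\l-\alpha_i}\phi^{ij}_\l$-contribution and similarly produces $m^{\l-\alpha_i}_{iji}(\phi^{ij}_\l,\alpha_i)_{\l+\alpha_i+\alpha_j} = -1$.

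The main technical obstacle is verifying that the $T_{ii}$-sandwich faithfully probes only the targeted weight. This relies on exactly the same mechanism that eliminates the $(IIII\t^{ij}_\l)$ term in Lemma~\ref{LEM:A}: the three non-targeted 2-morphisms live at weights that do not lie between the two $\E_i$'s of the inserted pair, so they commute past one $T_{ii}$ and end up contributing a scalar times $T_{ii}^2 = 0$. Justifying the various commutation steps uses Proposition~\ref{PROP:1} to propagate the insertions of $\t$ and $\gamma$ across the extra $\E_i$, after which the proof reduces to a scalar equation in $\End^{-2}(\E_i\1_\mu\E_i)$ where the identification of the pairing with $\la\t,\alpha_i\ra = 1$ (via Lemma~\ref{LEM:pairing2}) produces the claimed $-1$ on the right-hand side.
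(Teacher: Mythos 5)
Your proposal is correct and follows essentially the same route as the paper: you derive the same key identity
\[
m^{\l}_{iji}(T_{ji}T_{ij})I + n^{\l}_{iji}\,I(T_{ij}T_{ji}) = (X_iII) - (IIX_i) \in \End^2(\E_i\E_j\E_i\1_{\l})
\]
via the nilHecke and KLR commutation relations applied to the Serre decomposition post-composed with $(IIX_i)$, and then use the four-weight decomposition from Lemma \ref{LEM:Tij} together with $X_i = -(\t II)+(II\t)$. The only difference is at the final step, where the paper simply ``compares'' the two sides at the outermost weight slots; you make this comparison precise by appending an extra $\E_i$ and sandwiching with $T_{ii}$, which directly reads off the $(\cdot,\alpha_i)$ pairing at the targeted slot while killing the other three contributions via $T_{ii}^2 = 0$ -- a legitimate and arguably cleaner way of executing the same extraction that the definition of the pairing already encodes.
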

\begin{proof}
First note that
\begin{align*}
(T_{ji}I)(IT_{ii})(T_{ij}I)(IIX_i) 
&= (T_{ji}I)[(IX_iI)(IT_{ii}) - (III)](T_{ij}I) \\
&= (X_iII)(T_{ji}I)(IT_{ii})(T_{ij}I)\1_\l - (T_{ji}I)(T_{ij}I) 
\end{align*}
and similarly
$$(IT_{ij})(T_{ii}I)(IT_{ji})(IIX_i) = (X_iII)(IT_{ij})(T_{ii}I)(IT_{ji}) - (IT_{ij})(IT_{ji}).$$
Using that $m_{iji}^\l (T_{ji}I)(IT_{ii})(T_{ij}I) I_\l + n_{iji}^\l (IT_{ij})(T_{ii}I)(IT_{ji}) I_\l = (III) I_\l$ and simplifying we arrive at the relation
\begin{equation*}
m_{iji}^\l (T_{ji}I)(T_{ij}I) I_\l + n_{iji}^\l(IT_{ij})(IT_{ji}) I_\l = (X_iII) I_\l - (IIX_i) I_\l \in \End^2(\E_i \E_j \E_i \1_\l).
\end{equation*}
Now, the left side is equal to  
\begin{equation}\label{EQ:11}
m_{iji}^\l [(\phi^{ij}_{\l+\alpha_i}IIIIII) + (IIII\t^{ij}_{\l+\alpha_i}II)] + n_{iji}^\l[(II \phi^{ji}_\l IIII)+(IIIIII \t^{ji}_\l)]
\end{equation}
as an element in $\End^2(\1_{\mu+\alpha_i} \E_i \1_{\mu} \E_j \1_{\l+\alpha_i} \E_i \1_\l)$, where $\mu = \l+\alpha_i+\alpha_j$. On the other hand, the right side is equal to 
\begin{equation}\label{EQ:12}
-(\t IIIIII) + (II \t IIII) + (IIII \t II) - (IIIIII \t)
\end{equation}
for some $\t \in Y_\k$ with $\la \t, \alpha_i \ra = 1$. Comparing (\ref{EQ:11}) and (\ref{EQ:12}) we find that 
$$m_{iji}^\l (\phi_{\l+\alpha_i}^{ij}, \alpha_i)_{\mu+\alpha_i} = - (\t, \alpha_i)_{\mu+\alpha_i} = -1 \ \ \text{ and } \ \ n_{iji}^\l(\t_\l^{ji}, \alpha_i)_\l = - (\t, \alpha_i)_{\l} = -1.$$
This concludes the proof. 
\end{proof}

\begin{Lemma}\label{LEM:C}
Using the notation from Lemma \ref{LEM:Tij} we have 
\begin{enumerate}
\item $(\t^{ij}_\l, \alpha_i)_\l = (\t^{ji}_\l, \alpha_i)_\l$ if $\E_i^{(2)} \E_j \1_{\l-\alpha_i} \ne 0$ and $\E_j \E_i^{(2)} \1_{\l-\alpha_i} \ne 0$,
\item $(\phi^{ij}_\l, \alpha_j)_{\l+\alpha_i+\alpha_j} = (\phi^{ji}_\l, \alpha_j)_{\l+\alpha_i+\alpha_j}$ if $\E_j^{(2)} \E_i \1_\l \ne 0$ and $\E_i \E_j^{(2)} \1_\l \ne 0$. 
\end{enumerate}
\end{Lemma}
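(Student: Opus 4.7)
The plan is to express both $(\t^{ij}_\l, \alpha_i)_\l$ and $(\t^{ji}_\l, \alpha_i)_\l$ as $1/m$, where $m := m_{iji}^{\l-\alpha_i}$ is the scalar from Corollary \ref{COR:serre2} applied at the shifted weight $\l - \alpha_i$; this immediately yields (i). For (ii), the symmetric argument—swapping $i \leftrightarrow j$ and working at weight $\l$—will identify both $(\phi^{ij}_\l, \alpha_j)_\mu$ and $(\phi^{ji}_\l, \alpha_j)_\mu$ with $1/n_{jij}^\l$.

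To obtain $m(\t^{ji}_\l, \alpha_i)_\l = 1$, I would apply the idempotent relation $m T_{iji}^2 I_{\l-\alpha_i} = T_{iji} I_{\l-\alpha_i}$ from Lemma \ref{LEM:4} and compute $T_{iji}^2 I_{\l-\alpha_i}$ directly. The middle factor $(T_{ij}I)(T_{ji}I)$ equals $(T_{ij}T_{ji} I_\l) \otimes I$ acting on the top two layers of $\E_j\E_i\E_i\1_{\l-\alpha_i}$; by Lemma \ref{LEM:Tij} this expands as $\phi^{ji}_\l$ at the top weight space plus $\t^{ji}_\l$ at the weight space $\1_\l$. The outer $(IT_{ii})$-sandwich annihilates the $\phi^{ji}_\l$ piece (which lies outside the $T_{ii}$-region, and $T_{ii}^2=0$) and extracts $(\t^{ji}_\l, \alpha_i)_\l \cdot IT_{ii}$ from the $\t^{ji}_\l$ piece by the definition of the pairing. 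Hence $T_{iji}^2 I_{\l-\alpha_i} = (\t^{ji}_\l, \alpha_i)_\l\, T_{iji} I_{\l-\alpha_i}$, and since $T_{iji} I_{\l-\alpha_i}\ne 0$ by Proposition \ref{PROP:Tijinonzero} (using $\E_j\E_i^{(2)}\1_{\l-\alpha_i}\ne 0$), this forces $m(\t^{ji}_\l, \alpha_i)_\l = 1$.

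For $m(\t^{ij}_\l, \alpha_i)_\l = 1$, I would repeat the derivation of Lemma \ref{LEM:B} at the shifted weight $\l - \alpha_i$. Composing $m T_{iji} + n T'_{iji} = III$ with $IIX_i$ and simplifying via the affine nilHecke relations yields
$$m(T_{ji}I)(T_{ij}I) + n(IT_{ij})(IT_{ji}) = X_iII - IIX_i \in \End^2(\E_i\E_j\E_i\1_{\l-\alpha_i}).$$
Expanding the left side via Lemma \ref{LEM:Tij}, the $\t^{ij}_\l$ contribution lives at the weight space $\1_\l$ (second-from-right), while the right side contributes a $\t \in Y_\k$ with $\la \t, \alpha_i\ra = 1$ at the same weight space. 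Reading off this component and pairing with $\alpha_i$—analogously to the outermost position-$3$ and position-$0$ readings carried out in the proof of Lemma \ref{LEM:B}—yields $m(\t^{ij}_\l, \alpha_i)_\l = 1$. Combining the two calculations proves (i). Part (ii) then goes through identically after swapping $i \leftrightarrow j$: I would use $(T'_{jij})^2 I_\l$ (Lemma \ref{LEM:A}(i) with swapped indices, requiring $\E_j^{(2)}\E_i \1_\l \ne 0$) to obtain $n_{jij}^\l(\phi^{ji}_\l, \alpha_j)_\mu = 1$, and read off the ``position-$2$'' component of the swapped Lemma \ref{LEM:B}-equation at weight $\l$ to obtain $n_{jij}^\l(\phi^{ij}_\l, \alpha_j)_\mu = 1$.

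The main obstacle will be rigorously justifying the ``interior'' position extractions (position $1$ in (i), position $2$ in (ii)). The proof of Lemma \ref{LEM:B} only explicitly reads off the two outermost weight spaces, which are directly accessible by adjunction; isolating an interior weight space requires showing that the decomposition of $\End^2(\E_i\E_j\E_i\1_{\l-\alpha_i})$ into per-weight-space contributions is well-defined modulo transient maps. This should follow from the bubble-slide relation Proposition \ref{PROP:1} combined with the structure of $\End^2(\E_i\1_\l)$ described in Lemma \ref{LEM:EndE}, applied layer by layer; alternatively one can simply compose with extra $\E_i$/$\F_i$'s and adjunctions to transport the interior position to an outer position, then reduce to the explicit case already handled.
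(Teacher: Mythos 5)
Your Step 1 is correct, and it is a legitimate variant of the strategy used in the paper's Lemma \ref{LEM:A}: expanding $T_{iji}^2 I_{\lambda-\alpha_i}$, the inner factor $(T_{ij}I)(T_{ji}I)=T_{ij}T_{ji}$ acts on $\E_j\E_i\1_\lambda$, Lemma~\ref{LEM:Tij} places $\theta^{ji}_\lambda$ at the weight space $\1_\lambda$ squarely between the two $\E_i$'s that $T_{ii}$ acts on, so the $(IT_{ii})$\nobreakdash-sandwich extracts $(\theta^{ji}_\lambda,\alpha_i)_\lambda$ while $\phi^{ji}_\lambda$ slides to the far left and dies against $T_{ii}^2=0$. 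Together with $mT_{iji}^2=T_{iji}$ and nonvanishing of $T_{iji}I_{\lambda-\alpha_i}$ this gives $m(\theta^{ji}_\lambda,\alpha_i)_\lambda=1$.

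Step 2, however, has a genuine gap, precisely the one you flag at the end. In the identity
$m(T_{ji}I)(T_{ij}I) + n(IT_{ij})(IT_{ji}) = X_iII - IIX_i$
at weight $\lambda-\alpha_i$, you want to isolate the contribution at the interior weight space $\1_\lambda$ and then pair it with $\alpha_i$. But $\1_\lambda$ sits directly beneath $\E_j$, and by Proposition~\ref{PROP:1} any $\gamma$ with $\langle\gamma,\alpha_j\rangle=0$ slides freely across that $\E_j$ between $\1_{\lambda+\alpha_j}$ and $\1_\lambda$ while possibly having $\langle\gamma,\alpha_i\rangle\neq 0$. So the ``per-position'' contribution at $\1_\lambda$ is only determined modulo bubbles whose $\alpha_j$-pairing vanishes, and these \emph{do} affect the $\alpha_i$-pairing. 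This is why Lemma~\ref{LEM:B} reads off only the two outermost weight spaces: those are adjacent only to $\E_i$, where the sliding ambiguity is exactly the kernel of $(\cdot,\alpha_i)$ and hence harmless. Your first suggested fix (bubble-slide plus Lemma~\ref{LEM:EndE} layer by layer) collides with exactly this obstruction; the second (compose with extra $\E$/$\F$'s to transport the position outward) is not carried out, and is essentially a new argument, not a reduction to something already proven. The same issue reappears in your treatment of part (ii), where you read off the $\alpha_j$-pairing at a weight space adjacent to $\E_i$.

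The paper avoids interior reads entirely. It evaluates a single composition,
$\E_j\E_i\1_\lambda\E_i \xrightarrow{IT_{ii}} \E_j\E_i\1_\lambda\E_i \xrightarrow{(T_{ji}I)(T_{ij}I)(T_{ji}I)} \E_i\E_j\1_\lambda\E_i \xrightarrow{(T_{ii}I)(IT_{ji})} \E_i\E_i\E_j,$
two ways, grouping the middle factor either as $(T_{ji}I)(T_{ij}T_{ji})I$ or as $(T_{ji}T_{ij})I\,(T_{ji}I)$ and then commuting the resulting bubble factor past a $T_{ji}$ so that in both cases the composition takes the form $T_{jii}\cdot[\phi_\lambda+\theta_\lambda]\cdot(IT_{ii})$. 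Using $T_{jii}=T'_{jii}=(IT_{ji})(T_{ji}I)(IT_{ii})$ to expose a second $(IT_{ii})$, the $\theta$-bubble is always caught between the two $T_{ii}$'s and the pairing is extracted, while the $\phi$-bubble is killed by $T_{ii}^2=0$. This yields $(\theta^{ij}_\lambda,\alpha_i)_\lambda T_{jii}=(\theta^{ji}_\lambda,\alpha_i)_\lambda T_{jii}$ directly, requiring only $T_{jii}I_{\lambda-\alpha_i}\neq 0$ (Proposition~\ref{PROP:homiij}), and it never requires decomposing any endomorphism into per-weight-space contributions. You should adopt this composition rather than re-derive the Lemma~\ref{LEM:B} identity, which was designed only to be read at its two ends.
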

\begin{proof}
Consider the following composition
\begin{equation}\label{EQ:9}
\E_j \E_i \1_\l \E_i \xrightarrow{(IT_{ii})} \E_j \E_i \1_\l \E_i \xrightarrow{(T_{ji}I)(T_{ij}I)(T_{ji}I)} \E_i \E_j \1_\l \E_i \xrightarrow{(T_{ii}I)(IT_{ji})} \E_i \E_i \E_j
\end{equation}
where we omit the grading shifts for convenience. On the one hand, this is equal to the composition 
$$\E_j \E_i \1_\l \E_i \xrightarrow{(IT_{ii})} \1_\mu \E_j \E_i \1_\l \E_i \xrightarrow{(\phi_\l^{ij} IIII) + (III \t_\l^{ij} I)} \1_\mu \E_j \E_i \1_\l \E_i \xrightarrow{T_{jii}} \E_i \E_i \E_j$$
where $\mu = \l+\alpha_i+\alpha_j$. Now $T_{jii} = T'_{jii} = (IT_{ji})(T_{ji}I)(IT_{ii})$ which means that 
$$T_{jii}(III \t_\l^{ij} I)(IT_{ii}) = (IT_{ji})(T_{ji}I) [(IT_{ii})(III \t_\l^{ij} I)(IT_{ii})] = (\t_\l^{ij}, \alpha_i)_\l T_{jii}$$
while $T_{jii}(\phi_\l^{ij} IIII)(IT_{ii}) = 0$. In particular we get that the composition in (\ref{EQ:9}) is equal to $(\t_\l^{ij}, \alpha_i)_\l T_{jii}$. 

On the other hand, we can also write the composition in (\ref{EQ:9}) as 
$$\E_j \E_i \1_\l \E_i \xrightarrow{(IT_{ii})} \1_\mu \E_j \E_i \1_\l \E_i \xrightarrow{(\phi_\l^{ji} IIII) + (III \t_\l^{ji} I)} \1_\mu \E_j \E_i \1_\l \E_i \xrightarrow{T'_{jii}} \E_i \E_i \E_j.$$
The same argument as above simplifies this composition to give $(\t_\l^{ji}, \alpha_i)_\l T_{jii}$. Thus must have $(\t_\l^{ij}, \alpha_i)_\l = (\t_\l^{ji}, \alpha_i)_\l$. The second relation follows similarly. 
\end{proof}

\begin{Proposition}\label{prop:rescaleTij}
One can rescale the maps $T_{ij}$ so that $m_{iji}^\l = t_{ij}^{-1}$ and $n_{iji}^\l = - t_{ij}^{-1}$ for all $\l$. 
\end{Proposition}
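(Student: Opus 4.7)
The plan is to rescale $T_{ij} I_\l \leadsto c(\l)\, T_{ij} I_\l$ by a weight-dependent scalar $c(\l) \in \k^\times$, leaving $T_{ji}$ untouched, and to exploit a compatibility between $m_{iji}^\l$ and $n_{iji}^{\l+\alpha_i}$ which is already implicit in Lemmas \ref{LEM:A} and \ref{LEM:B}. First I would trace where the two copies of $T_{ij}$ sit inside the two compositions of Corollary \ref{COR:serre2}: starting from $\E_i\E_j\E_i\1_\l$, the $T_{ij}$ in $(T_{ji}I)(IT_{ii})(T_{ij}I) I_\l$ occurs at weight $\l+\alpha_i$, whereas the $T_{ij}$ in $(IT_{ij})(T_{ii}I)(IT_{ji}) I_\l$ occurs at weight $\l$. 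Rescaling therefore replaces $m_{iji}^\l$ by $m_{iji}^\l/c(\l+\alpha_i)$ and $n_{iji}^\l$ by $n_{iji}^\l/c(\l)$, so hitting the prescribed targets amounts to
$$c(\l+\alpha_i) = t_{ij}\, m_{iji}^\l \qquad \text{and} \qquad c(\l) = -t_{ij}\, n_{iji}^\l.$$
Viewed together as $\l$ varies, these two demands are consistent precisely when $m_{iji}^\l = -n_{iji}^{\l+\alpha_i}$ at every $\l$ where both scalars are genuinely constrained.

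The key observation is that this compatibility is free. Lemma \ref{LEM:B}(i) gives $m_{iji}^{\l-\alpha_i} (\phi^{ij}_\l, \alpha_i)_{\l+\alpha_i+\alpha_j} = -1$ while Lemma \ref{LEM:A}(i) gives $n_{iji}^\l (\phi^{ij}_\l, \alpha_i)_{\l+\alpha_i+\alpha_j} = 1$, so dividing (and noting that the pairing must be nonzero for either lemma to apply) immediately yields $m_{iji}^{\l-\alpha_i} = -n_{iji}^\l$, i.e.\ $m_{iji}^\l = -n_{iji}^{\l+\alpha_i}$. Both lemmas apply exactly when $\E_i^{(2)}\E_j\1_\l$ and $\E_j\E_i^{(2)}\1_{\l-\alpha_i}$ are nonzero, which is precisely the weights at which both summands actually contribute to the decomposition of $I_\l$ in Corollary \ref{COR:serre2} and both $m$ and $n$ carry nontrivial content.

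The proof itself is then a short construction: define $c(\l) := -t_{ij}\, n_{iji}^\l \in \k^\times$ at every generic weight, and use the compatibility to conclude automatically that $c(\l+\alpha_i) = t_{ij}\, m_{iji}^\l$. At degenerate weights where only one of $\E_j\E_i^{(2)}\1_\l,\, \E_i^{(2)}\E_j\1_\l$ is nonzero, only one of $m_{iji}^\l, n_{iji}^\l$ sits in the identity decomposition, so the defining equation for $c$ at that weight decouples from its neighbours and can be solved independently; and at weights with $\E_i\E_j\E_i\1_\l = 0$ one takes $c(\l) = 1$ by default. One should also check that the rescaling does not disturb earlier relations, which it does not: in both $T_{iij} = T'_{iij}$ and $T_{jii} = T'_{jii}$ the factor $T_{ij}$ appears on each side the same number of times and at the same weights, so both sides acquire the same scaling factor. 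The main potential obstacle—having to choose values of $c(\l)$ at infinitely many weights in a globally consistent way—is therefore neutralised precisely by the pairing identities of Lemmas \ref{LEM:A} and \ref{LEM:B}, which do all the real work; the rescaling itself is essentially forced once those are in hand.
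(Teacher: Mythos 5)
Your proposal is correct and rests on exactly the same mechanism as the paper's proof: the compatibility $m_{iji}^{\l-\alpha_i} = -n_{iji}^\l$ extracted from Lemmas~\ref{LEM:A} and~\ref{LEM:B}, combined with the observation that rescaling only $T_{ij}$ leaves the earlier relations (\ref{EQ:2}), (\ref{EQ:3}), (\ref{EQ:KLR2}), (\ref{EQ:KLR3}) intact because $T_{ij}$ occurs equally often on both sides of each. The paper organizes this as an induction along each $\alpha_i$-string starting from a carefully chosen base weight (where $\1_{\l-\alpha_i}=0$ and $\1_{\l+\alpha_j}=0$, forcing only $m$ to be constrained there), and derives the compatibility via Lemma~\ref{LEM:A}(ii) with $i,j$ swapped together with Lemma~\ref{LEM:B}(ii), passing through $(\t_\mu^{ji},\alpha_i)_\mu = t_{ij}$. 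You instead define $c(\l)$ pointwise and derive the compatibility from Lemma~\ref{LEM:A}(i) together with Lemma~\ref{LEM:B}(i), via $(\phi_\l^{ij},\alpha_i)_{\l+\alpha_i+\alpha_j}$. These are equivalent routes to the same constraint, and your treatment of the degenerate weights (where only one of $m,n$ is genuinely constrained) matches what makes the paper's chosen base case work. This is a presentation difference rather than a different argument.
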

\begin{proof}
First notice that rescaling some $T_{ij} \1_\l$ does not affect the relations in (\ref{EQ:2}), (\ref{EQ:3}), (\ref{EQ:KLR2}) and (\ref{EQ:KLR3}). This is because each $T_{ij}$ occurs the same number of times on both sides of these relations. 

Choose $\l$ so that $\1_\l \ne 0$ whereas $\1_{\l-\alpha_i} = 0$. We will now rescale all $T_{ij} I_\mu$ where $\mu = \l + r \alpha_i$ for some $r \in \Z$. We assume that $\1_{\l+\alpha_j} = 0$ and proceed by increasing induction on $\l_i$. If this is not the case then instead of $\l$ we choose $s_i \cdot \l := \l - \l_i \alpha_i$ where $\1_{s_i \cdot \l} \ne 0$, $\1_{s_i \cdot \l + \alpha_i} = 0$ and $\1_{s_i \cdot \l +\alpha_i+\alpha_j} \ne 0$ and proceed by decreasing induction on $\l_i$. 

{\bf The base case.} Since $\1_{\l+\alpha_j}=0$ we have  $\E_i^{(2)} \E_j \1_\l = 0$ and hence $(IT_{ij})(T_{ii}I)(IT_{ji}) I_\l = 0$. Thus, by Corollary \ref{COR:serre2}, $m_{iji}^\l (T_{ji}I)(IT_{ii})(T_{ij}I) I_\l = (III) I_\l$. So we can rescale $T_{ij} I_{\l+\alpha_i}$ so that $m_{iji}^\l = t_{ij}^{-1}$. This proves the base case.

{\bf The induction step.} Consider now 
$$m_{iji}^\mu (T_{ji}I)(IT_{ii})(T_{ij}I) I_\mu + n_{iji}^\mu (IT_{ij})(T_{ii}I)(IT_{ji}) I_\mu = (III) I_\mu \in \End(\E_i \E_j \E_i \1_\mu)$$
where $\mu = \l + r \alpha_i$ for some $r > 0$. By induction we have $\E_j \E_i^{(2)} \1_{\mu-\alpha_i} \ne 0$ and we have rescaled $T_{ij} I_\mu$ so that $m_{iji}^{\mu-\alpha_i} = t_{ij}^{-1}$. We claim that this implies $n_{iji}^\mu = - t_{ij}^{-1}$. 

To see this note that by Lemma \ref{LEM:A} we have $(\t_\mu^{ji}, \alpha_i)_\mu = t_{ij}$. Now, if $\E_i^{(2)} \E_j \1_\mu = 0$ there is nothing to prove. If it is nonzero then by Lemma \ref{LEM:B} we have $n_{iji}^\mu \cdot (\t_\mu^{ji}, \alpha_i)_\mu = -1$ which proves the claim. 

So now we can just rescale $T_{ij} I_{\mu+\alpha_i}$ so that $m_{iji}^\mu=t_{ij}^{-1}$. This completes the induction. 
\end{proof}

At this point we know the following relations: 
\begin{align}
\label{EQ:R1} & (\t_\l^{ij}, \alpha_k)_\l = - (\phi_\l^{ij}, \alpha_k)_{\l+\alpha_i+\alpha_j} \text{ if } k \in S_{ij}, \\
\label{EQ:R2} & (\t_\l^{ij}, \alpha_i)_\l = t_{ij} = - (\phi_\l^{ij}, \alpha_i)_{\l+\alpha_i+\alpha_j} \text{ and } \\
\label{EQ:R3} & (\t_\l^{ij}, \alpha_j)_\l = - (\phi_\l^{ij}, \alpha_j)_{\l+\alpha_i+\alpha_j}.
\end{align} 
To see the last relation above notice that switching the roles of $i$ and $j$ in Lemma \ref{LEM:A} equation (i) gives $n_{iji}^\l \cdot (\phi_\l^{ji}, \alpha_j)_{\l+\alpha_i+\alpha_j} = 1$ which together with Lemma \ref{LEM:C} equation (ii) gives $n_{iji}^\l \cdot (\phi_\l^{ij}, \alpha_j)_{\l+\alpha_i+\alpha_j} = 1$. On the other hand, switching the roles of $i$ and $j$ in Lemma \ref{LEM:B} equation (ii) gives $n_{jij}^\l \cdot (\t_\l^{ij}, \alpha_j)_\l = -1$. Putting these two relations together gives $(\t_\l^{ij}, \alpha_j)_\l = - (\phi_\l^{ij}, \alpha_j)_{\l+\alpha_i+\alpha_j}$.

\begin{Corollary}\label{COR:tji}
There exists $t_{ji}^\l \in \k^\times$ such that 
\begin{align}
\label{EQ:13A} (T_{ji})(T_{ij}) I_\l &= t_{ij}(X_iI) I_\l + t_{ji}^\l(IX_j) I_\l \in \End^2(\E_i \E_j \1_\l) \\
\label{EQ:13B} (T_{ij})(T_{ji}) I_\l &= t_{ij}(IX_i) I_\l + t_{ji}^\l(X_jI) I_\l \in \End^2(\E_j \E_i \1_\l).
\end{align}
\end{Corollary}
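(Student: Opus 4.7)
The plan is to put both sides of (\ref{EQ:13A}) into the common shape $(\phi\, III) + (III\, \t) \in \End^2(\1_\mu \E_i \E_j \1_\l)$ furnished by Lemma \ref{LEM:Tij} (with $\mu := \l + \alpha_i + \alpha_j$), and then to match these representatives modulo transients. Set $t_{ji}^\l := (\t^{ij}_\l, \alpha_j)_\l$; by relation (\ref{EQ:R3}) this equals $-(\phi^{ij}_\l, \alpha_j)_\mu$, and it lies in $\k^\times$ whenever both $T_{ji}$ and $T_{ij}$ contribute nontrivially via Lemma \ref{LEM:A}(ii) (with the roles of $i, j$ swapped) combined with Proposition \ref{prop:rescaleTij}.

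To expand the right-hand side, choose $\t_i, \t_j \in Y_\k$ with $\la \t_i, \alpha_i \ra = 1$, $\la \t_i, \alpha_j \ra = 0$ and $\la \t_i, \alpha_k \ra = 0$ for $k \in S_{ij}$, and symmetrically for $\t_j$ --- possible by the nondegeneracy of $\la\cdot,\cdot\ra$ on $\{\alpha_l\}_{l \in S_{ij} \cup \{i,j\}}$ --- so that by the Remark after Corollary \ref{COR:2} the definitions $X_i = -(\t_iII)+(II\t_i)$ and $X_j = -(\t_jII)+(II\t_j)$ are unaffected. In the five-slot picture on $\1_\mu \E_i \1_{\l+\alpha_j} \E_j \1_\l$ one computes
\[ t_{ij}(X_iI) + t_{ji}^\l(IX_j) = -t_{ij}(\t_i IIII) + t_{ij}(II\t_iII) - t_{ji}^\l(II\t_jII) + t_{ji}^\l(IIII\t_j). \]
Since $\la \t_i, \alpha_j \ra = 0$, Proposition \ref{PROP:1} applied to $\E_j$ gives $(II\t_iII) = (IIII\t_i)$ modulo transients; since $\la \t_j, \alpha_i \ra = 0$, the same applied to $\E_i$ gives $(II\t_jII) = (\t_j IIII)$ modulo transients. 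Hence
\[ t_{ij}(X_iI) + t_{ji}^\l(IX_j) \equiv (a\,III) + (III\,b) \pmod{\text{transients}}, \]
with $a := -t_{ij}\t_i - t_{ji}^\l \t_j \in Y_\k$ and $b := t_{ij}\t_i + t_{ji}^\l \t_j \in Y_\k$.

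Comparing with $T_{ji}T_{ij}I_\l = (\phi^{ij}_\l III) + (III\,\t^{ij}_\l)$: the pairings of $\phi^{ij}_\l - a$ vanish against $\alpha_i, \alpha_j$ by (\ref{EQ:R2}), (\ref{EQ:R3}) and the choice of $\t_i, \t_j$, and equal $-(\t^{ij}_\l, \alpha_k)_\l$ against $\alpha_k$ for $k \in S_{ij}$ by (\ref{EQ:R1}). Nondegeneracy produces $\gamma \in Y_\k$ with $\la \gamma, \alpha_i \ra = \la \gamma, \alpha_j \ra = 0$ and $\la \gamma, \alpha_k \ra = -(\t^{ij}_\l, \alpha_k)_\l$ for $k \in S_{ij}$, so that $\phi^{ij}_\l \equiv a + \gamma$ and (by the opposite pairings) $\t^{ij}_\l \equiv b - \gamma$ modulo transients. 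Two applications of Proposition \ref{PROP:1} (first across $\E_i$, then across $\E_j$) then yield $(\gamma III) = (III\gamma)$ modulo transients, hence
\[ (\phi^{ij}_\l III) + (III\,\t^{ij}_\l) \equiv (aIII) + (\gamma III) + (IIIb) - (III\gamma) \equiv (aIII) + (IIIb), \]
establishing (\ref{EQ:13A}). Equation (\ref{EQ:13B}) follows from the identical argument applied to $T_{ij}T_{ji}I_\l$, with Lemma \ref{LEM:C}(i), (ii) ensuring that the scalar at $\alpha_j$ in the analogous decomposition is again $t_{ji}^\l$ so that a single value serves both relations.

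The decisive technical step is collapsing the middle-slot contributions $(II\t_iII)$ and $(II\t_jII)$ into edge-slot form via Proposition \ref{PROP:1}; the orthogonality requirements $\la \t_i, \alpha_j \ra = 0 = \la \t_j, \alpha_i \ra$ are precisely what this demands. The subtlest point is the existence of the correction $\gamma \in Y_\k$ with the prescribed pairings, which rests on the nondegeneracy of $\la\cdot,\cdot\ra$ on $\{\alpha_k : k \in S_{ij} \cup \{i, j\}\}$ arranged at the start of Step \#7.
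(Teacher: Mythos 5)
Your proof is correct and follows essentially the same route as the paper's: it rests on the decomposition of $T_{ji}T_{ij}I_\l$ from Lemma \ref{LEM:Tij}, matches the $\alpha_i$-, $\alpha_j$- and $\alpha_k$-pairings ($k\in S_{ij}$) via (\ref{EQ:R1})--(\ref{EQ:R3}), slides the residue across $\E_i$ and $\E_j$ with Proposition \ref{PROP:1}, and passes to (\ref{EQ:13B}) through Lemma \ref{LEM:C}. The one small slip is in the nonvanishing remark: $(\t^{ij}_\l,\alpha_j)_\l\in\k^\times$ is exactly what Lemma \ref{LEM:A}(ii) \emph{as stated} controls (via $m_{jij}^{\l-\alpha_j}$ and Proposition \ref{prop:rescaleTij}), so no swap of $i$ and $j$ is needed there.
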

\begin{proof}
We have
$$(T_{ji})(T_{ij}) I_\l = (\phi_\l^{ij} IIII) + (IIII \t_\l^{ij}) \in \End^2(\1_{\mu} \E_i \1_{\l+\alpha_j} \E_j \1_\l)$$
where $\mu = \l+\alpha_i+\alpha_j$. Since $(\phi_\l^{ij}, \alpha_i)_{\mu} = - t_{ij}$ we can write 
$$(\phi_\l^{ij} IIII) = - t_{ij} (\t IIII) + t_{ij} (II \t II) + (II \sigma_\l^{ij} II) = t_{ij}(I X_i III) + (II \phi_\l^{ij} II)$$
for some $\t$ satisfying $(\t,\alpha_i) = 1$ and some $\sigma \in \End^2(\1_{\l+\alpha_j})$ satisfying $(\sigma, \alpha_k)_{\l+\alpha_j} = (\t_\l^{ij}, \alpha_k)_\mu$ for all $k \in S_{ij}$. Similarly we get 
$$(IIII \t_\l^{ij}) = - t_{ji}^\l (IIII \rho) + t_{ji}^\l (II \rho II) + (II \tau II) = t_{ji}^\l (III X_j I) + (II \tau II)$$
for some $\rho$ satisfying $(\rho,\alpha_j) = 1$ and where $t_{ji}^\l := - (\t_\l^{ij}, \alpha_j)_\l$ and $\tau \in \End^2(\1_{\l+\alpha_j})$ satisfies $(\tau,\alpha_k)_{\l+\alpha_j} = (\t_\l^{ij},\alpha_k)_\l$ for all $k \in S_{ij}$. Using relation (\ref{EQ:R1}), (\ref{EQ:R2}) and (\ref{EQ:R3}) it is straightforward to check that $(\sigma+\tau,\alpha_k)_{\l+\alpha_j} = 0$ for all $k \in S_{ij}$. This completes the proof of (\ref{EQ:13A}). 

Relation (\ref{EQ:13B}) now also follows since by switching the roles of $i$ and $j$ in Lemma \ref{LEM:C} we have
$$(\t_\l^{ij}, \alpha_k)_\l = (\t_\l^{ji},\alpha_k)_\l \text{ and } (\phi^{ij}_\l,\alpha_k)_{\l+\alpha_i+\alpha_j} = (\phi^{ji}_\l,\alpha_k)_{\l+\alpha_i+\alpha_j} \ \ \text{ for } k=i,j.$$ 
\end{proof}

\begin{Proposition}\label{PROP:tji}
The value of $t_{ji}^\l$ is independent of $\l$.
\end{Proposition}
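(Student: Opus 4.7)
The plan is to show that $t_{ji}^\l = t_{ji}^{\l+\alpha_k}$ for every simple root $\alpha_k$ such that both scalars are defined, and then invoke the connectivity of nonzero weights within a fixed coset of $X/Y$ (via condition (\ref{co:new})) to conclude global constancy.

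For the easy case $k \ne i,j$, I would consider the endomorphism $(T_{ji}T_{ij})I$ of $\E_i\E_j\E_k\1_\l$. Since the pair $\E_i\E_j$ sits at source weight $\l+\alpha_k$, Corollary \ref{COR:tji} identifies this composition as $t_{ij}(X_iII) + t_{ji}^{\l+\alpha_k}(IX_jI)$. Transporting through the braid $(T_{ik}I)(IT_{jk})\colon \E_i\E_j\E_k\1_\l \to \E_k\E_i\E_j\1_\l$, which by Corollary \ref{COR:6} is proportional to its $T'$-counterpart, relocates the pair $\E_i\E_j$ to weight $\l$, so the same endomorphism is now read off as $t_{ij}(IX_iI) + t_{ji}^\l(IIX_j)$. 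Using Corollary \ref{COR:2} to commute $X_i$ and $X_j$ past $T_{ik}$ and $T_{jk}$, and matching the two resulting expressions against the decomposition of $\End^2(\E_i\E_j\E_k\1_\l)$ modulo transients (via the analogue of Lemma \ref{LEM:EndE}), forces the equality $t_{ji}^{\l+\alpha_k} = t_{ji}^\l$.

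For the hard cases $k = i$ and (symmetrically) $k = j$, the triple braid $T_{iji}$ does not simplify in the three-distinct-indices fashion, so I would instead exploit the identity $t_{ji}^\l = 1/n_{jij}^\l$ obtained by applying Lemma \ref{LEM:B}(ii) with $i,j$ swapped. The task reduces to showing that $n_{jij}^\l$ is $\l$-independent. Given that Proposition \ref{prop:rescaleTij} has already fixed $m_{iji}^\mu = t_{ij}^{-1}$ constant in $\mu$, and recalling from the proof of that proposition that the crux was the quadrilateral compatibility established in Lemma \ref{LEM:rescale}, I would run an analogous quadrilateral argument in the slot $\E_j\E_i\E_j$: compose $T_{jij}$ with $\F_i$ on an appropriate side, use the pitchfork relations (Lemma \ref{LEM:fork}) to pass the $\F_i$ through, and rewrite the resulting composition in two different ways (one using $T_{iji}\sim T'_{iji}$ at weight $\l$, the other at weight $\l+\alpha_i$). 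The rigidity $m_{iji}^\mu = t_{ij}^{-1} = m_{iji}^{\mu-\alpha_i}$ then propagates through this quadrilateral to yield $n_{jij}^\l = n_{jij}^{\l+\alpha_i}$.

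The main obstacle will be executing this last step cleanly modulo transients, because $X_i$ and the various $\t_\l^{ij}, \phi_\l^{ij}$ are only determined modulo the radical $R_\k$, and one must verify that the transient ambiguity cancels between the two sides of the comparison. Once the shift by $\alpha_i$ (and by $\alpha_j$) is handled, combining with the $k \ne i,j$ case gives invariance under all simple-root shifts, and condition (\ref{co:new}) together with the assumption that nonzero weights of $\K$ lie in a single coset of $X/Y$ ensures the resulting equivalence classes coincide, proving the proposition.
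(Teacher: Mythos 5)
Your partition of the argument into three shift types is the right skeleton, and the $k\ne i,j$ case you describe is essentially the paper's Claim 3: compose through $\E_k\E_i\E_j\1_\l$, use $T_{ijk}\sim T'_{ijk}$ and the $X$-commutation relations of Corollary \ref{COR:2}, and compare the two resulting decompositions of the same map. The identity $t_{ji}^\l = 1/n_{jij}^\l$ you extract from Lemma \ref{LEM:B}(ii) (with $i,j$ swapped, together with the definition $t_{ji}^\l := -(\t_\l^{ij},\alpha_j)_\l$) is also correct and is exactly how the paper handles the $\alpha_j$-shift (Claim 2), where one shows $m_{jij}^\l = -n_{jij}^\l$ and then invokes $m_{jij}^\l t_{ji}^{\l+\alpha_j} = -1 = n_{jij}^\l t_{ji}^\l$.

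The gap is in the $k=i$ shift, which you try to fold into the same $n_{jij}$ detour. The paper does not: Claim 1 is a \emph{direct} argument on $\E_i\E_i\E_j\1_\l$, computing $(IT_{ij})(T_{ii}I)(IT_{ji})(IT_{ij})$ once by applying Corollary \ref{COR:tji} to the inner $\E_i\E_j\1_\l$ (producing $t_{ji}^\l$) and once by sliding $T_{ii}$ past using relation (\ref{EQ:3}) and then applying Corollary \ref{COR:tji} to the outer $\E_i\E_j\1_{\l+\alpha_i}$ (producing $t_{ji}^{\l+\alpha_i}$). The two weights $\l,\l+\alpha_i$ sit naturally inside this three-fold composition, so the comparison is immediate. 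Your plan instead asks for $n_{jij}^\l = n_{jij}^{\l+\alpha_i}$, but these two scalars are defined by Serre relations on $\E_j\E_i\E_j\1_\l$ and $\E_j\E_i\E_j\1_{\l+\alpha_i}$, which overlap in only a single weight $\l+\alpha_i+\alpha_j$. There is no three-fold composition that sees both; to relate them via an $\F_i$-pitchfork you would need to move to a four-fold configuration such as $\F_i\E_j\E_i\E_j$ and carefully peel off the $\F_i$, and your proposal does not specify how the constancy of $m_{iji}$ would ``propagate'' through that. The analogy with Lemma \ref{LEM:rescale} (which is about rescaling $T_{ii}$, not a 2-morphism-level identity) doesn't substitute for this. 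A secondary concern: $t_{ji}^\l = 1/n_{jij}^\l$ carries the hypothesis that both $\E_j^{(2)}\E_i\1_\l$ and $\E_i\E_j^{(2)}\1_\l$ are nonzero, whereas the paper's direct Claim 1 avoids that caveat entirely. So I would either fill in the $\E_i\E_i\E_j$ computation as in the paper's Claim 1, or give a genuinely worked-out four-fold argument for the shift by $\alpha_i$ — the current sketch does not establish it.
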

\begin{proof}
We break up the argument into three claims which together give the result. 

{\bf Claim 1:} $t_{ji}^\l = t_{ji}^{\l + \alpha_i}$. Consider the composition 
$$\E_i \E_i \E_j \1_\l \xrightarrow{(IT_{ji})(IT_{ij})} \E_i \E_i \E_j \1_\l \xrightarrow{T_{ii}I} \E_i \E_i \E_j \1_\l \xrightarrow{IT_{ij}} \E_i \E_j \E_i \1_\l$$
where we omit the grading shifts for convenience. On the one hand, this composition equals
\begin{equation}\label{EQ:14A}
(IT_{ij})(T_{ii}I)[t_{ij} (IX_iI) + t_{ji}^\l (IIX_j)] = [t_{ij}(X_iII) + t_{ji}^\l (IX_jI)](IT_{ij})(T_{ii}I) - t_{ij} (IT_{ij})
\end{equation}
while on the other hand it is equal to
\begin{align}
\nonumber (T'_{iji})(IT_{ij}) &= (T_{iji})(IT_{ij}) - t_{ij} (IT_{ij}) \\
\nonumber &= (T_{ji}I)(IT_{ii})(T_{ij}I)(IT_{ij}) - t_{ij} (IT_{ij}) \\
\nonumber &= (T_{ji}I)(T_{ij}I)(IT_{ij})(T_{ii}I) - t_{ij} (IT_{ij}) \\
\label{EQ:14B} &= t_{ij} (X_iII)(IT_{ij})(T_{ii}I) + t_{ji}^{\l+\alpha_i} (IX_jI)(IT_{ij})(T_{ii}I) - t_{ij} (IT_{ij}). 
\end{align}
Comparing (\ref{EQ:14A}) and (\ref{EQ:14B}) we get that $t_{ji}^\l = t_{ji}^{\l+\alpha_i}$. 

{\bf Claim 2:} $t_{ji}^\l = t_{ji}^{\l + \alpha_j}$. Consider the composition 
$$\E_j \E_i \E_j \1_\l \xrightarrow{(IT_{ji})(IT_{ij})} \E_j \E_i \E_j \1_\l \xrightarrow{(T_{ji}I)} \E_i \E_j \E_j \1_\l \xrightarrow{IT_{jj}} \E_i \E_j \E_j \1_\l$$
where we again have ignored the grading shifts. On the one hand this composition equals
\begin{equation}\label{EQ:19}
(IT_{jj})(T_{ji}I)[t_{ij}(IX_iI) + t_{ji}^{\l}(IIX_j)]=[(t_{ij}(X_iII)+t_{ji}^\l(IX_jI)](IT_{jj})(T_{ji}I) - t_{ji}^\l (T_{ji}I).
\end{equation}
On the other hand it is equal to 
\begin{align}
\nonumber (T_{ji}I)T'_{jij} &= - m_{jij}^\l (n_{jij}^\l)^{-1} (T_{ji}I)T_{jij} + (n_{jij}^\l)^{-1} (T_{ji}I) \\
\label{EQ:20} & = - m_{jij}^\l (n_{jij}^\l)^{-1}[t_{ij}(X_iII) + t_{ji}^{\l+\alpha_j} (IX_jI)](IT_{jj})(T_{ji}I) + (n_{jij}^\l)^{-1} (T_{ji}I) 
\end{align}
Now, by Lemma \ref{LEM:B} relation (i) we have that
$$m_{jij}^\l \cdot (\phi_\l^{ji}, \alpha_j)_{\l+\alpha_i+\alpha_j} = -1 \Rightarrow m_{jij}^\l \cdot t_{ji}^{\l+\alpha_j} = -1$$
and likewise, from Lemma \ref{LEM:B} relation (ii) we have $n_{jij}^\l \cdot t_{ji}^\l = -1$. Thus the last two terms in (\ref{EQ:19}) and (\ref{EQ:20}) are equal, leaving us with 
$$t_{ij}(X_iII)(IT_{jj})(T_{ji}I) = -m_{jij}^\l (n_{jij}^\l)^{-1} t_{ij}(X_iII)(IT_{jj})(T_{ji}I).$$
Thus we get $m_{jij}^\l = - n_{jij}^\l \Rightarrow t_{ji}^{\l+\alpha_j} = t_{ji}^\l$. 

{\bf Claim 3:} $t_{ji}^\l = t_{ji}^{\l + \alpha_k}$ when $k \ne i,j$. Consider the composition 
\begin{equation}\label{EQ:16A}
\E_k \E_i \E_j \1_\l \xrightarrow{(IT_{ji})(IT_{ij})} \E_k \E_i \E_j \1_\l \xrightarrow{(T_{ki}I)} \E_i \E_k \E_j \1_\l \xrightarrow{(IT_{kj})} \E_i \E_j \E_k \1_\l.
\end{equation}
On the one hand this composition equals 
\begin{equation}\label{EQ:17}
(IT_{kj})(T_{ki}I)[t_{ij} (IX_iI) + t_{ji}^\l (IIX_j)] = [t_{ij}(X_iII)+t_{ji}^\l(IX_jI)](IT_{kj})(T_{ki}I).
\end{equation}
On the other hand, we know that 
$$(IT_{kj})(T_{ki}I)(IT_{ji}) = c_1 (T_{ji}I)(IT_{ki})(T_{kj}I) \ \ \text{ and } \ \ (IT_{ki})(T_{kj}I)(IT_{ij}) = c_2 (T_{ij}I)(IT_{kj})(T_{ki}I)$$
for some constants $c_1,c_2 \in \k$. So the composition (\ref{EQ:16A}) equals
\begin{equation}\label{EQ:18}
c_1c_2 (T_{ji}I)(T_{ij}I)(IT_{kj})(T_{ki}I) = c_1c_2[t_{ij}(X_iII) + t_{ji}^{\l+\alpha_k}(IX_jI)](IT_{kj})(T_{ki}I).
\end{equation}
Comparing (\ref{EQ:17}) and (\ref{EQ:18}) we find that $c_1c_2=1$ and subsequently $t_{ji}^{\l+\alpha_k} = t_{ji}^\l$. 
\end{proof}

Denoting the common value of $t_{ji}^\l$ by $t_{ji}$ we can summarize the content of the results above as follows
\begin{framed}
\begin{align}
\label{EQ:B} (T_{ji}I)(IT_{ii})(T_{ij}I) &= (IT_{ij})(T_{ii}I)(IT_{ji}) + t_{ij} (III) \in \End(\E_i \E_j \E_i) \\
\label{EQ:A} (T_{ij}I)(IT_{jj})(T_{ji}I) &= (IT_{ji})(T_{jj}I)(IT_{ij}) + t_{ji} (III) \in \End(\E_j \E_i \E_j) \\
\label{EQ:C} (T_{ji})(T_{ij}) &= t_{ij} (X_i I) + t_{ji}(IX_j) \in \End^2(\E_i \E_j) \\
\label{EQ:D} (T_{ij})(T_{ji}) &= t_{ij} (IX_i) + t_{ji}(X_j I) \in \End^2(\E_j \E_i).
\end{align}
\end{framed}
Finally, if $\la j,k \ra = 0$ then $T_{kj} T_{jk} \1_\l \in \End(\E_j \E_k \1_\l)$ is some nonzero multiple of the identity. So one can rescale each $T_{jk} \1_\l$ so that 
\begin{framed}
\begin{equation}\label{EQ:E}
(T_{kj})(T_{jk}) = t_{jk} (II) \in \End(\E_j \E_k \1_\l).
\end{equation}
\end{framed}
It is easy to see that this implies $(T_{jk})(T_{kj}) = t_{jk} (II) \in \End(\E_k \E_j)$ meaning that $t_{jk} = t_{kj}$. 

\begin{Remark}
Equalities (\ref{EQ:B}), (\ref{EQ:A}) and (\ref{EQ:E}) hold even without modding out by transient maps. 
\end{Remark}

\section{Step \#8 -- The $T_{ijk}$ relation}

In this section we will show how to rescale maps so that 
$$T_{ijk} I_\l = T'_{ijk} I_\l \in \Hom(\E_i \E_j \E_k \1_\l, \E_k \E_j \E_i \1_\l \la -\ell_{ijk} \ra)$$
whenever $i,j,k \in I$ are all distinct. Assuming the Hom space above is nonzero we know by Lemma \ref{LEM:homijk} that it is one-dimensional. By Proposition \ref{PROP:homijk}, we also have that $T_{ijk} I_\l$ and $T'_{ijk} I_\l$ are nonzero which means that they must be equal to each other up to a multiple. 

It remains to find a consistent way to rescale $T$'s so that $T_{ijk} = T'_{ijk}$. To do this we fix a total ordering on $\prec$ on $I$. By decreasing induction (with respect to $\prec$) we will rescale $T_{ij}$'s so that $T_{ijk} = T'_{ijk} $ if $i \prec j \prec k$. More precisely, we can assume that we have already rescaled all $T_{ik}$ and $T_{jk}$ where $i,j \prec k$ as well as all $T_{k_1 k_2}$ with $i \prec k_1 \prec k_2$ so that $T_{ik_1k_2} = T'_{ik_1k_2}$ and $T_{jk_1k_2} = T'_{jk_1k_2}$. Then, given $T_{ij} I_\l$ one can uniquely rescale $T_{ij} I_{\l+\alpha_k}$ so that $T_{ijk} I_\l = T'_{ijk} I_\l$. 

\begin{Proposition}\label{PROP:welldefined2}
This rescaling algorithm for $T_{ij} I_\l$ is well defined. 
\end{Proposition}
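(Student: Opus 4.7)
My plan is to mirror the strategy of Proposition \ref{PROP:rescale} and Lemma \ref{LEM:rescale}. A closed sequence of rescalings
$$T_{ij} I_\l \leadsto T_{ij} I_{\l+c_1 \alpha_{k_1}} \leadsto \cdots \leadsto T_{ij} I_{\l+\sum_\ell c_\ell \alpha_{k_\ell}} = T_{ij} I_\l$$
(with $c_\ell = \pm 1$ and each $k_\ell \succ i,j$) is encoded by a word on which the switch and drop operations of Proposition \ref{PROP:rescale} act. Following that template, it suffices to show that for any $k_1 \ne k_2$ with $k_1, k_2 \succ i,j$, the two sequences
\begin{align*}
T_{ij} I_\l &\leadsto T_{ij} I_{\l+\alpha_{k_1}} \leadsto T_{ij} I_{\l+\alpha_{k_1}+\alpha_{k_2}} \ \ \text{ and } \\
T_{ij} I_\l &\leadsto T_{ij} I_{\l+\alpha_{k_2}} \leadsto T_{ij} I_{\l+\alpha_{k_1}+\alpha_{k_2}}
\end{align*}
end up rescaling $T_{ij} I_{\l+\alpha_{k_1}+\alpha_{k_2}}$ in the same way. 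Sequences involving negative steps will then reduce to the all-positive case exactly as in Corollary \ref{COR:rescale}, using condition (\ref{co:new}) to guarantee that the auxiliary weights entering the argument are nonzero.

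To establish this coherence, I will consider the $4$-strand composition
$$\Phi: \E_i \E_j \E_{k_1} \E_{k_2} \1_\l \longrightarrow \E_{k_2} \E_{k_1} \E_j \E_i \1_\l \la -\ell \ra,$$
where $\ell = \la i,j \ra + \la i,k_1 \ra + \la i,k_2 \ra + \la j,k_1 \ra + \la j,k_2 \ra + \la k_1,k_2 \ra$, built as a product of six $T$'s corresponding to a reduced expression for the longest element of $S_4$. An iteration of Lemma \ref{LEM:homijk}, together with the standard adjunction arguments of the appendix used to prove Lemma \ref{LEM:homEEs}, should show that the ambient $\Hom$ space is at most one-dimensional, so $\Phi$ is independent of the choice of reduced expression up to a scalar, provided $\Phi \ne 0$.

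The key observation is that one reduced expression for $\Phi$ can be simplified in two different orders using the already-established triple relations $T_{ik_1 k_2} = T'_{ik_1 k_2}$ and $T_{jk_1 k_2} = T'_{jk_1 k_2}$ (from the induction hypothesis), together with either $T_{ijk_1} = T'_{ijk_1}$ (applied at weights $\l$ and $\l+\alpha_{k_2}$) or $T_{ijk_2} = T'_{ijk_2}$ (applied at weights $\l$ and $\l+\alpha_{k_1}$). Carrying out these two orders will extract exactly the two candidate rescalings of $T_{ij} I_{\l+\alpha_{k_1}+\alpha_{k_2}}$ defined by the two sequences above; the one-dimensionality of the $\Hom$ space then forces the two rescalings to coincide.

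The main obstacle I expect is verifying that $\Phi \ne 0$ under the assumption that all intermediate weights $\1_{\l+\sum_s \eps_s \alpha_s}$ ($s \in \{i,j,k_1,k_2\}$, $\eps_s \in \{0,1\}$) are nonzero. My plan is to mimic Lemma \ref{LEM:iiab}: pre- and post-compose $\Phi$ with appropriate adjunction units and counits and then use Lemma \ref{LEM:fork} and Corollary \ref{COR:nonzero} repeatedly to peel off the $\E_{k_2}$ and $\F_{k_2}$ strands, thereby reducing the nonvanishing of $\Phi$ to the nonvanishing of a triple composition such as $T_{ijk_1}$ at an appropriate weight, which is guaranteed by Proposition \ref{PROP:homijk}.
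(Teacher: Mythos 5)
Your proposal follows the paper's argument essentially step for step: reduce well-definedness of the rescaling algorithm to pairwise coherence via the switch and drop operations on closed words, prove pairwise coherence for two forward steps $\l \leadsto \l + \alpha_{k_1} \leadsto \l + \alpha_{k_1} + \alpha_{k_2}$ versus the other order, and handle mixed-sign steps using condition (\ref{co:new}). The coherence lemma is established exactly as in Lemma \ref{LEM:welldefined3}: one fixes the six-factor $4$-strand composition you call $\Phi$, rewrites it two ways using the inductively known triple relations $T_{ik_1k_2}=T'_{ik_1k_2}$, $T_{jk_1k_2}=T'_{jk_1k_2}$ together with the two candidate rescalings implementing $T_{ijk_1}=T'_{ijk_1}$ or $T_{ijk_2}=T'_{ijk_2}$, and invokes nonvanishing of $\Phi$, which the paper establishes in Lemma \ref{LEM:ijab} by exactly the peeling-off-strands method you describe.

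The one place where your plan diverges from the paper, and where you would create unnecessary work, is the appeal to one-dimensionality of the ambient $\Hom$ space $\Hom(\E_i\E_j\E_{k_1}\E_{k_2}\1_\l, \E_{k_2}\E_{k_1}\E_j\E_i\1_\l\la -\ell\ra)$. The paper never proves nor needs that statement. Instead it observes that the two orders of simplification --- after sliding the $(T_{k_1k_2}II)$ back to its original position, which is legitimate precisely because the triple relations hold as \emph{equalities} by induction rather than merely up to scalar --- produce the \emph{literal same expression}, differing only in the multiplicative factor applied to $T_{ij} I_{\l+\alpha_{k_1}+\alpha_{k_2}}$; nonvanishing of $\Phi$ then forces the two factors to agree. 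Knowing the $\Hom$ space is one-dimensional would tell you the two expressions are proportional, but you would then still need to determine the proportionality constant, which is the whole point. So you should drop the one-dimensionality step; what you actually need, and what you do sketch in the following paragraph, is that the two reorderings of the same $\Phi$ give identical elements. With that adjustment your outline matches the paper's proof.
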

\begin{proof}
The proof is analogous to that of Proposition \ref{PROP:rescale} with $T_{ii}$ replaced by $T_{ij}$. More precisely, we consider a sequence of rescalings 
\begin{equation}\label{eq:7}
T_{ij} I_\l \leadsto T_{ij} I_{\l+c_1 \alpha_{k_1}} \leadsto T_{ij} I_{\l + c_1 \alpha_{k_1} + c_2 \alpha_{k_2}} \leadsto \dots \leadsto T_{ij} I_{\l+ \sum_\ell c_\ell \alpha_{k_\ell}} = T_{ij} I_\l
\end{equation}
where $c_\ell = \pm 1$, $i,j \prec k_\ell$ for all $\ell$ and $\sum_\ell c_\ell \alpha_{k_\ell} = 0$ which we encode as $\uck = (c_1 k_1, \dots, c_m k_m)$. To show that such a sequence does not rescale $T_{ij} I_\l$ we consider the two operators $S_a$ and $D_a$ acting on such sequences as in Proposition \ref{PROP:rescale}. Then the way $\uck$ and $S_a \cdot \uck$ ends up rescaling $T_{ij} I_\l$  is the same due to Corollary \ref{COR:welldefined4} while the way $\uck$ and $D_a \cdot \uck$ rescale $T_{ij} I_\l$ is also the same (essentially by definition). The result now follows since any sequence $\underline{c \alpha}$ can be transformed into the trivial sequence by repeatedly applying these moves. 
\end{proof}

Having rescaled $T_{ij} I_\l$ as above we rescale $T_{ji} I_\l$ by the inverse. This has the effect of preserving all the other relations such as those in (\ref{EQ:C}) and (\ref{EQ:D}). So we maintain all our prior results while adding the following relation: 
\begin{equation}\label{eq:RIII3}
T_{ijk} I_\l = T'_{ijk} I_\l: \E_i \E_j \E_k \1_\l \rightarrow \E_k \E_j \E_i \1_\l \la -\ell_{ijk} \ra \ \ \text{ for all } i,j,k \in I \text{ with } i \prec j \prec k.
\end{equation}

\begin{Lemma}\label{LEM:welldefined3}
Suppose $i,j,a,b \in I$ with $i \prec j \prec a,b$ and that $\1_{\l+\eps_i \alpha_i+\eps_j \alpha_j + \eps_a \alpha_a + \eps_b \alpha_b} \ne 0$ for $\eps_i,\eps_j,\eps_a,\eps_b \in \{0,1\}$. If $T_{iab} = T'_{iab}$ and $T_{jab} = T'_{jab}$ then the two sequences
\begin{equation}\label{EQ:rescale3}
T_{ij} I_\l \leadsto T_{ij} I_{\l+\alpha_a} \leadsto T_{ij} I_{\l + \alpha_a+\alpha_b} \ \ \text{ and } \ \ T_{ij} I_\l \leadsto T_{ij} I_{\l+\alpha_b} \leadsto T_{ij} I_{\l + \alpha_a+\alpha_b}
\end{equation}
lead to the same rescaling of $T_{ij} I_{\l + \alpha_a+\alpha_b}$. 
\end{Lemma}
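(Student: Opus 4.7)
The strategy is to mimic the proof of Lemma~\ref{LEM:rescale}, with $T_{ii}$ replaced by $T_{ij}$ and with the hypothesis $T_{iab}=T'_{iab}$, $T_{jab}=T'_{jab}$ playing the role of the factors $c_1,c_2$ in that proof. Concretely, I would consider the composition
\begin{equation*}
f \;:=\; (IIT_{ij})(IT_{ia}I)(IIT_{ja})(T_{ib}II)(IT_{jb}I)(IIT_{ab}) \colon \E_i\E_j\E_a\E_b \1_\l \longrightarrow \E_b\E_a\E_j\E_i \1_\l
\end{equation*}
(with an appropriate grading shift), which pushes $\E_a\E_b$ past $\E_i\E_j$ while swapping each pair, and then rewrite $f$ in two ways corresponding to the two rescaling orders.

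For the first rewriting, recognize the leftmost three factors as $IT'_{ija}$, apply $T'_{ija}=T_{ija}$ (the content of the step $T_{ij}I_\l \leadsto T_{ij}I_{\l+\alpha_a}$), expand $IT_{ija}=(IT_{ja}I)(IIT_{ia})(IT_{ij}I)$, identify the central block $(IT_{ij}I)(T_{ib}II)(IT_{jb}I)$ as $T'_{ijb}I$, and apply $T'_{ijb}=T_{ijb}$ (the next step $T_{ij}I_{\l+\alpha_a} \leadsto T_{ij}I_{\l+\alpha_a+\alpha_b}$). This expresses $f$ as a product with a single $T_{ij}II$ factor on the far right, acting at weight $\l+\alpha_a+\alpha_b$ with scalar dictated by the $\alpha_a$-then-$\alpha_b$ rescaling path.

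For the second, first slide $T_{ab}$ from the far right all the way to the far left past the four maps $T_{jb},T_{ib},T_{ja},T_{ia}$, using the hypothesis $T_{jab}=T'_{jab}$ and $T_{iab}=T'_{iab}$; because these equalities hold on the nose (no $c_1,c_2$ factors appear, unlike in Lemma~\ref{LEM:rescale}), this slide is scalar-free. Then apply $T'_{ijb}=T_{ijb}$ followed by $T'_{ija}=T_{ija}$ in the opposite order to pull out a $T_{ij}II$ at the right, this time with scalar dictated by the $\alpha_b$-then-$\alpha_a$ rescaling path. Sliding $T_{ab}$ back to the right identifies the two expressions, forcing the two rescalings of $T_{ij}I_{\l+\alpha_a+\alpha_b}$ to agree.

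To conclude anything from the equality of the two expressions we need $f\ne 0$. I would prove this exactly as in Lemma~\ref{LEM:iiab}: compose $f$ on both sides with appropriate units/counits $\adj^\bullet$, $\adj_\bullet$ for $\F_i$, $\F_j$, $\F_a$, $\F_b$, use the pitchfork relations of Lemma~\ref{LEM:fork} to convert the adjoined $T$'s into the invertible $u,v$ maps, and reduce the question to nonvanishing of a simpler map such as $T_{ab}$ between weights at which all the relevant $\1$'s are nonzero, which is guaranteed by the weight nonvanishing assumption on $\1_{\l+\eps_i\alpha_i+\eps_j\alpha_j+\eps_a\alpha_a+\eps_b\alpha_b}$. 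The main obstacle is precisely this nonvanishing step: tracking weights through the adjunctions is delicate, and one will likely need a short case analysis on the signs of $\l_i,\l_j,\l_a,\l_b$ and on which pairs among $i,j,a,b$ are joined by an edge, paralleling the case split in Lemma~\ref{LEM:iiab}. The remaining manipulations are bookkeeping that follows the template of Lemma~\ref{LEM:rescale} line for line.
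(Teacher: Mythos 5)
Your proposal is correct and follows the paper's proof essentially step for step: same composition $f$, same two rewriting chains using $T'_{ija}=T_{ija}$ and $T'_{ijb}=T_{ijb}$, same use of $T_{iab}=T'_{iab}$ and $T_{jab}=T'_{jab}$ to slide $T_{ab}$, and the same appeal to a nonvanishing statement. The only cosmetic difference is that the paper cites the nonvanishing as a standalone result (Lemma~\ref{LEM:ijab}, the distinct-indices analogue of Lemma~\ref{LEM:iiab}) rather than re-deriving it inline as you sketch.
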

\begin{proof}
Consider the composition 
\begin{equation}\label{eq:quad}
(IIT_{ij})(IT_{ia}I)(IIT_{ja})(T_{ib}II)(IT_{jb}I)(IIT_{ab}) : \E_i \E_j \E_a \E_b \1_\l \rightarrow \E_b \E_a \E_j \E_i \1_\l.
\end{equation}
On the one hand one can write this as 
\begin{align}
\nonumber (IT'_{ija})(T_{ib}II)(IT_{jb}I)(IIT_{ab}) =& (IT_{ija})(T_{ib}II)(IT_{jb}I)(IIT_{ab}) \\
\nonumber =& (IT_{ja}I)(IIT_{ia})[(IT_{ij}I)(T_{ib}II)(IT_{jb}I)](IIT_{ab}) \\
\nonumber =& (IT_{ja}I)(IIT_{ia})(T'_{ijb}I)(IIT_{ab}) \\
\nonumber =& (IT_{ja}I)(IIT_{ia})(T_{ijb}I)(IIT_{ab}) \\
\label{eq:quad4} =& (IT_{ja}I)(IIT_{ia})(T_{jb}II)(IT_{ib}I)(T_{ij}II)(IIT_{ab})
\end{align}
where we rescaled $T_{ij} I_\l \leadsto T_{ij} I_{\l+\alpha_a} \leadsto T_{ij} I_{\l + \alpha_a+\alpha_b}$ in order to get the first and fourth equalities. 

On the other hand, using that $T_{iab}=T'_{iab}$ and $T_{jab}=T'_{jab}$ one can rewrite (\ref{eq:quad}) as 
$$(T_{ab}II)(IIT_{ij})(IT_{ib}I)(IIT_{jb})(T_{ia}II)(IT_{ja}I)$$
where we use that we know $T_{iab}=T'_{iab}$ and $T_{jab}=T'_{jab}$ to slide the $T_{ab}$ from the far right to the far left. Then the same sequence of equalities as above shows that this is equal to 
\begin{equation}\label{eq:quad3}
(T_{ab}II)(IT_{jb}I)(IIT_{ib})(T_{ja}II)(IT_{ia}I)(T_{ij}II)
\end{equation}
where this time we had to rescale $T_{ij} I_\l \leadsto T_{ij} I_{\l+\alpha_b} \leadsto T_{ij} I_{\l + \alpha_a+\alpha_b}$. Finally, sliding back the $(T_{ab}II)$ we find that (\ref{eq:quad3}) is equal to (\ref{eq:quad4}). Since by Lemma \ref{LEM:ijab} the composition in (\ref{eq:quad}) is nonzero it follows that the two sequences of rescalings must be the same. 
\end{proof}

\begin{Lemma}\label{LEM:ijab}
Suppose $i,j,a,b \in I$ are distinct and let $\ell_{ijab} := \la i,j+a+b \ra + \la j,a+b \ra + \la a,b \ra$. Then 
\begin{equation}\label{EQ:ijab}
(IIT_{ij})(IT_{ia}I)(IIT_{ja})(T_{ib}II)(IT_{jb}I)(IIT_{ab}) : \E_i \E_j \E_a \E_b \1_\l \rightarrow \E_b \E_a \E_j \E_i \1_\l \la -\ell_{ijab} \ra
\end{equation}
is nonzero if and only if $\1_{\l+\eps_i \alpha_i+\eps_j \alpha_j + \eps_a \alpha_a + \eps_b \alpha_b} \ne 0$ for $\eps_i,\eps_j,\eps_a,\eps_b \in \{0,1\}$. 
\end{Lemma}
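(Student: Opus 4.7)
The forward direction is immediate: each of the sixteen weights $\l + \eps_i\alpha_i + \eps_j\alpha_j + \eps_a\alpha_a + \eps_b\alpha_b$ appears as the index of some $\1_\mu$ occurring between consecutive $T$-factors of (\ref{EQ:ijab}) (possibly after replacing $T_{iab}$ by $T'_{iab}$ via Corollary \ref{COR:6}, and similarly for $T_{jab}$), so the composition (\ref{EQ:ijab}) factors through a zero object whenever any of these weights vanishes.

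For the converse, assume all sixteen weights are nonzero. The plan is to mimic the adjunction strategy used in Lemma \ref{LEM:iiab}. I first sandwich the composition (\ref{EQ:ijab}) between $\adj^i$ on the left and $\adj_i$ on the right (with identities on the remaining strands) to form a map
$$\Phi_1 : \E_j\E_a\E_b\F_i\1_{\l+\alpha_i} \longrightarrow \F_i\E_b\E_a\E_j\1_{\l+\alpha_i}$$
whose nonvanishing is equivalent to that of (\ref{EQ:ijab}), by the bubble identity of Lemma \ref{LEM:bubbles1}. Repeatedly applying the pitchfork relations of Lemma \ref{LEM:fork} to each of the three $T_{?i}$-factors appearing in (\ref{EQ:ijab}) (each such swap gets absorbed by an adjunction and replaced by a $v_{?i}$-move of $\F_i$ across an $\E_?$), together with Corollary \ref{COR:6} to substitute $T_{iab}$ for $T'_{iab}$ where needed, shows that $\Phi_1$ equals, up to a nonzero scalar, the composition
$$\E_j\E_a\E_b\F_i \xrightarrow{T'_{jab}II} \E_b\E_a\E_j\F_i \xrightarrow{(v_{bi}II)(Iv_{ai}I)(IIv_{ji})} \F_i\E_b\E_a\E_j,$$
exactly paralleling the reduction carried out in the proof of Corollary \ref{COR:nonzero}.

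Since $i$ is distinct from each of $j,a,b$, the maps $v_{ji}, v_{ai}, v_{bi}$ are all isomorphisms, so the problem reduces to showing $T'_{jab}II$ is nonzero. To do this I apply the same adjunction trick a second time, sandwiching now with $\adj^j$ and $\adj_j$, which further reduces the question to the nonvanishing of
$$T_{ab}II : \E_a\E_b\F_i\F_j\1_{\l+\alpha_i+\alpha_j} \longrightarrow \E_b\E_a\F_i\F_j\1_{\l+\alpha_i+\alpha_j}.$$
Mimicking the endgame of Lemma \ref{LEM:iiab}, I then compose on the appropriate side with divided powers $\E_i^{(k)}, \F_i^{(k)}, \E_j^{(k)}, \F_j^{(k)}$ (the choices being dictated by the signs of $\l_i$ and $\l_j$) to extract as a direct summand a map $T_{ab} : \E_a\E_b\1_\nu \to \E_b\E_a\1_\nu$ at some weight $\nu$, whose nonvanishing follows from Proposition \ref{PROP:homiij}.

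The main obstacle is the weight bookkeeping in this final reduction: I must verify that the relevant weights $\nu + \eps_a\alpha_a + \eps_b\alpha_b$ remain nonzero after the $\E_i^{(k)}/\F_i^{(k)}/\E_j^{(k)}/\F_j^{(k)}$-adjoining steps. This is precisely where conditions (\ref{co:vanish2}) and (\ref{co:new}) enter: (\ref{co:vanish2}) rules out the weight orbit collapsing when $i,j,a,b$ (or a subset) form a triangle or square in the Dynkin diagram $\Gamma$, while (\ref{co:new}) propagates nonvanishing across adjacent simple-root shifts; together with the sixteen hypothesized nonzero weights, these conditions suffice to produce the desired $\nu$ and close the argument.
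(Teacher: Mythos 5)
The overall plan of your reduction via iterated adjunction is on the right track, but there is a genuine gap at the second adjunction. After you peel off $\E_i$ and cancel the isomorphisms $v_{ji},v_{ai},v_{bi}$, you are left with $T'_{jab}I$ on $\E_j\E_a\E_b\F_i$. When you then sandwich this with $\adj^j$ and $\adj_j$, the $\E_j$ strand ends at the target position immediately to the \emph{left} of $\F_i$, so in order to close it against the new $\F_j$ you are forced to slide $\E_j$ past $\F_i$ first. That slide, carried through the pitchfork relations, does not vanish: it becomes the adjoint $T_{ij}\colon \F_i\F_j\to\F_j\F_i$, exactly as the inner $v_{ii}$ becomes $T_{ii}$ on $\F_i\F_i$ in the proof of Lemma \ref{LEM:iiab}. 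Thus the correct target of the nonvanishing is $T_{ab}IT_{ij}$ on $\E_a\E_b\1_\l\F_i\F_j$ (the paper's (\ref{EQ:35})), not $T_{ab}II$. Since $T_{ij}$ on $\F_i\F_j$ is \emph{not} invertible when $\la i,j\ra=-1$, dropping it is not an allowed simplification and the rest of your argument is aimed at the wrong map.

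This in turn means your endgame does not line up with what actually has to be shown. The paper's argument composes (\ref{EQ:35}) with $\F_a$ (respectively $\F_b,\F_i,\F_j$), uses the $\E_b$-rank estimate of Lemma \ref{LEM:rank} to strip off the $T_{ab}$ factor, and lands on $T_{ij}$ acting on $\F_i\F_j$ with a spectator $\E_b$, which it then identifies with $T_{ijb}$ (nonzero by Proposition \ref{PROP:homijk}, not \ref{PROP:homiij} as you cite). Your proposal of composing with divided powers of $\E_i,\F_i,\E_j,\F_j$ to peel off both $\F_i$ and $\F_j$ and extract a bare $T_{ab}$ skips the rank reduction entirely and does not engage with the $T_{ij}$ factor at all. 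Finally, your appeal to conditions (\ref{co:vanish2}) and (\ref{co:new}) is only a gesture: in the actual proof, (\ref{co:vanish2}) is brought in only at the very end, in an explicit three-way case analysis according to whether the Dynkin subdiagram spanned by $\{i,j,a,b\}$ is a square, has five edges, or is a complete graph, in order to rule out the residual configurations of $(\l_i,\l_j,\l_a,\l_b)$ that escape the rank arguments. That combinatorial argument needs to be supplied, not merely invoked.
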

\begin{proof}
This result is the analogue of Lemma \ref{LEM:iiab} which dealth with the case $i=j$. The proof is also very similar. One direction is immediate since one check that the weights $\l+\eps_i \alpha_i+\eps_j \alpha_j + \eps_a \alpha_a + \eps_b \alpha_b$ all appear somewhere in the composition (\ref{EQ:ijab}) (after possibly having to use relations such as $T_{ija} \sim T'_{ija}$). 

For the converse one can argue as in Lemma \ref{LEM:iiab} that (\ref{EQ:ijab}) is nonzero if either
\begin{equation}\label{EQ:35}
T_{ab}IT_{ij}: \E_a \E_b \1_\l \F_i \F_j \rightarrow \E_b \E_a \1_\l \F_j \F_i \ \ \text{ or } \ \ T_{ij}I T_{ab}: \F_i \F_j \1_\mu \E_a \E_b \rightarrow \F_j \F_i \1_\mu \E_b \E_a
\end{equation}
are nonzero, where $\mu := \l+\alpha_i+\alpha_j+\alpha_a+\alpha_b$ and we abuse notation by writing $T_{ij} \in \Hom^{-\la i,j \ra}(\F_i \F_j, \F_j \F_i)$ for the analogue of $T_{ij} \in \Hom^{-\la i,j \ra}(\E_i \E_j, \E_j \E_i)$. These two maps are the analogues of (\ref{EQ:28}) and (\ref{EQ:29}). 

Now, compose the left map in (\ref{EQ:35}) with $\F_a$ on the left. If $\la a,b \ra = -1$ and $\l_a \le -1$ then by Lemma \ref{LEM:rank} the $\E_b$-rank of $IT_{ab}I: \F_a \E_a \E_b \1_\l \rightarrow \F_a \E_b \E_a \1_\l \la 1 \ra$ is $-\l_a > 0$ so we reduce to showing $IIT_{ij}: \E_b \1_\l \F_i \F_j \rightarrow \E_b \1_\l \F_j \F_i$. Composing with invertible maps $v_{bj}$ and $v_{bi}$ leaves us with the composition 
$$\E_b \F_i \F_j \xrightarrow{IT_{ij}} \E_b \F_j \F_i \xrightarrow{v_{bj}I} \F_j \E_b \F_i \xrightarrow{Iv_{bi}} \F_j \F_i \E_b.$$
This is adjoint to $T_{ijb}$ and hence nonzero by Proposition \ref{PROP:homijk}. Thus $\l_a \le -1 \Rightarrow (\ref{EQ:ijab})$ is nonzero.

A similar argument except composing the right map in (\ref{EQ:35}) with $\F_a$ on the right show that
$$\l_a \ge - 1 - \la a,i+j+b \ra \Rightarrow (\ref{EQ:ijab}) \text{ is nonzero }.$$
Hence we are finished unless 
$$\la a, i+j+b \ra = -2 \text{ and } \l_a = 0 \ \ \text{ or } \ \ \la a, i+j+b \ra = -3 \text{ and } \l_a = 0 \text{ or } 1.$$
Similar arguments with $b,i$ and $j$ give us that we are also finished unless
\begin{align*}
& \la b,i+j+a \ra = -2 \text{ and } \l_b = 0 \ \ \text{ or } \ \ \la b,i+j+a \ra = -3 \text{ and } \l_b = 0 \text{ or } 1 \\
& \la i,j+a+b \ra = -2 \text{ and } \l_i = 0 \ \ \text{ or } \ \ \la i,j+a+b \ra = -3 \text{ and } \l_i = 0 \text{ or } 1 \\
& \la j,i+a+b \ra = -2 \text{ and } \l_j = 0 \ \ \text{ or } \ \ \la j,i+a+b \ra = -3 \text{ and } \l_j = 0 \text{ or } 1. 
\end{align*}
We now argue case by case that these four conditions cannot all hold. There are three cases depending on whether the Dynkin diagram containing $i,j,a,b$ is a square, contains 5 edges or contains 6 edges ({\it i.e.} is a complete graph). 
 
In the first case it follows by condition (\ref{co:vanish2}) that $\la \l, i+j+a+b \ra > 0$ since $\1_\l \ne 0$. On the other hand, $\l_i=\l_j=\l_a=\l_b=0$ which contradicts this. 

In the second case suppose (without loss of generality) that $i,j,a$ generates one of the two triangles. Then as above we must have $\la \l+b,i+j+a \ra > 0 \Rightarrow \l_i+\l_j+\l_a \ge 3$ since vertex $b$ is connected to exactly two of $i,j,a$. But this is impossible since at least one of $\l_i,\l_j,\l_a$ must be zero (and the other two either zero or one). 

In the third case $i,j,b$ form a triangle which means $\la \l+a,i+j+b \ra \ge 1 \Rightarrow \l_i+\l_j+\l_b \ge 4$. This is not possible since $\l_i,\l_j,\l_b \le 1$. 
\end{proof}

\begin{Corollary}\label{COR:welldefined4}
Suppose $i,j,a,b \in I$ with $i \prec j \prec a,b$ and that $T_{ij} I_\l$ and $T_{ij} I_{\l+\alpha_a-\alpha_b}$ are nonzero. If $T_{iab} = T'_{iab}$ and $T_{jab} = T'_{jab}$ then the two sequences
$$T_{ij} I_\l \leadsto T_{ij} I_{\l+\alpha_a} \leadsto T_{ij} I_{\l + \alpha_a - \alpha_b} \ \ \text{ and } \ \ T_{ij} I_\l \leadsto T_{ij} I_{\l-\alpha_b} \leadsto T_{ij} I_{\l + \alpha_a - \alpha_b}$$
lead to the same rescaling of $T_{ij} I_{\l+\alpha_a-\alpha_b}$.
\end{Corollary}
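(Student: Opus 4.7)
The plan is to proceed in direct analogy with the proof of Corollary \ref{COR:rescale}, reducing the mixed-sign square to an application of the all-positive Lemma \ref{LEM:welldefined3}. We may assume $a \ne b$, since otherwise the two sequences are trivially equal. The strategy is to apply Lemma \ref{LEM:welldefined3} at base point $\l - \alpha_b$ (rather than $\l$), which yields a commutative square with corners $\l-\alpha_b$, $\l$, $\l+\alpha_a-\alpha_b$, $\l+\alpha_a$.

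The first task is to verify the sixteen weight nonvanishings required by Lemma \ref{LEM:welldefined3} at this shifted base point, namely $\1_{\l+\eps_i\alpha_i+\eps_j\alpha_j+\eps_a\alpha_a+\delta\alpha_b} \ne 0$ for $\eps_i,\eps_j,\eps_a \in \{0,1\}$ and $\delta \in \{-1,0\}$. The hypothesis $T_{ij} I_\l \ne 0$ forces $\E_i\E_j\1_\l \ne 0$, which gives the four weights with $\eps_a = 0$ and $\delta = 0$. Similarly $T_{ij} I_{\l+\alpha_a-\alpha_b} \ne 0$ supplies the four weights with $\eps_a = 1$ and $\delta = -1$. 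For each of the four pairs $(\eps_i,\eps_j)$, condition \ref{co:new} applied to the weight $\mu = \l + \eps_i\alpha_i + \eps_j\alpha_j - \alpha_b$ with the two distinct roots $\alpha_a$ and $\alpha_b$ then yields $\1_\mu \ne 0$ and $\1_{\mu+\alpha_a+\alpha_b} \ne 0$, which are precisely the remaining eight weights.

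With the nonvanishings established, Lemma \ref{LEM:welldefined3} applies (using the hypotheses $T_{iab} = T'_{iab}$ and $T_{jab} = T'_{jab}$, which are carried over from the statement of the corollary) and tells us that the two rescaling paths from $T_{ij}I_{\l-\alpha_b}$ to $T_{ij}I_{\l+\alpha_a}$ via $T_{ij}I_{\l+\alpha_a-\alpha_b}$ and via $T_{ij}I_\l$ agree. Since each individual rescaling arrow $T_{ij}I_\mu \leadsto T_{ij}I_{\mu+\alpha_k}$ corresponds to a specific choice of nonzero scalar (and is therefore reversible by dividing through), this commutativity of the square can be re-read as the equality of the two paths from $\l$ to $\l+\alpha_a-\alpha_b$ described in the statement. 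The main conceptual work is already contained in Lemma \ref{LEM:welldefined3}; the only point requiring care is the bookkeeping for the nonvanishings via condition \ref{co:new}, and there is no serious obstacle beyond that.
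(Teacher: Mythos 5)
Your proof is correct and follows the same route as the paper: apply Lemma \ref{LEM:welldefined3} at the shifted base weight $\l-\alpha_b$, supply the required nonvanishings from the hypotheses together with condition (viii), and then re-read the resulting commutative square from the corner $\l$. Your write-up is in fact slightly more careful than the paper's one-line proof, which glosses over the base-point shift and the reversibility of the rescaling arrows; you make both points explicit.
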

\begin{proof}
Since $T_{ij} I_\l \ne 0$ and $T_{ij} I_{\l+\alpha_a-\alpha_b} \ne 0$ we get that $\1_{\l+\eps_i \alpha_i+\eps_j \alpha_j} \ne 0$ and $\1_{\l+\alpha_a-\alpha_b+\eps_i \alpha_i+\eps_j \alpha_j} \ne 0$, where $\eps_i,\eps_j \in \{0,1\}$. By condition (\ref{co:new}) this implies that all $\1_{\l+\eps_i \alpha_i+\eps_j \alpha_j + \eps_a \alpha_a + \eps_b \alpha_b}$ are nonzero and the result follows from Lemma \ref{LEM:welldefined3}. 
\end{proof}

Finally, given (\ref{eq:RIII3}) it remains to check that $T_{ijk} I_\l = T'_{ijk} I_\l$ for any distinct $i,j,k \in I$ (not just when $i \prec j \prec k$). This follows from the following Lemma. 

\begin{Lemma}
For distinct $a,b,c \in I$, if $T_{abc} I_\l = T'_{abc} I_\l$ for all $\l$ then $T_{a'b'c'} I_\l = T'_{a'b'c'} I_\l$ for any permutation $a',b',c'$ of $a,b,c$. 
\end{Lemma}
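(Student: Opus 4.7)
The plan is to reduce the problem to verifying the two adjacent transpositions in $S_3$, namely $T_{bac}=T'_{bac}$ (swap of positions $1$ and $2$) and $T_{acb}=T'_{acb}$ (swap of positions $2$ and $3$); the remaining four permutations will then follow by iterating the same construction on the newly established equalities. Throughout, Lemma \ref{LEM:homijk} guarantees that $\Hom(\E_x\E_y\E_z\1_\l,\E_z\E_y\E_x\1_\l\la -\ell_{xyz}\ra)$ is at most one-dimensional, and Proposition \ref{PROP:homijk} says both $T_{xyz}I_\l$ and $T'_{xyz}I_\l$ are nonzero in it. So each unknown identity reduces to showing that the unique scalar $\lambda\in\k^\times$ with $T_{xyz}I_\l=\lambda\,T'_{xyz}I_\l$ is equal to $1$.

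For $T_{bac}=T'_{bac}$, my move will be to sandwich the given equation $T_{abc}=T'_{abc}$ by post-composing with $(IT_{ba})$ and pre-composing with $(T_{ba}I)$, so that both sides become maps $\E_b\E_a\E_c\to\E_c\E_a\E_b$. This isolates the factor $(T_{ab}T_{ba})I$ on the left-hand side and $I(T_{ba}T_{ab})$ on the right-hand side, which I will evaluate using equations (\ref{EQ:C}), (\ref{EQ:D}), (\ref{EQ:E}). When $\la a,b\ra=0$ these factors are $t_{ab}$ times an identity by (\ref{EQ:E}), so the sandwiched equation collapses to $t_{ab}T'_{bac}=t_{ab}T_{bac}$ and $\lambda_{bac}=1$ follows at once. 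When $\la a,b\ra=-1$, after sliding the emergent $X$-operators past the intermediate $T$'s via (\ref{EQ:KLR2}) and (\ref{EQ:KLR3}), the sandwich instead produces
\[
\bigl[\,t_{ab}(IX_aI)+t_{ba}(IIX_b)\,\bigr]\bigl(T'_{bac}-T_{bac}\bigr)=0
\]
as maps $\E_b\E_a\E_c\to\E_c\E_a\E_b\la -\ell_{bac}+2\ra$. The analogous sandwich with $(T_{cb}I)$ on the left and $(IT_{cb})$ on the right handles $T_{acb}=T'_{acb}$, with the same dichotomy now depending on $\la b,c\ra$.

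The main obstacle is the subcase $\la a,b\ra=-1$ (and symmetrically $\la b,c\ra=-1$): the sandwich only establishes that $T_{bac}-T'_{bac}$ is annihilated from the left by the degree-$2$ operator $I(T_{ba}T_{ab})=t_{ab}(IX_aI)+t_{ba}(IIX_b)$. Since $T_{bac}$ and $T'_{bac}$ are proportional, this forces $\lambda_{bac}=1$ provided $I(T_{ba}T_{ab})\circ T'_{bac}$ is itself nonzero. I plan to establish this nonvanishing via a rank calculation in the spirit of Lemmas \ref{LEM:rank} and \ref{LEM:rank2}: post-compose with $\F_b$ on the right, apply adjunction, and show that on a convenient $\E_?$-summand of the source the $X$-operator acts nontrivially. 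A fallback is to exploit any vanishing $\la\cdot,\cdot\ra$ among the three pairings --- if for instance $\la a,c\ra=0$, an alternative sandwich built from $T_{ac}$ and $T_{ca}$ avoids introducing $X$'s altogether, and together with the cases above covers every Dynkin configuration of the triple $\{a,b,c\}$.

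Once both $T_{bac}=T'_{bac}$ and $T_{acb}=T'_{acb}$ are in hand, the remaining four identities follow by iterating the sandwich construction on the newly available equations. For instance, $T_{bca}=T'_{bca}$ is obtained by sandwiching $T_{bac}=T'_{bac}$ with $(T_{ca}I)$ on the left and $(IT_{ca})$ on the right, again splitting by $\la a,c\ra$; the identities $T_{cab}=T'_{cab}$ and $T_{cba}=T'_{cba}$ are obtained similarly, corresponding to the fact that $S_3$ is generated by the two adjacent transpositions. Because each iteration uses only the sandwich construction together with the same toolkit --- equations (\ref{EQ:C}), (\ref{EQ:D}), (\ref{EQ:E}), (\ref{EQ:KLR2}), (\ref{EQ:KLR3}), Lemma \ref{LEM:homijk}, and the nonvanishing argument handled in the hard case --- no further structural ingredient is required.
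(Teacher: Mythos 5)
Your sandwich strategy matches the paper's: it also proves $T_{bac}I_\l=T'_{bac}I_\l$ by building a composition (the paper's (\ref{eq:9})) that can be evaluated two ways, one via the hypothesis $T_{abc}=T'_{abc}$, with (\ref{EQ:C}) turning a $T_{ba}T_{ab}$-factor into dotted identities that are then slid across using (\ref{EQ:KLR2}) and (\ref{EQ:KLR3}); the comparison forces $s=1$ in $T_{bac}=sT'_{bac}$ provided a certain degree-$2$ composition is nonzero. Your reduction of a general permutation to the two adjacent transpositions, followed by iteration, is also correct and is what the paper means by ``the general case follows similarly.''

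The gap is in how you discharge the nonvanishing, which is the technical heart of the proof. You propose ``a rank calculation in the spirit of Lemmas \ref{LEM:rank} and \ref{LEM:rank2},'' tensoring by $\F_b$ first. But those lemmas compute $\E_j$-ranks of the braid maps $T_{ij}$ after passing across an $\F_i$; they say nothing about $X$-operators and give no handle on $\bigl[t_{ab}(IX_aI)+t_{ba}(IIX_b)\bigr]T'_{bac}$. The paper instead peels off $\E_c$ first, reducing (when $\l_c\le-1$, say) to showing that $t_{ab}(X_aII)+t_{ba}(IX_bI)\in\End^2(\E_a\E_b\1_\l)$ is nonzero, and then closes a bubble via Corollary \ref{COR:bubbles3} by tensoring with $\F_b$ or $\F_a$ according to the sign of $\l_b$ or $\l_a$. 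Crucially, there are extremal weight configurations ($\l_a=0,\ \l_b=-1$ with $\l_c\le-1$, and $\l_a=1,\ \l_b=\l_c=0$) where neither tensoring produces a usable bubble, and the paper has to go back to (\ref{eq:9}) and rerun the entire argument with $a$ and $b$ interchanged to cover them. That symmetrization step is absent from your plan, and without it (or a substitute) the nonvanishing argument does not close in those cases. Your fallback for $\la\cdot,\cdot\ra=0$ configurations is correct but orthogonal --- the difficulty is concentrated in the all-$(-1)$ case, exactly as you identify.
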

\begin{proof}
We prove the case when $(a',b',c') = (b,a,c)$ (the general case follows similarly). There are various cases to consider depending on whether $a,b,c$ are connected in the Dynkin diagram. We will deal with the most difficult case when $\la a,b \ra = \la a,c \ra = \la b,c \ra = -1$ (the other cases are proved in the same way but involve simpler computations). 

By Proposition \ref{PROP:homijk} we know that $T_{bac} I_\l = s T'_{bac} I_\l$ for some $s \in \k^\times$. We need to show that in fact $s=1$. To do this consider the composition 
\begin{equation}\label{eq:9}
\E_a \E_b \E_c \1_\l \xrightarrow{T_{ab}I} \E_b \E_a \E_c \1_\l \xrightarrow{T_{ba}I} \E_a \E_b \E_c \1_\l \xrightarrow{IT_{bc}} \E_a \E_c \E_b \1_\l \xrightarrow{T_{ac}I} \E_c \E_a \E_b \1_\l.
\end{equation}
On the one hand, this is equal to 
\begin{align}
\nonumber (T_{bac})(T_{ab}I) 
=& s (T'_{bac})(T_{ab}I) = s (IT_{ba}I)(T_{bc}I)(IT_{ac})(T_{ab}I) = s (IT_{ba})(T_{abc}) \\
\nonumber =& s (IT_{ba})(T'_{abc}) = s (IT_{ba})(IT_{ab})(T_{ac}I)(IT_{bc}) \\
\label{EQ:10} =& s [t_{ab}(IX_aI) + t_{ba}(IIX_b)](T_{ac}I)(IT_{bc})
\end{align}
where we used relation (\ref{EQ:C}) to get the last equality.  

On the other hand, we can begin by first applying (\ref{EQ:C}) to (\ref{eq:9}) to obtain
\begin{equation}
\label{eq:11} (T_{ac}I)(IT_{bc})[t_{ab}(X_aII) + t_{ba}(IX_bI)] \1_\l = [t_{ab}(IX_aI) + t_{ba}(IIX_b)](T_{ac}I)(IT_{bc}) \1_\l. 
\end{equation} 
Comparing (\ref{EQ:10}) with (\ref{eq:11}) we get $s=1$ as long as 
$$[t_{ab}(IX_aI) + t_{ba}(IIX_b)](T_{ac}I)(IT_{bc}): \E_a \E_b \E_c \1_\l \rightarrow \E_c \E_a \E_b \1_\l \la 4 \ra$$ 
is nonzero. To show this note that this map is adjoint to 
\begin{equation}\label{eq:12}
t_{ab}(X_a III) + t_{ba}(IX_bII): \E_a \E_b \1_\l \F_c \rightarrow  \E_a \E_b \1_\l \F_c \la 2 \ra.
\end{equation}
Now if $\l_c \le -1$ then you can compose (\ref{eq:12}) on the right with $\E_c$. Simplifying one finds that one of the summands is 
\begin{equation}\label{eq:13}
t_{ab}(X_aII) + t_{ba}(IX_bI): \E_a \E_b \1_\l \rightarrow \E_a \E_b \1_\l. 
\end{equation}
Now, if $\l_b \ge 0$ then one can compose (\ref{eq:13}) on the right with $\F_b$ and consider the composition 
$$\E_a \1_{\l+\alpha_b} \xrightarrow{I \adj^b} \E_a \E_b \F_b \1_{\l+\alpha_b} \xrightarrow{(\tiny{\ref{eq:13})}(IX_b^{\l_b}I)} \E_a \E_b \F_b \1_{\l+\alpha_b} \xrightarrow{I \adj_b} \E_a \1_{\l+\alpha_b}$$
which, by Corollary \ref{COR:bubbles3}, is equal to (a nonzero multiple of) the identity map on $\E_a \1_{\l+\alpha_b}$. Finally, $\E_a \1_{\l+\alpha_b} \ne 0$ by Lemma \ref{LEM:nonvan} since $\1_{\l+\eps_a \alpha_a+\eps_b \alpha_b+\eps_c \alpha_c} \ne 0$ for $\eps_a,\eps_b,\eps_c \in \{0,1\}$. Thus (\ref{eq:13}) is nonzero in this case. The case $\l_b \le -2$ is similar by composing with $\F_b$ on the left instead. Thus we are left with the case $\l_b=-1$. 

A similar argument with $a$ instead of $b$ leaves us with the case $\l_a=0$. To take care of this possibility that $\l_a=0$ and $\l_b=-1$ we note that we could have switched the roles of $a,b$ in the original equation (\ref{eq:9}). Then the same argument above would take care of this possibility too. 

This completes the case when $\l_c \le -1$. One has a similar argument when $\l_c \ge 1$ by composing (\ref{eq:12}) with $\E_c$ on the left to reduce to showing that $t_{ab}(X_aII) + t_{ba}(IX_bI) \in \End(\E_a \E_b \1_{\l+\alpha_c})$ is nonzero. This is done in the same way as above. 

Finally, if $\l_c=0$ one cannot simplify as above. On the other hand, one can still argue as above to reduce to the case when $\l_a = 0,1$ and $\l_b=0,-1$. Since $\l_a+\l_b+\l_c \ge 1$ we must have that $\l_a=1$ and $\l_b=0$. This final possibility that $\l_a=1,\l_b=0$ and $\l_c=0$ can be taken care of again by reversing the roles of $a$ and $b$ in (\ref{eq:9}). This concludes the proof that the map in (\ref{eq:12}) is nonzero. 
\end{proof}

So we arrive at the following relation, which holds without having to mod out by transient maps.
\begin{framed}
\begin{equation}
T_{ijk} = T'_{ijk} \in \Hom(\E_i \E_j \E_k, \E_k \E_j \E_i \la -\ell_{ijk} \ra) \ \ \text{ for all distinct } i,j,k \in I. 
\end{equation}
\end{framed}

\section{Step \#9 -- Transients}\label{sec:transients}

At this point several of the relations above hold only modulo transient maps, namely relations (\ref{EQ:KLR1}), (\ref{EQ:KLR2}), (\ref{EQ:KLR3}), (\ref{EQ:C}) and (\ref{EQ:D}). In many cases this may be good enough since transient maps are usually negligible. For example, in applications to knot homology (section \ref{sec:knots}), the homology of any link involves a computation whose output is an endomorphism of the highest weight space where there are no transient maps. Even better, in our application to vertex operators (section \ref{sec:vertex}), there are no transient maps at all ({\it i.e.} they are all zero). 

Nevertheless, it would be nice to know whether all relations hold on the nose. In this section we prove this is the case when $\g = \sl_n$. We expect this also holds when the graph $\Gamma$ associated to $\g$ is a tree and perhaps more generally. 

To understand the general argument it is worth first considering the case $\g=\sl_2$. In this case one needs to prove the affine nilHecke relations or, equivalently, that 
\begin{equation}\label{EQ:ex}
\End(\E \1_\l \E) \ni T(I \t I) + (I \t I)T = (ITI)[(III \t)+(\t III)]+id \in \End(\1_{\l+\alpha} \E \E \1_{\l-\alpha}).
\end{equation}
We know this relation holds modulo transient maps. On the other hand, a transient map belonging to $\End^2(\1_{\l+\alpha} \E \E \1_{\l-\alpha})$ is of the form $(III \phi)$ if $\l \le 0$ and $(\phi III)$ if $\l \ge 0$. This means that, if $\l \ge 0$ (resp. $\l \le 0$) then one can redefine $\t \in \End^2(\1_{\l+\alpha})$ (resp. $\t \in \End^2(\1_{\l-\alpha})$) by adding some transient map so that (\ref{EQ:ex}) holds. Thus, starting in the middle ({\it i.e.} at weight space $\l=0$ or $\l=-1$) we can go out in both directions and redefine the $\t$'s so that (\ref{EQ:ex}) holds on the nose. 

When $\g = \sl_n$ (and $n > 2$) things are more complicated because there are further relations to check. Moreover, it is not so clear what is the analogue of the middle weight $\l=0$ or $\l=-1$. However, the general idea is the same: we will start at a ``middle weight'' (see section \ref{sec:middle}) and work our way outwards to redefine all $\t$'s by adding to them appropriate transient maps. 

More precisely, we will proceed as follows (from hereon $\g=\sl_n$). Fix an orientation of $\Gamma$ so that from each vertex there is at most one arrow leaving. Also, fix $\theta_i \in Y_\k$ so that $\la \t_i, \alpha_j \ra = \delta_{i,j}$. 
\begin{enumerate}
\item Fix a middle weight $\mu$ (see section \ref{sec:middle}). Show that having fixed $\t_i \in \End^2(\1_\mu)$ and $\t_i \in \End^2(\1_{\mu-\alpha_i})$ one can then redefine the remaining $\t_i$'s so that 
\begin{align}
\label{EQ:N1} II \t_i &= \t_i II \in \End^2(\1_{\l+\alpha_j} \E_j \1_\l) \text{ for any } i \ne j \text{ and } \\
\label{EQ:N2} \End(\E_i \1_\l \E_i) \ni T_{ii}(I \t_i I) + (I \t_i I)T_{ii} &= (IT_{ii}I)[(III \t_i)+(\t_i III)]+id \in \End(\1_{\l+\alpha_i} \E_i \E_i \1_{\l-\alpha_i}).
\end{align}
\item For each edge $i \rightarrow j$ in $\Gamma$, redefine $\theta_i \in \End^2(\1_\mu)$ and $\theta_i \in \End^2(\1_{\mu-\alpha_i})$ so that 
\begin{equation}\label{EQ:N3}
(T_{ji})(T_{ij}) = t_{ij}(X_i I) + t_{ji}(IX_j) \in \End^2(\E_i \1_\l \E_j)
\end{equation}
holds when $\l=\mu-\alpha_i$. Then use step 1 to redefine all other $\t_i$'s and show that this implies (\ref{EQ:N3}) for all $\l$. 
\item Show that 
\begin{equation}\label{EQ:N4}
(T_{ij})(T_{ji}) = t_{ij}(I X_i) + t_{ji}(X_j I) \in \End^2(\E_j \1_\l \E_i)
\end{equation}
follows as a consequence of (\ref{EQ:N1}), (\ref{EQ:N2}) and (\ref{EQ:N3}). 
\end{enumerate}

\begin{Remark}
If, following section \ref{sec:Xi}, we define $X_i \in \End^2(\1_{\l+\alpha_i} \E_i \1_\l)$ as $-(\t_i II) + (II \t_i)$ then relations (\ref{EQ:N1}) and (\ref{EQ:N2}) imply (\ref{EQ:KLR1}), (\ref{EQ:KLR2}) and (\ref{EQ:KLR3}) while (\ref{EQ:N3}) and (\ref{EQ:N4}) are the same as (\ref{EQ:C}) and (\ref{EQ:D}). 
\end{Remark}

\subsection{The middle weight}\label{sec:middle}

For each $i \in I$ we define the following {\em valid slides}
$$\l \leadsto \l+\alpha_i \text{ if } \l_i \ge -1 \ \ \text{ and } \ \ \l \leadsto \l-\alpha_i \text{ if } \l_i \le 1.$$
We say $\l \in X$ is a {\it middle weight} of $\K$ if you can reach any other nonzero weight space of $X$ from $\l$ by using a sequence of valid slides (such a sequence is called a {\em path}). 

\begin{Proposition}\label{PROP:middle}
If $\g = \sl_n$ then $\K$ has a middle weight. 
\end{Proposition}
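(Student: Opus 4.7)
My plan is to exploit the finiteness and Weyl symmetry of the support $S := \{\nu \in X : \1_\nu \ne 0\}$ together with the type $A$ structure of $\sl_n$. First, $S$ is finite: along any line $\nu + \Z\alpha_i$, condition (\ref{co:vanish1}) gives boundedness in at least one direction, and the Weyl invariance $\1_\nu = 0 \Leftrightarrow \1_{s_i \cdot \nu} = 0$ (already invoked in the proof of Lemma \ref{LEM:serre}) forces boundedness in the other direction as well.

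Next, I would verify the $\sl_2$-string property: if $\1_\l \ne 0$ and $\l_i = m \ge 0$, then $\1_{\l - k\alpha_i} \ne 0$ for every $0 \le k \le m$. Iterating the first isomorphism of condition (\ref{co:EF}) downward from $\l$ (while the $i$-th index remains $\ge 0$) gives the nonvanishing for $k$ up to $\lfloor m/2\rfloor+1$, while the symmetric argument starting from the Weyl conjugate $s_i(\l) = \l - m\alpha_i$ covers the remaining values, and the two intervals always overlap. A consequence is that from any weight of an $\sl_2$-string whose $i$-th index lies in $\{-1,0,1\}$, one may traverse the whole string in either direction via valid slides.

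Realizing the weights of $\sl_n$ as tuples $(a_1,\ldots,a_n) \in \Z^n/\Z(1,\ldots,1)$ with $\l_i = a_i - a_{i+1}$, the Weyl group $S_n$ acts by permuting coordinates. I would take $\mu \in S$ whose coordinates minimize $\max_j a_j - \min_j a_j$, breaking ties by picking a dominant representative; for such a $\mu$, most indices $\mu_i$ lie in $\{0,1\}$, so many valid slides are available from $\mu$ in both directions at once. To show $\mu$ is a middle weight, given any $\nu \in S$ write $\nu = w \cdot \nu_0$ with $\nu_0$ dominant, and build a valid-slide path from $\mu$ to $\nu$ in two stages: first realize the sequence of simple reflections moving (a Weyl conjugate of) $\mu$ into the chamber of $\nu_0$, each simple reflection realized as a traversal of an $\sl_2$-string using the previous paragraph; then perform slides within the chamber to reach $\nu$ itself.

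The main obstacle is ensuring that every intermediate weight of the constructed path lies in $S$. Here the $\sl_2$-string property is crucial: whenever both endpoints of a string segment lie in $S$ (which holds after each simple reflection applied to a weight already in $S$, by Weyl invariance of $S$), every intermediate weight lies in $S$ too. The linearity of the $A_{n-1}$ Dynkin diagram is then used to ensure that the required simple reflections can always be realized through strings lying entirely within $S$, so that the whole path stays in the support.
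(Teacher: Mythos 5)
Your proposal takes a genuinely different route from the paper, but it contains a gap exactly where the substance of the result lies. The paper's proof reduces to a purely combinatorial claim: writing $\l = \sum_j a_j \alpha_j$ in the coset of $\omega = 0$, if $\l \ne 0$ then there exists an index $i$ with either $a_i, \l_i \ge 1$ or $a_i, \l_i \le -1$. This claim is what exploits the tridiagonal $\sl_n$ Cartan matrix (via the chain of inequalities $a_2 \ge 2a_1$, $a_3 \ge 2a_2 - a_1$, $\dots$), and it immediately yields a valid slide $\l \mp \alpha_i \leadsto \l$ decreasing $\sum_j |a_j|$, so induction produces the path from $\omega$. Your ``two-stage'' construction --- move a Weyl conjugate of $\mu$ into the chamber of $\nu$, then slide within that chamber --- simply does not supply an analogue of this step. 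Within a single Weyl chamber (say dominant, so $\l_i \ge 0$ for all $i$), downward slides $\l \leadsto \l - \alpha_i$ require $\l_i \le 1$, and it is not at all obvious that two dominant weights in $S$ can be joined through such slides; this is exactly where an argument like the paper's claim is needed, and you do not provide one. Likewise, ``realize each simple reflection as a traversal of an $\sl_2$-string'' is only available from a position where the relevant index lies in $\{-1,0,1\}$, and you give no reason why one can keep landing in such positions as the sequence of reflections is carried out.

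Two smaller points. First, the definition of a middle weight requires only that the slides be valid; it does \emph{not} require the intermediate weights to have nonzero weight spaces, so the ``main obstacle'' you identify (staying inside $S$) is not the definition's obstacle (though the paper establishes it anyway as a useful byproduct, and it matters for Proposition~\ref{PROP:redefine}). Second, the choice of $\mu$ as a spread-minimizer is heavier machinery than needed: the paper simply takes the unique representative $\omega \in \{0, \Lambda_1, \dots, \Lambda_{n-1}\}$ of the coset $X/Y$, and the type~$A$ inequalities do all the work; the finiteness of $S$ and the $\sl_2$-string property you develop, while true, are not actually used in the paper's argument.
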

\begin{proof}
For $\sl_n$ the cosets $X/Y$ are indexed by $\omega \in \{0, \Lambda_1, \dots, \Lambda_{n-1}\}$. We will show that these are all middle weights. To simplify notation suppose $\omega = 0$ (the other cases are exactly the same). Consider $\l = \sum_{j=1}^{n-1} a_j \alpha_j$ for some $a_j \in \Z$ with $\1_\l \ne 0$. 

{\bf Claim:} if $\l \ne 0$ then there exists an $i$ so that either $a_i, \l_i \ge 1$ or $a_i, \l_i \le -1$. 

If $a_1=0$ then this claim is reduced to $\sl_{n-1}$ and we proceed by induction. If $a_1 \ge 1$ then $\l_1 \le 0$ or otherwise we are done. Thus 
$$\l_1 = 2a_1-a_2 \le 0 \Rightarrow a_2 \ge 2a_1 \ge 2.$$
But now $a_2 \ge 2$ means $\l_2 \le 0$ or otherwise we are done. Then
$$\l_2 = 2a_2-a_1-a_3 \le 0 \Rightarrow a_3 \ge 2a_2 - a_1 \ge a_2+a_1 \ge 3$$
meaning $\l_3 \le 0$. Repeating this arguments gives the claim. The case $a_1 \le -1$ is proved similarly.  

The result now follows from the claim. More precisely, if $\l = 0$ we are done. Otherwise suppose $a_i,\l_i \ge 1$ (the case $a_i,\l_i \le -1$ is similar). Then we have a slide $\l - \alpha_i \leadsto \l$. Note that $\1_{\l-\alpha_i} \ne 0$ since $\E_i \1_{\l-\alpha_i} \F_i$ contains at least one copy of $\1_\l \ne 0$ (using that $\l_i \ge 1$). Finally, by induction on $\sum_j |a_j|$, there exists a path $0 \leadsto \dots \leadsto \l-\alpha_i$. Together with the slide $\l-\alpha_i \leadsto \l$ this gives a path $0 \leadsto \dots \leadsto \l$. 
\end{proof}

From now on we fix a middle weight $\mu$. The proof above gives us a canonical path from $\mu$ to any weight space $\l$. More precisely, we start at $\l$ and word backwards. Using the claim in the Lemma we choose the smallest $i$ so that $a_i, \l_i \ge $ or $a_i, \l_i \le -1$. Take the last slide in the path to be $\l-\alpha_i \leadsto \l$ or $\l+\alpha_i \leadsto \l$ respectively, and then repeat. We refer to this sequence of slides as the {\em canonical path} from $\mu$ to $\l$. 

\begin{Remark}\label{rem:orient}
For convenience, if $\mu = \Lambda_i$ for some $i \in I$ then we choose the orientation on $\Gamma$ so that all edges incident on $i$ are oriented away from $i$.
\end{Remark} 

\subsection{Slide equivalences}

Consider a path $\mu \leadsto \mu+c_1 \alpha_{k_1} \leadsto \dots \leadsto \mu+\sum_\ell c_\ell \alpha_{k_\ell} = \l$ where $c_\ell = \pm 1$. We will encode it as $(\uck) = (c_1 k_1, \dots, c_m k_m)$ and denote by its length by $|\uck|$ (the length equals $m$). We denote the canonical path from $\mu$ to $\l$ by $(\uck_\l)$. Note that the canonical path has the smallest possible length among all paths, namely $\sum_j |a_j|$ if $\l = \mu + \sum_j a_j \alpha_j$. 

We have two operations we can perform on a path $\uck$, namely 
\begin{align*}
S_a \cdot (c_1 k_1, \dots, c_a k_a, c_{a+1} k_{a+1}, \dots, c_m k_m) &= (c_1 k_1, \dots, c_{a+1} k_{a+1}, c_a k_a, \dots, c_m k_m) \\
D_a \cdot (c_1 k_1, \dots, c_a k_a, c_{a+1} k_{a+1}, \dots, c_m k_m) &= (c_1 k_1, \dots, c_{a-1} k_{a-1}, c_{a+2} k_{a+2}, \dots, c_m k_m) 
\end{align*}
where $S_a$ is defined as long as the right hand side is a valid path and $D_a$ is defined when $c_ak_a= -c_a k_{a+1}$. If two paths $(\uck)$ and $(\uck')$ are equivalent via a sequence of such operations we write $(\uck) \sim (\uck')$ and say they are {\em slide equivalent}. 

\begin{Lemma}\label{LEM:reduce}
Suppose $\l = \mu + \sum_j a_j \l_j$. If $a_i \le -1$ then $(\uck_\l,i) \sim (\uck')$ where $|\uck'| = |\uck_\l|-1$. Similarly, if $a_i \ge 1$ then $(\uck_\l,-i) \sim (\uck')$ where $|\uck'| = |\uck_\l|-1$.
\end{Lemma}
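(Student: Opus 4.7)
The proof proceeds by induction on $|\uck_\l| = \sum_j |a_j|$; the case $a_i \ge 1$ is symmetric to $a_i \le -1$ under the sign-reversal $\alpha_k \leftrightarrow -\alpha_k$, so I restrict attention to $a_i \le -1$. The base case $|\uck_\l| = 0$ forces $\l = \mu$ with all $a_j = 0$, so the hypothesis is vacuously false.

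The crucial preliminary observation is that for any path from $\mu$ to $\l$, writing $p_j, q_j$ for the numbers of $+j$ and $-j$ slides, one has $p_j - q_j = a_j$, and so the total length $\sum_j(p_j + q_j) \ge \sum_j |p_j - q_j| = \sum_j |a_j|$. Since the canonical path saturates this bound, $p_j \cdot q_j = 0$ for every $j$ in $\uck_\l$: all $j$-slides in $\uck_\l$ share the common sign $\mathrm{sgn}(a_j)$. In particular, under $a_i \le -1$ the path $\uck_\l$ contains $|a_i|\ge 1$ slides of type $-i$ and no slides of type $+i$, so its final slide $c_m k_m$ is either equal to $-i$ or has $k_m \ne i$.

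In the first case, $(\uck_\l, i)$ ends in the cancelling pair $(-i, +i)$ and the drop $D_m$ produces $\uck_{\l'}$, which has length $|\uck_\l| - 1$ as required. In the second case, the coefficient of $\alpha_i$ in $\l' := \l - c_m \alpha_{k_m}$ remains $a_i \le -1$, so by induction there is a path $\tilde\uck$ of length $|\uck_{\l'}| - 1$ ending at $\l' + \alpha_i$ with $(\uck_{\l'}, i) \sim \tilde\uck$. I would then swap the last two entries of $(\uck_\l, i) = (\uck_{\l'}, c_m k_m, i)$ via $S_m$, transport the equivalence $(\uck_{\l'}, i) \sim \tilde\uck$ through the initial segment (the $S$- and $D$-operations are local and do not touch the suffix $c_m k_m$), and re-append $c_m k_m$ to obtain $(\uck_\l, i) \sim (\tilde\uck, c_m k_m)$ of length $|\uck_\l| - 1$.

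The main obstacle is verifying validity of the swap $S_m$ and of the re-appended $c_m k_m$. A short case analysis on $c_m$ and $\la \alpha_{k_m}, \alpha_i \ra \in \{0,-1\}$ shows that both are automatic except in the boundary configurations where $\la \alpha_{k_m}, \alpha_i \ra = -1$ and either $(\l')_{k_m} = -1$ (when $c_m = +1$) or $(\l')_i = -2$ (when $c_m = -1$). In these residual cases I would bypass the obstruction by first applying the inverse of a drop at an intermediate position, inserting a cancelling pair $(c j, -c j)$ for a well-chosen adjacent index $j$ at which both slides are valid, and then regrouping by a short sequence of $S$-moves before performing two drops simultaneously. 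The chain structure of the $A_n$ Dynkin diagram (no cycles, each node with at most two neighbours) ensures a suitable detour index $j$ is always available, so the obstruction can always be resolved without introducing new boundary violations.
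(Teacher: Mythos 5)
Your preliminary observation is correct (minimality of the canonical path forces all slides of a given color to share the sign of $a_j$), your induction on $|\uck_\l|$ is set up sensibly, and you correctly isolate the two configurations where the backward swap $S_m$ fails, namely $\la \alpha_{k_m},\alpha_i\ra = -1$ with either $(\l')_{k_m}=-1$ (when $c_m=+1$) or $(\l')_i=-2$ (when $c_m=-1$). However, the treatment of those boundary cases is a genuine gap, not a deferrable detail. The ``bypass'' you describe --- inserting a cancelling pair at an unspecified intermediate node, ``regrouping by a short sequence of $S$-moves,'' and ``performing two drops simultaneously'' --- is not a procedure: you do not say where to insert, which index $j$ to use, why both inserted slides are valid there (inserting $(+j,-j)$ at $\nu$ requires $\nu_j=-1$ exactly, and $(-j,+j)$ requires $\nu_j=1$ exactly, which is a very restrictive condition), or why the regrouping produces cancelling pairs. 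The final sentence appealing to the chain structure of $A_n$ to assert that ``a suitable detour is always available'' is an assertion of the conclusion rather than an argument.

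The paper's proof handles the hard case by an entirely different maneuver that your outline does not contemplate. It inducts on the length of the tail of $\uck_\l$ after its last $-i$ slide, and when the backward swap is obstructed (which, after ruling out $c_m=+1$ and $k_m=i+1$ by a contradiction with canonicity), it instead slides the \emph{existing} $-i$ slide forward through the intervening entries: writing the tail as $(\ldots,-i, c_j k_j,\ldots,c_m k_m, i)$, it checks that every $S$-move carrying $-i$ rightwards is valid because each $k_a$ in that tail either satisfies $k_a\le i-2$ or $k_a=i-1$ with $c_a=-1$, which keeps the intermediate weight's $\alpha_i$-coordinate $\le 1$. After this cascade of forward swaps, the $-i$ sits next to the appended $+i$ and a single drop finishes. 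This is not an insertion of a detour pair, and it uses the specific structure of the canonical path segment beyond its last $-i$, which your induction does not track. To repair your argument you would need either to carry out the paper's case analysis or to supply a concrete, verifiable version of your insertion scheme; as written the residual cases are unproven.
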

\begin{proof}
We prove the case $a_i \le -1$ (the case $a_i \ge 1$ is the same). Suppose $(\uck_\l,i)$ looks like 
$$\mu \leadsto \dots \leadsto \rho + \alpha_i \leadsto \rho \leadsto \dots \leadsto \l - c_m \alpha_{k_m} \leadsto \l \leadsto \l+\alpha_i$$
where the path from $\rho$ to $\l$ contains no $\alpha_i$. We will prove the result by induction on the length $\ell$ of this path. Notice that if $\ell = 1$ then we have $\dots \leadsto \rho \leadsto \rho - \alpha_i = \l \leadsto \l + \alpha_i$ and the result follows by applying $D_m$. 

If $\ell > 1$ then $k_m \ne i$. If $\la i, k_m \ra = 0$ then the path $\l - c_m \alpha_{k_m} \leadsto \l - c_m \alpha_{k_m} + \alpha_i \leadsto \l + \alpha_i$ are valid slides which means $(\uck_\l, i) = (\dots, c_m k_m, i) \sim (\dots, i, c_m k_m)$ and the result follows by induction. So we are left with the cases $k_m = i-1$ or $k_m = i+1$. 

If $k_m=i+1$ then since $\uck_\l$ is canonical it means $c_m=-1$. Now, if $\l_i \ge 0$ then 
$$\l+\alpha_{i+1} \leadsto \l+\alpha_{i+1}+\alpha_i \leadsto \l+\alpha_i$$
are valid slides and hence $(\uck_\l,i) = (\dots, - k_m, i) \sim (\dots, i, - k_m)$ and we are done by induction. Otherwise, since $\l \leadsto \l+\alpha_i$ is a valid slide, we must have $\l_i \ge -1$ which leaves us with $\l_i=-1$. But then $\l+\alpha_i \leadsto \l$ is a valid slide which means $\uck_\l = (\dots,i)$ (contradiction). 

If $k_m=i-1$ then all the $\alpha_a$ which appear between $\rho$ and $\l$ satisfy $a < i$. If $c_m = 1$ then 
$$\l = \rho + c \alpha_{i-1} + \nu$$
where $c \ge 1$ and $\la \nu, i \ra = 0$. This means $\l_i = \rho_i - c \le \rho_i - 1 \le -2$ since $\rho_i \le -1$. But this is a contradiction since $\l \leadsto \l+\alpha_i$ is a valid slide. On the other hand, if $c_m=-1$ then $(\uck_\l,i) = (\dots, -i, c_j k_j, \dots, c_m k_m, i)$ where for each $a = j, \dots, m$ either $k_a \le i-2$ or $k_a=i-1$ and $c_a = -1$. In either case one can check that $\tau \leadsto \tau + c_a \alpha_{k_a} \leadsto \tau + c_a \alpha_{k_a} - \alpha_i$ are valid slides where $\tau = \rho + \alpha_i + c_j \alpha_{k_j} + \dots + c_{a-1} \alpha_{k_{a-1}}$. This is because $\la \tau + c_a \alpha_{k_a}, i \ra \le \rho_i + 2 \le 1$. Thus we can repeatedly slide the $-i$ to the right to obtain  
$$(\uck_\l,i) \sim (\dots, c_{j-2} k_{j-2}, c_j k_j, \dots, c_m k_m, -i, i)$$ 
and we are done once again. This concludes the proof. 
\end{proof}

\begin{Proposition}\label{PROP:paths}
Any two minimal length paths between $\mu$ and $\l$ are slide equivalent. 
\end{Proposition}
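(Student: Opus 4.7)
The plan is to prove the equivalent claim that every minimal path $\uck$ from $\mu$ to $\l$ is slide equivalent to the canonical path $\uck_\l$; transitivity then yields the proposition. The argument proceeds by induction on $m = |\uck|$, the base case $m = 0$ being trivial.

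For $m \ge 1$, write $\uck = (\uck_0, c_m k_m)$, so that $\uck_0$ is a minimal path of length $m-1$ from $\mu$ to $\l' := \l - c_m \alpha_{k_m}$. By the inductive hypothesis $\uck_0 \sim \uck_{\l'}$, and hence $\uck \sim (\uck_{\l'}, c_m k_m)$. It therefore suffices to show that $(\uck_{\l'}, c_m k_m) \sim \uck_\l$ for every valid last slide $c_m k_m$ at $\l$; writing $\uck_\l = (\uck_{\l - \tilde{c}\alpha_{\tilde{k}}}, \tilde{c}\tilde{k})$ with $\tilde{c}\tilde{k}$ the canonical last slide at $\l$, the case $(c_m, k_m) = (\tilde{c}, \tilde{k})$ is immediate, so we assume otherwise.

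Set $\l'' := \l - c_m\alpha_{k_m} - \tilde{c}\alpha_{\tilde{k}}$. The key step is to produce a minimal path $\uck''$ from $\mu$ to $\l''$ (necessarily of length $m-2$, since $c_m a_{k_m}, \tilde{c} a_{\tilde{k}} \ge 1$ where $\l = \mu + \sum a_j\alpha_j$) such that both extensions $(\uck'', c_m k_m, \tilde{c}\tilde{k})$ and $(\uck'', \tilde{c}\tilde{k}, c_m k_m)$ are valid paths to $\l$. When this succeeds, a single $S$-move at the end identifies the two extensions, and applying the inductive hypothesis to their length-$(m-1)$ prefixes yields $(\uck'', \tilde{c}\tilde{k}, c_m k_m) \sim (\uck_{\l'}, c_m k_m)$ and $(\uck'', c_m k_m, \tilde{c}\tilde{k}) \sim \uck_\l$, completing the argument. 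The nonvanishing $\1_{\l''} \ne 0$ follows from condition (\ref{co:new}), applied after a Weyl reflection $s_i$ if necessary to reduce to the same-sign case (using the standard fact $\1_\nu \ne 0 \Leftrightarrow \1_{s_i \cdot \nu} \ne 0$, as exploited in the proof of Lemma \ref{LEM:serre}), and then existence of $\uck''$ is automatic from Proposition \ref{PROP:middle}.

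The main obstacle is validity of both orderings at $\l''$. When $\la \alpha_{k_m}, \alpha_{\tilde{k}} \ra = 0$ this is automatic because the slides commute coordinate-wise, so validity of each slide from $\l''$ reduces to the conditions $c_m \l_{k_m} \ge 1$ and $\tilde{c}\l_{\tilde{k}} \ge 1$ already guaranteed. When $\la \alpha_{k_m}, \alpha_{\tilde{k}} \ra = -1$ (the only other possibility for $\g = \sl_n$), one of the two orderings from $\l''$ can fail in corner configurations where the signs $c_m, \tilde{c}$ are opposite and one of $\l_{k_m}, \l_{\tilde{k}}$ sits exactly at the slide boundary $\pm 1$. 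In such corners I plan to bypass the direct $S$-move by inserting a cancelling pair of slides in a compatible coordinate — routing the path through an auxiliary weight such as $\l'' - 2\tilde{c}\alpha_{\tilde{k}}$, at which the desired reordering becomes valid — and then invoking Lemma \ref{LEM:reduce} to cancel the extra slides after the swap has been performed. Verifying that this zigzag always closes up into a valid sequence of $S$- and $D$-moves, in particular that all required intermediate weights $\1_\nu$ remain nonzero, is the most delicate step and will require case analysis of the Dynkin data near $(k_m, \tilde{k})$ together with the smallest-index characterization of $\tilde{k}$ from the canonical-path algorithm.
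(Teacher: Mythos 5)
Your reduction to showing that every minimal path is slide equivalent to the canonical one, the induction on $m$, and the idea of routing through $\l''=\l-c_m\alpha_{k_m}-\tilde c\alpha_{\tilde k}$ and applying an $S$-move there are exactly what the paper does in its generic case ($\l_i\ge 2$). The difficulty you correctly isolate is that when $\la\alpha_{k_m},\alpha_{\tilde k}\ra=-1$ and the relevant coordinate sits on the slide boundary (the paper's $\l_i=1$ subcase), the $S$-move at $\l''$ is not a valid slide, so the commutation step simply does not apply.

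That is precisely where your argument stops being a proof. You propose to ``route the path through an auxiliary weight such as $\l''-2\tilde c\alpha_{\tilde k}$'' and then ``invoke Lemma~\ref{LEM:reduce} to cancel the extra slides,'' but you do not produce the explicit sequence of $S$- and $D$-moves, you do not check that the inserted slides are valid (the whole point of the corner case is that validity is exactly what is failing), and you yourself flag this as ``the most delicate step'' left to do. As written, this is a sketch with the hard part deferred. The paper handles this corner differently and more directly: when $\l_i=1$ it observes that $\l\leadsto\l-\alpha_i$ is now a valid slide, appends $-i$ to the arbitrary minimal path to get a path to $\l-\alpha_i$, uses Lemma~\ref{LEM:reduce} to replace that by a \emph{minimal} path to $\l-\alpha_i$, and then applies the induction hypothesis at $\l-\alpha_i$ together with a $D$-move to cancel the $\,-i,i\,$ pair at the end. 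No auxiliary weight two steps away is needed.

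A secondary issue: your nonvanishing argument for $\1_{\l''}$ via condition~(\ref{co:new}) ``after a Weyl reflection'' is not spelled out and is not obviously correct in the mixed-sign case $c_m\tilde c=-1$, where $\l''$ is not of the form $\nu+\alpha_a+\alpha_b$ relative to any natural base point covered by~(\ref{co:new}). The paper sidesteps this by treating validity of slides as a purely combinatorial condition on the numbers $\l_i$, so this concern may be spurious, but your proposal raises it and then does not resolve it.
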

\begin{proof}
Consider an arbitrary minimal length path from $\mu$ to $\l$. Let us write it as $(\uck, cj)$. It suffices to show that $(\uck, cj) \sim (\uck_\l)$. The proof is by induction on $|\uck_\l|$. Notice that by induction we can assume $(\uck) \sim (\uck_{\l-c\alpha_j})$. 

Suppose $c=-1$ and $(\uck_\l) = (\uck_{\l-\alpha_i}, i)$ for some $i$ (the other cases when  $c=1$ or $(\uck_\l) = (\uck_{\l+\alpha_i}, -i)$ are the same). Since $\l-\alpha_i \leadsto \l$ is a valid slide we have $\l_i \ge 1$. If $\l_i=1$ then by induction we have $(\uck_{\l-\alpha_i}) \sim (\uck,cj,-i)$ which means that 
$$(\uck_\l) \sim (\uck_{\l-\alpha_i},i) \sim (\uck,cj,-i,i) \sim (\uck,cj)$$
and we are done. 

If $\l_i \ge 2$ then $\l-\alpha_i+\alpha_j \leadsto \l+\alpha_j \leadsto \l$ are valid slides (the first slide can only fail to be valid if $\l_i=1$ and $\la i,j \ra = -1$). Hence we get 
$$(\uck_\l) \sim (\uck_{\l-\alpha_i},i) \sim (\uck_{\l-\alpha_i+\alpha_j},-j,i) \sim (\uck_{\l-\alpha_i+\alpha_j},i,-j) \sim (\uck,-j)$$
where the second and last equivalences follows by induction. This completes the proof.
\end{proof}

\subsection{Step (i)}

\begin{Proposition}\label{PROP:redefine}
Having fixed $\t_i \in \End^2(\1_\mu)$ and $\t_i \in \End^2(\1_{\mu-\alpha_i})$ one can redefine the remaining $\t_i \in \End^2(\1_\l)$ so that relations (\ref{EQ:N1}) and (\ref{EQ:N2}) hold.  
\end{Proposition}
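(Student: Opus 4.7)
The plan is to propagate the definition of the $\t_i$'s outward from the middle weight $\mu$ along the canonical paths of Proposition \ref{PROP:paths}, and then verify that the result is path-independent. Concretely, suppose the canonical path $(\uck_\l)$ ends with a valid slide $\l - c\alpha_k \leadsto \l$, and that the $\t_i \in \End^2(\1_{\l'})$ have already been redefined for every $\l'$ strictly before $\l$ on this path. For each $i \in I$, we propose to fix $\t_i \in \End^2(\1_\l)$ by starting from an arbitrary representative and correcting it by a unique transient so that:
\begin{itemize}
\item for $i \ne k$, relation (\ref{EQ:N1}) holds across the slide;
\item for $i = k$, relation (\ref{EQ:N2}) holds at the corresponding weight of $\E_i\E_i$.
\end{itemize}
Such a correction exists by what has already been proved modulo transients. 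For $i\ne k$, Proposition \ref{PROP:1} says $II\t_i - \t_i II$ is transient, and by Lemma \ref{LEM:EndE} this transient in $\End^2(\1_{\l+\alpha_k}\E_k\1_\l)$ can be realized as a transient in $\End^2(\1_\l)$, which we absorb into the definition of $\t_i$ at $\l$. For $i=k$, Proposition \ref{PROP:2} gives a discrepancy in (\ref{EQ:N2}) of the form $(\phi III)$ or $(III\phi)$ for a transient $\phi$ sitting at one of the two outer weights; choosing $c$ appropriately (and using the induction hypothesis at $\l - c\alpha_k$ and $\l - 2c\alpha_k$) we arrange for $\phi$ to live at $\l$ and absorb it into $\t_k$ there.

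The remaining content of the proof is to show that this recipe does not depend on which minimal path from $\mu$ to $\l$ we use. By Proposition \ref{PROP:paths}, any two minimal paths differ by a sequence of $S_a$ moves, and by Lemma \ref{LEM:reduce} any non-minimal path is $(S_a,D_a)$-equivalent to the canonical one. The $D_a$ check is straightforward: sliding forward by $c\alpha_k$ and then back by $-c\alpha_k$ introduces inverse transient corrections that cancel. The nontrivial invariance is under $S_a$, which swaps two commuting adjacent slides $\l \leadsto \l + c_1\alpha_{k_1} \leadsto \l + c_1\alpha_{k_1} + c_2\alpha_{k_2}$. One handles this case by case according to whether $k_1, k_2$ coincide with the index $i$ we are tracking; the argument is in the spirit of Lemma \ref{LEM:rescale} and Lemma \ref{LEM:welldefined3}, reducing each case to a commutativity of two transient corrections that follows from the relations already established modulo transients (Propositions \ref{PROP:1} and \ref{PROP:2} applied at two neighbouring weights).

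The main obstacle is the $S_a$-invariance when both swapped slides interact nontrivially with the $\t_i$ being redefined, i.e.\ when $i \in \{k_1, k_2\}$. Here the nilHecke-type correction (\ref{EQ:N2}) feeds into itself across the swap, and the two candidate corrections at the common endpoint $\l + c_1\alpha_{k_1} + c_2\alpha_{k_2}$ must be shown to agree. For $\g = \sl_n$ this is tractable because the Dynkin diagram is a path, so $\la i,j\ra \in \{0,-1\}$ with very limited local geometry, and the orientation convention of Remark \ref{rem:orient} ensures we only need to compare corrections propagated in a single consistent direction out of $\mu$. In each local case the comparison boils down to applying Corollary \ref{COR:5} together with the already-established relations (\ref{EQ:KLR2}), (\ref{EQ:KLR3}) to move transient corrections past $\E_j$'s and $\E_i$'s and recognize them as equal.
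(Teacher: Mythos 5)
Your local picture is right: Parts I and II of the paper's Proposition \ref{PROP:theta+transient} are exactly the transient-absorption steps you describe, and you correctly invoke Proposition \ref{PROP:1}, Lemma \ref{LEM:EndE}, and Proposition \ref{PROP:2} to produce them. The issue is the induction scheme you wrap around them.

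Your induction hypothesis is that $\theta_i$ has been fixed \emph{at every weight strictly before $\l$ on the canonical path $(\uck_\l)$}, and for the Part~II step at an $\alpha_i$-slide $\l - c\alpha_i \leadsto \l$ you need $\theta_i$ at both $\l - c\alpha_i$ and $\l - 2c\alpha_i$. But $\l - 2c\alpha_i$ is generically \emph{not} on $(\uck_\l)$: if the path has only one $\alpha_i$-slide (e.g.\ $a_i = 1$ where $\l = \mu + \sum_j a_j\alpha_j$), the weight $\l - 2\alpha_i$ lies on the far side of $\mu$ and is never visited, so it is not covered by your hypothesis. This is not a cosmetic omission; it is precisely why the paper does not run a single induction along canonical paths, but instead partitions into Steps A--C by the value of $\la\l,\Lambda_i\ra$: first fix $\theta_i$ on all weights with $\alpha_i$-displacement $0$ from $\mu$ (canonical paths with no $\alpha_i$-slides, Part I only), then on all weights with displacement $-1$ (canonical paths from $\mu-\alpha_i$, again Part I only), and only then propagate outward in the $\alpha_i$-direction via Part II, at which point both of the required anchor weights automatically lie in the already-completed layers. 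You need some version of this two-layer base before the $\alpha_i$-propagation can start.

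For the path-invariance you gesture at Lemma \ref{LEM:rescale} and Lemma \ref{LEM:welldefined3}, but those concern \emph{multiplicative} rescaling of $T_{ii}$ and $T_{ij}$, and they turn on showing a particular long composition is nonzero (Lemmas \ref{LEM:iiab} and \ref{LEM:ijab}). The consistency check needed here is of a different flavour: it is an \emph{additive} cancellation of transient corrections, and the paper isolates it as Lemma \ref{LEM:twopaths}, whose proof moves $\theta_i$ past $\E_a\E_b$ using $T_{ab}$ and the already-established $\E_j$-rank computations (Lemma \ref{LEM:rank}). Citing that lemma, and its proof mechanism, is what your argument actually relies on. Finally, a small inaccuracy: $S_a$ swaps two adjacent slides in a minimal path, and if $k_a = k_{a+1}$ the swap is literally the identity, so the case you flag as the ``main obstacle'' (both swapped slides along $\alpha_i$) does not occur; the genuine nontrivial case is $k_a \ne k_{a+1}$ with one of them possibly equal to $i$, which is exactly the scope of Lemma \ref{LEM:twopaths}.
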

\begin{proof}
We prove the case $\mu_i=0$ (the case $\mu_i=1$ is the same). Let us fix $\t_i I_\mu$ and $\t_i I_{\mu+\alpha_i}$. We redefine the remaining $\t_i I_\l$ in three steps. 

Step A. First we redefine each $\t_i I_\l$ with $\la \l, \Lambda_i \ra = 0$. To do this we use Part I of Proposition \ref{PROP:theta+transient} to redefine each $\t_i$ along the canonical path $\mu \leadsto \l$ (the condition $\la \l, \Lambda_i \ra = 0$ means that this path does not involve any slides along $\alpha_i$). 

Step B. Next we redefine each $\t_i I_\l$ with $\la \l, \Lambda_i \ra = -1$. In this case, the proof of Proposition \ref{PROP:middle} also gives a canonical path from $\mu-\alpha_i$ to $\l$ (without any slides along $\alpha_i$). Once again we redefine each $\t_i$, using Part I of Proposition \ref{PROP:theta+transient}, along this canonical path $\mu-\alpha_i \leadsto \l$. 

Step C. Finally, if $\la \l, \Lambda_i \ra > 0$ (resp. $\la \l, \Lambda_i \ra < -1$) then we redefine $\t_i$ along the canonical path from $\mu$ (resp. $\mu-\alpha_i$) to $\l$ using Parts I and II of Proposition \ref{PROP:theta+transient}. Notice that if, along the path, we wish to redefine $\t_i I_{\rho+\alpha_i}$ (resp. $\t_i I_{\rho-\alpha_i}$) then we can use Part II of \ref{PROP:theta+transient} because now $\t_i I_\rho$ and $\t_i I_{\rho-\alpha_i}$ (resp. $\t_i I_{\rho+\alpha_i}$) have already been fixed. 

It remains to show that relations (\ref{EQ:N1}) and (\ref{EQ:N2}) hold. To see (\ref{EQ:N1}) suppose $\l = \mu + \sum_j a_j \alpha_j$ with $a_i \ge 0$ (the case $a_i \le -1$ is the same). Suppose $a_j \ge 0$ (the case $a_j \le 0$ is the same). If $\l_j \ge -1$ then $(\uck_\l,j)$ is a path of minimal length and, by Proposition \ref{PROP:paths}, $(\uck_{\l+\alpha_j}) \sim (\uck_\l,j)$. By applying Lemma \ref{LEM:twopaths} repeatedly these two paths give the same redefinition of $\t_i I_{\l+\alpha_j}$. But redefining via the path $(\uck_\l,j)$ implies, by construction, that (\ref{EQ:N1}) holds. 

Similarly, if $\l_j \le -1$ then by Lemma \ref{LEM:reduce} $(\uck_\l) \sim (\uck_{\l+\alpha_j},-j)$. But redefining via the path $(\uck_{\l+\alpha_j},-j)$ implies again, by construction, that (\ref{EQ:N1}) holds. 

To see (\ref{EQ:N2}) suppose again $a_i \ge 0$ (the case $a_i \le -1$ is the same). The paths $(\uck_{\l-\alpha_i})$ and $(\uck_\l,-i)$ are minimal so if $\l_i \le 0$ then $(\uck_{\l-\alpha_i}) \sim (\uck_\l,-i)$ by Proposition \ref{PROP:paths}. By applying Lemma \ref{LEM:twopaths} repeatedly these paths give the same rescaling of $\t_i I_{\l-\alpha_i}$. But rescaling via the path $(\uck_\l,-i)$ implies, by construction, that (\ref{EQ:N2}) holds. 

Similarly, by Lemma \ref{LEM:reduce} we have $(\uck_{\l+\alpha_i}) \sim (\uck_\l,i)$ so if $\l_i \ge 0$ then rescaling $\t_i I_{\l+\alpha_i}$ via the path $(\uck_\l,i)$ implies, by construction again, that (\ref{EQ:N2}) holds.
\end{proof}

\begin{Proposition}\label{PROP:theta+transient}
There are two parts to redefining $\t_i \in \End^2(\1_\l)$. 

Part I: For $i \ne j \in I$, if $\l_j \le -1$ \(resp. $\l_j \ge 1$\) one can redefine $\t_i \in \End^2(\1_\l)$ by adding some transient map so that (\ref{EQ:N1}) holds inside $\End^2(\E_j \1_\l)$ \(resp. inside $\End^2(\1_\l \E_j)$\). 

Part II: For $i = j \in I$, if $\l_i \le -2$ \(resp. $\l_i \ge 2$\) one can redefine $\t_i \in \End^2(\1_\l)$ by adding some transient map so that (\ref{EQ:N2}) holds inside $\End(\E_i \E_i \1_\l)$ \(resp. inside $\End(\1_\l \E_i \E_i)$\).
\end{Proposition}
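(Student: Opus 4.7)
Both parts follow the same template: by earlier work the desired identity holds modulo transient maps, so its failure is a 2-morphism living in the ideal generated by transients, and the task is to realize this failure in a form that can be absorbed by modifying $\t_i|_\l$ by a single transient element of $\End^2(\1_\l)$.

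For Part~I with $\l_j \le -1$, I apply Corollary~\ref{COR:5}(ii) with its ``$\l$'' played by our $\l+\alpha_j$; the hypothesis $(\l+\alpha_j)_j \le 1$ translates to $\l_j \le -1$. This gives $\t_i II = II\gamma$ in $\End^2(\1_{\l+\alpha_j}\E_j\1_\l)$ for some $\gamma \in \End^2(\1_\l)$. Running the bookkeeping in the proof of Proposition~\ref{PROP:1} (which applies because $\la \t_i,\alpha_j\ra = 0$ when $i\ne j$) shows $(\gamma-\t_i,\alpha_k)_\l = 0$ for every $k \in I$, so $\delta := \gamma-\t_i$ is transient. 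Redefining $\t_i|_\l$ to be $\gamma = \t_i + \delta$ alters $\t_i|_\l$ only by the transient $\delta$ and makes (\ref{EQ:N1}) hold exactly. The case $\l_j \ge 1$ is the mirror argument via Corollary~\ref{COR:5}(i) applied with its ``$\l$'' played by our $\l-\alpha_j$ (hypothesis $(\l-\alpha_j)_j \ge -1$, i.e.\ $\l_j \ge 1$); the resulting transient discrepancy is absorbed into $\t_i|_\l$ in the same way.

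For Part~II with $\l_i \le -2$, I invoke the proof of Proposition~\ref{PROP:2} at middle weight $\mu := \l+\alpha_i$, whose hypothesis $\mu_i = \l_i+2 \le 0$ is satisfied. It yields the exact identity
\begin{equation*}
T_{ii}(X_iI) - (IX_i)T_{ii} - II \;=\; T_{ii}\bigl((\gamma' III) + (III\gamma)\bigr)
\end{equation*}
in $\End(\1_{\l+2\alpha_i}\E_i\E_i\1_\l)$, with $\gamma' \in \End^2(\1_{\l+2\alpha_i})$ and $\gamma \in \End^2(\1_\l)$ both transient. The plan is to rewrite the right-hand side as $T_{ii}(III\tilde\gamma)$ for a single transient $\tilde\gamma \in \End^2(\1_\l)$ and then replace $\t_i|_\l$ by $\t_i|_\l - \tilde\gamma$: this modifies the $(III\t_i)$ summand of (\ref{EQ:N2}) by $-(III\tilde\gamma)$, producing the compensating $-T_{ii}(III\tilde\gamma)$ on the right-hand side and killing the obstruction exactly. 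To justify the rewriting I transport $\gamma'$ across the two $\E_i$ factors using Lemma~\ref{LEM:EndE}: the bookkeeping in Proposition~\ref{PROP:1}'s proof, applied to the transient $\gamma'$, yields $\gamma' II = II\gamma''$ in $\End^2(\1_{\l+2\alpha_i}\E_i\1_{\l+\alpha_i})$ for some transient $\gamma'' \in \End^2(\1_{\l+\alpha_i})$, and a second application gives $\gamma'' II = II\gamma'''$ in $\End^2(\1_{\l+\alpha_i}\E_i\1_\l)$ for some transient $\gamma''' \in \End^2(\1_\l)$; the applicability of Lemma~\ref{LEM:EndE} at each step is guaranteed by $\l_i \le -2$, which gives $(\l+\alpha_i)_i \le 0$ and $\l_i \le -1$ respectively. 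Setting $\tilde\gamma := \gamma''' + \gamma$ completes the argument.

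The main obstacle is precisely the transport step in Part~II: rewriting a transient inserted at the far left as a transient inserted at the far right. The sign hypothesis $\l_i \le -2$ is essential, as it forces Lemma~\ref{LEM:EndE} to apply in the correct regime at both intermediate $\E_i$ factors (the boundary $\l_i = -2$ uses the ``$\ge -1$'' case of the lemma at the outer step while $\l_i \le -3$ uses the ``$\le -1$'' case, but either way the cascade carries through). The mirror situation $\l_i \ge 2$ is handled symmetrically in $\End(\1_\l\E_i\E_i)$: one transports the transient from the right to the left, rewrites the obstruction as $T_{ii}(\tilde\gamma III)$, and absorbs it through the $(\t_i III)$ summand of (\ref{EQ:N2}) by modifying $\t_i|_\l$.
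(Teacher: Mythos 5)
Your Part~I is essentially the paper's argument, just routed through Corollary~\ref{COR:5}(ii) rather than citing Lemma~\ref{LEM:EndE} directly; that is fine, since the corollary is what makes the decomposition and the transience of $\gamma-\t_i$ explicit.

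Your Part~II, however, contains a genuine gap, and it stems from a misreading of the output of Proposition~\ref{PROP:2} in the regime $\mu_i\le 0$. You write the leftover as $T_{ii}\bigl((\gamma'III)+(III\gamma)\bigr)$ with $\gamma'$ and $\gamma$ ``both transient,'' and then attempt to transport $\gamma'$ from the far-left slot to the far-right slot. But in PROP\ref{PROP:new1}, when the middle weight satisfies $\mu_i\le 0$, the element $\tau$ has the constrained shape $(III\rho)+b(\t III)$: the \emph{arbitrary} piece $\rho$ sits on the right and the left piece is forced to be a scalar multiple of the fixed $\t\in Y_\k$ with $\la\t,\alpha_i\ra\ne 0$. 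After rewriting in terms of $X_i$, the far-left coefficient is exactly $(1+b)\t$; the argument of PROP\ref{PROP:2} shows $\bigl((1+b)\t,\alpha_i\bigr)_{\mu+\alpha_i}=0$, and since $\la\t,\alpha_i\ra\ne 0$ this forces $b=-1$ and hence $\gamma'=0$ on the nose. There is simply no far-left transient to transport: the obstruction already lives entirely at weight $\l$, which is precisely what the paper's proof uses when it says the defect is of the form $(T_{ii}I)(II\gamma)$ with $\gamma\in\End^2(\1_\l)$ transient. Redefining $\t_i\vert_\l\mapsto\t_i\vert_\l-\gamma$ then clears it.

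Beyond being unnecessary, your transport cascade also fails at its outer step when $\l_i=-2$. There $(\l+\alpha_i)_i=0$, so Lemma~\ref{LEM:EndE} places you in case~(i), which spans $\End^2(\1_{\l+2\alpha_i}\E_i\1_{\l+\alpha_i})$ by $(II\t)$ together with \emph{all} elements $(\xi II)$. That decomposition lets one push a transient from \emph{right to left}, not left to right; $\gamma'II$ is already of the form $(\xi II)$ and the lemma gives you no way to re-express it as $(II\gamma'')$. The ``$\ge -1$'' case does not carry the cascade through; only the ``$\le -1$'' case of Lemma~\ref{LEM:EndE} gives a left-to-right transport, which requires $\l_i\le -3$. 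This is exactly why a transport argument cannot be made uniform in $\l_i\le -2$, and why the correct route is to observe that the far-left defect is $0$ to begin with.
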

\begin{proof}
To prove the first claim suppose $\l_j \le -1$ (the case $\l_j \ge 1$ is the same). We know that relation (\ref{EQ:N1}) holds modulo transient maps. By Lemma \ref{LEM:EndE} it follows that 
$$(II \t_i) - (\t_i II) = (II \gamma) \in \End^2(\1_{\l+\alpha_i} \E_i \1_\l)$$ 
for some transient $\gamma \in \End^2(\1_\l)$. The result now follows if we redefine $\t_i \in \End^2(\1_\l)$ as $\t_i-\gamma$. 

To prove the second claim suppose $\l_i \le -2$ (the case $\l_i \ge 2$ is the same). From the proof of Proposition \ref{PROP:2} We know that relation (\ref{EQ:N2}) holds modulo maps of the form $(T_{ii}I)(II \gamma) \in \End^2(\E_i \E_i \1_\l)$ where $\gamma \in \End^2(\1_\l)$ is transient. Thus, if we redefine $\t_i \in \End^2(\1_\l)$ as $\t_i - \gamma$ then (\ref{EQ:N2}) will hold on the nose.
\end{proof}

\begin{Lemma}\label{LEM:twopaths}
Using Proposition \ref{PROP:theta+transient} there are two ways of redefining $\theta_i \in \End^2(\1_{\l \pm \alpha_a \pm \alpha_b})$ given $\theta_i \in \End^2(\1_\l)$, namely 
$$\theta_i I_\l \leadsto \theta_i I_{\l \pm \alpha_a} \leadsto \theta_i I_{\l \pm \alpha_a \pm \alpha_b} \ \ \text{ or }  \ \ \theta_i I_\l \leadsto \theta_i I_{\l \pm \alpha_b} \leadsto \theta_i I_{\l \pm \alpha_a \pm \alpha_b}.$$
However, these yield the same $\theta_i I_{\l \pm \alpha_a \pm \alpha_b}$. 
\end{Lemma}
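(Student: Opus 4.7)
The plan is to carry out a case analysis on the equalities among $i, a, b$; without loss of generality we treat the $(+,+)$ sign pattern since the three other sign patterns are handled symmetrically. Since $a \ne b$, there are essentially two cases: (1) $i \notin \{a,b\}$ so that both slides use Part I of Proposition \ref{PROP:theta+transient}, and (2) exactly one of $a,b$ equals $i$, say $a=i$, so that one slide uses Part II and the other uses Part I.

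In Case (1), both paths redefine $\theta_i$ by purely commuting it past an $\E_a$ or $\E_b$. The redefinition $\theta_i I_\l \leadsto \theta_i I_{\l+\alpha_a}$ is uniquely characterized by the equation $(II\theta_i^{(\l+\alpha_a)}) = (\theta_i^{(\l)}II) \in \End^2(\1_{\l+\alpha_a}\E_a \1_\l)$ (assuming $\l_a \le -1$; the case $\l_a \ge 1$ is analogous). The two paths then produce a defining equation for $\theta_i I_{\l+\alpha_a+\alpha_b}$ by traversing either $\E_a\E_b$ or $\E_b\E_a$. We compare them by applying the intertwiner $T_{ab}$, which is invertible when $\la a,b\ra=0$ and satisfies (\ref{EQ:C})/(\ref{EQ:D}) when $\la a,b\ra=-1$, to slide $\E_a\E_b$ past $\E_b\E_a$. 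After this slide, both defining equations become a single identity in $\End^2$ of a composite with both $\E_a$ and $\E_b$, which pins down $\theta_i I_{\l+\alpha_a+\alpha_b}$ unambiguously.

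The harder Case (2) requires combining Parts I and II of Proposition \ref{PROP:theta+transient} in the two opposite orders. Here one compares the sequences
\[
\theta_i I_\l \leadsto \theta_i I_{\l+\alpha_i} \leadsto \theta_i I_{\l+\alpha_i+\alpha_b}
\qquad\text{versus}\qquad
\theta_i I_\l \leadsto \theta_i I_{\l+\alpha_b} \leadsto \theta_i I_{\l+\alpha_i+\alpha_b}.
\]
The first step of the first sequence is characterized by the affine nilHecke identity (\ref{EQ:N2}) involving $T_{ii}$, while the second step of the second sequence is characterized by the same identity at the shifted weight $\l+\alpha_b$. The key computation is to slide $T_{ii}$ past $T_{ib}$ using the braid relation (\ref{EQ:B}) (when $\la i,b\ra=-1$) or the commutation (\ref{EQ:2}) (when $\la i,b\ra=0$), combined with Proposition \ref{PROP:1} which lets $\theta_j$ commute past an $\E_k$ when $\la\theta_j,\alpha_k\ra=0$. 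This produces a single quadrilateral identity that is satisfied by exactly one element of $\End^2(\1_{\l+\alpha_i+\alpha_b})$, showing that both sequences lead to the same redefinition.

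The main obstacle is bookkeeping in Case (2): the two redefined values of $\theta_i I_{\l+\alpha_i+\alpha_b}$ differ from the original by transient corrections, so their difference is itself transient, and one must use the nondegeneracy built into the rank arguments of Step \#1 (together with the explicit intertwining identities above) to force this transient difference to vanish. The computation is analogous in spirit to the quadrilateral verification in Lemma \ref{LEM:rescale} but executed one level up, with $\theta_i$ in place of the scaling parameter and with Part II supplying the role of the affine nilHecke relation.
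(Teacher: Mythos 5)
Your overall strategy matches the paper's: use the intertwiner $T_{ab}$ to relate the two sliding orders, then invoke a rank/nondegeneracy argument to force the resulting transient difference in $\End^2(\1_{\l+\alpha_a+\alpha_b})$ to vanish. There are two concrete places where your write-up diverges from the paper, and both point to ways it could be tightened.

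First, the paper does not case-split on whether $i\in\{a,b\}$. Instead it simply sets $\phi$ equal to the difference of the two redefined $\t_i I_{\l+\alpha_a+\alpha_b}$ and shows directly that $(\phi II)(IT_{ab})=0$ by computing $(T_{ab}I)(II\t_i)$ two ways. This makes the argument uniform, and it also sidesteps your claim that a redefinition is ``uniquely characterized by the equation'' $(II\t_i) = (\t_i II)$: Lemma~\ref{LEM:EndE} only tells you that $(\theta II)$ and $(II\gamma)$ span, so uniqueness of the writing is not automatic, and one should work with the difference $\phi$ directly rather than appeal to uniqueness of the slides.

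Second, your Case (1) ends with the unsupported assertion that after applying $T_{ab}$ both defining equations ``pin down $\theta_i I_{\l+\alpha_a+\alpha_b}$ unambiguously.'' This is precisely the step that needs a rank argument, and you only invoke one in Case (2). The paper's version of this step is: since $\l_a,\l_b\ge 0$ (forced by the slides being valid), Lemma~\ref{LEM:rank} gives that $T_{ab}II\colon\E_a\E_b\1_\l\F_a\to\E_b\E_a\1_\l\F_a$ has positive $\E_b$-rank, hence $(\phi II)(IT_{ab})=0$ implies $(\phi I)=0$ in $\End^2(\1_{\l+\alpha_a+\alpha_b}\E_b)$, and then $\phi=0$ because $\E_b\F_b$ contains at least one copy of $\1_{\l+\alpha_a+\alpha_b}$. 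You should either supply this argument explicitly in Case (1) as well, or collapse both cases into the paper's uniform treatment.
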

\begin{proof}
We prove the case of $\theta_i I_{\l+\alpha_a+\alpha_b}$ (the other cases are the same). For simplicity we assume $\la a,b \ra = -1$ (if $a=b$ there is nothing to prove and the case $\la a,b \ra = 0$ is strictly easier). Let us denote by $\phi \in \End^2(\1_{\l+\alpha_a+\alpha_b})$ the difference between the two redefinitions. We need to show that $\phi = 0$. 

By considering $(T_{ab}I)(II \theta_i) \in \Hom^{-1}(\E_a \E_b \1_\l, \E_b \E_a \1_\l)$ we find that 
$$(\phi II)(IT_{ab}) \in \Hom^{-1}(\1_{\l+\alpha_a+\alpha_b} \E_a \E_b, \1_{\l+\alpha_a+\alpha_b} \E_b \E_a)$$
must be zero. Now we know that $\la \l+\alpha_a, \alpha_b \ra \ge -1 \Rightarrow \l_b \ge 0$ and likewise $\l_a \ge 0$. Then by Lemma \ref{LEM:rank} we know that the $\E_b$-rank of $T_{ab}II: \E_a \E_b \1_\l \F_a \rightarrow \E_b \E_a \1_\l \F_a$ is positive. Thus $(\phi II)(IT_{ab}) = 0$ implies $(\phi I) \in \End^2(\1_{\l+\alpha_a+\alpha_b} \E_b)$ is zero. But then $\1_{\l+\alpha_a+\alpha_b} \E_b \F_b$ contains $\l_b+1 \ge 1$ copies of $\1_{\l+\alpha_a+\alpha_b}$ which means that $\phi \in \End^2(\1_{\l+\alpha_a+\alpha_b})$ must also be zero. This completes the proof. 
\end{proof}

\subsection{Step (ii)}

By Remark \ref{rem:orient} we can assume $\mu_j=0$. Then if we take $\l=\mu-\alpha_i$ we have $\l_i=-2$ and $\l_j=1$. This means that a transient map in $\End^2(\E_i \1_\l \E_j)$ is of the form $I \phi I$ for some $\phi \in \End^2(\1_\l)$. Thus, by redefining $\t_i I_{\mu-\alpha_i}$, we can fix it so that (\ref{EQ:N3}) holds when $\l=\mu-\alpha_i$. 

Next, having fixed $\t_i I_\mu$ and $\t_i I_{\mu-\alpha_i}$ we can redefine the remaining $\t_i I_\l$ so that relations (\ref{EQ:N1}) and (\ref{EQ:N2}) hold (Proposition \ref{PROP:redefine}). Finally, since (\ref{EQ:N3}) holds for $\l=\mu-\alpha_i$ it holds for all $\l$ by Corollary \ref{COR:N3}. This completes Step (ii). 

\begin{Proposition}\label{PROP:N3}
Suppose (\ref{EQ:N1}) and (\ref{EQ:N2}) both hold for an edge $i \rightarrow j$ in $\Gamma$ and all $\l$. Assuming that (\ref{EQ:N3}) holds when $\l=\nu$ then it also holds for $\l=\nu+\alpha_k$ if $\la \nu+\alpha_i,\alpha_k \ra \ge -1$ and for $\l=\nu-\alpha_k$ if $\la \nu+\alpha_i,\alpha_k \ra \le 1$. 
\end{Proposition}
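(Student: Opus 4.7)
Plan. Set
\[
\Psi_\l := T_{ji}T_{ij} - t_{ij}(X_iI) - t_{ji}(IX_j) \in \End^2(\E_i \1_\l \E_j).
\]
By Corollary~\ref{COR:tji} together with the now-established relations (\ref{EQ:N1}) and (\ref{EQ:N2}) (which upgrade the relevant identities from ``modulo transients'' to ``on the nose'' after commuting transient parts through $\E_i$ and $\E_j$), the element $\Psi_\l$ takes the form $I \chi_\l I$ for a transient $\chi_\l \in \End^2(\1_{\l+\alpha_j})$, well-defined whenever $\E_i \1_\l \E_j \ne 0$ (otherwise the conclusion is vacuous at the corresponding weight). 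The hypothesis becomes $\chi_\nu = 0$, and the task is to deduce $\chi_{\nu+\alpha_k} = 0$ assuming $\la \nu+\alpha_i,\alpha_k\ra \ge -1$; the case $\nu-\alpha_k$ under $\la \nu+\alpha_i,\alpha_k\ra \le 1$ is entirely symmetric.

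The core idea is to transport $\Psi_\nu$ to weight $\nu+\alpha_k$ by an intertwining composition and identify the result with $\Psi_{\nu+\alpha_k}$ multiplied by an invertible bubble scalar. Concretely, I would study the composition
\[
\E_i \1_{\nu+\alpha_k} \E_j \xrightarrow{I \adj^k II} \E_i \E_k \1_\nu \F_k \E_j \xrightarrow{\text{permute}} \E_k \E_i \1_\nu \E_j \F_k \xrightarrow{I \Psi_\nu I} \E_k \E_i \1_\nu \E_j \F_k \xrightarrow{\text{permute}} \E_i \E_k \1_\nu \F_k \E_j \xrightarrow{I \adj_k II} \E_i \1_{\nu+\alpha_k} \E_j,
\]
where the ``permute'' steps move $\E_k$ past $\E_i$ and $\F_k$ past $\E_j$ using the distinct-index braid relation $T_{ikj}=T'_{ikj}$ of Step~\#8, the pitchfork relations of Lemma~\ref{LEM:fork}, and the commutators (\ref{EQ:KLR2})--(\ref{EQ:KLR3}) and (\ref{EQ:C})--(\ref{EQ:D}) for transporting $X_i, X_j, T_{ij}, T_{ji}$ across $T_{ik}, T_{kj}, X_k$. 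Using these one shows the composite equals $\Psi_{\nu+\alpha_k}$ precomposed with a bubble of the form $\adj_k \circ X_k^r \circ \adj^k \in \End(\1_{\nu+\alpha_k})$ for an exponent $r$ fixed by degree-balance; by Corollary~\ref{COR:bubbles3} this bubble is a nonzero multiple of the identity precisely in the weight range $\la \nu+\alpha_i,\alpha_k\ra \ge -1$. Since $\Psi_\nu = 0$ kills the entire composite, we conclude $\Psi_{\nu+\alpha_k} = 0$. The cases $k=i$ and $k=j$ fit into the same scheme with $T_{ii}$ or $T_{jj}$ and the affine nilHecke relation (\ref{EQ:N2}) replacing the distinct-index braid in the permutation step.

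The main obstacle is the bookkeeping of correction terms that appear when $k$ is adjacent to $i$ or $j$ in the Dynkin diagram (in particular when $k=i$ or $k=j$): there, commuting $T_{ij}$ past $T_{ik}$ or $T_{jk}$ via (\ref{EQ:C})--(\ref{EQ:D}) generates $X_k$-tails that are \emph{a priori} non-transient. However, because $\Psi_\l$ is itself transient, these tails must collectively either cancel or be absorbed into the bubble scalar through the affine nilHecke straightening. The one-sided inequality $\la \nu+\alpha_i,\alpha_k\ra \ge -1$ is exactly what is needed so that the resulting bubble lies in the nonzero range of Corollary~\ref{COR:bubbles3}; a degenerate bubble would allow the composite to vanish without forcing $\Psi_{\nu+\alpha_k} = 0$, which is why the dual hypothesis $\la \nu+\alpha_i,\alpha_k\ra \le 1$ is imposed in the $\nu-\alpha_k$ direction.
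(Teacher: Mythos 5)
Your approach is genuinely different from the paper's, but it has a real gap at its core.  The paper does not transport $\Psi$ with adjunction bubbles.  For the case $k=j$ it considers the single composition
\[
\E_j \E_i \E_j \1_{\nu-\alpha_j} \xrightarrow{(IT_{ji})(IT_{ij})} \E_j \E_i \E_j \1_{\nu-\alpha_j} \la 2 \ra
\xrightarrow{(IT_{jj})(T_{ji}I)} \E_i \E_j \E_j \1_{\nu-\alpha_j} \la 1 \ra
\]
and evaluates it in two ways: once by applying (\ref{EQ:N3}) at $\l=\nu$ to the innermost pair, and once by recognizing it as $\bigl(T'_{jji}\bigr)(IT_{ij})=\bigl(T_{jji}\bigr)(IT_{ij})$, applying the braid relation (\ref{EQ:A}), and then using (\ref{EQ:N3}) up to a transient $\gamma$ at $\l=\nu+\alpha_j$.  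Equating the two answers forces $(\gamma I)(IT_{jj})(T_{ji}I)=0$, and a separate rank argument (Corollary~\ref{COR:nonzero}, Lemma~\ref{lem:uii}, and the inclusion of $\E_i\E_j\1_{\nu+\alpha_j}$ as a summand of $\E_j\F_j\E_i\E_j\1_\nu$) shows $\gamma=0$.  The hypothesis $\la\nu+\alpha_i,\alpha_k\ra\ge-1$ enters there, through $\nu_j\ge0$, to make $v_{jj}u_{jj}\sim\mathrm{id}$ and to guarantee the copies of $\E_i\E_j\1_{\nu+\alpha_j}$ exist; it is not a bubble-nonvanishing condition.

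Your plan hinges entirely on the unproven identity ``composite $=\Psi_{\nu+\alpha_k}$ precomposed with an invertible bubble of the form $\adj_k\circ X_k^r\circ\adj^k$.''  Three things go wrong.  First, the permutation steps you list ($T_{ik}$, $T_{kj}$, pitchfork relations) are not degree-preserving, so the exponent $r$ is not ``fixed by degree balance'' without first computing those degrees explicitly, and the composite need not land on a pure bubble at all once those crossings have been pushed through.  Second, when $k=i$ or $k=j$, the maps $u_{kk}$ and $v_{kk}$ are projections/inclusions rather than isomorphisms, and moving $\E_k$ past $T_{ij}$ via the affine nilHecke relation (\ref{EQ:N2}) generates correction terms $(III)$; the remark that these cases ``fit into the same scheme'' is not a proof, and it is precisely in these cases that the difficulty lies.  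Third, the argument that the extra tails ``must collectively either cancel or be absorbed into the bubble scalar because $\Psi_\l$ is transient'' is not a valid deduction: transience constrains the form of $\Psi_\l$ (it is an endomorphism carried by a transient class in $\End^2(\1)$) but says nothing about which composition of crossings with $\Psi_\l$ will vanish.  Indeed $\Psi_{\nu+\alpha_k}$ is what you are trying to show is zero, so reasoning from its shape is circular unless you first realize the whole composite as (something invertible) $\circ\Psi_{\nu+\alpha_k}$, which is exactly the missing step.
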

\begin{proof}
We assume $\la \nu+\alpha_i,\alpha_k \ra \ge -1$ (the case $\la \nu+\alpha_i,\alpha_k \ra \le 1$ is the same). There are three cases depending on whether $k=i, k=j$ or $k \ne i,j$. All three are proven in the same manner so we only prove the more complicated case $k=j$. 

Consider the composition 
$$\E_j \E_i \E_j \1_{\nu-\alpha_j} \xrightarrow{(IT_{ji})(IT_{ij})} \E_j \E_i \E_j \1_{\nu-\alpha_j} \la 2 \ra \xrightarrow{(IT_{jj})(T_{ji}I)} \E_i \E_j \E_j \1_{\nu-\alpha_j} \la 1 \ra.$$
On the one hand, using (\ref{EQ:N3}) when $\l = \nu$ this is equal to 
$$(IT_{jj})(T_{ji}I)[t_{ij}(IX_iI) + t_{ji}(IIX_j)] = [t_{ij}(X_iII) + t_{ji}(IX_jI)](IT_{jj})(T_{ji}I) - t_{ji}(T_{ji}I)$$
where we used the affine nilHecke relation to commute the $X_j$. On the other hand, this composition equals
\begin{align*}
(T'_{jji})(IT_{ij}) 
&= (T_{jji})(IT_{ij}) = (T_{ji}I)(IT_{ji})(T_{jj}I)(IT_{ij}) \\
&= (T_{ji}I)(T_{ij})(IT_{jj})(T_{ji}I) - t_{ji} (T_{ji}I) \\
&= [t_{ij}(X_iII) + t_{ji}(IX_jI) + (\gamma I)](IT_{jj})(T_{ji}I) - t_{ji}(T_{ji}I)
\end{align*}
for some transient map $\gamma \in \End^2(\E_i \E_j \1_{\nu})$. Comparing expressions we get $(\gamma I)(IT_{jj})(T_{ji}I) = 0$. It remains to show $\gamma = 0$. 

By Corollary \ref{COR:nonzero} we get $(I \gamma)(v_{jj}I)(Iv_{ij}) I_{\nu} = 0$. Now $(Iv_{ij})(Iu_{ji}) \sim id$ and $(v_{jj}I)(u_{jj}I) \sim id$ since $\nu_j \ge 0$. So composing on the right with $(Iu_{ji})(u_{jj}I)$ we get that $(I \gamma) \in \End^2(\F_j \E_i \E_j \1_{\nu})$ is zero. If we compose with $\E_j$ on the left and simplify we get several copies of $\gamma \in \End^2(\E_i \E_j \1_{\nu+\alpha_j})$ (this uses that $\la \nu+\alpha_i+\alpha_j,\alpha_j \ra \ge 1$). It follows that $\gamma = 0$ and we are done. 
\end{proof}

\begin{Corollary}\label{COR:N3}
Suppose (\ref{EQ:N1}) and (\ref{EQ:N2}) hold for an edge $i \rightarrow j$ in $\Gamma$ and all $\l$. If (\ref{EQ:N3}) holds for $\l=\mu-\alpha_i$ then it holds for all $\l$.
\end{Corollary}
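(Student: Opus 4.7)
The plan is to propagate (\ref{EQ:N3}) from the single weight $\mu-\alpha_i$ to every $\l$ by a slide argument analogous to the one carried out for the middle weight in section \ref{sec:middle}. The key observation that makes this work is a change-of-variable: setting $\rho := \nu+\alpha_i$, the slide conditions in Proposition \ref{PROP:N3} take the form $\rho_k \ge -1$ for the move $\nu \leadsto \nu+\alpha_k$ and $\rho_k \le 1$ for the move $\nu \leadsto \nu-\alpha_k$. These are precisely the \emph{valid slide} conditions from section \ref{sec:middle} applied to $\rho$. Thus propagation of (\ref{EQ:N3}) in the variable $\nu$ corresponds exactly to valid sliding in the shifted variable $\rho=\nu+\alpha_i$.

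I would first reduce to weights $\l$ with $\1_{\l+\alpha_i}\neq 0$, since otherwise $\E_i\1_\l = \1_{\l+\alpha_i}\E_i = 0$ and relation (\ref{EQ:N3}) holds vacuously in $\End^2(\E_i\1_\l\E_j)$. For such $\l$, set $\rho := \l+\alpha_i$, so $\1_\rho \neq 0$. By Proposition \ref{PROP:middle}, $\mu$ is a middle weight of $\K$, and hence there exists a path of valid slides $\mu = \mu_0 \leadsto \mu_1 \leadsto \cdots \leadsto \mu_N = \rho$. Translating each $\mu_\ell$ by $-\alpha_i$ gives a sequence $\nu_\ell := \mu_\ell - \alpha_i$ with $\nu_0 = \mu - \alpha_i$ and $\nu_N = \l$, and by the change-of-variable observation above, each transition $\nu_\ell \leadsto \nu_{\ell+1}$ satisfies the Cartan-pairing hypothesis of Proposition \ref{PROP:N3}.

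Now I would apply Proposition \ref{PROP:N3} inductively along this sequence: starting from (\ref{EQ:N3}) at $\nu_0 = \mu-\alpha_i$, each step propagates the relation to $\nu_{\ell+1}$, until we arrive at (\ref{EQ:N3}) at $\nu_N = \l$. This uses hypotheses (\ref{EQ:N1}) and (\ref{EQ:N2}), which are assumed to hold at \emph{all} weights, so the inductive step is always available. Note that intermediate weights $\nu_\ell$ at which $\1_{\nu_\ell} = 0$ cause no trouble: the hypothesis on (\ref{EQ:N3}) at such $\nu_\ell$ is vacuous, and Proposition \ref{PROP:N3} still produces a (possibly vacuous) statement at $\nu_{\ell+1}$ which we then feed into the next step.

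The main conceptual ingredient is just recognizing the $-\alpha_i$ shift that matches Proposition \ref{PROP:N3}'s slide conditions with the middle-weight slide conditions; once that is noticed there is no real obstacle, and the rest is a direct application of Proposition \ref{PROP:middle} and repeated use of Proposition \ref{PROP:N3}.
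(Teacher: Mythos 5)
Your proof is correct and follows the same route as the paper's one-line argument: both recognise that the Cartan-pairing condition in Proposition~\ref{PROP:N3}, after the shift $\l \mapsto \l+\alpha_i$, becomes exactly the valid-slide condition of section~\ref{sec:middle}, and hence propagate (\ref{EQ:N3}) from the base case at $\mu-\alpha_i$ along a valid-slide path from the middle weight $\mu$. Your explicit reduction to the non-vacuous case $\1_{\l+\alpha_i}\neq 0$ is a useful detail that the paper leaves implicit.
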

\begin{proof}
Proposition \ref{PROP:N3} states that for a valid slide $\nu \leadsto \nu \pm \alpha_k$ relation (\ref{EQ:N3}) holds for $\l = \nu-\alpha_i \pm \alpha_k$ assuming it holds for $\l = \nu-\alpha_i$. The result follows since we can reach any weight via a sequence of valid slides starting from $\mu$. 
\end{proof}

\subsection{Step (iii)}

We will show that (\ref{EQ:N4}) holds when $\l = \mu-\alpha_j$. It then follows by Corollary \ref{COR:N3} that (\ref{EQ:N4}) holds for all $\l$. 

We know that $(T_{ij})(T_{ji}) = t_{ij}(IX_i) + t_{ji}(X_jI) + \phi \in \End^2(\E_j \1_{\mu-\alpha_j} \E_i)$ for some transient map $\phi$. Now consider the composition $(T_{ij})(T_{ji})(T_{ij}) \in \Hom^3(\E_i \1_{\mu-\alpha_i} \E_j, \E_j \1_{\mu-\alpha_j} \E_i)$. On the one hand, using relation (\ref{EQ:N3}), this equals
$$(T_{ij})(t_{ij}(X_iI)+t_{ji}(IX_j)) = (t_{ij}(IX_i)+t_{ji}(X_jI))(T_{ij}).$$
On the other hand, the composition equals $(t_{ij}(IX_i)+t_{ji}(X_jI)+\phi)(T_{ij})$. It follows that $\phi(T_{ij}) = 0$. 

It remains to show that $\phi(T_{ij})=0 \Rightarrow \phi=0$. We consider two cases depending on whether $\mu_j=0$ or $\mu_j=1$. If $\mu_j=0$ then $\phi$ is of the form $(I \gamma I) \in \End^2(\E_j \1_{\mu-\alpha_j} \E_i)$ where $\gamma \in \End^2(\1_{\mu-\alpha_j})$ is transient. Composing on the left with $\F_j$ we get that 
$$\F_j \E_i \1_{\mu-\alpha_i} \E_j \xrightarrow{(IT_{ij})} \F_j \E_j \1_{\mu-\alpha_j} \E_i \la 1 \ra \xrightarrow{(II \gamma I)} \F_j \E_j \1_{\mu-\alpha_j} \E_i \la 3 \ra$$
is zero. Since $\mu_j=0$ the first map induces an isomorphism between one copy of $\1_{\mu-\alpha_j} \E_i$. This means that $(\gamma I) \in \End^2(\1_{\mu-\alpha_j} \E_i)$ is zero and hence $\phi = 0$. 

Similarly, if $\mu_j=1$ then $\phi$ is of the form $(\gamma II) \in \End^2(\1_\mu \E_j \E_i)$. This time we compose with $\F_j$ on the right and find that 
$$\1_\mu \E_i \E_j \F_j \xrightarrow{(IT_{ij}I)} \1_\mu \E_j \E_i \F_j \la 1 \ra \xrightarrow{(\gamma III)}  \1_\mu \E_j \E_i \F_j \la 3 \ra$$
is zero. Since $\mu_j = 1$ the first map again induces an isomorphism between one copy of $\1_\mu \E_i$ and hence $(\gamma I) \in \End^2(\1_\mu \E_i)$ is zero. So again we find $\phi=0$ which completes the argument.

\section{An alternative definition of a $(\g,\t)$ action}\label{sec:alt}

An equivalent definition of a $(\g,\t)$ action involves replacing condition (\ref{co:theta}) with the following. 

\begin{framed}
The composition $\E_i \E_i$ decomposes as $\E_i^{(2)} \la -1 \ra \oplus \E_i^{(2)} \la 1 \ra$ for some 1-morphism $\E_i^{(2)}$. Moreover, if $\theta \in Y_\k$ where $\la \theta, \alpha_i \ra \ne 0$ (resp. $\la \theta, \alpha_i \ra = 0$) then $I \t I \in \End^2(\E_i \1_\l \E_i)$ induces a nonzero map (resp. the zero map) between the summands $\E_i^{(2)} \la 1 \ra$ on either side.
\end{framed}

\begin{Lemma}
The condition above implies condition (\ref{co:theta}) from section \ref{sec:gaction}. 
\end{Lemma}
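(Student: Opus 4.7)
The plan is to derive condition (\ref{co:theta}) from the alternative hypothesis by translating, via adjunction, the action of $(I\t I)$ on $\E_i \1_\l \F_i$ into information about the action on $\E_i \1_\l \E_i$, which is governed by the alternative hypothesis. Focus on $\l_i \ge 0$; the case $\l_i \le 0$ is symmetric upon swapping $\E_i$ and $\F_i$. Condition (\ref{co:EF}), which does not use (\ref{co:theta}), gives $\E_i \1_\l \F_i \cong \F_i \E_i \1_{\l+\alpha_i} \oplus_{[\l_i+2]} \1_{\l+\alpha_i}$, placing $\l_i+2$ copies of $\1_{\l+\alpha_i}$ in internal degrees $\l_i+1, \l_i-1, \dots, -\l_i-1$. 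For any choice of inclusions $\iota_k$ and projections $\pi_{k'}$ of these summands, the matrix entry $\pi_{k'}\circ(I\t I)\circ \iota_k$ lies in $\End^{2(k-k')+2}(\1_{\l+\alpha_i})$, which vanishes for $k' > k+1$ by condition (\ref{co:hom1}); the $\l_i+1$ entries with $k' = k+1$ are scalars in $\End^0(\1_{\l+\alpha_i}) = \k$. Thus condition (\ref{co:theta}) reduces to showing that these $\l_i+1$ sub-diagonal scalars are simultaneously nonzero (resp.\ zero) exactly when $\la\t,\alpha_i\ra\ne 0$ (resp.\ $= 0$).

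To compute these scalars, take $\iota_k := (I\t^k I)\circ\adj^i : \1_{\l+\alpha_i}\la\l_i+1-2k\ra\to \E_i \F_i \1_{\l+\alpha_i}$, and build $\pi_k$ analogously from $\adj_i$. Under the $(\E_i,\F_i)$-adjunction (condition (\ref{co:adj})), the Hom space containing $\iota_k$ is identified, up to a nonzero scalar that is independent of $k$, with a graded piece of $\End^\bullet(\E_i\1_\l)$, and $\iota_k$ becomes ``multiplication by $\t^k$.'' Similarly $\pi_{k+1}$ corresponds to ``multiplication by $\t^{\l_i-k}$'' on the opposite side. Via one further adjunction the bilinear pairing $\pi_{k+1}\circ (I\t I)\circ \iota_k$ is converted into the pairing in $\End^\bullet(\E_i \1_\l \E_i)$ obtained by composing $(I\t^{\l_i+1} II)$ with $(I\t I)$ and projecting onto the degree-zero part. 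Since the alternative hypothesis provides $\E_i\1_\l\E_i \cong \E_i^{(2)}\1_\l\la 1\ra \oplus \E_i^{(2)}\1_\l\la -1\ra$, the projected pairing factors uniquely through the one-dimensional $\End^0(\E_i^{(2)}\1_\l\la 1\ra) \cong \k$ and recovers precisely the matrix coefficient of $(I\t I)$ on $\E_i^{(2)}\1_\l\la 1\ra$. By the alternative hypothesis, this coefficient is nonzero iff $\la\t,\alpha_i\ra\ne 0$, and hence all $\l_i+1$ sub-diagonal scalars are simultaneously nonzero (resp.\ zero) in the two cases, yielding condition (\ref{co:theta}).

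The main technical obstacle will be verifying that the $k$-dependent constants introduced during the two successive adjunctions are \emph{all nonzero} and independent of the matrix coefficient itself, so that a single scalar on $\E_i^{(2)}\1_\l\la 1\ra$ controls all $\l_i+1$ sub-diagonal entries simultaneously. This reduces to a bookkeeping argument using condition (\ref{co:hom1}), the standard basis of the relevant bubble-part Hom spaces, and the fact that the only degree-zero endomorphism of $\E_i^{(2)}\1_\l$ (up to scalar) is the identity.
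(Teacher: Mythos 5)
Your proposal takes a genuinely different route from the paper, but it has a significant gap that is not just a matter of bookkeeping.

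The paper argues via \emph{total $\E_i\1_\l$-rank}: it postcomposes $(I\theta I)$ with $\F_i$ to get a map on $\E_i\1_\l\E_i\F_i$, uses the alternative hypothesis to deduce that this induces an isomorphism between the $\E_i^{(2)}\F_i\1_\l\la 1\ra$ summands, and hence has total $\E_i\1_\l$-rank at least $\l_i+1$; then it transports this rank across the decompositions $\E_i\E_i\F_i\cong\E_i\F_i\E_i\oplus_{[\l_i]}\E_i$ and $\E_i\F_i\E_i\cong\F_i\E_i\E_i\oplus_{[\l_i+2]}\E_i$, and a degree count forces equality. Because total rank is a basis-free invariant, the paper never needs to exhibit any particular inclusions or projections of the $\1_{\l+\alpha_i}$-summands.

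Your approach instead tries to compute matrix coefficients directly. The problem is the step where you set $\iota_k := (I\theta^k I)\circ\adj^i$ and treat these as inclusions of the graded $\1_{\l+\alpha_i}$-summands. Those maps are well-defined, but there is no \emph{a priori} guarantee that they form a system of inclusions: without condition (\ref{co:theta}), the image of $\iota_k$ could leak into the $\F_i\E_i$-summand or into lower-degree $\1_{\l+\alpha_i}$-pieces, so that $\pi_{k+1}\circ(I\theta I)\circ\iota_k$ computes a \emph{mixture} of matrix entries in an unknown basis, not the canonical sub-diagonal entry. Showing that the $\iota_k$ are upper-triangular with respect to a true system of inclusions is equivalent to showing that the bubble $\adj_i\circ(I\theta^{\l_i+1}I)\circ\adj^i$ is nonzero (cf.\ Lemma \ref{LEM:bubbles1}), which is essentially what you set out to prove; taken at face value, your argument is circular at this point. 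Relatedly, the assertion that a single scalar on $\E_i^{(2)}\1_\l\la 1\ra$ simultaneously controls all $\l_i+1$ sub-diagonal entries presupposes exactly the basis-alignment you have not established. You flag this difficulty in your final paragraph, but it is the crux of the proof rather than a normalization check, and the middle of the argument claims more than it can deliver without condition (\ref{co:theta}) already in hand. If you want to salvage a matrix-coefficient style proof, you would first need an independent derivation (from the alternative hypothesis alone) that the bubble on $\1_{\l+\alpha_i}$ is nonzero precisely when $\la\theta,\alpha_i\ra\ne 0$; the paper's total-rank argument is precisely a way to avoid having to do this.
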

\begin{proof}
A very similar version of this result appears as \cite[Lemma 3.6]{CLa}. We prove the case $\l_i \ge 0$ (the case $\l_i \le 0$ is the same). First, we know that 
\begin{equation}\label{eq:bubble1}
(I \t II): \E_i \1_{\l} \E_i \F_i \rightarrow \E_i \1_{\l} \E_i \F_i \la 2 \ra
\end{equation}
induces an isomorphism between the summands $\E_i^{(2)} \F_i \1_{\l} \la 1 \ra$ on either side. Since 
$$\E_i^{(2)} \F_i \1_{\l} \cong \F_i \E_i^{(2)} \1_{\l} \bigoplus_{[\l_i+1]} \E_i \1_{\l}$$
this means that the total $\E_i \1_\l$-rank of the map in (\ref{eq:bubble1}) is at least $\l_i+1$.

On the other hand, the map in (\ref{eq:bubble1}) induces a map 
$$(I \t II) \bigoplus_{[\l_i]} (I \t): \E_i \1_{\l} \F_i \E_i \bigoplus_{[\l_i]} \E_i \1_{\l} \rightarrow \E_i \1_{\l} \F_i \E_i \la 2 \ra \bigoplus_{[\l_i]} \E_i \1_{\l} \la 2 \ra.$$
Thus the total $\E_i \1_\l$-rank of the map in (\ref{eq:bubble1}) is equal to the total $\E_i \1_\l$-rank of $(I \t II) \in \End^2(\E_i \1_{\l} \F_i \E_i)$. Since $\E_i \1_{\l} \F_i \E_i \cong \F_i \E_i \E_i \1_{\l} \bigoplus_{[\l_i+2]} \E_i \1_{\l}$ where $\F_i \E_i \E_i \1_{\l}$ contains no summands $\E_i \1_\l$ we get that $(I \t I) \in \End^2(\E_i \1_{\l} \F_i)$ has total $\1_{\l+\alpha_i}$-rank at least $\l_i+1$. The result follows because by degree reasons the $\1_{\l+\alpha_i}$-rank of $(I \t I) \in \End^2(\E_i \1_{\l} \F_i)$ cannot be any larger than $\l_i+1$. 
\end{proof}

This alternative condition is similar to the condition present in the definition of a geometric categorical $\sl_n$ action from \cite{CK3}. In that geometric setup the existence of $\E_i^{(2)}$ is obvious and checking the condition above is easier than checking condition (\ref{co:theta}). On the other hand, condition (\ref{co:theta}) is easier to check in other setups such as \cite{CLi2} where even the existence of divided powers $\E_i^{(r)}$ is very difficult.

\section{Applications}\label{sec:applications}

\subsection{Categorical vertex operators}\label{sec:vertex}

In \cite{CLi1} we explained how, starting with the zig-zag algebra $A_\Gamma$ associated to the Dynkin diagram $\Gamma$ of a finite type Lie algebra $\g$, one obtains a Heisenberg algebra $\h_\Gamma$ and a 2-category $\H_\Gamma$ which categorifies it. In the process we also categorified the Fock space of $\h_\Gamma$ using a 2-category $\sF_\Gamma$. 

In the subsequent paper \cite{CLi2} we showed how to define a $(\hg,\t)$ action on the 2-category $\K_{\sF,\Gamma} := \Kom(\sF_\Gamma) \otimes_\Z \Z[Y]$ where $\Kom(\cdot)$ denotes the homotopy category and $(\cdot) \otimes_\Z \Z[Y]$ means that we have one copy of $(\cdot)$ for each element of the root lattice $Y$.

This construction categorifies the Frenkel-Kac-Segal vertex operator construction \cite{FK,Se} of the basic representation of $\hg$. One subtlety here is that $\hg$ is the affine Lie algebra of $\g$ in its {\it loop} presentation. We will show in future work that such a $(\hg,\t)$ action is equivalent to a $(\hg,\t)$ action where $\hg$ is in its Kac-Moody presentation. Moreover, in the category $\K_{\sF,\Gamma}$ all transient maps are zero so we do not need to worry about them. Thus Theorem \ref{THM:main} implies the following. 

\begin{Theorem}\label{THM:KLRvertex}
The quiver Hecke algebras associated to $\hg$ act on $\K_{\sF,\Gamma}$. 
\end{Theorem}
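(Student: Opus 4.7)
The plan is to apply Theorem \ref{THM:main} to the $(\hg,\t)$ action on $\K_{\sF,\Gamma}$ produced in \cite{CLi2}, and then to verify that the transient ideal in this particular 2-category is zero so that the conclusion lifts from "modulo transients" to a genuine action of $R_Q$.

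First I would recall that \cite{CLi2} directly verifies the minimal axioms (\ref{co:hom1})--(\ref{co:new}) for a $(\hg,\t)$ action on $\K_{\sF,\Gamma}$, together with the adjunction data and the $\t$-datum $Y_\k \to \End^2(\1_\l)$. This is exactly where the design choice of a $(\g,\t)$ action pays off: in the zig-zag setting of \cite{CLi1,CLi2} it is difficult (even to formulate cleanly) to check the existence of divided powers $\E_i^{(r)}$, the Serre relation, or the commutation $\E_i\E_j \cong \E_j\E_i$ for $\la \alpha_i,\alpha_j \ra = 0$, and none of these appears among the required axioms. Applying Theorem \ref{THM:main} then yields an action of the quiver Hecke algebra $R_Q$ associated to $\hg$ on $\K_{\sF,\Gamma}$ modulo transient 2-morphisms.

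Second, I would show that there are no nonzero transient maps in $\K_{\sF,\Gamma}$. A transient is a $\gamma \in \End^2(\1_\l)$ with $(\gamma,\alpha_i)_\l = 0$ for every $i \in I$, and Lemma \ref{LEM:pairing2} identifies this pairing with $\la \cdot,\alpha_i\ra$ on the image of $Y_\k \to \End^2(\1_\l)$. The key point is that the extra tensor factor $\Z[Y]$ built into the definition $\K_{\sF,\Gamma} = \Kom(\sF_\Gamma) \otimes_\Z \Z[Y]$ is engineered so as to detect precisely the direction in $Y_\k$ that would otherwise sit in the radical of the Cartan form on the loop algebra: this extra root-lattice grading separates endomorphisms of $\1_\l$ that differ by an imaginary-root shift. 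A direct computation of $\End^\bullet(\1_\l)$ from the zig-zag algebra $A_\Gamma$, combined with this grading, shows that the map $Y_\k \to \End^2(\1_\l)$ is injective and that the pairing becomes non-degenerate on its image, so that the transient subspace is trivial.

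The main obstacle is the bookkeeping involved in passing between the loop presentation of $\hg$ used in \cite{CLi2} and the Kac-Moody presentation underlying the definition of a $(\g,\t)$ action in section \ref{sec:gaction}. The excerpt defers the full equivalence to future work, but the content needed here is just that the data of a $(\hg,\t)$ action transports compatibly between the two presentations, and that under this transport the map $Y_\k \to \End^2(\1_\l)$ and the pairing $(\cdot,\cdot)_\l$ correspond correctly. Once that comparison is in place, combining Theorem \ref{THM:main} with the vanishing of transients established above produces the desired action of $R_Q$ on $\K_{\sF,\Gamma}$, proving Theorem \ref{THM:KLRvertex}.
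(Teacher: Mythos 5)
Your overall strategy is exactly the paper's: cite \cite{CLi2} for the $(\hg,\t)$ action, invoke Theorem \ref{THM:main}, observe that transients vanish in $\K_{\sF,\Gamma}$, and flag the loop-vs-Kac--Moody translation as something deferred. The paper itself does little more than make this assertion in two sentences, so the interest of your write-up lies in the second paragraph where you try to supply an actual argument for the vanishing of transients. That is where there is a genuine problem.

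You assert both that ``the map $Y_\k\to\End^2(\1_\l)$ is injective'' and that ``the pairing becomes non-degenerate on its image, so that the transient subspace is trivial.'' These cannot both hold. By Lemma \ref{LEM:pairing2} the pairing $(\cdot,\cdot)_\l$ restricted along $Y_\k\to\End^2(\1_\l)$ simply recovers the Cartan form $\la\cdot,\cdot\ra$ on $Y_\k$, and in affine type that form is degenerate on the root lattice: the imaginary root $\delta$ satisfies $\la\delta,\alpha_i\ra=0$ for every $i\in I$. Hence the image of $\delta$ in $\End^2(\1_\l)$ is automatically transient (this is exactly the Remark after Lemma \ref{LEM:pairing2}). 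If the map were injective, $\delta$ would give a nonzero transient, directly contradicting the conclusion you want. The correct picture must therefore be the opposite of what you claim: either $\delta$ maps to zero (so the map $Y_\k\to\End^2(\1_\l)$ factors through $Y_\k/R_\k$), or $\End^2(\1_\l)$ is small enough that the pairing is non-degenerate on all of it once the radical is quotiented away. The heuristic about the $\Z[Y]$ tensor factor needs to be re-examined in that light: the extra grading may indeed kill would-be transients, but the mechanism is by forcing $\bar\delta=0$, not by making $Y_\k\hookrightarrow\End^2(\1_\l)$. As stated the second paragraph is internally inconsistent and does not establish the vanishing you need.
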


It seems very difficult to explicitly construct this action on $\K_{\sF,\Gamma}$. Indeed, this result is surprising because it is even difficult to show by hand that $\E_i^2 \cong \oplus_{[2]} \E_i^{(2)}$ holds in $\K_{\sF,\Gamma}$. In fact, an explicit form of $\E_i^{(k)}$ for $k > 2$ exists only conjecturally. An immediate Corollary of Theorem \ref{THM:KLRvertex} and \cite{CLa} is the following. 

\begin{Corollary}\label{COR:KLRvertex}
The 2-category $\K_{\sF,\Gamma}$ is a 2-representation of $\dot{\mathcal{U}}_Q(\hg)$ in the sense of Khovanov-Lauda. 
\end{Corollary}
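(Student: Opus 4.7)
The plan is to assemble Corollary \ref{COR:KLRvertex} from three ingredients that are essentially in place once Theorem \ref{THM:KLRvertex} has been established: the $(\hg,\t)$ action on $\K_{\sF,\Gamma}$ constructed in \cite{CLi2}, the rigidity result Theorem \ref{THM:main} (specialized as Theorem \ref{THM:KLRvertex}), and the main theorem of \cite{CLa} which upgrades a compatible quiver Hecke action into a 2-functor from $\dot{\mathcal{U}}_Q(\hg)$. First I would recall that by construction \cite{CLi2} exhibits a genuine (not merely ``modulo transients'') $(\hg,\t)$ action on the homotopy category $\K_{\sF,\Gamma} = \Kom(\sF_\Gamma) \otimes_\Z \Z[Y]$, so all the hypotheses of Theorem \ref{THM:main} are available. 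Applying that theorem yields the 2-morphisms $X_i$ and $T_{ij}$ and the full list of quiver Hecke algebra relations, valid modulo transient maps; this is exactly the content of Theorem \ref{THM:KLRvertex}.

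Next I would verify that no transient maps obstruct the conclusion. By definition a transient lives in some $\End^2(\1_\l)$ and pairs trivially with every simple root under $(\cdot,\cdot)_\l$. In $\K_{\sF,\Gamma}$ the weight-space categories are built from zig-zag-algebra bimodules and their homotopy-theoretic completions, and a direct inspection (as in the computations of \cite{CLi2}) shows that $\End^2(\1_\l)$ is spanned by the images of $Y_\k$ under the map $\t \mapsto \t I_\l$ and carries a nondegenerate pairing with $Y_\k$; hence no nonzero transients exist. This is the statement alluded to in Corollary \ref{COR:main}'s remark and in the sentence preceding Theorem \ref{THM:KLRvertex}. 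Consequently the KLR relations produced by Theorem \ref{THM:main} hold on the nose in $\K_{\sF,\Gamma}$.

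With a strict KLR action in hand together with the $(\hg,\t)$-action data, one is precisely in the situation addressed by the main theorem of \cite{CLa}: a categorical $\hg$ action equipped with a compatible action of $R_Q$ determines a 2-representation of $\dot{\mathcal{U}}_Q(\hg)$. Unpacking, the divided powers $\E_i^{(r)},\F_i^{(r)}$ are constructed \emph{a posteriori} from the affine nilHecke action (as in the proof of Corollary \ref{COR:new}), and the remaining defining 2-morphisms of $\dot{\mathcal{U}}_Q(\hg)$—the caps, cups, and bubble generators—are obtained via the adjunctions of condition (\ref{co:adj}) and the normalizations fixed in section \ref{sec:Xi}. Feeding these into \cite{CLa} produces the desired 2-functor $\dot{\mathcal{U}}_Q(\hg) \to \K_{\sF,\Gamma}$.

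The main obstacle, and the only step that is not formal once Theorem \ref{THM:KLRvertex} is granted, is the verification that transient maps are zero in $\K_{\sF,\Gamma}$. In general the rigidity argument of the paper only controls 2-morphisms up to transients, so without this vanishing one would be forced to pass to a quotient 2-category; checking that $\End^2(\1_\l)$ in $\K_{\sF,\Gamma}$ is exhausted by $Y_\k$ is therefore the crux. I would handle it by a direct $\Hom$-space computation in the zig-zag setup of \cite{CLi1,CLi2}, using the explicit description of $\sF_\Gamma$ and the Krull--Schmidt property recalled in section \ref{sec:defs} to rule out additional degree-$2$ endomorphisms of the weight identities.
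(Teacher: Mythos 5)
Your proposal matches the paper's argument: apply Theorem \ref{THM:KLRvertex} (itself Theorem \ref{THM:main} for the $(\hg,\t)$ action of \cite{CLi2}, where transients vanish) and then invoke the main result of \cite{CLa} to upgrade the quiver Hecke action to a 2-functor from $\dot{\mathcal{U}}_Q(\hg)$. The only presentational difference is that you fold the verification that transients vanish into the proof of the corollary, whereas the paper asserts this fact about $\K_{\sF,\Gamma}$ in the setup for Theorem \ref{THM:KLRvertex} and defers the underlying computation to \cite{CLi2}.
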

\begin{Remark} Unfortunately, we do not know for what choice of $Q$ this is a 2-representation. If $\g$ is of type A then the space of such $Q$'s is parametrized by $\k^\times$. 
\end{Remark}

\subsection{The affine Grassmannian and geometric categorical $\g$ actions}\label{sec:quiver}

In \cite[section 2.2]{CK3} we introduced the idea of a geometric categorical $\g$ action. This definition was helpful because it was easier to check in geometric situations. Briefly, such an action consists of the following data.
\begin{enumerate}
\item A collection of smooth varieties $Y(\l)$ for $\l \in X$.
\item A collection of kernels
$$\sE^{(r)}_i(\l) \in D(Y(\l) \times Y(\l + r \alpha_i)) \text{ and } \sF^{(r)}_i(\l) \in D(Y(\l + r\alpha_i) \times Y(\l))$$
where $D(Y)$ denotes the derived category of coherent sheaves on $Y$. 
\item For each $Y(\l)$ a flat deformation $\tY(\l) \rightarrow Y_\k$.  
\end{enumerate}

From this data we obtain a 2-category $\K$ where the objects are $D(Y(\l))$, the 1-morphisms are kernels and 2-morphisms are maps between kernels. The extra data of the deformation $\tY(\l)$ can be used to obtain a linear map $Y_\k \rightarrow \End^2(\1_\l)$ as follows. From the standard short exact sequence 
$$0 \rightarrow T_{Y(\l)} \rightarrow T_{\tY(\l)}|_{Y(\l)} \rightarrow \O_{Y(\l)} \{2\} \otimes_\k Y_\k \rightarrow 0$$
one obtains the Kodaira-Spencer map $Y_\k \rightarrow H^1(T_{Y(\l)} \{-2\})$ (the $\{2\}$ is a grading shift corresponding to a $\C^\times$ action on $\tY(\l)$ which acts on $Y_\k$ with weight $2$). On the other hand, the Hochschild-Kostant-Rosenberg isomorphism states that 
$$\End^2(\1_\l) = HH^2(Y(\l)) \cong H^0(\wedge^2 T_{Y(\l)}) \oplus H^1(T_{Y(\l)}) \oplus H^2(\O_{Y(\l)})$$
and so we get a linear map $Y_\k \rightarrow HH^2(Y(\l)\{-2\})$. Thus, identifying the shift $\la 1 \ra$ with $[1]\{-1\}$, we obtain $Y_\k \rightarrow \End^2(\1_\l)$. 

\begin{Proposition}
A geometric categorical $\g$ action induces a $(\g,\t)$ action, assuming $\K$ satisfies conditions (\ref{co:vanish1}), (\ref{co:vanish2}) and (\ref{co:new}). 
\end{Proposition}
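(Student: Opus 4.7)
The plan is to translate each piece of structure and each axiom of a geometric categorical $\g$ action, as set up in \cite{CK3}, into the corresponding item in the definition of a $(\g,\t)$ action from section \ref{sec:gaction}. The target 2-category $\K$ will have objects $D(Y(\l))$, 1-morphisms given by Fourier-Mukai kernels (so that $\E_i = \sE_i^{(1)}$ and $\F_i = \sF_i^{(1)}$), 2-morphisms given by maps of kernels, and grading shift $\la 1 \ra = [1]\{-1\}$. Idempotent completeness and additivity of $\K$ are standard, and the graded structure comes from the $\C^\times$ action on the $Y(\l)$'s.

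First I would match the ``easy'' conditions. The adjunction condition (\ref{co:adj}) is part of the axioms of a geometric categorical $\g$ action (up to identifying the shifts with the ones coming from dualizing sheaves and relative dimensions). The direct sum decompositions in (\ref{co:EF}) and the commutation (\ref{co:EiFj}) are likewise built into the definition of a geometric categorical action, and the finiteness/one-dimensionality in (\ref{co:hom1}) follows from the assumed properness of the $Y(\l)$ together with the assumption that each $Y(\l)$ is connected (so that $H^0(\O_{Y(\l)}) \cong \k$) and the vanishing of negative $\Ext$s of $\O_{Y(\l)}$ with itself in the derived category (an immediate consequence of it being a sheaf, not a complex). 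Conditions (\ref{co:vanish1}), (\ref{co:vanish2}), (\ref{co:new}) are assumed outright.

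Next I would define the linear map $Y_\k \to \End^2(\1_\l)$ as described in the excerpt: compose the Kodaira-Spencer map $Y_\k \to H^1(T_{Y(\l)}\{-2\})$ coming from the deformation $\tY(\l) \to Y_\k$ with the HKR embedding $H^1(T_{Y(\l)}\{-2\}) \hookrightarrow HH^2(Y(\l)\{-2\}) = \End^2(\1_\l)$. The main thing that needs proof is condition (\ref{co:theta}): for $\t \in Y_\k$ with $\la \t, \alpha_i \ra \ne 0$, the 2-morphism $(I \t I) \in \End^2(\E_i \1_\l \F_i)$ should induce an isomorphism between the prescribed number of summands $\1_{\l+\alpha_i}$. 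My plan here is to invoke section \ref{sec:alt}: it suffices to show that $(I \t I) \in \End^2(\E_i \1_\l \E_i)$ induces an isomorphism between the $\E_i^{(2)}\la 1\ra$ summands (and the zero map when $\la \t, \alpha_i \ra = 0$). This is the content of the compatibility between the deformation and the $\E$'s in the geometric categorical action (it is essentially the ``Koszul-type'' / deformation axiom there, which controls how the $\sE_i^{(2)}$ kernel interacts with the deformation parameters). The existence of $\E_i^{(2)}$ itself is part of the geometric data.

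The main obstacle is the verification of this deformation condition (\ref{co:theta}), because it is the one place where the extra data $\tY(\l)$ really intervenes and must be compared to the output of the Kodaira-Spencer/HKR construction. Everything else is essentially a bookkeeping translation between the two definitions. Once (\ref{co:theta}) is established via the alternative formulation in section \ref{sec:alt}, all eight axioms of a $(\g,\t)$ action are in place, which completes the proof.
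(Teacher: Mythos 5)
Your proposal is correct and follows essentially the same route as the paper: you translate conditions (i)--(iv) directly from the geometric axioms, build the map $Y_\k \to \End^2(\1_\l)$ via Kodaira--Spencer and HKR, and verify condition (v) by routing through the alternative formulation in section \ref{sec:alt}, where the compatibility of $(I\t I)$ with the $\E_i^{(2)}$ decomposition is exactly what the deformation axioms of \cite{CK3} (conditions (vi) and (x) of Sect.~2.2 there) provide. The paper's proof is just a terse version of this same argument.
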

\begin{proof}
Conditions $(i)-(iv)$ are immediate. Most naturally we obtain a $(\g,\t)$ action in its alternative description from section \ref{sec:alt}. The alternative condition $(v)$ follows from conditions \cite[Sect. 2.2:(vi)+(x)]{CK3}. 
\end{proof}
\begin{Remark}
With hindsight of Theorem \ref{THM:main} the definition of a geometric categorical $\g$ action can be simplified. For instance, one does not need to require the existence of divide powers $\E_i^{(r)}$ for $r > 1$. 
\end{Remark}

An example of a geometric categorical $\g=\sl_n$ action categorifying $\Lambda_q^N(\C^m \otimes \C^n)$ was defined in \cite{CKL1,C}. More precisely, having fixed $N \in \N$ we take
$$Y(\l) := \{ \C[z]^m = L_0 \subset L_1 \subset \dots \subset L_n \subset \C(z)^m : z L_i \subset L_{i-1}, \dim(L_i/L_{i-1}) = k_i \}$$
where the $L_i$ are complex vector subspaces, $N = \sum_i k_i$ with $0 \le k_i \le m$, and $\l$ is determined by $\l_i = k_{i+1}-k_i$. These varieties are obtained from the affine Grassmannian of ${\rm PGL}_m$ via the convolution product. Namely, $Y(\l) = \Gr^{\Lambda_{k_1}} \tilde{\times} \dots \tilde{\times} \Gr^{\Lambda_{k_n}}$ where $\tilde{\times}$ denotes the convolution product. 

The kernels $\sE_i^{(r)}$ and $\sF_i^{(r)}$ are then defined by certain Hecke correspondence (see \cite[section 8.3]{C}). We denote the resulting 2-category $\K^n_{\Gr,m}$.

\begin{Theorem}\label{thm:new}
There exists an $(\sl_n,\t)$ action on $\K^n_{\Gr,m}$.
\end{Theorem}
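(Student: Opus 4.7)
The plan is to deduce Theorem~\ref{thm:new} directly from the Proposition of Section~\ref{sec:quiver}, which asserts that any geometric categorical $\g$ action gives rise to a $(\g,\t)$ action, provided the target 2-category $\K$ satisfies conditions (\ref{co:vanish1}), (\ref{co:vanish2}) and (\ref{co:new}). The construction of \cite{CKL1,C} already exhibits a geometric categorical $\sl_n$ action on $\K^n_{\Gr,m}$: the varieties $Y(\l)$ are the convolution spaces built out of minuscule slices in the affine Grassmannian of $\mathrm{PGL}_m$, the kernels $\sE^{(r)}_i$ and $\sF^{(r)}_i$ are the Hecke correspondences described in \cite[\S 8.3]{C}, and the deformations $\tY(\l)\to Y_\k$ come from the standard one-parameter deformations of the factor slices, assembled into a convolution deformation. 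These yield the Kodaira--Spencer map $Y_\k\to \End^2(\1_\l)$ needed as part of the $(\g,\t)$ data.

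It therefore remains to verify the three combinatorial hypotheses of Section~\ref{sec:quiver}. Recall that the nonzero weight spaces of $\K^n_{\Gr,m}$ are indexed by tuples $\underline{k} = (k_1,\dots,k_n)$ with $0\le k_a\le m$ and $\sum_a k_a=N$, with $\l_i = k_{i+1}-k_i$, and that adding $\alpha_i$ to $\l$ corresponds to the shift $(k_i,k_{i+1})\mapsto (k_i+1,k_{i+1}-1)$.

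For condition (\ref{co:vanish1}), adding a large multiple of $\alpha_i$ (or of $\alpha_i+\alpha_{i\pm 1}$) to any admissible tuple pushes one of the two (resp.\ three) affected coordinates outside $[0,m]$, so $\1_{\l+r\alpha}=0$ for $|r|$ sufficiently large. Condition (\ref{co:vanish2}) is vacuous since the Dynkin diagram of $\sl_n$ is a path and so contains no triangles or squares. For condition (\ref{co:new}), if $|i-j|\ge 2$ the shifts at the four affected coordinates decouple and the inequalities forced by $\1_{\l+\alpha_i},\1_{\l+\alpha_j}\ne 0$ immediately imply the non-vanishing of $\1_\l$ and $\1_{\l+\alpha_i+\alpha_j}$; when $j=i\pm 1$ one uses $\1_{\l+\alpha_i}\ne 0$ to get $k_i<m$ and $\1_{\l+\alpha_j}\ne 0$ to get $k_{j+1}>0$, and these together with the automatic bounds on the shared middle coordinate produce valid tuples for both $\l$ and $\l+\alpha_i+\alpha_j$.

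The main substantive work is not in these verifications, which are direct combinatorics on the weight polytope, but rather lies upstream in \cite{CKL1,C}: identifying the correct Hecke kernels and showing they satisfy the axioms of a geometric categorical $\sl_n$ action, and ensuring that the deformations of the convolution varieties pair correctly with the cocharacter lattice so that the induced map $Y_\k\to\End^2(\1_\l)$ has the expected non-degeneracy properties required to conclude condition (\ref{co:theta}) (equivalently the alternative condition of Section~\ref{sec:alt}). Granted that background, Theorem~\ref{thm:new} is an immediate application of the Proposition of Section~\ref{sec:quiver}.
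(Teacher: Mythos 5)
Your proposal is correct and matches the paper's proof exactly: both invoke the Proposition of Section \ref{sec:quiver} after citing \cite{CKL1} for the geometric categorical $\sl_n$ action, and then check conditions (\ref{co:vanish1}), (\ref{co:vanish2}) and (\ref{co:new}) from the combinatorics of the weight polytope of $\Lambda_q^N(\C^m\otimes\C^n)$, with the paper simply compressing those checks into ``immediate'' and ``easy consequence.'' (One trivial slip: with $\l_i = k_{i+1}-k_i$, adding $\alpha_i$ sends $(k_i,k_{i+1})$ to $(k_i-1,k_{i+1}+1)$, not $(k_i+1,k_{i+1}-1)$; the combinatorial verification is symmetric under this flip and goes through unchanged.)
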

\begin{proof}
The fact that there exists a geometric categorical $\sl_n$ action on $\K^n_{\Gr,m}$ was proved in \cite{CKL1} (with some of the details appearing also in \cite{CK3,C}). Since $\K^n_{\Gr,m}$ categorifies a finite dimensional representation of $\sl_n$, conditions (\ref{co:vanish1}) and (\ref{co:vanish2}) are immediate. Finally, condition (\ref{co:new}) is a an easy consequence of the particular representation being categorified (namely, $\Lambda_q^N(\C^m \otimes \C^n)$). 
\end{proof}

\begin{Corollary}\label{cor:new}
$\K^n_{\Gr,m}$ is a 2-representation of $\dot{\mathcal{U}}_Q(\sl_n)$ in the sense of Khovanov-Lauda.
\end{Corollary}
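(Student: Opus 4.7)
The plan is to chain together two previously established results. First, Theorem \ref{thm:new} supplies an $(\sl_n,\t)$ action on $\K^n_{\Gr,m}$, so all the hypotheses of the main theorem of the paper are satisfied for $\g = \sl_n$. Second, Corollary \ref{COR:main} tells us that any $(\g,\t)$ action induces a 2-representation of $\dot{\mathcal{U}}_Q(\g)$ in the sense of Khovanov--Lauda, modulo transient maps. Since here $\g = \sl_n$, the Remark following Corollary \ref{COR:main} and the content of section \ref{sec:transients} eliminate the need to mod out by transients, so one obtains an honest 2-representation.

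Concretely, I would write the proof as follows. By Theorem \ref{thm:new}, $\K^n_{\Gr,m}$ carries an $(\sl_n,\t)$ action. Applying Theorem \ref{THM:main} together with the sharpening proved in section \ref{sec:transients} (which shows that in the $\sl_n$ case the $X_i$ can be redefined so that the quiver Hecke algebra relations hold on the nose, not merely modulo transients), we obtain a genuine action of a quiver Hecke algebra $R_Q$ on $\K^n_{\Gr,m}$. By the main result of \cite{CLa}, such a data package (adjunctions, the commutator relations \eqref{EQ:intro1}--\eqref{EQ:intro2}, and the KLR action by natural transformations) is precisely what is needed to produce a 2-functor $\dot{\mathcal{U}}_Q(\sl_n) \rightarrow \K^n_{\Gr,m}$. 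This is exactly the content of Corollary \ref{COR:main} in the $\sl_n$ case.

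There is no serious obstacle here; the whole point of the preceding nine steps of Theorem \ref{THM:main}, together with the $\sl_n$-specific redefinition argument in section \ref{sec:transients}, has been to reduce the verification of a Khovanov--Lauda 2-representation to the much weaker input data of an $(\sl_n,\t)$ action. The only thing worth remarking is the caveat about $Q$: as in the remark following Corollary \ref{COR:KLRvertex}, the theorem does not pin down which choice of scalars $Q$ is being categorified, although for $\sl_n$ (which is a tree) any two such choices are equivalent, so there is no genuine ambiguity.
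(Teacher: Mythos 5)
Your proposal is correct and follows exactly the paper's (implicit) argument: combine Theorem \ref{thm:new} with Corollary \ref{COR:main}, invoking the $\sl_n$-specific sharpening from section \ref{sec:transients} to avoid modding out by transients, and noting that $\sl_n$ being a tree removes any ambiguity in the choice of scalars $Q$.
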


Note that in this case, since $\g=\sl_n$, this result holds without having to mod out by transient maps. Also, all choices of parameters $Q$ are equivalent so there is no ambiguity. 

\begin{Remark}
In \cite{CKL3} we constructed a geometric categorical $\g$ action on Nakajima quiver varieties which lifted Nakajima's action on K-theory \cite{N}. For a dominant weight $\Lambda$ one can define a 2-category $\K_{\rm Q}^{\Lambda}$ consisting of derived categories of coherent sheaves on Nakajima quiver varieties with highest weight $\Lambda$. Subsequently, Theorem \ref{thm:new} and Corollary \ref{cor:new} also hold if we replace $\K^n_{\Gr,m}$ with $\K_{\rm Q}^{\Lambda}$ (except that we have to mod out by transient maps if $\g \ne \sl_n$). 
\end{Remark}

\subsection{Rigidity of homological knot invariants}\label{sec:knots}

In \cite{C} we explained that given a categorification of the $U_q(\sl_{\infty})$ representation $\Lambda^{m \infty}(\C^m \otimes \C^{2 \infty}) = \lim_{N \rightarrow \infty} \Lambda_q^{mN}(\C^m \otimes \C^{2N})$ one obtains a homological knot invariant categorifying the Reshetikhin-Turaev knot invariants of type $\sl_m$. In fact, we explained that one only needs an action whose nonzero weight spaces are the same as those of $\Lambda^{m \infty}(\C^m \otimes \C^{2 \infty})$.

In principle, the homology you get will depend on the specific categorification. However, if the categorification is given by a 2-representation in the sense of Khovanov-Lauda then it can be calculated entirely from this information. Since such a 2-representation is induced by a $(\sl_\infty,\t)$ action we obtain the following.

\begin{Theorem}\label{thm:knots}
Any two $(\sl_\infty,\t)$ actions whose nonzero weight spaces are the same as those of $\Lambda^{m \infty}(\C^m \otimes \C^{2 \infty})$ yield isomorphic homological knot invariants.
\end{Theorem}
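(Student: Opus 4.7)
The plan is to combine Corollary \ref{COR:main} with the construction of homological link invariants from \cite{C}, and to observe that that construction only ever uses $2$-morphisms inside $\dot{\mathcal{U}}_Q(\sl_\infty)$. First I would apply Corollary \ref{COR:main} to each of the two $(\sl_\infty,\t)$ actions $\K_1,\K_2$ to produce 2-functors $\Phi_a:\dot{\mathcal{U}}_Q(\sl_\infty)\to\K_a$ for $a=1,2$. Two subtleties deserve comment. Because $\sl_\infty$ is a direct limit of $\sl_n$'s, the refinement in section \ref{sec:transients} applies: the quiver Hecke relations hold on the nose in each $\K_a$ and there are no transient maps to quotient out. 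Because the Dynkin diagram of $\sl_\infty$ is a tree, the remark after Theorem \ref{THM:main} guarantees that the choice of scalars $Q$ is essentially unique, so both 2-functors are defined over the same $\dot{\mathcal{U}}_Q(\sl_\infty)$.

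Next I would recall the construction of the knot homology $\H(L)$ from \cite{C}. Presenting $L$ as a braid closure, one assembles Rickard-style braiding complexes from the functors $\E_i,\F_i$ and the natural transformations $X_i, T_{ij}$ together with the chosen adjunction units and counits; composing these yields a complex $\sT(L)$ of 1-morphisms, and $\H(L)$ is the cohomology of $\Hom(\1_\mu,\sT(L)\1_\mu)$, where $\mu$ is the highest weight built from fundamental weights. All of the ingredients of $\sT(L)$ live inside the 2-category $\dot{\mathcal{U}}_Q(\sl_\infty)$, so there is a canonical universal complex $\sT^{\mathrm{univ}}(L)\in\Kom(\dot{\mathcal{U}}_Q(\sl_\infty))$ with $\Phi_a(\sT^{\mathrm{univ}}(L))\cong\sT(L)$ inside $\K_a$.

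It then remains to check that $\Phi_a$ induces an isomorphism on $\Hom(\1_\mu,\sT^{\mathrm{univ}}(L)\1_\mu)$. This is where the hypothesis on the weight spaces enters: since the nonzero weights of $\K_a$ coincide with those of $\Lambda^{m\infty}(\C^m\otimes\C^{2\infty})$, the weight $\mu$ is extremal in the relevant sense, every composition of $\E$'s and $\F$'s contributing to $\Hom(\1_\mu,-\,\1_\mu)$ reduces by bubble slides to an expression in $\End(\1_\mu)$, and condition (\ref{co:hom1}) forces $\End(\1_\mu)\cong\k$ in each $\K_a$ as well as in $\dot{\mathcal{U}}_Q(\sl_\infty)$. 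Consequently the graded vector spaces $\Hom_{\K_a}(\1_\mu,\sT(L)\1_\mu)$ are canonically identified with $\Hom_{\dot{\mathcal{U}}_Q(\sl_\infty)}(\1_\mu,\sT^{\mathrm{univ}}(L)\1_\mu)$, proving $\H(L)$ computed in $\K_1$ agrees with that computed in $\K_2$. The main obstacle is verifying carefully that every combinatorial step used in the knot homology recipe of \cite{C} (assembly of Rickard complexes, the simplifications used to identify the result with a complex of indecomposables, and the bubble reductions at the highest weight) genuinely factors through $\dot{\mathcal{U}}_Q(\sl_\infty)$; once that factorization is in place the conclusion is formal.
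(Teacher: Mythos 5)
Your proposal reproduces the paper's argument: the paper states, in the paragraph preceding the theorem, that the knot invariant from \cite{C} depends only on the induced $2$-representation of $\dot{\mathcal{U}}_Q(\sl_\infty)$, which Corollary~\ref{COR:main} (with the $\sl_n$/no-transients and tree/$Q$-uniqueness observations you make) supplies for any $(\sl_\infty,\theta)$ action. Your extra detail about why $\Phi_a$ identifies the $\Hom(\1_\mu,-)$ spaces is exactly what the paper compresses into the phrase ``can be calculated entirely from this information,'' so this is the same proof, just spelled out further.
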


A variety of methods have been used to define homological knot invariants over the last few years. These include: derived categories of coherent sheaves \cite{CK1,CK2,C}, category $\O$ \cite{MS,Su}, matrix factorizations \cite{KR,W,Y} and foams \cite{MSV,LQR,QR}. Since all these invariants fit within the framework described above they define equivalent homologies. Thus Theorem \ref{THM:main} also implies a certain rigidity for homological knot invariants of Reshetikhin-Turaev type A. 

\appendix

\section{Spaces of morphisms}

In this section we collect a series of calculations of the dimension of spaces of maps between various 1-morpshisms. All these computations are basically performed in the same way, namely by repeatedly applying adjunction and simplifying until one ends up with $\End^i(\1_\l)$ which we have assumed to be zero if $i < 0$ and one-dimensional if $i=0$. All the proofs are independent of other results in the main body of this paper, meaning that they only use the definition of a $(\g,\t)$ action from section \ref{sec:gaction}. 

\subsection{Spaces not involving divided powers}

\begin{Lemma}\label{LEM:homEs} 
We have $\dim \End^d(\E_i \1_\l) \le \begin{cases} 1 & \text{ if } d = 0 \\ 0 & \text{ if } d < 0. \end{cases}$
\end{Lemma}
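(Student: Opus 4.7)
The plan is to apply adjunction to convert $\End^d(\E_i \1_\l)$ into a Hom-space of the form $\Hom(\1_\l, \F_i \E_i \1_\l \la \cdot \ra)$, then use the commutator relation (\ref{co:EF}) to break this up into pieces indexed by the $\alpha_i$-string through $\l$, each of which is controlled by condition (\ref{co:hom1}), terminating the recursion via condition (\ref{co:vanish1}).

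More precisely, the right adjunction $(\E_i \1_\l)_R \cong \1_\l \F_i \la \l_i + 1 \ra$ from condition (\ref{co:adj}) gives
\[
\End^d(\E_i \1_\l) \;\cong\; \Hom(\1_\l, \F_i \E_i \1_\l \la d + \l_i + 1 \ra).
\]
Splitting into cases on the sign of $\l_i$ and applying (\ref{co:EF}): if $\l_i \ge 0$ then $\F_i \E_i \1_\l$ is a direct summand of $\E_i \F_i \1_\l$, so $\End^d(\E_i \1_\l)$ is a summand of $\Hom(\1_\l, \E_i \F_i \1_\l \la d + \l_i + 1\ra)$; if $\l_i \le 0$ then $\F_i \E_i \1_\l \cong \E_i \F_i \1_\l \oplus \bigoplus_{[-\l_i]}\1_\l$, giving
\[
\End^d(\E_i \1_\l) \;\cong\; \Hom(\1_\l, \E_i \F_i \1_\l \la d + \l_i + 1\ra) \;\oplus\; \bigoplus_{r=0}^{-\l_i - 1} \End^{d - 2r}(\1_\l),
\]
and for $d \le 0$ the second summand contributes at most one dimension, with equality only when $d = 0$.

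Next I would apply adjunction a second time to the remaining term, moving $\E_i$ across to produce $\End^{d+2\l_i}(\F_i \1_\l)$. Using the identification $\dim \End^k(A) = \dim \End^k(A_R)$ together with $(\F_i \1_\l)_R \cong \E_i \1_{\l - \alpha_i} \la -\l_i + 1 \ra$ reduces this to $\End^{d + 2\l_i}(\E_i \1_{\l - \alpha_i})$, i.e.\ the same problem at the next weight down the $\alpha_i$-string. Iterating $\l \leadsto \l - \alpha_i \leadsto \l - 2\alpha_i \leadsto \cdots$ and invoking condition (\ref{co:vanish1}) terminates the recursion: for $k$ sufficiently large, $\1_{\l - (k-1)\alpha_i} = 0$, whence $\E_i \1_{\l - k\alpha_i} = 0$ and the residual term vanishes. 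The bound on $\dim \End^d(\E_i \1_\l)$ is therefore a finite sum of $\dim \End^*(\1_\mu)$ contributions for $\mu$ in the $\alpha_i$-string through $\l$.

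The hard part will be the degree bookkeeping: I need to verify that for $d < 0$ every such contribution has strictly negative degree shift (hence vanishes by (\ref{co:hom1})), and that for $d = 0$ exactly one of the accumulated $\End^0(\1_\mu)$ terms appears (each such term being one-dimensional by (\ref{co:hom1})). This follows because the degree shift $d^{(k)}$ at step $k$ evolves as $d^{(k)} = d + 2 \sum_{j < k} \l_i^{(j)}$, and a nonzero $\End^{d^{(k)} - 2r}(\1_{\l^{(k)}})$ with $r \in [0, -\l_i^{(k)} - 1]$ forces $d^{(k)} = 2r$ at a unique step along the string; the cancellations track precisely the combinatorics of the $\sl_2$-string decomposition guaranteed by (\ref{co:EF}).
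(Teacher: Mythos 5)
Your plan is essentially the paper's proof for the half of the argument where $\l_i \le 0$: apply adjunction to reach $\Hom(\1_\l, \F_i\E_i\1_\l\la d+\l_i+1\ra)$, decompose via (\ref{co:EF}), bound the $\bigoplus_{[-\l_i]}\1_\l$ piece by (\ref{co:hom1}), and reduce the $\E_i\F_i$ piece to $\End^{d+2\l_i}(\E_i\1_{\l-\alpha_i})$. For $\l_i \le 0$ the degree shift $2\l_i$ is nonpositive, so every residual contribution at step $k$ lands in degree $\le d^{(k)} \le d$, and the recursion down the $\alpha_i$-string does what you want.

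The gap is in your treatment of $\l_i > 0$. There you still recurse $\l \leadsto \l-\alpha_i$ via the same adjunction, producing the inclusion $\End^d(\E_i\1_\l) \hookrightarrow \End^{d+2\l_i}(\E_i\1_{\l-\alpha_i})$. But now $d + 2\l_i > d$, so the internal degree goes \emph{up}, and after finitely many steps you reach weights $\mu$ with $\mu_i < 0$ where the $\bigoplus_{[-\mu_i]}\1_\mu$ summand contributes $\End^{d^{(k)}-2r}(\1_\mu)$ with $d^{(k)}-2r > 0$. Condition (\ref{co:hom1}) says nothing about positive-degree endomorphisms of $\1_\mu$, so those terms are not bounded and the argument does not close. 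Concretely, for $\l_i = 2$ and $d = 0$ your chain gives $\End^0(\E_i\1_\l) \hookrightarrow \End^4(\E_i\1_{\l-\alpha_i}) \hookrightarrow \End^4(\E_i\1_{\l-2\alpha_i})$, and at $\l-2\alpha_i$ (where $\la\l-2\alpha_i,\alpha_i\ra = -2$) the $\bigoplus_{[2]}\1$ piece contributes $\End^4(\1) \oplus \End^2(\1)$, which can be arbitrarily large. The fix, which is what the paper does when it says "the case $\l_i \ge 0$ is the same," is to run the adjunction the other way when $\l_i \ge 0$: use $(\E_i\1_\l)_L$ to get $\End^d(\E_i\1_\l) \cong \Hom(\1_{\l+\alpha_i}, \E_i\F_i\1_{\l+\alpha_i}\la d-\l_i-1\ra)$, decompose via (\ref{co:EF}) at the weight $\l+\alpha_i$ (which has $\la\l+\alpha_i,\alpha_i\ra = \l_i+2 > 0$), and recurse \emph{up} the string to $\End^{d-2\l_i-4}(\E_i\1_{\l+\alpha_i})$. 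Now the degree strictly decreases, the $\bigoplus$ pieces are all in degree $\le d$, and (\ref{co:vanish1}) terminates the recursion in this direction. Your degree bookkeeping claim --- that every accumulated $\End$ contribution has degree $\le d$ --- is only true if you pick the recursion direction according to the sign of $\l_i$.
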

\begin{proof}
Suppose $\l_i \le 0$ (the case $\l_i \ge 0$ is the same). In this case we have 
\begin{align*}
\Hom(\E_i \1_\l, \E_i \1_\l \la d \ra) 
&\cong \Hom(\F_i \E_i \1_\l, \1_\l \la d+\l_i+1 \ra) \\
&\cong \Hom(\E_i \F_i \1_\l \bigoplus_{[-\l_i]} \1_\l, \1_\l \la d+\l_i+1 \ra).
\end{align*}
On the one hand, if $\l_i < 0$ then 
$$\dim \Hom(\1_\l, \bigoplus_{[-\l_i]} \1_\l \la d+\l_i+1 \ra) = \sum_{r=0}^{-\l_i-1} \dim \Hom(\1_\l, \1_\l \la d - 2r \ra) \le \begin{cases} 1 & \text{ if } d=0 < 0 \\ 0 & \text{ if } d < 0 \ \end{cases}$$
while on the other hand
$$\dim \Hom(\E_i \F_i \1_\l, \1_\l \la d+\l_i+1 \ra) = \dim \Hom(\E_i \1_{\l-\alpha_i}, \E_i \1_{\l-\alpha_i} \la d+2\l_i \ra) = 0.$$
where the last equality follows by induction on $\l_i$ (this is where we use the condition that $\1_{\l-r\alpha_i}=0$ for $r \gg 0$). The result follows. In the special case $\l_i=0$ the first term vanishes while the second is $\End^d(\E_i \1_{\l-\alpha_i})$ and the result follows again by induction. 
\end{proof}

\begin{Lemma}\label{LEM:homEEs}
We have $\dim \End^{-2}(\E_i \E_i \1_\mu) \le 1$ with equality if $\E_i \E_i \1_\mu \ne 0$.
\end{Lemma}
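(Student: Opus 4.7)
The approach mirrors that of Lemma~\ref{LEM:homEs}: apply adjunction to rewrite the endomorphism space in the form $\Hom^d(\1_\l, X)$, use the $\sl_2$-commutation isomorphisms of condition~(\ref{co:EF}) to decompose $X$, and then bound each summand by applying Lemma~\ref{LEM:homEs} (and its $\F$-analog) together with an induction on $|\l_i|$ that terminates via condition~(\ref{co:vanish1}).

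First I would apply the adjunctions $(\E_i\1_{\l+\alpha_i})_R \cong \F_i\la\l_i+3\ra$ and $(\E_i\1_\l)_R \cong \F_i\la\l_i+1\ra$ to both source $\E_i$'s in order to obtain
\[
\End^{-2}(\E_i \E_i \1_\l) \;\cong\; \Hom^{2\l_i+2}(\1_\l,\, \F_i \F_i \E_i \E_i \1_\l).
\]
Assume without loss of generality $\l_i \le 0$ (the case $\l_i \ge 0$ is symmetric, by instead adjointing both target $\E_i$'s). Then I would apply the commutation isomorphism $\F_i \E_i \1_\mu \cong \E_i \F_i \1_\mu \oplus \bigoplus_{[-\mu_i]} \1_\mu$ twice---first to the inner $\F_i \E_i \1_{\l+\alpha_i}$, then to the resulting inner $\F_i \E_i \1_\l$---to decompose $\F_i^2 \E_i^2 \1_\l$ as a direct sum of $\E_i^2 \F_i^2 \1_\l$, a collection of copies of $\E_i\F_i\1_\l$ in various degrees, and a collection of pure $\1_\l$ summands in various degrees, with explicit multiplicities given by $q$-polynomials built from $[-\l_i]$ and $[-\l_i \pm 2]$.

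Next I would analyze each contribution to $\Hom^{2\l_i+2}(\1_\l, -)$. The pure $\1_\l$ summands yield a sum of $\End^{e}(\1_\l)$ for various $e$; condition~(\ref{co:hom1}) kills every $e < 0$, and a direct check of the $q$-polynomial multiplicities shows that exactly one summand lands in degree $e = 0$, contributing a single copy of $\End^0(\1_\l) \cong \k$ when $\1_\l \ne 0$. For the $\E_i\F_i\1_\l$ summands, adjunction converts each $\Hom^d(\1_\l, \E_i\F_i\1_\l)$ into $\End^{d+\l_i-1}(\F_i\1_\l)$; comparing the range of $d$ appearing in the multiplicities against the shift $\l_i-1$ shows that all of these land in strictly negative degrees of $\End(\F_i\1_\l)$ and therefore vanish by the $\F$-analog of Lemma~\ref{LEM:homEs}. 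The remaining $\E_i^2\F_i^2\1_\l$ summand, after two further adjunctions on the other side, produces $\End^{e}(\F_i^2\1_\l)$ at an explicit degree $e$; a parallel application of the whole argument (with $\E_i \leftrightarrow \F_i$) identifies this with $\End^{-2}(\E_i^2\1_{\l-2\alpha_i})$ at strictly more negative $\alpha_i$-weight, so induction on $-\l_i$ (terminating by condition~(\ref{co:vanish1}), which forces $\1_{\l - r\alpha_i} = 0$ for $r \gg 0$) shows this contribution vanishes.

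Combining these bounds gives $\dim \End^{-2}(\E_i\E_i\1_\l) \le 1$; the equality claim follows because $\E_i\E_i\1_\l \ne 0$ forces $\1_\l \ne 0$, so the surviving $\End^0(\1_\l) \cong \k$ summand produces a nonzero class (the one obtained by composing two copies of the unit of adjunction $\adj^i \colon \1_\l \to \F_i \E_i \1_\l \la \l_i+1\ra$). The main obstacle is the degree bookkeeping: with several layers of adjunction introducing shifts $\la\l_i+1\ra$ and $\la\l_i-1\ra$, and the $q$-brackets producing many summands of the form $\1_\l \la d \ra$ and $\E_i\F_i\1_\l\la d\ra$, one must check carefully that exactly one summand contributes to the $\End^0$ of a lower-order 1-morphism while every other lands in a strictly negative degree where either Lemma~\ref{LEM:homEs} or condition~(\ref{co:hom1}) forces vanishing.
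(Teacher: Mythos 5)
Your overall strategy is the same as the paper's --- adjunction to move $\E_i$'s to the other side of the Hom, then the $\E\F$-commutation from condition~(\ref{co:EF}), then Lemma~\ref{LEM:homEs} plus an induction that terminates via condition~(\ref{co:vanish1}). But the specific route is different and there is a genuine gap in the boundary cases.

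The paper only moves \emph{one} $\E_i$ across and instead derives a two-step commutation formula
$\E_i \E_i \F_i \1_\nu \cong \F_i \E_i \E_i \1_\nu \oplus_{[2][\nu_i+1]} \E_i \1_\nu$ valid whenever $\nu_i \ge -1$ (and its mirror for $\nu_i \le -1$), which, with $\nu=\mu+\alpha_i$, cleanly covers $\mu_i\ge -2$ and $\mu_i\le -2$ with one split. Your proposal moves \emph{both} source $\E_i$'s across to get $\Hom^{2\l_i+2}(\1_\l, \F_i\F_i\E_i\E_i\1_\l)$, and then proposes to apply the single commutation $\F_i\E_i\1_\mu \cong \E_i\F_i\1_\mu \oplus_{[-\mu_i]}\1_\mu$ iteratively. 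The problem is that this isomorphism requires $\mu_i\le 0$, and your first application is at the weight $\mu=\l+\alpha_i$, which has $\mu_i=\l_i+2$. Under your WLOG assumption $\l_i\le 0$, this leaves $\l_i\in\{-1,0\}$ where $\l_i+2>0$, so the commutation does not apply in the form you wrote --- there $\F_i\E_i\1_{\l+\alpha_i}$ is only a \emph{summand} of $\E_i\F_i\1_{\l+\alpha_i}$, not a direct sum of it and identities. Your symmetric argument for $\l_i\ge 0$ has the same problem at $\l_i\in\{0,1\}$. So the boundary weights $\l_i\in\{-1,0,1\}$ are not covered and need a separate argument, either via the derived two-step commutation as in the paper or by a finer case split.

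A secondary imprecision: pushing $\E_i\E_i$ past $\F_i\F_i$ requires more than two applications of the pair-commutation (intermediate 1-morphisms $\F_i\E_i\F_i\E_i\1_\l$, $\E_i\F_i\F_i\E_i\1_\l$, $\E_i\F_i\E_i\F_i\1_\l$ all appear), and the multiplicities of the resulting $\E_i\F_i\1_\l$ and $\1_\l$ summands involve products such as $[n]^2$ and $[n-2][n]$ rather than simple $q$-integers. Your degree bookkeeping does work out in the range $|\l_i|\ge 2$ --- the unique degree-$0$ contribution comes from the top of $[n]^2$ and the $\E_i\F_i\1_\l$ contributions all land in degree $\le 2\l_i+2\le -2$ --- but this needs to actually be carried out. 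Taken together, the argument is repairable but not complete as stated; the paper's single-adjunction route with the combined commutation sidesteps both issues.
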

\begin{proof}
By applying the commutation relation twice one finds that 
\begin{align*}
\E_i \E_i \F_i \1_\nu \cong \F_i \E_i \E_i \1_\nu \bigoplus_{[2][\nu_i+1]} \E_i \1_\nu \ \ & \text{ if } \ \ \nu_i \ge -1 \\
\F_i \E_i \E_i \1_\nu \cong \E_i \E_i \F_i \1_\nu \bigoplus_{[2][-\nu_i-1]} \E_i \1_\nu \ \ & \text{ if } \ \ \nu_i \le -1. \end{align*}
Thus, if $\mu_i \ge -2$ then we get
\begin{align*}
&\End^{-2}(\E_i \E_i \1_\mu) \\
\cong &\Hom(\E_i \E_i \1_{\mu+\alpha_i}, \E_i \E_i \F_i \1_{\mu+\alpha_i} \la -\mu_i-3 \ra) \\
\cong &\Hom(\E_i \E_i \1_{\mu+\alpha_i}, \F_i \E_i \E_i \1_{\mu+\alpha_i} \la -\mu_i-3 \ra \bigoplus_{[2][\mu_i+3]} \E_i \E_i \1_{\mu+\alpha_i}\la -\mu_i-3 \ra) \\
\cong &\Hom(\E_i \E_i \1_{\mu+\alpha_i}, \E_i \E_i \1_{\mu+\alpha_i} \la -2\mu_i-8 \ra) \oplus \Hom(\E_i \1_{\mu+\alpha_i}, \bigoplus_{[2][\mu_i+3]} \E_i \1_{\mu+\alpha_i} \la -\mu_i-3 \ra).
\end{align*}
The left term vanishes by induction while, using Lemma \ref{LEM:homEs}, the only surviving term in the right hand sum is $\Hom(\E_i \1_{\mu+\alpha_i}, \E_i \1_{\mu+\alpha_i}) \cong \k$. Thus $\End^{-2}(\E_i \E_i \1_\mu) \cong \k$ (as long as $\E_i \1_{\mu+\alpha_i} \ne 0$). The case $\mu_i \le -2$ is similar.
\end{proof}

\begin{Lemma}\label{LEM:homEEEs}
We have $\dim \End^{-6}(\E_i \E_i \E_i \1_\mu) \le 1$ with equality if $\E_i \E_i \E_i \1_\mu \ne 0$. 
\end{Lemma}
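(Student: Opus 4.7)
The plan is to mimic the proof of Lemma~\ref{LEM:homEEs} almost verbatim, with one additional $\E_i$ in play. First, assume $\mu_i \ge -4$ (the opposite inequality is handled symmetrically by adjointing an $\E_i$ on the other side). Apply adjunction to move the rightmost $\E_i$ of the target to the source as an $\F_i$, producing an isomorphism of the form
\[
\End^{-6}(\E_i \E_i \E_i \1_\mu) \;\cong\; \Hom\bigl(\E_i \E_i \E_i \F_i \1_{\mu+\alpha_i},\; \E_i \E_i \1_{\mu+\alpha_i} \la \mu_i - 5 \ra\bigr).
\]
Next, iterate the basic commutator $\E_i \F_i \1_\nu \cong \F_i \E_i \1_\nu \oplus_{[\nu_i]} \1_\nu$ three times to obtain the categorical identity
\[
\E_i \E_i \E_i \F_i \1_\nu \;\cong\; \F_i \E_i \E_i \E_i \1_\nu \;\oplus_{[3][\nu_i+2]}\; \E_i \E_i \1_\nu,
\]
where the multiplicity arises from the quantum-integer identity $[\nu_i] + [\nu_i+2] + [\nu_i+4] = [3][\nu_i+2]$. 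Substituting and distributing $\Hom$ over the direct sum expresses $\End^{-6}(\E_i \E_i \E_i \1_\mu)$ as the sum of two pieces.

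The first piece, coming from the $\F_i \E_i \E_i \E_i$--summand, reduces by a further adjunction on $\F_i$ to $\End^{d'}(\E_i \E_i \E_i \1_{\mu+\alpha_i})$ for some strictly more negative $d' < -6$. It vanishes by induction on $\mu_i$, the induction being guaranteed to terminate by condition~(\ref{co:vanish1}) which forces $\1_{\mu + r \alpha_i} = 0$ for $r \gg 0$. The second piece is a direct sum of Hom spaces between $\E_i \E_i \E_i$ and $\E_i \E_i$, each of which collapses by yet another adjunction to a graded endomorphism space of $\E_i \E_i$ or $\E_i$, controlled by Lemmas~\ref{LEM:homEs} and~\ref{LEM:homEEs}.

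For the induction to close, one must strengthen the inductive statement to include the vanishing $\dim \End^d(\E_i \E_i \E_i \1_\mu) = 0$ for all $d < -6$, not only the stated bound at $d = -6$; the natural companion vanishings $\End^d(\E_i \E_i \1_\nu) = 0$ for $d < -2$ and $\End^d(\E_i \1_\nu) = 0$ for $d < 0$ (implicit in the proofs of Lemmas~\ref{LEM:homEs} and~\ref{LEM:homEEs}) are needed in the analysis of the second piece. The nonvanishing assertion $\dim \End^{-6} = 1$ when $\E_i \E_i \E_i \1_\mu \ne 0$ should emerge from the second piece: the relevant contribution is nonzero by Lemma~\ref{LEM:homEEs}, and the degree count matches exactly the coefficient of $q^{-6}$ in $[3]!^2 = q^6 + 4q^4 + 8q^2 + 10 + 8q^{-2} + 4q^{-4} + q^{-6}$, which is precisely $1$.

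The main obstacle will be the degree bookkeeping. Several intermediate Hom spaces look naively too big; it is only the combination of the sharper vanishings above with the precise form of the multiplicity $[3][\nu_i+2]$ and cancellations across the two pieces that cuts the total dimension down to one. Care is also needed to verify that the commutation decomposition applies at every weight encountered during the induction, which dictates the case split on the sign of $\mu_i$.
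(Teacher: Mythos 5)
Your plan matches the paper's proof exactly: write $\End^{-6}(\E_i\E_i\E_i\1_\mu)$ in terms of $\E_i\E_i\E_i\F_i\1_{\mu+\alpha_i}$ by adjunction, apply the iterated commutator to split the Hom space into a $\F_i\E_i\E_i\E_i$--piece (handled by induction) plus a $\bigoplus_{[3][\nu_i+2]}\E_i\E_i$--piece (handled by Lemma~\ref{LEM:homEEs}), splitting into two cases by the sign of $\mu_i$. Your observation that the induction on the first piece requires the stronger statement $\End^d(\E_i\E_i\E_i\1_\mu)=0$ for all $d < -6$ is correct and is implicit in the paper's ``vanishes by induction''; the companion vanishings for $\E_i$ and $\E_i\E_i$ are indeed established in the proofs of Lemmas~\ref{LEM:homEs} and~\ref{LEM:homEEs}.

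However, the degree shift in your first adjunction is wrong, and the error is not harmless. Using $(\E_i\1_\mu)_R \cong \1_\mu\F_i\la\mu_i+1\ra$ (the correct adjoint when moving a target $\E_i$ into the source), the shift that appears is $\la -6 - (\mu_i+1)\ra = \la -\mu_i - 7\ra$; your $\la\mu_i - 5\ra$ comes from mistakenly applying $(\E_i\1_\mu)_L \cong \1_\mu\F_i\la -\mu_i - 1\ra$, which is the wrong side of the adjunction for this move. This matters downstream: with the correct shift the second piece $\Hom(\bigoplus_{[3][\mu_i+4]}\E_i\E_i\1_{\mu+\alpha_i}, \E_i\E_i\1_{\mu+\alpha_i}\la -\mu_i-7\ra)$ has its maximal summand in degree exactly $-2$ (and all others strictly smaller), so Lemma~\ref{LEM:homEEs} gives precisely one copy of $\k$; with your $\mu_i-5$ the top degree becomes $2\mu_i$, which for $\mu_i \ge 0$ sits in nonnegative degrees where $\End^d(\E_i\E_i)$ is not controlled by Lemma~\ref{LEM:homEEs} at all, so the bound $\le 1$ would not follow. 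The aside about the coefficient of $q^{-6}$ in $[3]!^2$ is also a bit of a red herring here, since divided powers $\E_i^{(3)}$ do not exist at this point of the development and the argument must not rely on them; correctly tracking the shift makes that sanity check unnecessary anyway.
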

\begin{proof}
By applying the commutation relation three times one finds that 
\begin{align*}
\E_i \E_i \E_i \F_i \1_\nu \cong \F_i \E_i \E_i \E_i \1_\nu \bigoplus_{[3][\nu_i+2]} \E_i \E_i \1_\nu \ \ & \text{ if } \ \ \nu_i \ge -2 \\
\F_i \E_i \E_i \E_i \1_\nu \cong \E_i \E_i \E_i \F_i \1_\nu \bigoplus_{[3][-\nu_i-2]} \E_i \E_i \1_\nu \ \ & \text{ if } \ \ \nu_i \le -2. 
\end{align*}
Thus, if $\mu_i \ge -3$ then we get 
\begin{align*}
&\End^{-6}(\E_i \E_i \E_i \1_\mu) \\
\cong &\Hom(\E_i \E_i \1_{\mu+\alpha_i}, \E_i \E_i \E_i \F_i \1_{\mu+\alpha_i} \la -\mu_i-7 \ra) \\
\cong &\Hom(\E_i \E_i \1_{\mu+\alpha_i}, \F_i \E_i \E_i \E_i \1_{\mu+\alpha_i} \la -\mu_i-7 \ra \bigoplus_{[3][\mu_i+4]} \E_i \E_i \1_{\mu+\alpha_i}\la -\mu_i-7 \ra) \\
\cong &\Hom(\E_i \E_i \E_i \1_{\mu+\alpha_i}, \E_i \E_i \E_i \1_{\mu+\alpha_i} \la -2\mu_i-14 \ra) \oplus \Hom(\E_i \E_i \1_{\mu+\alpha_i}, \bigoplus_{[3][\mu_i+4]} \E_i \E_i \1_{\mu+\alpha_i} \la -\mu_i-7 \ra).
\end{align*}
The left term vanishes by induction while, using Lemma \ref{LEM:homEEs}, the only surviving term in the right hand sum is $\Hom(\E_i \E_i \1_{\mu+\alpha_i}, \E_i \E_i \1_{\mu+\alpha_i} \la -2 \ra)$. Thus, by Lemma \ref{LEM:homEEs}, $\dim \End^{-6}(\E_i \E_i \E_i \1_\mu) \le 1$ and equality holds if $\E_i \E_i \1_{\mu+\alpha_i} \ne 0$. The case $\mu_i \le -3$ is similar. 
\end{proof}

\begin{Lemma}\label{LEM:Ts1} 
Suppose $i,j \in I$ with $\la i,j \ra = -1$. Then 
\begin{align}
\label{eq:hom1} \dim \Hom(\E_i \E_j \1_\l, \E_j \E_i \1_\l \la d \ra) &\le \begin{cases} 1 & \text{ if } d = 1 \\ 0 & \text{ if } d < 1 \end{cases} \\
\label{eq:hom2} \dim \Hom(\E_i \E_j \1_\l, \E_i \E_j \1_\l \la d \ra) &\le \begin{cases} 1 & \text{ if } d = 0 \\ 0 & \text{ if } d < 0. \end{cases}
\end{align}
In (\ref{eq:hom1}) equality holds when $d=1$ if $\E_i \E_j \1_\l$ and $\E_j \E_i \1_\l$ are both nonzero. Likewise in (\ref{eq:hom2}) equality holds when $d=0$ if $\E_i \E_j \1_\l$ is nonzero. 
\end{Lemma}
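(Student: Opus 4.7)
The plan is to follow the pattern of Lemmas \ref{LEM:homEs}, \ref{LEM:homEEs}, and \ref{LEM:homEEEs}: apply a cascade of adjunctions from condition (\ref{co:adj}), interleaved with the commutator $\F_i \E_j \cong \E_j \F_i$ from (\ref{co:EiFj}) and the $\F_i \E_i$, $\E_i \F_i$ decompositions of (\ref{co:EF}), until every leftover Hom space either reduces to one bounded by condition (\ref{co:hom1}) or by Lemma \ref{LEM:homEs}, or else is a Hom of the same shape as the original at a strictly smaller weight (so the argument closes by induction, with condition (\ref{co:vanish1}) providing the base case). By the $\E \leftrightarrow \F$ symmetry I may assume $\l_i \le 0$.

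For (\ref{eq:hom1}) I would first apply right adjunction to the outermost $\E_i$ on the source. Since $\la \l + \alpha_j, \alpha_i \ra = \l_i - 1$, the adjoint shift is $\l_i$, giving
\[
\Hom(\E_i \E_j \1_\l, \E_j \E_i \1_\l \la d \ra) \cong \Hom(\E_j \1_\l, \F_i \E_j \E_i \1_\l \la d + \l_i \ra).
\]
Commuting $\F_i \E_j \cong \E_j \F_i$ and then decomposing $\F_i \E_i \1_\l$ via (\ref{co:EF}) produces
\[
\Hom(\E_j \1_\l, \E_j \E_i \F_i \1_\l \la d + \l_i \ra) \;\oplus\; \bigoplus_{r=0}^{-\l_i-1} \End^{d-1-2r}(\E_j \1_\l).
\]
By Lemma \ref{LEM:homEs}, the second summand vanishes for $d < 1$ and contributes exactly one copy of $\k$ at $d = 1$ (when $\l_i \le -1$; the boundary case $\l_i = 0$ is handled by the symmetric argument starting from the other end). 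The first summand is then attacked by peeling off the outer $\E_j$ with another adjunction, commuting the resulting $\F_j$ past $\E_i$, and applying (\ref{co:EF}) once more; each branch terminates either at an $\End^\bullet(\1_\mu)$ or $\End^\bullet(\E_k \1_\mu)$ space, or at a new instance of a Hom of the same shape as (\ref{eq:hom1}) at a nearby weight, to which the inductive hypothesis applies.

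The argument for (\ref{eq:hom2}) is completely parallel; the only change is that the decomposition of $\F_i \E_i \1_{\l + \alpha_j}$ (now with $(\l + \alpha_j)_i = \l_i - 1$) yields summands $\End^{d - 2r}(\E_j \1_\l)$ for $0 \le r \le -\l_i$, which vanish for $d < 0$ and give exactly one $\k$ at $d = 0$, with the identity morphism serving as the canonical generator. The main obstacle, as in Lemmas \ref{LEM:homEEs} and \ref{LEM:homEEEs}, is not any single step but the tracking of degree shifts and the setup of the induction so that the residual Hom after the second adjunction is genuinely simpler than the original — in practice I would induct on $|\l_i| + |\l_j|$, using (\ref{co:vanish1}) to force the base case. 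The equality statement when the relevant weights are nonzero is automatic from the short-adjunction summand, whose generator arises from the explicit composition of the adjunction unit with the $\F_i \E_j \cong \E_j \F_i$ isomorphism.
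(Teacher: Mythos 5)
Your overall strategy — adjoint one end, commute across via condition (\ref{co:EiFj}), decompose $\F\E$ or $\E\F$ via (\ref{co:EF}), and set up a two--step mutual induction between (\ref{eq:hom1}) and (\ref{eq:hom2}) — is the same as the paper's. The adjunction you chose, however, is not: you peel the leftmost $\E_i$ of the \emph{source}, whereas the paper peels the leftmost $\E_j$ of the \emph{target}. Both choices do decrease the weight along $\alpha_i+\alpha_j$ (peeling the source $\E_i$ lands the residual Hom at $\1_{\l-\alpha_i}$, because the $\F_i$ that arises migrates to the far right and becomes $\1_{\l-\alpha_i}\F_i$ before being adjointed back as $\E_i\la 1-\l_i\ra$), so your step is legitimate. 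It is also genuinely simpler for (\ref{eq:hom2}): you again peel a single $\E_i$, whereas the paper's treatment of (\ref{eq:hom2}) adjoints \emph{both} $\E_i$ and $\E_j$ onto $\1_\mu$ and then needs the quantum-integer reshuffling $[a][b-1]=[a-1][b]+[b-a]$ to identify the leftover summand; your route avoids that identity entirely.

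The concrete gap is the induction parameter. You propose to induct on $|\l_i|+|\l_j|$, but your reduction sends $\l \leadsto \l-\alpha_i$, which changes $\l_i \mapsto \l_i-2$; with your standing assumption $\l_i\le 0$ this \emph{increases} $|\l_i|$ by $2$, and $|\l_j|$ may increase as well, so $|\l_i|+|\l_j|$ goes up, not down — the induction would never terminate as stated. What does decrease at each step is the \emph{signed} quantity $\la\l,\alpha_i+\alpha_j\ra = \l_i+\l_j$ (by $1$ at each half-step, since $\la\l-\alpha_i,\alpha_i+\alpha_j\ra = \l_i+\l_j-1$); this descends to $-\infty$, and it is exactly $\1_{\l+r(\alpha_i+\alpha_j)}=0$ for $r\ll 0$, i.e.\ condition (\ref{co:vanish1}) with $\alpha=\alpha_i+\alpha_j$, that supplies the base case — so the quantity you induct on has to be $\l_i+\l_j$, not $|\l_i|+|\l_j|$. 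Relatedly, your ``WLOG $\l_i\le 0$'' (which you justify only via $\E\leftrightarrow\F$) really needs both the $\E\leftrightarrow\F$ symmetry (giving $\l_i+\l_j\le 0$) and the $i\leftrightarrow j$ symmetry of the statement (giving $\l_i\le\l_j$, hence $\l_i\le 0$); and after the first reduction the outer index of the new source is $j$, whose pairing $\l_j+1$ may be positive, so the decomposition of $\F_j\E_j$ sometimes goes the other way and only yields $\F_j\E_j$ as a direct summand of $\E_j\F_j$ — fine for an upper bound, but it is a case split you are eliding when you call the (\ref{eq:hom2}) argument ``completely parallel.''
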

\begin{proof}
Suppose $\mu_i+\mu_j \le 0$ (the case $\mu_i+\mu_j \ge 0$ is the same). This means that either $\mu_i \le 0$ or $\mu_j \le 0$. We assume $\mu_j \le 0$ as the other case is similar. 

The proof is by induction on $\mu_i+\mu_j$. The base case follows from condition (\ref{co:vanish1}) where we take $\alpha = \alpha_i + \alpha_j$. This is the only time we use this case of condition (\ref{co:vanish1}) in this paper. 

To obtain the induction step we first note that
\begin{align}
\nonumber \Hom(\E_i \E_j \1_\mu, \E_j \E_i \1_\mu \la d \ra) \cong & \Hom(\F_j \E_i \E_j \1_\mu \la - \mu_j \ra, \E_i \1_\mu \la d \ra) \\
\label{eq:Y} \cong & \Hom(\E_i \E_j \F_j \1_\mu, \E_i \1_\mu \la \mu_j+d \ra) \bigoplus_{r=0}^{-\mu_j-1} \Hom(\E_i \1_\mu, \E_i \1_\mu \la - 1 - 2r + d \ra).
\end{align}
Now, if $d \le 1$ then by Lemma \ref{LEM:homEs} every term in the right hand sum is zero unless $r=0$, $d=1$ and $\E_i \1_\mu \ne 0$ (and $\mu_i < 0$) in which case it is one-dimensional. Meanwhile, the left hand term is equal to 
$$\Hom(\E_i \E_j \1_{\mu-\alpha_j}, \E_i (\1_{\mu-\alpha_j} \F_j)_L \la \mu_j + d \ra) \cong \Hom(\E_i \E_j \1_{\mu-\alpha_j}, \E_i \E_j \1_{\mu-\alpha_j} \la 2 \mu_j + d - 1 \ra).$$
By (\ref{eq:hom2}) this is zero unless $d=1, \mu_j=0$ and $\E_i \E_j \1_{\mu-\alpha_j} \ne 0$ in which case it is one-dimensional. Thus (\ref{eq:hom1}) when $\l = \mu$ follows from (\ref{eq:hom2}) when $\l=\mu-\alpha_j$. 

Now, if we study (\ref{eq:hom2}) we have 
\begin{align*}
\Hom(\E_i \E_j \1_\mu, \E_i \E_j \1_\mu \la d \ra) 
&\cong \Hom(\1_\mu, \F_j \F_i \E_i \E_j \1_\mu \la \mu_i+\mu_j+1+d \ra).
\end{align*}
If $\mu_i \le 1$ then we can simplify $\F_j \F_i \E_i \F_j \1_\mu$ directly (this is the easier case). We assume the more difficult situation that $\mu_i \ge 1$ in which case we have 
\begin{align*}
\F_j \E_i \F_i \E_j \1_\mu 
&\cong \F_j \F_i \E_i \E_j \1_\mu \bigoplus_{[\mu_i-1]} \F_j \E_j \1_\mu \\
&\cong \F_j \F_i \E_i \E_i \1_\mu \bigoplus_{[\mu_i-1]} \E_j \F_j \1_\mu \bigoplus_{[\mu_i-1][-\mu_j]} \1_\mu
\end{align*}
while the left hand side of the equation above equals 
\begin{align*}
\E_i \F_j \E_j \F_i \1_\mu 
&\cong \E_i \E_j \F_j \F_i \1_\mu \bigoplus_{[-\mu_j-1]} \E_i \F_i \1_\mu \\
&\cong \E_i \E_j \F_j \F_i \1_\mu \bigoplus_{[-\mu_j-1]} \F_i \E_i \1_\mu \bigoplus_{[\mu_i][-\mu_j-1]} \1_\mu.
\end{align*}
Now, if $a,b \in \N$ with $b \ge a$ then $[a][b-1] = [a-1][b]+[b-a]$. Thus, taking $a=\mu_i$, $b=-\mu_j$ and using that morphisms have a unique decomposition we get that 
\begin{equation}\label{eq:hom3}
\F_j \F_i \E_i \E_j \1_\mu \bigoplus_{[\mu_i-1]} \E_j \F_j \1_\mu \cong \E_i \E_j \F_j \F_i \1_\mu \bigoplus_{[-\mu_j-1]} \F_i \E_i \1_\mu \bigoplus_{[-\mu_i-\mu_j]} \1_\mu.
\end{equation}
Now
$$\Hom(\1_\mu, \E_j \F_j \1_\mu \la s \ra) \cong \Hom((\1_{\mu-\alpha_j} \F_j)_L, \E_j \1_{\mu-\alpha_j} \la s \ra) \cong \Hom(\1_\mu \E_j, \1_\mu \E_j \la s + \mu_j-1 \ra)$$
so by Lemma \ref{LEM:homEs} this vanishes if $s \le -\mu_j$. In particular, this means that 
$$\Hom(\1_\mu, \bigoplus_{[\mu_i-1]} \E_j \F_j \1_\mu \la \mu_i+\mu_j+1+d \ra)=0 \ \ \text{ if } \ \ d \le 0.$$
Similarly, one finds that $\Hom(\1_\mu, \bigoplus_{[-\mu_j-1]} \F_i \E_i \1_\mu \la \mu_i+\mu_j+1+d \ra) = 0$ if $d \le 0$. Finally, 
\begin{align}
\Hom(\1_\mu, \E_i \E_j \F_j \F_i \1_\mu \la \mu_i+\mu_j+1+d \ra) 
\nonumber & \cong \Hom((\1_{\mu-\alpha_i} \F_i)_R (\1_{\nu} \F_j)_R, \E_i \E_j \1_\nu \la \mu_i+\mu_j+1+d \ra) \\
\label{eq:hom5} & \cong \Hom(\E_i \E_j \1_\nu, \E_i \E_j \1_\nu \la 2(\mu_i + \mu_j)+d \ra)
\end{align}
where $\nu = \mu-\alpha_i-\alpha_j$. Now, $\la \nu, \alpha_i+\alpha_j \ra = \la \mu, \alpha_i+\alpha_j \ra - 2$ so by induction this is zero if $d \le 0$ (unless $\mu_i+\mu_j=0$ in which case it is one-dimensional). Thus we get that 
\begin{align*}
\Hom(\E_i \E_j \1_\mu, \E_i \E_j \1_\mu \la d \ra) 
& \cong \Hom(\1_\mu, \F_j \F_i \E_i \E_i \1_\mu \la \mu_i+\mu_j+1+d \ra) \\
& \cong \Hom(\1_\mu, \bigoplus_{[-\mu_i-\mu_j]} \1_\mu \la \mu_i+\mu_j+1+d \ra) \\
& \cong \bigoplus_{r=0}^{-\mu_i-\mu_j-1} \Hom(\1_\mu, \1_\mu \la d - 2r \ra). 
\end{align*}
Each term in the last direct sum above is zero if $d < 0$ or $r > 0$ and one-dimensional if $d = 0 = r$ (unless $d = \mu_i+\mu_j=0$ in which case the sum vanishes but then (\ref{eq:hom5}) contributes $1$ to the dimension). In conclusion, equation (\ref{eq:hom2}) when $\l = \mu$ follows from (\ref{eq:hom1}) when $\l=\mu-\alpha_i-\alpha_j$. This completes the induction. 
\end{proof}

\begin{Lemma}\label{LEM:Ts2} 
Suppose $i,j \in I$ with $\la i,j \ra = 0$. Then 
\begin{align}
\label{eq:hom'1} \dim \Hom(\E_i \E_j \1_\l, \E_j \E_i \1_\l \la d \ra) &\le \begin{cases} 1 & \text{ if } d = 0 \\ 0 & \text{ if } d < 0 \end{cases} \\
\label{eq:hom'2} \dim \Hom(\E_i \E_j \1_\l, \E_i \E_j \1_\l \la d \ra) &\le \begin{cases} 1 & \text{ if } d = 0 \\ 0 & \text{ if } d < 0. \end{cases}
\end{align}
In (\ref{eq:hom'1}) equality holds when $d=0$ if $\E_i \E_j \1_\l$ and $\E_j \E_i \1_\l$ are both nonzero. Likewise in (\ref{eq:hom'2}) equality holds when $d=0$ if $\E_i \E_j \1_\l$ is nonzero. 
\end{Lemma}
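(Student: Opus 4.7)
The plan is to mirror the proof of Lemma \ref{LEM:Ts1} almost verbatim, exploiting two simplifications that arise when $\la i,j \ra = 0$: condition (\ref{co:EiFj}) yields $\F_j \E_i \cong \E_i \F_j$ as a direct isomorphism with no cross term, and the weight pairings satisfy $(\mu+\alpha_i)_j = \mu_j$ and $(\mu+\alpha_j)_i = \mu_i$, so adjunction-induced shifts are unperturbed when crossing $\E_i$ or $\F_i$. I prove (\ref{eq:hom'1}) and (\ref{eq:hom'2}) by simultaneous induction on $\mu_i + \mu_j$, assuming $\mu_i + \mu_j \le 0$ (the other direction is symmetric by switching the roles of $\E$ and $\F$). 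The base case comes from condition (\ref{co:vanish1}) applied separately to $\alpha = \alpha_i$ and $\alpha = \alpha_j$: once $\mu_i$ or $\mu_j$ is sufficiently negative, $\1_\mu$ itself vanishes and both sides become zero.

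For the inductive step on (\ref{eq:hom'1}) with $\mu_j \le 0$, adjuncting the outer $\E_j$ on the target (with shift $-(\mu+\alpha_i)_j - 1 = -\mu_j - 1$), applying $\F_j \E_i \cong \E_i \F_j$, and decomposing $\F_j \E_j \1_\mu \cong \E_j \F_j \1_\mu \oplus_{[-\mu_j]} \1_\mu$, one obtains
\begin{align*}
\Hom(\E_i \E_j \1_\mu, \E_j \E_i \1_\mu \la d \ra)
\cong \Hom(\E_i \E_j \F_j \1_\mu, \E_i \1_\mu \la d + \mu_j + 1 \ra) \oplus \bigoplus_{r=0}^{-\mu_j - 1} \Hom^{d - 2r}(\E_i \1_\mu, \E_i \1_\mu)
\end{align*}
(after a routine reindexing of the sum). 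By Lemma \ref{LEM:homEs} the direct sum contributes $0$ for $d < 0$ and exactly $1$ for $d = 0$ (from $r = 0$). The residual Hom, after one more adjunction collapsing the rightmost $\F_j$ against its right adjoint $\E_j \1_{\mu - \alpha_j} \la \mu_j - 1 \ra$, is isomorphic to $\Hom^{d + 2\mu_j}(\E_i \E_j \1_{\mu - \alpha_j}, \E_i \E_j \1_{\mu - \alpha_j})$. By the inductive hypothesis for (\ref{eq:hom'2}) at $\mu - \alpha_j$ this vanishes when $d + 2\mu_j < 0$; when $\mu_j < 0$ and $d \le 0$ it vanishes and the sum supplies the required $1$ at $d = 0$, while when $\mu_j = 0$ the sum is empty and the residual alone contributes at most $1$ at $d = 0$.

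The proof of (\ref{eq:hom'2}) proceeds by the same pattern: adjunct the outer $\E_i$ on the target, use $\F_i \E_j \cong \E_j \F_i$ together with the decomposition of $\F_i \E_i \1_\mu$ to split off a sum $\bigoplus_{r=0}^{-\mu_i-1} \Hom^{d-2r}(\E_j\1_\mu, \E_j\1_\mu)$ plus a residual isomorphic to $\Hom^{d + 2\mu_i}(\E_i \E_j \1_{\mu - \alpha_i}, \E_j \E_i \1_{\mu - \alpha_i})$, which is controlled by the inductive hypothesis for (\ref{eq:hom'1}) at $\mu - \alpha_i$. The main obstacle is bookkeeping the grading shifts so that at $d = 0$ exactly one summand survives in each case; this is genuinely simpler than for Lemma \ref{LEM:Ts1} because the absence of cross terms in the $\E_i,\F_j$ commutator obviates any identity like $[a][b-1] = [a-1][b] + [b-a]$, and the two inductive reductions decouple cleanly on the $i$ and $j$ sides.
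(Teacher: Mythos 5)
Your proposal correctly identifies the overall shape of the argument (a simultaneous induction reducing (\ref{eq:hom'1}) to (\ref{eq:hom'2}) and back, using adjunction, condition (\ref{co:EiFj}) and Lemma~\ref{LEM:homEs}), and the reduction you carry out for (\ref{eq:hom'1}) — adjuncting the outer $\E_j$ in the target, commuting $\F_j$ past $\E_i$, and peeling $\F_j\E_j\1_\mu$ — is right, with the grading shift $d+2\mu_j$ on the residual correctly computed.

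However, there is a genuine gap in your step for (\ref{eq:hom'2}). The decomposition of $\F_i\E_i\1_{\mu+\alpha_j}$ into $\E_i\F_i\oplus\bigoplus_{[-\mu_i]}\1$, which you use to split off the sum $\bigoplus_{r=0}^{-\mu_i-1}\Hom^{d-2r}(\E_j,\E_j)$, is only available when $\mu_i\le 0$. Your standing hypothesis is $\mu_i+\mu_j\le 0$, which does not force $\mu_i\le 0$: one can have $\mu_i>0$ and $\mu_j\ll 0$. Thus the claim that "the two inductive reductions decouple cleanly on the $i$ and $j$ sides" is overstated — when $\mu_i$ and $\mu_j$ have opposite signs you cannot reduce (\ref{eq:hom'2}) by a one-sided adjunction. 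In that regime you still need the analogue of the second part of Lemma~\ref{LEM:Ts1}: adjunct both source $\E$'s to land in $\Hom(\1_\mu,\F_j\F_i\E_i\E_j\1_\mu\langle\mu_i+\mu_j+2+d\rangle)$, then compare the two decompositions of $\F_j\E_i\F_i\E_j\1_\mu$ obtained by simplifying $\E_i\F_i$ at $\mu+\alpha_j$ (using $\mu_i>0$) versus $\F_j\E_j$ at $\mu-\alpha_i$ (using $\mu_j\le 0$). The comparison here is indeed simpler than in Lemma~\ref{LEM:Ts1} — the leftover $\1_\mu$-terms match exactly and cancel, so no analogue of $[a][b-1]=[a-1][b]+[b-a]$ is needed — but you cannot skip the two-sided argument altogether.

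The base case also deserves more care than your sentence gives it. For $\langle i,j\rangle=-1$, Lemma~\ref{LEM:Ts1} invokes condition~(\ref{co:vanish1}) with $\alpha=\alpha_i+\alpha_j$ to kill $\1_\mu$ along the direction in which the induction decreases. For $\langle i,j\rangle=0$ this case of~(\ref{co:vanish1}) is not available, and $\mu_i+\mu_j\to-\infty$ does not by itself force either $\mu_i$ or $\mu_j$ individually to be very negative along a fixed $\alpha_i$- or $\alpha_j$-line. So claiming that~(\ref{co:vanish1}) "applied separately to $\alpha_i$ and $\alpha_j$" finishes the base case is glib; one has to argue (using condition~(\ref{co:new}) together with Lemma~\ref{LEM:nonvan} and~(\ref{co:vanish1})) that $\E_i\E_j\1_\nu$ vanishes along the actual chain of reductions. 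This point is glossed over in the paper's own one-line sketch as well, but since you explicitly offer a base-case argument, it should be one that holds up.
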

\begin{proof}
The argument here is a much simpler version of that in the proof of Lemma \ref{LEM:Ts1}. The main difference here is that $\E_i \E_j \cong \E_j \E_i$ so one only needs to mimic the first part of the proof of Lemma \ref{LEM:Ts1}. We leave the details to the reader. 
\end{proof}

\begin{Lemma}\label{LEM:EF-FEhom} If $i \ne j \in I$ then
\begin{align*}
\dim \Hom(\F_j \E_i \1_\l, \E_i \F_j \1_\l \la d \ra) = \dim \Hom(\E_i \F_j \1_\l, \F_j \E_i \1_\l \la d \ra) &= \begin{cases} 1 & \text{ if } d = 0 \text{ and } \F_j \E_i \1_\l \ne 0 \\ 0 & \text{ if } d < 0 \end{cases} \\ 
\dim \Hom(\F_i \E_i \1_\l, \1_\l \la d \ra) = \dim \Hom(\1_\l, \F_i \E_i \1_\l \la d \ra) &= \begin{cases} 1 & \text{ if } d = \l_i+1 \text{ and } \E_i \1_\l \ne 0 \\ 0 & \text{ if } d < \l_i+1 \end{cases} \\
\dim \Hom(\E_i \F_i \1_\l, \1_\l \la d \ra) = \dim \Hom(\1_\l, \E_i \F_i \1_\l \la d \ra) &= \begin{cases} 1 & \text{ if } d = -\l_i+1 \text{ and } \F_i \1_\l \ne 0 \\ 0 & \text{ if } d < -\l_i+1. \end{cases} 
\end{align*}
\end{Lemma}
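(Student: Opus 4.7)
My plan is to reduce each Hom to $\End^*$ of something simpler via the biadjunction in condition (ii), then invoke Lemma \ref{LEM:homEs} (whose proof transfers verbatim to $\F_i$), condition (i), and (for the first identity) Lemmas \ref{LEM:Ts1}--\ref{LEM:Ts2} to bound dimensions. The second and third identities each take a single adjunction move; the first requires a double adjunction combined with the commutator relation.

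For the second identity, I would read $(\E_i \1_\l)_L = \1_\l \F_i \la -\l_i - 1\ra$ as the biadjunction $\F_i \dashv \E_i \la -\l_i - 1\ra$ (with $\F_i = \F_i \1_{\l+\alpha_i}$), so that $\Hom(\F_i X, Z) \cong \Hom(X, \E_i Z \la -\l_i - 1\ra)$. Applied with $X = \E_i \1_\l$, $Z = \1_\l \la d\ra$, this converts $\Hom(\F_i \E_i \1_\l, \1_\l \la d\ra)$ into $\End^{d - \l_i - 1}(\E_i \1_\l)$, which by Lemma \ref{LEM:homEs} is one-dimensional exactly when $d = \l_i + 1$ (and $\E_i \1_\l \ne 0$) and vanishes for $d < \l_i + 1$. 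The companion Hom $\Hom(\1_\l, \F_i \E_i \1_\l \la d\ra)$ reduces to the same $\End^{d-\l_i-1}(\E_i \1_\l)$ via the left-adjoint version $(\F_i)_L = \E_i \la \l_i + 1\ra$. The third identity is entirely parallel, using $(\E_i \1_{\l-\alpha_i})_R = \F_i \la \l_i - 1\ra$ to reduce to $\End^{d + \l_i - 1}(\F_i \1_\l)$. Non-vanishing in each case is witnessed by the explicit $\adj_i$ and $\adj^i$ from section \ref{sec:defs}.

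For the first identity, condition (iv) identifies $\E_i \F_j \1_\l$ with $\F_j \E_i \1_\l$, so both statements reduce to bounding $\End^d(\F_j \E_i \1_\l)$. First, adjoining the outer $\F_j$ via $(\F_j)_R = \E_j \la -\nu_j + 1\ra$, where $\nu_j := \l_j + \la i,j\ra$, yields $\Hom(\E_i \1_\l, \E_j \F_j \E_i \1_\l \la d - \nu_j + 1\ra)$. Applying the $\E_j \F_j$ commutator of condition (iii) at weight $\l + \alpha_i$ splits this, when $\nu_j \ge 0$, into a main piece $\Hom(\E_i \1_\l, \F_j \E_j \E_i \1_\l \la d - \nu_j + 1\ra)$ plus $\bigoplus_{k=0}^{\nu_j - 1} \End^{d - 2k}(\E_i \1_\l)$. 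A second adjunction on the remaining $\F_j$, via $(\F_j)_L = \E_j \la \nu_j + 1\ra$, turns the main piece into $\End^{d - 2\nu_j}(\E_j \E_i \1_\l)$. Lemma \ref{LEM:homEs} controls the identity summands (zero for $d < 0$, a single copy at $d = 0$ present only when $\nu_j \ge 1$), while Lemmas \ref{LEM:Ts1}--\ref{LEM:Ts2} control the $\End^*(\E_j \E_i \1_\l)$ term (zero for $d - 2\nu_j < 0$, at most a single copy at $d - 2\nu_j = 0$, which combined with $d \le 0$ and $\nu_j \ge 0$ forces $d = \nu_j = 0$). The two contributions are mutually exclusive, giving the bound $\dim \le \delta_{d,0}$; the case $\nu_j \le 0$ is handled symmetrically by using the commutator in the opposite direction.

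The main obstacle will be careful bookkeeping of grading shifts through the double adjunction: one has to verify that the shift $-\nu_j + 1$ from the first adjunction, combined with the $[\nu_j]$-shifts in the commutator summands and the shift $-\nu_j - 1$ from the second adjunction, conspire to place exactly one surviving contribution in degree zero. Non-vanishing at $d = 0$ in the first identity is then automatic once the upper bound is in hand, since the condition (iv) isomorphism $\F_j \E_i \1_\l \cong \E_i \F_j \1_\l$ itself supplies a nonzero element whenever $\F_j \E_i \1_\l \ne 0$.
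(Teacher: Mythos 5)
Your proof is correct, and for the second and third identities it coincides exactly with the paper's: a single adjunction move lands on $\End^{d - \l_i - 1}(\E_i\1_\l)$ (resp.\ $\End^{d + \l_i - 1}(\F_i\1_\l)$), and Lemma~\ref{LEM:homEs} finishes. For the first identity you take a genuinely different route. The paper biadjoint \emph{both} copies of $\F_j$ simultaneously, jumping in one step from $\Hom(\F_j\E_i\1_\l, \E_i\F_j\1_\l\la d\ra)$ to $\Hom(\E_i\E_j\1_{\l-\alpha_j}, \E_j\E_i\1_{\l-\alpha_j}\la d - \la i,j\ra\ra)$ and then quoting equation~(\ref{eq:hom1}) of Lemma~\ref{LEM:Ts1} (or Lemma~\ref{LEM:Ts2} when $\la i,j\ra=0$); no commutator is needed and no case split occurs. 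You instead pass through condition~(iv) to reduce to $\End^d(\F_j\E_i\1_\l)$, adjoin one $\F_j$, apply condition~(iii), and adjoin the remaining $\F_j$, eventually quoting equation~(\ref{eq:hom2}) of the same lemmas. The paper's move is both shorter and uniform; your route requires the extra hypothesis~(iii).

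Two points in your write-up need attention. First, the phrase ``the case $\nu_j \le 0$ is handled symmetrically'' hides a real subtlety: when $\nu_j \le 0$, condition~(iii) exhibits $\E_j\F_j\1_{\l+\alpha_i}$ as a \emph{summand} of $\F_j\E_j\1_{\l+\alpha_i}$, so it cannot be decomposed the way you used; and after the second adjunction the putative bound $\End^{d-2\nu_j}(\E_j\E_i\1_\l)$ can sit in positive degree and fail to vanish for $d<0$. The fix is to work with $\End^d(\E_i\F_j\1_\l)$, adjoin its $\F_j$ (which is \emph{pre}-composed), and apply the commutator to $\F_j\E_j\1_{\l-\alpha_j}$; the resulting threshold $(\l-\alpha_j)_j \le 0$, i.e.\ $\l_j \le 2$, indeed covers $\nu_j \le 0$. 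This works, but it is not literally ``the same argument with signs flipped'' and needs to be spelled out. Second, invoking $\adj_i, \adj^i$ from section~\ref{sec:defs} to witness non-vanishing is mildly circular, as those maps are \emph{defined} in section~3.3 as spanning elements of the one-dimensional spaces this very lemma produces. The clean statement is that non-vanishing comes for free from the adjunction isomorphism itself: $\Hom(\F_i\E_i\1_\l, \1_\l\la\l_i+1\ra) \cong \End^0(\E_i\1_\l)$ carries the identity, and similarly for $\End^0(\F_j\E_i\1_\l)$ in the first identity.
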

\begin{proof}
There are three cases to consider in proving the first equality. If $i \ne j$ with $\la i,j \ra = -1$ then 
$$\Hom(\F_j \E_i \1_\l, \E_i \F_j \1_\l \la d \ra) \cong \Hom(\E_i (\1_{\l-\alpha_j} \F_j)_R , (\1_{\l+\alpha_i-\alpha_j} \F_j)_R \E_i \la d \ra) \cong \Hom(\E_i \E_j, \E_j \E_i \la d+1 \ra)$$
and the result follows from Lemma \ref{LEM:Ts1}. Likewise, if $i \ne j$ with $\la i,j \ra = 0$ the result follows from Lemma \ref{LEM:Ts2}. Finally, if $i=j$ then $\Hom(\F_i \E_i \1_\l, \E_i \F_i \1_\l \la d \ra) \cong \Hom(\E_i \E_i \1_{\l-\alpha_i}, \E_i \E_i \1_{\l-\alpha_i} \la d-2 \ra)$ and the result follows from Lemma \ref{LEM:homEs}. The second equality follows similarly.

The second and third pairs of equalities follow directly by adjunction together with Lemma \ref{LEM:homEs}. 
\end{proof}

\begin{Lemma}\label{LEM:nonvan}
For any $i,j \in I$ we have: 
\begin{enumerate}
\item \label{eq:nonvan1} $\E_i \1_\l \ne 0$ if and only if $\1_\l$ and $\1_{\l+\alpha_i}$ are both nonzero.
\item \label{eq:nonvan2} $\E_j \E_i \1_\l \ne 0$ if and only if $\1_\l, \1_{\l+\alpha_i}$ and $\1_{\l+\alpha_i+\alpha_j}$ are all nonzero. 
\item \label{eq:nonvan3} $\E_i \1_\l \F_j \ne 0$ if and only if $\1_\l, \1_{\l+\alpha_i}, \1_{\l+\alpha_j}$ and $\1_{\l+\alpha_i+\alpha_j}$ are all nonzero. 
\item \label{eq:nonvan4} $\E_i \1_\l \E_i \F_j \ne 0$ if and only if $\1_{\l+r\alpha_i}$ and $\1_{\l+\alpha_j+r\alpha_i}$ are nonzero for $-1 \le r \le 1$. 
\end{enumerate}
\end{Lemma}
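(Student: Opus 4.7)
The plan is to prove the four parts in sequence, using earlier parts in later ones. For every part the forward implication is essentially automatic: the factorization $\A \1_\l \B = \A \1_{\mu_1} \cdots \1_{\mu_k} \B$ shows that nonvanishing of a composition forces nonvanishing of each intermediate weight, and where only three of the four corners of a ``square'' are obtained this way I appeal to condition (\ref{co:new}) to fill in the fourth.

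For the reverse implication of (i), I split on the sign of $\l_i$: when $\l_i \ge -1$ the commutator $\E_i \F_i \1_{\l+\alpha_i} \cong \F_i \E_i \1_{\l+\alpha_i} \bigoplus_{[\l_i+2]} \1_{\l+\alpha_i}$ exhibits $\1_{\l+\alpha_i}$ as a direct summand, which being nonzero forces $\E_i \F_i \1_{\l+\alpha_i} \ne 0$ and hence $\E_i \1_\l \ne 0$. The case $\l_i \le -1$ is dual, using $\F_i \E_i \1_\l \supseteq \bigoplus_{[-\l_i]} \1_\l$. For (ii) with $i = j$, a double application of the same commutator (as in the proof of Lemma \ref{LEM:homEEs}) shows that $\F_i \E_i \E_i \1_\l$ contains $\E_i \1_\l$ as a summand of multiplicity $[2][-\l_i-1]$ when $\l_i \le -2$, and dually $\E_i \E_i \F_i \1_{\l+\alpha_i}$ contains $\E_i \1_{\l+\alpha_i}$ of multiplicity $[2][\l_i+3]$ when $\l_i \ge -3$; part (i) guarantees these summands are nonzero, and the two ranges cover all of $\Z$.

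For (ii) with $i \ne j$, I combine the matching commutator with $\E_j \F_i \cong \F_i \E_j$: the composition $\E_j \E_i \F_i \1_{\l+\alpha_i}$ contains $\E_j \1_{\l+\alpha_i}$ (nonzero by (i), since $\1_{\l+\alpha_i}$ and $\1_{\l+\alpha_i+\alpha_j}$ are nonzero) when $\l_i \ge -1$, while $\F_j \E_j \E_i \1_\l$ contains $\E_i \1_\l$ (nonzero by (i)) when $\l_j \le -2$. The boundary regime $\l_i \le -2$ and $\l_j \ge -1$ requires pushing an extra $\F_i$ through to produce $\1_{\l+2\alpha_i}$ as a new nonzero weight, and then invoking condition (\ref{co:new}) to propagate this sideways along the $\alpha_j$-direction until one of the above arguments applies. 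Parts (iii) and (iv) follow the same template, now with $\E_i \F_j \cong \F_j \E_i$ in place of one of the matching commutators, reducing to (i), (ii), or their adjoint forms.

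The main obstacle throughout is the handful of boundary configurations where the relevant quantum integer $[\l_i + \textup{const}]$ collapses to zero and the direct commutator argument fails to isolate a $\1_\mu$ summand; in those cases I will combine conditions (\ref{co:vanish1})--(\ref{co:new}) to derive nonvanishing of an auxiliary weight space (typically $\1_{\l+2\alpha_i}$ or $\1_{\l+\alpha_j}$), which then unlocks the commutator argument. This propagation-of-nonvanishing step is the crux of the proof and the only genuinely nontrivial point; everything else is a direct book-keeping exercise with the defining conditions of a $(\g,\t)$ action.
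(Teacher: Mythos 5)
Your overall plan — split on the sign of $\l_i$, split off an identity summand via the $\E_i\F_i$ commutator, and reduce to part (i), using $\E_j\F_i\cong\F_i\E_j$ when $j\neq i$ — is the same basic strategy the paper uses, and it is clearly correct for part (i) and for part (ii) in the case $i=j$.

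Two points, one minor and one more serious.

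The minor one: in the forward implications you say you sometimes need condition (\ref{co:new}) to ``fill in the fourth corner.'' You never do. In (ii) the statement only asserts nonvanishing of the three weights the composition actually passes through; in (iii) and (iv), all four (respectively six) weights are already forced by combining the factorization $\E_i\1_\l\F_j=\E_i\F_j\1_{\l+\alpha_j}$ with its commuted form $\F_j\E_i\1_{\l+\alpha_j}$, without (\ref{co:new}).

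The more serious issue is your treatment of (ii) for $i\neq j$. First, the threshold for the left-compose-with-$\F_j$ branch should be phrased in terms of $(\l+\alpha_i)_j=\l_j+\la i,j\ra$, not $\l_j$, since the $\F_j\E_j$ commutator you are decomposing sits at weight $\l+\alpha_i$; the surviving-quantum-integer condition is $(\l+\alpha_i)_j\le -1$, so ``$\l_j\le -2$'' is incorrect (too strong when $\la i,j\ra=-1$). More importantly, your handling of the remaining regime ($\l_i\le -2$ and $(\l+\alpha_i)_j\ge 0$) is not a proof. You propose producing $\1_{\l+2\alpha_i}\ne 0$ and then ``propagating sideways along $\alpha_j$'' with condition (\ref{co:new}). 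But (\ref{co:new}) applied at $\nu=\l+\alpha_i$ to the inputs $\1_{\l+2\alpha_i},\1_{\l+\alpha_i+\alpha_j}\ne 0$ only returns $\1_{\l+\alpha_i}$ (already known) and $\1_{\l+2\alpha_i+\alpha_j}$; iterating moves you further up the $i$-string, never down to $\1_{\l+\alpha_j}$. And it is exactly $\1_{\l+\alpha_j}\ne 0$ that your left-$\F_i$-composition ($\F_i\E_j\E_i\1_\l\cong\E_j\F_i\E_i\1_\l\supseteq\bigoplus_{[-\l_i]}\E_j\1_\l$) would need in order for the split-off summand $\E_j\1_\l$ to be nonzero. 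Worse, that weight can genuinely vanish in this regime: for $\g=\sl_3$ and the weight $\l$ with Euclidean coordinates $(0,4,2)$ in a categorification of $L(2\Lambda_1+2\Lambda_2)$ (so $\l_1=-4$, $\l_2=2$, $i=1$, $j=2$, $\la i,j\ra=-1$), one has $\1_\l,\1_{\l+\alpha_1},\1_{\l+\alpha_1+\alpha_2}\ne 0$ but $\1_{\l+\alpha_2}=0$; so no amount of propagation can rescue $\E_j\1_\l$. Your proof as written therefore has a genuine gap. To be fair, the paper's own proof is also terse at this exact point (it dispatches $\l_i\le -1$ with one word, ``similarly''), but the intended argument cannot be a one-step $\F_i$-commute followed by ``$\E_j\1_\l\ne 0$''; one needs either to push more than one $\F_i$ through — applying the $\E_i\F_i$ commutator repeatedly until an identity summand at a reachable weight appears, as the paper itself does for (iv) with $\F_i\F_i$ — or to invoke the $s_i$-symmetry of weight strings, as in the proof of Lemma~\ref{LEM:serre}, rather than a single invocation of condition (\ref{co:new}).
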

\begin{proof}
Clearly if $\1_\l=0$ or $\1_{\l+\alpha_i} = 0$ then $\E_i \1_\l = 0$. Conversely, suppose $\1_\l$ and $\1_{\l+\alpha_i}$ are nonzero. If $\l_i \ge -1$ then 
$$\E_i \1_\l \F_i \cong \F_i \E_i \1_{\l+\alpha_i} \bigoplus_{[\l_i+2]} \1_{\l+\alpha_i}$$
which is nonzero since $\1_{\l+\alpha_i} \ne 0$. Thus $\E_i \1_\l \ne 0$. Similarly, if $\l_i \le -1$ then $\F_i \E_i \1_\l \ne 0$ so $\E_i \1_\l \ne 0$. 

This proves (i). The argument for (ii) is the same. Namely, if $\l_i \ge -1$ then 
$$\E_j \E_i \1_\l \F_i \cong \E_j \F_i \E_i \1_{\l+\alpha_i} \bigoplus_{[\l_i+2]} \E_j \1_{\l+\alpha_i}$$
which is nonzero since $\1_{\l+\alpha_i}$ and $\1_{\l+\alpha_i+\alpha_j}$ are nonzero (and similarly if $\l_i \le -1$). 

We now prove (iii). Since $\E_i \F_j \cong \F_j \E_i$ it is clear that $\1_\l, \1_{\l+\alpha_i}, \1_{\l+\alpha_j}, \1_{\l+\alpha_i+\alpha_j}$ are all nonzero if $\E_i \1_\l \F_j \ne 0$. Conversely, suppose first that $\l_i \le -1$. Then we compose with $\F_i$ to get 
$$\F_i \E_i \1_\l \F_j \cong \E_i \F_i \1_\l \F_j \bigoplus_{[-\l_i]} \1_\l \F_j$$
which is nonzero since $\1_\l$ and $\1_{\l+\alpha_j}$ are nonzero. If $\l_i \ge 0$ then we precompose with $\F_i$ to get
$$\E_i \1_\l \F_j \F_i \cong \F_j \E_i \F_i \1_{\l+\alpha_j+\alpha_i} \cong \F_j \E_i \F_i \1_{\l+\alpha_j} \bigoplus_{[\l_i+2+\la i,j \ra]} \F_j \1_{\l+\alpha_i+\alpha_j}$$
which is nonzero since $\1_{\l+\alpha_i}$ and $\1_{\l+\alpha_i+\alpha_j}$ are nonzero (here we use that $\l_j+2+\la i,j \ra \ge 1$). 

The proof of (iv) is similar to that of (iii). The main difference is that we compose with $\F_i \F_i$ if $\l_i \le -2$ and precompose with $\F_i \F_i$ if $\l_i \ge -1$. 
\end{proof}

\subsection{Spaces involving divided powers}

In this section the proofs are still independent of the other results in this paper with one exception, we assume that we know $\E_i^2 \cong \E_i^{(2)}[1] \oplus \E_i^{(2)} [-1]$. 

\begin{Lemma}\label{LEM:homE2s}
We have $\dim \End^d(\E_i^{(2)} \1_\l) \le \begin{cases} 1 & \text{ if } d = 0 \\ 0 & \text{ if } d < 0. \end{cases}$
\end{Lemma}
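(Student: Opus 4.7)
The plan is to bootstrap from the bound on $\End(\E_i \E_i)$ via the assumed decomposition $\E_i \E_i \1_\l \cong \E_i^{(2)} \1_\l \la 1 \ra \oplus \E_i^{(2)} \1_\l \la -1 \ra$. Taking $\End^d$ of both sides splits into four pieces, yielding
$$\End^d(\E_i \E_i \1_\l) \cong \End^{d+2}(\E_i^{(2)} \1_\l) \oplus \End^d(\E_i^{(2)} \1_\l)^{\oplus 2} \oplus \End^{d-2}(\E_i^{(2)} \1_\l),$$
so in particular each summand on the right injects into the left.

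For $d=0$ the bound follows immediately by applying this with $d$ replaced by $-2$ and invoking Lemma \ref{LEM:homEEs}: we get $\dim \End^0(\E_i^{(2)} \1_\l) \le \dim \End^{-2}(\E_i \E_i \1_\l) \le 1$.

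For $d<0$ I need $\dim \End^{d-2}(\E_i \E_i \1_\l) = 0$, so the plan is to first strengthen Lemma \ref{LEM:homEEs} to the statement that $\dim \End^e(\E_i \E_i \1_\l) = 0$ for all $e<-2$. This is a rerun of that Lemma's proof with the degree $-2$ replaced by a general $e$: for $\mu_i \ge -2$ one obtains
$$\End^e(\E_i \E_i \1_\mu) \cong \End^{e - 2\mu_i - 6}(\E_i \E_i \1_{\mu+\alpha_i}) \oplus \Hom\bigl(\E_i \1_{\mu+\alpha_i}, \textstyle\bigoplus_{[2][\mu_i+3]} \E_i \1_{\mu+\alpha_i} \la -\mu_i - 1 + e \ra\bigr),$$
whose right term vanishes by Lemma \ref{LEM:homEs} (for $e<-2$ the internal shifts $-\mu_i - 1 + e + r$, with $r$ ranging over the graded support $\{-\mu_i-3,\dots,\mu_i+3\}$ of $[2][\mu_i+3]$, are bounded above by $e+2<0$), and whose left term vanishes by downward induction on $\mu_i$ with base case supplied by condition (\ref{co:vanish1}). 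The symmetric case $\mu_i \le -2$ is identical.

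The only substantive step is this mild strengthening of Lemma \ref{LEM:homEEs}, and it is a routine adaptation tracking how the shifts depend on $e$ rather than a new obstacle.
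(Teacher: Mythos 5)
Your proof is correct; I checked the degree arithmetic and the induction. Your route differs from the paper's stated one. The paper's proof is a one-line remark ("analogous to Lemma \ref{LEM:homEs}"), which points to running the adjunction/commutation argument directly on $\E_i^{(2)}$: strip a factor of $\E_i^{(2)}$ by adjunction, use a commutation relation between $\E_i^{(2)}$ and $\F_i$ (which one extracts from $\E_i\E_i \cong \E_i^{(2)}\la 1\ra \oplus \E_i^{(2)}\la -1\ra$ together with condition (\ref{co:EF}), by Krull--Schmidt), and induct on $\mu_i$. You instead bootstrap from $\End^d(\E_i\E_i)$ via the decomposition, reducing to a strengthening of Lemma \ref{LEM:homEEs} to all degrees $e<-2$; that strengthening is a true rerun of that lemma's proof with $e$ tracked, with the same inductive termination by condition (\ref{co:vanish1}). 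Two remarks. First, your degree bookkeeping is right: the first piece lands in degree $e-2\mu_i-6 \le e-2$ (since $\mu_i\ge -2$), so the induction descends, and the tail term's top shift is $e+2<0$ for $e<-2$, killing it by Lemma \ref{LEM:homEs}. Second, for the part of your argument handling $d<0$ you embed $\End^d(\E_i^{(2)})$ into $\End^{d-2}(\E_i\E_i)$, which for $d<0$ always has $d-2<-2$, so you never need the (false in general) vanishing of $\End^{-1}(\E_i\E_i)$. What your approach buys: you avoid re-deriving a commutation relation for $\E_i^{(2)}$ from scratch and reuse the $\E_i\E_i$ computation you already have; what you pay is the extra paragraph strengthening Lemma \ref{LEM:homEEs}. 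Both approaches are one Krull--Schmidt observation and one adjunction chase apart, so the difference is a matter of packaging.
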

\begin{proof}
The proof is analogous to the one computing $\End^d(\E_i \1_\l)$ from Lemma \ref{LEM:homEs}. 
\end{proof}

\begin{Lemma}\label{LEM:temp}
Suppose $i,j \in I$ with $\la i,j \ra = -1$. Then 
\begin{align}
\label{eq:3} \dim \Hom(\E_i \E_j^{(2)} \1_\l, \E_j^{(2)} \E_i \1_\l \la d \ra) & \le \begin{cases} 1 & \text{ if } d = 2 \\ 0 & \text{ if } d < 2 \end{cases} \\
\label{eq:4} \dim \Hom(\E_j^{(2)} \E_i \1_\l, \E_j^{(2)} \E_i \1_\l \la d \ra) & \le \begin{cases} 1 & \text{ if } d = 0 \\ 0 & \text{ if } d < 0. \end{cases}
\end{align}
In (\ref{eq:3}) equality holds when $d=2$ if and only if $\E_i \E_j^{(2)} \1_\l$ and $\E_j^{(2)} \E_i \1_\l$ are both nonzero. Likewise in (\ref{eq:4}) equality holds when $d=0$ if and only if $\E_j^{(2)} \E_i \1_\l$ is nonzero.
\end{Lemma}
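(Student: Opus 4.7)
\emph{Plan.} The strategy is to reduce everything to $\Hom$ spaces between compositions that only involve single (undivided) $\E$'s, where the preceding Lemmas \ref{LEM:homEs}, \ref{LEM:homEEs}, \ref{LEM:Ts1}, and \ref{LEM:homE2s} already apply. The reduction is via the decomposition
$$\E_j \E_j \1_\nu \;\cong\; \E_j^{(2)} \1_\nu \otimes_\k V, \qquad V := \k\la 1 \ra \oplus \k\la -1 \ra,$$
which exists by hypothesis (and is promised a posteriori by Corollary \ref{COR:new}). Writing
\begin{align*}
f(d) &:= \dim \Hom(\E_i \E_j^{(2)} \1_\l, \E_j^{(2)} \E_i \1_\l \la d \ra), &
h(d) &:= \dim \End^d(\E_j^{(2)} \E_i \1_\l), \\
g(d) &:= \dim \Hom(\E_i \E_j \E_j \1_\l, \E_j \E_j \E_i \1_\l \la d \ra), &
k(d) &:= \dim \End^d(\E_j \E_j \E_i \1_\l),
\end{align*}
and using the Krull--Schmidt cancellation laws of section \ref{sec:defs} together with the identity $V \otimes V^* \cong \k\la 2 \ra \oplus \k^{\oplus 2} \oplus \k\la -2 \ra$, I obtain
$$g(d) = f(d+2) + 2f(d) + f(d-2), \qquad k(d) = h(d+2) + 2h(d) + h(d-2).$$

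The bulk of the work will be to bound $g$ and $k$ directly. To bound $g(d)$, I would take the right adjoint of the outermost $\E_i$ on the source, yielding
$$\Hom(\E_i \E_j \E_j \1_\l, \E_j \E_j \E_i \1_\l \la d \ra) \cong \Hom(\E_j \E_j \1_\l, \F_i \E_j \E_j \E_i \1_\l \la d + \l_i - 1 \ra),$$
commute the $\F_i$ past the two $\E_j$'s using $\F_i \E_j \cong \E_j \F_i$ (condition \ref{co:EiFj}), apply the commutator of condition \ref{co:EF} to $\F_i \E_i \1_\l$, and split. The resulting summands are either $\End^s(\E_j \E_j \1_\mu)$, handled by Lemma \ref{LEM:homEEs} (after one further adjunction and the base case from condition \ref{co:vanish1}), or $\Hom(\E_j \E_j \1_\l, \E_j \E_j \E_i \F_i \1_\l \la s \ra)$, which after taking a further right adjoint reduces to a $\Hom$ space of the type treated in Lemma \ref{LEM:Ts1} (or its $\la i,j\ra=0$ analogue, Lemma \ref{LEM:Ts2}). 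Performing this case-by-case on the sign of $\l_i$, with mutual downward induction on $\l_i$ and $\l_j$ using condition \ref{co:vanish1}, yields $g(d) = 0$ for $d < 0$ and $g(0) \le 1$. An entirely parallel argument gives $k(d) = 0$ for $d < -2$ and $k(-2) \le 1$.

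From here the conclusion is mechanical. Solving the recursion in Step~1 downward in $d$: $g(d)=0$ for $d<0$ forces $f(d+2)=f(d)=f(d-2)=0$, so $f(d)=0$ for all $d<2$; then $g(0) = f(2) + 2f(0) + f(-2) = f(2)$ gives $f(2) \le g(0) \le 1$. The identical computation with $h$ and $k$ yields $h(d)=0$ for $d<0$ and $h(0)\le 1$. For the equality statements: in (\ref{eq:4}) the identity map is nonzero on $\E_j^{(2)} \E_i \1_\l$ whenever that 1-morphism is nonzero; in (\ref{eq:3}), the composition $(T_{ij}I)(IT_{ij}) : \E_i \E_j \E_j \1_\l \to \E_j \E_j \E_i \1_\l \la 2 \ra$ restricts under the direct-sum decomposition to a map $\E_i \E_j^{(2)} \1_\l \to \E_j^{(2)} \E_i \1_\l \la 2 \ra$, whose nonvanishing is verified by composing with $\F_i$ and invoking a rank calculation in the spirit of Lemma \ref{LEM:rank}, together with Lemma \ref{LEM:nonvan}.

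\emph{The main obstacle} is the bound on $g(d)$ and $k(d)$ in the middle step. It is not conceptually new — the same adjunction-plus-commutator dance as in Lemma \ref{LEM:Ts1} — but with an extra $\E_j$ in play there are more summands and a more elaborate mutual induction among the six orderings of $(\E_i,\E_j,\E_j)$. Nothing beyond the properties listed in section \ref{sec:gaction} is needed.
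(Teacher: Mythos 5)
Your plan \emph{does} deliver the inequality bounds in (\ref{eq:3}) and (\ref{eq:4}), but it is a detour: passing through $\E_j\E_j \cong \E_j^{(2)} \otimes_\k V$ and then extracting $f$ and $h$ from $g$ and $k$ via the recursion buys nothing, because bounding $g$ and $k$ requires exactly the same mutual weight induction (condition (\ref{co:vanish1}) as the base) that the paper runs directly on $f$ and $h$ with $\E_j^{(2)}$ kept intact, and the boundary terms $\End^s(\E_j\E_j\1_\mu)$ with $s<-2$ are \emph{not} covered by Lemma~\ref{LEM:homEEs} (which only treats $s=-2$); you have to invoke Lemma~\ref{LEM:homE2s} via the $V$-decomposition anyway, so the promised reduction to ``undivided'' $\E$'s does not actually materialize. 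Also, your bounds for $g$ and $k$ are not obtained by ``entirely parallel'' arguments: they feed into each other ($g$ at $\1_\l$ reduces to $k$ at $\1_{\l-\alpha_i}$ and vice versa), so they must be established by a single intertwined induction, exactly as (\ref{eq:hom1})/(\ref{eq:hom2}) are in Lemma~\ref{LEM:Ts1}.

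The genuine gap is in your proof of equality in (\ref{eq:3}). You propose to exhibit a nonzero element by passing $(T_{ij}I)(IT_{ij})$ to the $\E_j^{(2)}$ summand and checking nonvanishing ``by composing with $\F_i$ and invoking a rank calculation in the spirit of Lemma~\ref{LEM:rank}.'' But the appendix is explicitly required to be independent of the main body apart from the decomposition $\E_i^2 \cong \E_i^{(2)}\la 1\ra \oplus \E_i^{(2)}\la -1\ra$, and in any case Lemma~\ref{LEM:rank} (and any argument in its spirit) depends on Corollary~\ref{COR:5}, which depends on Corollary~\ref{COR:4}, whose proof cites precisely the present Lemma~\ref{LEM:temp}. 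So that route is circular. The fix is cheap and consistent with the rest of your argument: since $f(d)=0$ for $d<2$, your recursion gives $g(0) = f(2)$ exactly, so it suffices to show $g(0)\ge 1$ under the nonvanishing hypotheses, and this falls out of the same adjunction chain by tracking which summand is nonzero (as in the proof of Lemma~\ref{LEM:Ts1}: the $r=0$ term is $\End^0(\E_j^{(2)}\1_\mu)\cong\k$ whenever $\E_j^{(2)}\1_\mu\ne 0$, while the remaining term vanishes by the inductively-known (\ref{eq:4})). No reference to $T_{ij}$ or to rank computations is needed or allowed at this point in the development.
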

\begin{proof}
The proof is similar to that of Lemma \ref{LEM:Ts1}. For example, suppose $\mu_i + \mu_j \le 0$ with $\mu_i \le 0$ (the case $\mu_i \ge 0$ is similar). Then by adjunction we have 
\begin{align*}
& \Hom(\E_i \E_j^{(2)} \1_\mu, \E_j^{(2)} \E_i \1_\mu \la d \ra) \\
\cong & \Hom(\E_j^{(2)} \1_\mu, (\E_i \1_{\l+2\alpha_j})_R \E_j^{(2)} \E_i \1_\mu \la d \ra) \\
\cong & \Hom(\E_j^{(2)} \1_\mu, \E_j^{(2)} \F_i \E_i \1_\mu \la d+\mu_i-1 \ra) \\
\cong & \Hom(\E_j^{(2)} \1_\mu, \E_j^{(2)} \E_i \F_i \1_\mu) \bigoplus_{r=0}^{-\mu_i-1} \Hom(\E_j^{(2)} \1_\mu, \E_j^{(2)} \1_\mu \la d-2r-2 \ra) \\
\cong & \Hom(\E_j^{(2)} \E_i \1_{\mu-\alpha_i} \la -\mu_i+1 \ra, \E_j^{(2)} \E_i \1_{\mu-\alpha_i}) \bigoplus_{r=0}^{-\mu_i-1} \Hom(\E_j^{(2)} \1_\mu, \E_j^{(2)} \1_\mu \la d-2r-2 \ra).
\end{align*}
The terms in the direct sum on the right side are all zero by Lemma \ref{LEM:homEs}, unless $r=0$ and $d=2$ in which case the term is one dimensional. The left hand term is zero by (\ref{eq:4}) with $\l = \mu-\alpha_i$. Thus (\ref{eq:3}) when $\l=\mu$ follows from (\ref{eq:4}) when $\l = \mu-\alpha_i$. Likewise one can show that (\ref{eq:4}) follows from (\ref{eq:3}) as in the proof of Lemma \ref{LEM:Ts1}. 
\end{proof}

\begin{Lemma}\label{LEM:2A}
Suppose $i,j \in I$ with $\la i,j \ra = -1$. Then assuming $\E_i^{(2)} \E_j \1_\l, \E_j \E_i^{(2)} \1_\l$ and $\E_i \E_j \E_i \1_\l$ are nonzero, the spaces
\begin{align*}
\Hom^d(\E_i^{(2)} \E_j \1_\l, \E_i \E_j \E_i \1_\l) \ \ \text{ and } \ \ \Hom^d(\E_j \E_i^{(2)} \1_\l, \E_i \E_j \E_i \1_\l) \\
\Hom^d(\E_i \E_j \E_i \1_\l, \E_i^{(2)} \E_j \1_\l) \ \ \text{ and } \ \ \Hom^d(\E_i \E_j \E_i \1_\l, \E_j \E_i^{(2)} \1_\l)
\end{align*}
are all zero if $d < 0$ and one-dimensional if $d=0$. 
\end{Lemma}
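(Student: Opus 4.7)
The plan is to establish the first of the four assertions, namely $\dim \Hom^d(\E_i^{(2)} \E_j \1_\l, \E_i \E_j \E_i \1_\l) \le 1$ with equality at $d = 0$; the remaining three will follow from entirely analogous arguments, by (i) adjoining on a different factor (leftmost vs.\ rightmost $\E_i$, source vs.\ target) and (ii) invoking Lemma \ref{LEM:temp} with the roles of $i$ and $j$ swapped, which is permitted because $\la i,j\ra = -1$ is symmetric. All four reduce to the same template: adjoin, commute $\E_j \F_i \cong \F_i \E_j$, split using the commutator isomorphism from condition (\ref{co:EF}), and read off the contribution using the earlier lemmas.

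For the upper bound, I would first apply right adjunction on the trailing $\E_i$ of the target to obtain
\[
\Hom^d(\E_i^{(2)} \E_j \1_\l, \E_i \E_j \E_i \1_\l) \cong \Hom^{d-\l_i-1}(\E_i^{(2)} \E_j \1_\l \F_i,\, \E_i \E_j \1_{\l+\alpha_i}).
\]
Next, I would use $\E_j \F_i \cong \F_i \E_j$ from (\ref{co:EiFj}) on the source, then apply the divided-power commutation
\[
\E_i^{(2)} \F_i \1_\mu \cong \F_i \E_i^{(2)} \1_\mu \,\oplus\, \oplus_{[\mu_i+1]} \E_i \1_\mu
\]
(obtained from (\ref{co:EF}) and $\E_i \E_i \cong \oplus_{[2]} \E_i^{(2)}$ by the cancellation law, valid for $\mu_i \ge -1$) at $\mu = \l+\alpha_i+\alpha_j$, so that the source splits as $\F_i \E_i^{(2)} \E_j \1_{\l+\alpha_i} \,\oplus\, \oplus_{[\l_i+2]} \E_i \E_j \1_{\l+\alpha_i}$. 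The second summand contributes $\oplus_{r=0}^{\l_i+1} \Hom^{d-2\l_i-2+2r}(\E_i\E_j, \E_i\E_j)$, which by Lemma \ref{LEM:Ts1} has total dimension one in degree $d=0$ (coming from $r = \l_i+1$) and zero in negative degrees. The first summand, after a second adjunction moving $\F_i$ across, becomes a hom-space into $\E_i \E_i \E_j$; expanding $\E_i \E_i \cong \oplus_{[2]} \E_i^{(2)}$ then reduces it to endomorphisms of $\E_i^{(2)} \E_j$, which are controlled by Lemma \ref{LEM:temp} (with $i$ and $j$ swapped) and vanish in the relevant degrees.

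The lower bound at $d = 0$ is witnessed by the explicit nonzero degree-zero morphism
\[
\E_i^{(2)} \E_j \1_\l \hookrightarrow \E_i \E_i \E_j \1_\l \la -1\ra \xrightarrow{I T_{ij}} \E_i \E_j \E_i \1_\l,
\]
which does not vanish because $T_{ij}$ is nonzero between the nonzero weight spaces assumed in the hypothesis (Lemma \ref{LEM:Ts1}).

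The main obstacle will be the boundary values $\l_i \in \{-2,-1\}$, where $[\mu_i+1]$ degenerates and the stated decomposition of $\E_i^{(2)} \F_i$ must be replaced by its symmetric counterpart $\F_i \E_i^{(2)} \1_\mu \cong \E_i^{(2)} \F_i \1_\mu \oplus_{[-\mu_i-1]} \E_i \1_\mu$ (valid for $\mu_i \le -1$). In this range I would instead begin with \emph{left} adjunction on the leading $\E_i$ of the target, which produces a symmetric computation; joining the two ranges covers all weights. A secondary nuisance is carefully tracking the accumulated grading shifts across two rounds of adjunction so that the quantum-integer multiplicities ultimately align with the claimed one-dimensional contribution in degree zero.
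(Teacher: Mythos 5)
Your template (adjoin one $\E_i$, commute $\E_j$ with $\F_i$, split with the divided-power commutation, cite Lemmas \ref{LEM:Ts1} and \ref{LEM:temp}) is the right one, and your treatment of the second (split-off) summand is correct. However, the treatment of the first summand contains a real gap that concerns the \emph{degrees}, not just the boundary weights.

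When you adjoin the \emph{trailing} $\E_i \1_\l$ of the target via right adjunction, the $\F_i$ lands on the right of the source with a shift $\la \l_i+1 \ra$; after commuting past $\E_j$ and applying $\E_i^{(2)} \F_i \1_\mu \cong \F_i \E_i^{(2)} \1_\mu \oplus_{[\mu_i+1]} \E_i \1_\mu$ at $\mu = \l+\alpha_i+\alpha_j$, the residual $\F_i$ in the first summand sits on the \emph{left} of the source, at weight $\mu+2\alpha_i \to \mu+\alpha_i$. Adjoining that $\F_i$ therefore puts an $\E_i$ on the \emph{left} of the target, and if you track the shifts you obtain
\[
\Hom^{\,d+3}\bigl(\E_i^{(2)} \E_j \1_{\l+\alpha_i},\ \E_i \E_i \E_j \1_{\l+\alpha_i}\bigr)
\ \cong\ \End^{\,d+4}(\E_i^{(2)} \E_j) \oplus \End^{\,d+2}(\E_i^{(2)} \E_j),
\]
not a space in degree $d-3$ or similar. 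For $d=0$ these are $\End^4$ and $\End^2$, which Lemma \ref{LEM:temp} does \emph{not} kill: that lemma only says $\End^m(\E_j^{(2)}\E_i)$ vanishes for $m<0$, and positive-degree endomorphisms of $\E_i^{(2)} \E_j$ are generically nonzero (think of $X_j$ or $X_i$-type dots). So the claim ``controlled by Lemma \ref{LEM:temp} and vanish in the relevant degrees'' is false as stated; the argument does not give the upper bound $\le 1$ at $d=0$. The failure is already visible at $\l_i=-2$: there the $\oplus_{[\l_i+2]}$ piece is empty and your argument would output $0$, contradicting the lemma.

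What the paper does differently — and what makes the argument close — is to adjoin the \emph{leading} $\E_i$ of the target (for $\l_i \le 0$) rather than the trailing one. With that choice the residual $\F_i$ ends up on the opposite side of $\E_i^{(2)}$, and after adjoining again the $\E_i$'s land on the far side of $\E_j$, producing the \emph{crossed} term
$\Hom^{\,d+2\l_i+2}(\E_i^{(2)} \E_j, \E_j \E_i^{(2)}) \oplus \Hom^{\,d+2\l_i}(\E_i^{(2)} \E_j, \E_j \E_i^{(2)})$.
These $\Hom$-spaces vanish below degree $2$ by a Lemma-\ref{LEM:temp}-type statement, and for $\l_i \le 0$, $d \le 0$ the degrees are $\le 2$ and $\le 0$; the remaining $\l_i=0$, $d=0$ case is handled directly. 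In short: the geometry of which $\E_i$ you adjoin determines whether the leftover term is a (controllable) map $\E_i^{(2)} \E_j \to \E_j \E_i^{(2)}$ or an (uncontrollable) endomorphism of $\E_i^{(2)} \E_j$, and your choice lands you on the wrong side of this dichotomy. You would need to redo the computation using the leading $\E_i$ for the appropriate range of $\l_i$ and confirm the degree $2\l_i+1$ is what appears.
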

\begin{proof}
This follows by the usual adjunction formalism. We will always assume $d \le 0$. First, if $\l_i \le 0$ then we have 
\begin{align*}
& \Hom^d(\E_i^{(2)} \E_j \1_\l, \E_i \E_j \E_i \1_\l) \\
\cong & \Hom^d((\E_i \1_{\l+\alpha_j+\alpha_i})_L \E_i^{(2)} \E_j, \E_j \E_i \1_\l) \\
\cong & \Hom^d(\F_i \E_i^{(2)} \E_j \1_\l \la -\l_i-2 \ra, \E_j \E_i \1_\l) \\
\cong & \Hom^d(\E_i^{(2)} \E_j \F_i \1_\l, \E_j \E_i \1_\l \la \l_i + 2 \ra) \bigoplus_{[-\l_i]} \Hom^d(\E_i \E_j \1_\l, \E_j \E_i \1_\l \la \l_i+2 \ra) \\
\cong & \Hom^d(\E_i^{(2)} \E_j, \E_j \E_i \E_i \la 2 \l_i + 1 \ra) \bigoplus_{r=0}^{-\l_i-1} \Hom^d(\E_i \E_j \1_\l, \E_j \E_i \1_\l \la 1-2r \ra).
\end{align*}
By Lemma \ref{LEM:Ts1} all the terms in the sum on the right are zero except when $d=0$ and $r=0$ where the space is one-dimensional. Moreover, by Lemma \ref{LEM:temp}, the left term is zero if $\l_i < 0$ or $d<0$. The case $\l_i=0$ is special since the sum on the right disappears. But now the left hand term is equal to 
$$\Hom^d(\E_i^{(2)} \E_j \1_\l, \E_j \E_i^{(2)} \1_\l \la 2 \ra \oplus \E_j \E_i^{(2)} \1_\l) \cong \begin{cases} \k & \text{ if } d = 0 \\ 0 & \text{ if } d < 0 \end{cases}.$$
Thus, in both cases we get $\dim \Hom^d(\E_i^{(2)} \E_j \1_\l, \E_i \E_j \E_i \1_\l) = \begin{cases} 1 & \text{ if } d = 0 \\ 0 & \text{ if } d < 0. \end{cases}$ 

If $\l_i \ge 0$ the argument is similar except that the first step is 
$$\Hom^d(\E_i^{(2)} \E_j \1_\l, \E_i \E_j \E_i \1_\l) \cong \Hom^d(\E_i^{(2)} \E_j (\E_i \1_\l)_R, \E_i \E_j \1_{\l+\alpha_i}).$$
The other three $\Hom$-space calculations are the same. 
\end{proof}

\begin{Lemma}\label{LEM:3}
If $i,j \in I$ with $\la i,j \ra = -1$ then  
$$\dim \End(\E_i \E_j \E_i \1_\l) \le \dim \End(\E_i^{(2)} \E_j \1_\l) + \dim \End(\E_j \E_i^{(2)} \1_\l).$$ 
\end{Lemma}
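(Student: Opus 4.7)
The proof will be an iterated adjunction computation in the style of Lemmas \ref{LEM:homEs}--\ref{LEM:homEEEs} and \ref{LEM:Ts1}--\ref{LEM:temp}. My first move would be to apply adjunction on the middle $\E_j$:
$$\End(\E_i \E_j \E_i \1_\l) \cong \Hom\bigl(\E_i \E_i \1_\l, \F_j \E_i \E_j \E_i \1_\l \la -\l_j \ra\bigr) \cong \Hom\bigl(\E_i \E_i \1_\l, \E_i \F_j \E_j \E_i \1_\l \la -\l_j \ra\bigr),$$
using $\F_j \E_i \cong \E_i \F_j$. Then I would simplify the inner $\F_j \E_j \1_{\l+\alpha_i}$ factor by applying the $\E\F$-commutation at weight $\l+\alpha_i$ (where the $j$-coordinate is $\l_j - 1$); depending on the sign of $\l_j - 1$, this splits off $\E_j \F_j \1_{\l+\alpha_i}$ (or dually $\F_j \E_j \1_{\l+\alpha_i}$) plus a multiplicity of $\1_{\l+\alpha_i}$.

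Commuting further via $\F_j \E_i \cong \E_i \F_j$, the main piece becomes of the form $\Hom(\E_i \E_i \1_\l, \E_i \E_j \E_i \F_j \1_\l \la d\ra)$, and moving the outer $\F_j$ to the source side via adjunction yields $\Hom(\E_i \E_i \E_j \1_\l, \E_i \E_j \E_i \1_\l \la d' \ra)$. Using the decomposition $\E_i \E_i \cong \E_i^{(2)}\la 1\ra \oplus \E_i^{(2)}\la -1\ra$ on the source splits this into terms of the form $\Hom(\E_i^{(2)} \E_j \1_\l, \E_i \E_j \E_i \1_\l \la d''\ra)$ for $d'' = d'\pm 1$. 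By Lemma \ref{LEM:2A}, only the degree-$0$ term contributes, giving exactly $\dim \End(\E_i^{(2)} \E_j \1_\l)$ after composition with the natural projection $\E_i \E_j \E_i \twoheadrightarrow \E_i^{(2)} \E_j$. A symmetric computation starting by adjoining the other outer $\E_i$ and decomposing on the right produces the $\dim \End(\E_j \E_i^{(2)} \1_\l)$ contribution.

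The correction summands coming from the initial $\F_j \E_j$-splitting contribute pieces of the form $\oplus \End^d(\E_i \E_i \1_\l)$ and $\oplus \Hom^d(\E_i \E_i, \E_i \E_j \E_i)$ for various shifts $d$. These must be shown to vanish or be absorbed into the two main contributions using Lemma \ref{LEM:homEEs} and positive-degree analysis. The main obstacle will be the careful grading bookkeeping and case analysis on the signs of $\l_i$ and $\l_j$ --- especially because higher-degree $\Hom$-spaces such as $\Hom^d(\E_i \E_j, \E_j \E_i)$ for $d \ge 2$ are not directly bounded by Lemma \ref{LEM:Ts1}, and require a further round of adjunction to reduce them to the already-bounded degree-$0$ spaces. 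Verifying in each case that the extra contributions do not exceed $\dim \End(\E_i^{(2)} \E_j \1_\l) + \dim \End(\E_j \E_i^{(2)} \1_\l)$ is the delicate part of the argument.
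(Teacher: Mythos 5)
Your first step is not a valid adjunction, and this sinks the argument. Adjunction in a 2-category lets you move a 1-morphism from the \emph{outer} end of a horizontal composite across the $\Hom$: for example $\Hom(\E_i \E_j \E_i \1_\l,\ \E_i \E_j \E_i \1_\l) \cong \Hom(\E_j \E_i \1_\l,\ (\E_i \1_{\l+\alpha_i+\alpha_j})_R \,\E_i \E_j \E_i \1_\l)$, where you peel off the outermost $\E_i$. But the $\E_j$ in $\E_i\E_j\E_i$ is a \emph{middle} factor, sandwiched between two $\E_i$'s, and there is no $\Hom$-tensor adjunction that pulls out a middle factor. Your claimed isomorphism
$$\End(\E_i\E_j\E_i\1_\l) \cong \Hom(\E_i\E_i\1_\l,\ \F_j\E_i\E_j\E_i\1_\l\la-\l_j\ra)$$
compares $2$-morphism spaces between $1$-morphisms with different targets ($\l+2\alpha_i+\alpha_j$ on the left, $\l+2\alpha_i$ on the right after applying $\F_j$), and there is no natural isomorphism between them — the weights happen to line up, but that is not an adjunction. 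Everything downstream in your sketch (commuting $\F_j$ past $\E_i$, splitting $\F_j\E_j$, folding into $\E_i^{(2)}$, invoking Lemma \ref{LEM:2A}) is built on this nonexistent isomorphism.

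The correct route, as the paper takes, is to adjoint away an outer $\E_i$ first (the case analysis then runs on the sign of $\l_i$, not $\l_j$). Concretely, for $\l_i < -1$ one writes $\End(\E_i\E_j\E_i\1_\l) \cong \Hom(\F_i\E_i\1_{\l+\alpha_i+\alpha_j}\E_j\E_i\la-\l_i-2\ra,\ \E_j\E_i\1_\l)$, commutes the $\F_i$ across, and finds the residual term is $\Hom(\E_i\E_j\E_i\1_{\l-\alpha_i},\ \E_j\E_i\E_i\1_{\l-\alpha_i}\la 2\l_i+1\ra)$, killed by Lemma \ref{LEM:2A}, plus direct sums of $\Hom(\E_j\E_i\1_\l,\E_j\E_i\1_\l\la -2r\ra)$ and $\Hom(\E_i\E_j\1_\l,\E_j\E_i\1_\l\la 1-2r\ra)$, each contributing at most $1$ by Lemma \ref{LEM:Ts1}. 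This gives $\dim \le 2$ without ever needing to move the middle $\E_j$. Your proposal, as written, cannot be repaired without abandoning the first step and starting from an outer factor; once you do that, you essentially reproduce the paper's argument.
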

\begin{proof}
{\bf Part 1.} Let us first assume $\E_i^{(2)} \E_j \1_\l$ and $\E_j \E_i^{(2)} \1_\l$ are both nonzero. By Lemma \ref{LEM:temp} we need to show that $\dim \End(\E_i \E_j \E_i \1_\l) \le 2$. 

Case 1: $\l_i < -1$. By adjunction we have  
\begin{align}
\nonumber \End(\E_i \E_j \E_i \1_\l) 
\cong & \Hom((\E_i \1_{\l+\alpha_i+\alpha_j})_L \E_i \E_j \E_i, \E_j \E_i \1_\l) \\
\nonumber \cong & \Hom(\F_i \E_i \1_{\l+\alpha_i+\alpha_j} \E_j \E_i \la -\l_i-2 \ra, \E_j \E_i \1_\l) \\
\label{eq:temp16} \cong & \Hom(\E_i \E_j \F_i \E_i \1_\l, \E_j \E_i \1_\l \la \l_i+2 \ra) \bigoplus_{[-\l_i-1]} \Hom(\E_j \E_i \1_\l, \E_j \E_i \1_\l \la \l_i+2 \ra).
\end{align}
Then the right hand sum is equal to $\bigoplus_{r=0}^{-\l_i-2} \Hom(\E_j \E_i \1_\l, \E_j \E_i \1_\l \la -2r \ra)$. By Lemma \ref{LEM:Ts1}, all these terms are zero except when $r=0$ where it is at most one-dimensional. 

On the other hand, the left hand term in (\ref{eq:temp16}) equals 
\begin{align}
\nonumber & \Hom(\E_i \E_j \E_i (\1_{\l-\alpha_i} \F_i), \E_j \E_i \1_\l \la \l_i+2 \ra) \bigoplus_{[-\l_i]} \Hom(\E_i \E_j \1_\l, \E_j \E_i \1_\l \la \l_i+2 \ra) \\
\label{eq:temp17} \cong& \Hom(\E_i \E_j \E_i \1_{\l - \alpha_i}, \E_j \E_i \E_i \1_{\l-\alpha_i} \la 2 \l_i+1 \ra) \bigoplus_{r=0}^{-\l_i-1} \Hom(\E_i \E_j \1_\l, \E_j \E_i \1_\l \la 1 - 2r \ra).
\end{align}
Then by Lemma \ref{LEM:2A} the left hand term is zero while, by Lemma \ref{LEM:Ts1}, all the terms in the right hand sum are zero except when $r=0$ when it is at most one-dimensional. Thus $\dim \End(\E_i \E_j \E_i \1_\l) \le 2$.

Case 2: $\l_i=-1$. This is a special case of the argument above. The right hand sum in (\ref{eq:temp16}) disappears but now the left hand term in (\ref{eq:temp17}) is
$$\dim \Hom(\E_i \E_j \E_i \1_\l, \E_j \E_i^{(2)} \1_\l \la -2 \ra \oplus \E_j \E_i^{(2)} \1_\l) \le 1 $$
where we use Lemmas \ref{LEM:temp} and \ref{LEM:2A} to get this isomorphism. So once again $\dim \End(\E_i \E_j \E_i \1_\l) \le 2$. 

Case 3: $\l_i > -1$. The argument is the same as in Case 1 except that the first step is
$$\End(\E_i\E_j\E_i \1_\l) \cong \Hom(\E_i \E_j, \E_i \E_j \E_i (\E_i \1_\l)_L \1_{\l+\alpha_i}).$$

{\bf Part 2.} It remains to consider the situation when $\E_i^{(2)} \E_j \1_\l$ or $\E_j \E_i^{(2)} \1_\l$ are zero. Suppose $\E_i^{(2)} \E_j \1_\l = 0$ and $\E_j \E_i^{(2)} \1_\l \ne 0$ with $\l_i < -1$ (the other cases are similar). Then the second term in (\ref{eq:temp16}) still contributes at most one to the dimension. On the other hand, the second term in (\ref{eq:temp17}) is now entirely zero because $\E_i \E_j \1_\l = 0$. To see that $\E_i \E_j \1_\l = 0$ we use that $\E_i^{(2)} \E_j \1_\l = 0$ which means
$$0 = \F_i \E_i \E_i \E_j \1_\l \cong \E_i \F_i \E_i \E_j \1_\l \bigoplus_{[-\l_i-1]} \E_i \E_j \1_\l.$$
Thus $\dim \End(\E_i \E_j \E_i \1_\l) \le 1$ which is what we wanted to show.
\end{proof}

\begin{Lemma}\label{LEM:homijk}
Let $i,j,k \in I$ be distinct. Then 
$$\dim \Hom(\E_i \E_j \E_k \1_\l, \E_k \E_j \E_i \1_\l \la d \ra) \le  \begin{cases} 1 & \text{ if } d = - \ell_{ijk} \\ 0 & \text{ if } d < - \ell_{ijk} \end{cases}.$$
If $d = -\ell_{ijk}$ then equality holds if and only if $\1_{\l+\eps_i \alpha_i + \eps_j \alpha_j + \eps_k \alpha_k} \ne 0$ for $\eps_i, \eps_j, \eps_k \in \{0,1\}$. 
\end{Lemma}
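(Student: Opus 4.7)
\medskip

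\noindent\textbf{Proof proposal.}
The plan is to mimic closely the inductive Hom--space calculation of Lemma \ref{LEM:Ts1}, extended to three indices and with the base case upgraded from condition (\ref{co:vanish1}) to condition (\ref{co:vanish2}). I would induct on the quantity $N(\l) := \la\l,\alpha_i+\alpha_j+\alpha_k\ra$ (running in both directions away from a ``middle''), the base case being handled by the vanishing supplied by (\ref{co:vanish1}) together with (\ref{co:vanish2}) -- this last is only needed in the subcase where $i,j,k$ form a triangle in $\Gamma$, which is exactly where the bound $\ell_{ijk}=-6$ is tightest and where the argument is most delicate. As in Lemma \ref{LEM:Ts1}, I would actually run the induction simultaneously with the auxiliary bound $\dim\Hom^{d}(\E_i\E_j\E_k\1_\l,\E_i\E_j\E_k\1_\l)\le 1$ for $d\le 0$ (with equality only at $d=0$), so that adjunction on one side produces a Hom--space that is controlled by the other bound at a shifted weight.

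\smallskip

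\noindent\emph{The inductive step.} Given the target Hom--space, the first move is to strip $\E_k$ off the outer right factor of the codomain by right adjunction (condition (\ref{co:adj})), turning it into a factor $\F_k$ acting on the domain. Using $\F_k\E_i\cong\E_i\F_k$ and $\F_k\E_j\cong\E_j\F_k$ (condition (\ref{co:EiFj})) one slides the $\F_k$ all the way to the right, where it meets $\E_k\1_\l$; the commutation relation (\ref{co:EF}) then splits this as $\E_k\F_k\1_\l$ (the ``main'' term) plus $\bigoplus_{[|\l_k|]}\1_\l$ (the ``correction'' terms). A second application of adjunction turns the main term back into a Hom of the form $\Hom(\E_i\E_j\E_k\1_{\l'},\E_k\E_j\E_i\1_{\l'}\la d'\ra)$ at a weight $\l'=\l\pm\alpha_k$ with $d'<d$; this is absorbed by the inductive hypothesis. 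The correction terms split as direct sums of $\Hom(\E_i\E_j\1_\mu,\E_j\E_i\1_\mu\la d''\ra)$, each bounded by Lemma \ref{LEM:Ts1} or Lemma \ref{LEM:Ts2} and contributing at most once. Careful bookkeeping of the shifts (using $(\E_k\1_\l)_R\cong\1_\l\F_k\la\l_k+1\ra$) shows that a correction term contributes exactly when $d=-\ell_{ijk}$, pinning the total dimension at $\le 1$ in that degree and $=0$ below.

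\smallskip

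\noindent\emph{The iff at $d=-\ell_{ijk}$.} One direction is immediate: if any of the eight weights $\l+\eps_i\alpha_i+\eps_j\alpha_j+\eps_k\alpha_k$ vanishes, then one of the two 1--morphisms in the Hom is zero. For the converse, assuming all eight weights are nonzero, I would exhibit a nonzero element explicitly as an iterated adjunction/commutation composite in the spirit of Corollary \ref{COR:nonzero}, built from the one--dimensional spaces of Lemmas \ref{LEM:Ts1} and \ref{LEM:Ts2} (applied to pairs $\{i,j\},\{i,k\},\{j,k\}$) and traced through the adjunctions above; the nonvanishing of each factor follows from Lemma \ref{LEM:nonvan} applied to the relevant two--index sub-configurations. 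This avoids any circularity with Proposition \ref{PROP:homijk}, which in fact \emph{uses} the present lemma.

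\smallskip

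\noindent\emph{Main obstacle.} The hard part will be the triangle case $\la i,j\ra=\la i,k\ra=\la j,k\ra=-1$ with $\ell_{ijk}=-6$. Here the straightforward induction on $N(\l)$ has no obvious terminating base since $N$ can a priori be negative, but condition (\ref{co:vanish2}) provides exactly what is needed: it forces $\1_\l=0$ once $\la\l,\alpha_i+\alpha_j+\alpha_k\ra\le 0$ in this case, starting the induction. The other delicate point is that the adjunction manipulations produce a proliferation of sub-cases according to the signs of $\l_i,\l_j,\l_k$ and which pairs of $i,j,k$ are joined by edges, so one must verify in each sub-case that the correction terms never contribute more than once and that the grading shift in the main term really drops by $2$ at each step. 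Everything else is routine bookkeeping.
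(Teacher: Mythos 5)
Your overall plan is in the right spirit -- repeated adjunction and an inductive Hom bound, with the vanishing condition (\ref{co:vanish2}) supplying the base case in the triangle case -- but there is a genuine gap in the inductive mechanism you describe, and it is precisely at the point you yourself flag as the hard part.

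You propose to strip a single $\E_k$, slide the resulting $\F_k$ across $\E_i\E_j$, split off the $\E_k\F_k$ tautological summands, and re-adjoin the main term to get $\Hom(\E_i\E_j\E_k\1_{\l'},\E_k\E_j\E_i\1_{\l'}\langle d'\rangle)$ with $\l'=\l\pm\alpha_k$ and $d'<d$. Neither part of this claim survives inspection in the triangle case $\la i,j\ra=\la i,k\ra=\la j,k\ra=-1$. First, $N(\l)=\la\l,\alpha_i+\alpha_j+\alpha_k\ra$ is \emph{unchanged} by $\l\mapsto\l\pm\alpha_k$, since $\la\alpha_k,\alpha_i+\alpha_j+\alpha_k\ra=0$ there; so an induction keyed to $N$ does not advance. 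Second, if you carry out the adjunction you describe, the re-adjoined main term is not of the form $\Hom(\E_i\E_j\E_k\1_{\l'},\E_k\E_j\E_i\1_{\l'})$: the outer $\E_k$ ends up on the same side of both the domain and the codomain (you get something like $\Hom(\E_k\E_i\E_j\1_{\l'},\E_k\E_j\E_i\1_{\l'})$, or $\Hom(\E_i\E_j\E_k\1_{\l'},\E_j\E_i\E_k\1_{\l'})$, depending on which $\E_k$ you strip), a Hom whose relative permutation is a transposition rather than the full reversal. That is not a space your stated inductive hypothesis (nor your auxiliary $\End$ bound) controls, so the recursion does not close. The paper's proof avoids exactly this by adjoining all three $\E$'s in the domain at once, so the main term returns to the \emph{same} shape $\Hom(\E_i\E_j\E_k\1_\mu,\E_k\E_j\E_i\1_\mu\langle-2\ell+3\rangle)$ at the shifted weight $\mu=\l+\alpha_i+\alpha_j+\alpha_k$ with a strictly smaller degree (using $\ell=\l_i+\l_j+\l_k>0$ from (\ref{co:vanish2})(b)); termination then follows from (\ref{co:vanish2})(a), i.e. $\1_{\l+r(\alpha_i+\alpha_j+\alpha_k)}=0$ for $r\gg0$.

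Two smaller points. In the triangle case $\ell_{ijk}=\la i,j\ra+\la i,k\ra+\la j,k\ra=-3$, not $-6$; your value suggests a slip that can make the degree bookkeeping go wrong. And for the equality direction at $d=-\ell_{ijk}$, you propose to build a nonzero element ``in the spirit of Corollary \ref{COR:nonzero}''; but that corollary lives in the main body of the paper (and itself relies on Lemma \ref{LEM:fork}, the maps $T_{ij},u_{ij},v_{ij}$, etc.), whereas Lemma \ref{LEM:homijk} is an appendix lemma that is supposed to be a pure Hom-space computation independent of the main body. The paper handles both directions of the iff internally to the adjunction calculation (the correction terms contribute exactly $1$ under the stated nonvanishing hypotheses, and the nonvanishing of the Hom forces the nonvanishing of each intermediate weight, using condition (\ref{co:new}) for the two weights $\1_{\l+\alpha_j}$, $\1_{\l+\alpha_i+\alpha_k}$ that are not forced directly).
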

\begin{proof}
This computation depends on whether $i,j,k \in I$ are joined by an edge. The most difficult case is when they are all joined by an edge, meaning $\la i,j \ra = \la i,k \ra = \la j,k \ra = -1$ (this is because in this case $\E_i,\E_j$ and $\E_k$ do not commute among each other). We will only deal with this case as the general case is essentially the same (and in fact a bit easier). 

By condition (\ref{co:vanish2}) we have $\ell := \l_i+\l_j+\l_k > 0$. There are several cases to consider depending on whether $\l_i,\l_j,\l_k$ are positive or negative. 

{\bf Case 1:} $(\l_i,\l_j,\l_k) = (+,+,-)$ (meaning that $\l_i,\l_j \ge 0$ and $\l_k \le 0$). First, we have 
\begin{align*}
\Hom(\E_i \E_j \E_k \1_\l, \E_k \E_j \E_i \1_\l \la 3 \ra)
&\cong \Hom(\1_\mu, \E_k \E_j \E_i \1_\l (\E_k \1_\l)_L (\E_j \1_{\l+\alpha_k})_L (\E_i \1_{\alpha_j+\alpha_k})_L \la 3 \ra) \\
&\cong \Hom(\1_\mu, \E_k \E_j \E_i \F_k \F_j \F_i \1_\mu \la -\ell + 3 \ra).
\end{align*}
where $\mu := \l+\alpha_i+\alpha_j+\alpha_k$. 
Next, we can simplify the right hand term as follows 
\begin{align*}
\E_k \E_j \E_i \F_k \F_j \F_i \1_\mu
&\cong \E_k \F_k \E_j \F_j \E_i \F_i \1_\mu \\
&\cong \E_k \F_k \F_j \F_i \E_j \E_i \1_\mu \bigoplus_{[\l_i]} \E_k \F_k \F_j \E_j \1_\mu \bigoplus_{[\l_j]} \E_k \F_k \F_i \E_i \1_\mu \bigoplus_{[\l_i][\l_j]} \E_k \F_k \1_\mu.
\end{align*}
Now 
\begin{align*}
\Hom(\1_\mu, \E_k \F_k \F_j \E_j \1_\mu \la -\ell+3 \ra) 
&\cong \Hom((\E_k \1_{\mu-\alpha_k})_L (\E_j \1_\mu)_R, \F_k \F_j \1_{\mu+\alpha_j} \la -\ell+3 \ra) \\
&\cong \Hom(\F_k \F_j \1_{\mu+\alpha_j}, \F_k \F_j \1_{\mu+\alpha_j} \la -\l_i - 2\l_j + 1 \ra)
\end{align*}
so by Lemmas \ref{LEM:nonvan} and \ref{LEM:Ts1} we have 
\begin{align}
\nonumber \dim \Hom(\1_\mu, \bigoplus_{[\l_i]} \E_k \F_k \F_j \E_j \1_\mu \la -\ell+3 \ra) =& \sum_{d=0}^{\l_i-1} \dim \Hom(\F_k \F_j \1_{\mu+\alpha_j}, \F_k \F_j \1_{\mu+\alpha_j} \la -2\l_j - 2d \ra)  \\
\label{eq:P1} =& \begin{cases} 1 & \text{ if } \l_j = 0 \text{ and } \1_{\mu+\alpha_j}, \1_\mu, \1_{\mu-\alpha_k} \text{ are nonzero }  \\ 0 & \text{ if } \l_j > 0 \text{ or } \l_i = 0. \end{cases}
\end{align}
Notice that if $\l_j = 0$ then $\mu_j=0$ so $\1_{\mu+\alpha_j} \ne 0 \Leftrightarrow \1_{\mu-\alpha_j} \ne 0$. Likewise one can show that 
\begin{equation}\label{eq:P2}
\dim \Hom(\1_\mu, \bigoplus_{[\l_j]} \E_k \F_k \F_i \E_i \1_\mu \la -\ell+3 \ra) =  \begin{cases} 1 & \text{ if } \l_i = 0 \text{ and } \1_{\mu-\alpha_i}, \1_\mu, \1_{\mu-\alpha_k} \text{ are nonzero } \\ 0 & \text{ if } \l_i > 0 \text{ or } \l_j = 0. \end{cases}
\end{equation}
and 
\begin{equation}\label{eq:P3}
\dim \Hom(\1_\mu, \bigoplus_{[\l_i][\l_j]} \E_k \F_k \1_\mu \la -\ell+3 \ra) = \begin{cases} 1 & \text{ if } \l_i, \l_j > 0 \text{ and } \1_\mu, \1_{\mu-\alpha_k} \text{ are nonzero } \\ 0 & \text{ if } \l_i=0 \text{ or } \l_j=0 \end{cases}.
\end{equation}
Finally, to compute $\Hom(\1_\mu, \E_k \F_k \F_j \F_i \E_j \E_i \1_\mu \la -\ell+3 \ra)$ we use that 
$$\F_k \E_k \F_j \F_i \E_j \E_i \1_\mu \cong \E_k \F_k \F_j \F_i \E_j \E_i \1_\mu \bigoplus_{[-\l_k]} \F_j \F_i \E_j \E_i \1_\mu.$$
Looking at the left hand side we have  
\begin{align*}
\Hom(\1_\mu, \F_k \E_k \F_j \F_i \E_j \E_i \1_\mu \la -\ell+3 \ra) 
&\cong \Hom(\1_\mu, \F_k \F_j \F_i \E_k \E_j \E_i \1_\mu \la -\ell+3 \ra) \\
&\cong \Hom(\E_i \E_j \E_k \1_\mu, \E_k \E_j \E_i \1_\mu \la -2 \ell + 3 \ra)
\end{align*}
which, by induction, is zero since (by condition (\ref{co:vanish2})) $\mu_i + \mu_j + \mu_k = \ell > 0$ and $\1_{\l+r(\alpha_i+\alpha_j+\alpha_k)} = 0$ for $r \gg 0$. This means that $\Hom(\1_\mu, \E_k \F_k \F_j \F_i \E_j \E_i \1_\mu \la -\ell+3 \ra) = 0$. 

Finally, (\ref{eq:P1}), (\ref{eq:P2}) and (\ref{eq:P3}) together end up contributing a total of $1$ to the dimension (note that there are several subcases to consider here depending on whether or not $\l_i = 0$ and $\l_j=0$). Thus $\dim \Hom(\E_i \E_j \E_k \1_\l, \E_k \E_j \E_i \1_\l \la 3 \ra) \le 1$ with equality holding if $\1_\mu, \1_{\mu-\alpha_i}, \1_{\mu-\alpha_j}, \1_{\mu-\alpha_k}$ are all nonzero. This is precisely what we needed to prove. 

{\bf Case 2:} $(\l_i,\l_j,\l_k) = (-,+,-)$. Arguing as above we have 
\begin{align*}
\Hom(\E_i \E_j \E_k \1_\l, \E_k \E_j \E_i \1_\l \la 3 \ra) 
&\cong \Hom(\1_\mu, \E_k \E_j \E_i \F_k \F_j \F_i \1_\mu \la -\ell + 3 \ra) \\
&\cong \Hom(\1_\mu, \E_k \F_k \F_j \E_j \E_i \F_i \1_\mu \la -\ell+3 \ra \bigoplus_{[\mu_j]} \E_k \F_k \E_i \F_i \1_\mu \la -\ell+3 \ra).
\end{align*}
On the one hand, by adjunction and Lemma \ref{LEM:Ts1} it follows that 
\begin{equation*}
\dim \Hom(\1_\mu, \bigoplus_{[\mu_j]} \E_k \F_k \E_i \F_i \1_\mu \la -\ell+3 \ra) = \begin{cases} 1 & \text{ if } \l_j > 0 \text{ and } \1_\mu, \1_{\mu-\alpha_i}, \1_{\mu-\alpha_i-\alpha_k} \text{ are nonzero } \\ 0 & \text{ if } \l_j = 0. \end{cases}
\end{equation*}
On the other hand, $\E_k \F_k \F_j \E_j \E_i \F_i \1_\mu$ is a direct summand of $\F_k \E_k \F_j \E_j \F_i \E_i \1_\mu$ and 
$$\Hom(\1_\mu, \F_k \E_k \F_j \E_j \F_i \E_i \1_\mu \la -\ell+3 \ra) \cong \Hom(\E_i \E_j \E_k \1_\mu, \E_k \E_j \E_i \1_\mu \la -2\ell+3 \ra).$$
By induction this is zero, which means that $\Hom(\1_\mu, \E_k \F_k \F_j \E_j \E_i \F_i \1_\mu \la -\ell+3 \ra)=0$. It follows that $\dim \Hom(\E_i \E_j \E_k \1_\l, \E_k \E_j \E_i \1_\l \la 3 \ra) \le 1$ with equality holding if $\1_\mu, \1_{\mu-\alpha_i}, \1_{\mu-\alpha_i-\alpha_k}$ are nonzero. This is again precisely what we wanted to prove. 

{\bf Case 3:} $(\l_i,\l_j,\l_k) = (+,-,+)$. First we have 
$$\E_k \E_j \E_i \F_k \F_j \F_i \1_\mu \cong \F_k \E_k \E_j \F_j \F_i \E_i \1_\mu \bigoplus_{[\l_k]} \E_j \F_j \F_i \E_i \1_\mu \bigoplus_{[\l_i]} \F_k \E_k \E_j \F_j \1_\mu \bigoplus_{[\l_i][\l_k]} \E_j \F_j \1_\mu.$$
Then, arguing as in case 1, we get 
\begin{align*}
\nonumber
\dim \Hom(\1_\mu, \F_k \E_k \E_j \F_j \F_i \E_i \1_\mu \la -\ell+3 \ra &= 0 \\
\dim \Hom(\1_\mu, \bigoplus_{[\l_k]} \E_j \F_j \F_i \E_i \1_\mu \la -\ell+3 \ra) &= \begin{cases} 1 & \text{ if } \l_i = 0 \text{ and } \1_{\mu-\alpha_i}, \1_\mu, \1_{\mu-\alpha_k} \text{ are nonzero } \\ 0 & \text{ if } \l_i > 0 \text{ or } \l_k = 0 \end{cases} \\
\dim \Hom(\1_\mu, \bigoplus_{[\l_i]} \F_k \E_k \E_j \F_j \1_\mu \la -\ell+3 \ra) &= \begin{cases} 1 & \text{ if } \l_k = 0 \text{ and } \1_{\mu-\alpha_j}, \1_\mu, \1_{\mu-\alpha_k} \text{ are nonzero } \\ 0 & \text{ if } \l_k > 0 \text{ or } \l_i = 0 \end{cases} \\
\dim \Hom(\1_\mu, \bigoplus_{[\l_i][\l_k]} \E_j \F_j \1_\mu \la -\ell+3 \ra) &= \begin{cases} 1 & \text{ if } \l_i, \l_k > 0 \text{ and } \1_\mu, \1_{\mu-\alpha_j} \text{ are nonzero } \\ 0 & \text{ if } \l_i = 0 \text{ or } \l_k = 0. \end{cases} 
\end{align*}
Thus we get that 
$$\dim \Hom(\E_i\E_j\E_k \1_\l, \E_k \E_j \E_i \1_\l \la 3 \ra) = \dim \Hom(\1_\mu, \E_k \E_j \E_i \F_k \F_j \F_i \1_\mu \la -\ell+3 \ra) \le 1$$
with equality holding if $\1_\mu, \1_{\mu-\alpha_i}, \1_{\mu-\alpha_j}, \1_{\mu-\alpha_k}$ are nonzero. 

{\bf Other cases.} There are four other cases, namely $(\l_i,\l_j,\l_k)$ equal to $(-,+,+)$, $(+,-,-)$, $(-,-,+)$ and $(+,+,+)$. The first is a consequence of Case 1 by symmetry while the others follow using the same arguments as above. 

{\bf The converse.} Suppose $\Hom(\E_i \E_j \E_k \1_\l, \E_k \E_j \E_i \1_\l) \ne 0$. Then $\E_i \E_j \E_k \1_\l \ne 0$ which means $\1_\l, \1_{\l+\alpha_k}, \1_{\l+\alpha_j+\alpha_k}, \1_{\l+\alpha_i+\alpha_j+\alpha_k}$ are all nonzero. Similarly, $\E_k \E_j \E_i \1_\l \ne 0$ means that $\1_{\l+\alpha_i}, \1_{\l+\alpha_i+\alpha_j}$ are also nonzero. It remains to show that $\1_{\l+\alpha_j}$ and $\1_{\l+\alpha_i+\alpha_k}$ are nonzero. This follows from condition (\ref{co:new}). 
\end{proof}

\begin{Corollary}\label{cor:homijk}
Let $i,j,k \in I$ be distinct. Then 
$$\Hom(\F_k \E_i \E_j \1_\l, \E_j \E_i \F_k \1_\l \la d \ra) \le \begin{cases} 1 & \text{ if } d = -\la i,j \ra \\ 0 & \text{ if } d < - \la i,j \ra \end{cases}$$
and when $d = - \la i,j \ra$ equality holds if and only if $\1_{\l+\eps_i \alpha_i + \eps_j \alpha_j - \eps_k \alpha_k}$ for $\eps_i,\eps_j,\eps_k \in \{0,1\}$. 
\end{Corollary}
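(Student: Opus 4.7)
The plan is to reduce this Hom space to ones handled by Lemma \ref{LEM:homijk} (and Lemmas \ref{LEM:Ts1}, \ref{LEM:Ts2}) by using the adjoint pair $(\F_k, \E_k)$ to strip off the spectator $\F_k$'s on either side. First I will use condition (\ref{co:EiFj}) to commute the outer $\F_k$ in the source past $\E_i$ and $\E_j$, obtaining
$$\Hom(\F_k \E_i \E_j \1_\l, \E_j \E_i \F_k \1_\l \la d \ra) \cong \Hom(\E_i \E_j \1_\mu \F_k, \E_j \E_i \1_\mu \F_k \la d \ra),$$
where $\mu := \l - \alpha_k$. Both sides now end with a common $\F_k \1_\l$ in the innermost (rightmost) position.

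Next, applying the post-composition adjunction associated with $(\F_k \1_\l)_R = \E_k \1_\mu \la \l_k - 1 \ra$, I strip this rightmost $\F_k$ from the source, yielding
$$\Hom(\E_i \E_j \1_\mu, \E_j \E_i \F_k \E_k \1_\mu \la d + \mu_k + 1 \ra).$$
I then split $\F_k \E_k \1_\mu$ via condition (\ref{co:EF}). In the main case $\mu_k \le 0$, the isomorphism $\F_k \E_k \1_\mu \cong \E_k \F_k \1_\mu \oplus_{[-\mu_k]} \1_\mu$ separates the Hom into two contributions. The contribution from the $\oplus_{[-\mu_k]} \1_\mu$ summands reduces, after shift bookkeeping, to
$$\bigoplus_{r=0}^{-\mu_k - 1} \Hom(\E_i \E_j \1_\mu, \E_j \E_i \1_\mu \la d - 2r \ra),$$
which by Lemmas \ref{LEM:Ts1} (if $\la i,j \ra = -1$) or \ref{LEM:Ts2} (if $\la i,j \ra = 0$) has total dimension $\le 1$ at $d = -\la i,j \ra$ (accounted for entirely by the $r = 0$ term) and vanishes for $d < -\la i,j \ra$.

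For the residual $\Hom(\E_i \E_j \1_\mu, \E_j \E_i \E_k \F_k \1_\mu \la d + \mu_k + 1 \ra)$, I apply adjunction to the innermost $\F_k$ one more time and commute, eventually reducing (after swapping $\E_k$ across $\E_j \E_i$ if necessary) to a Hom space of the form in Lemma \ref{LEM:homijk} at weight $\mu - \alpha_k$; by careful shift tracking the resulting degree exceeds $-\ell_{ijk}$ whenever $d \le -\la i,j \ra$, so this piece contributes zero. The case $\mu_k \ge 0$ is symmetric, using instead $\E_k \F_k \1_\mu \cong \F_k \E_k \1_\mu \oplus_{[\mu_k]} \1_\mu$ after commuting $\F_k$ to the outermost position before stripping.

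The main obstacle is the shift bookkeeping: one must verify that the residual $\E_k \F_k$ piece really vanishes for $d \le -\la i,j \ra$ and does not compete with the leading $r = 0$ summand. Once this is in hand, the equality condition at $d = -\la i,j \ra$ is immediate: by Lemmas \ref{LEM:Ts1}/\ref{LEM:Ts2}, the $r = 0$ term is exactly one-dimensional precisely when both $\E_i \E_j \1_\mu$ and $\E_j \E_i \1_\mu$ are nonzero, which by Lemma \ref{LEM:nonvan} translates (after accounting for $\mu = \l - \alpha_k$) to the condition that $\1_{\l + \eps_i \alpha_i + \eps_j \alpha_j - \eps_k \alpha_k} \ne 0$ for all $\eps_i, \eps_j, \eps_k \in \{0,1\}$.
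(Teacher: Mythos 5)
Your proposal takes a genuinely different route from the paper, and while the leading summand is handled correctly, the residual term has a real gap. The paper moves \emph{both} copies of $\F_k$ in a single two-sided adjunction step: the outer $\F_k$ of the source is traded for an outer $\E_k$ on the target, and the inner $\F_k$ of the target is traded for an inner $\E_k$ on the source, producing at once
$$\Hom(\E_i\E_j\E_k\1_{\l-\alpha_k}, \E_k\E_j\E_i\1_{\l-\alpha_k}\la d - \la i,k\ra - \la j,k\ra\ra),$$
which is exactly the shape of Lemma \ref{LEM:homijk}. You instead commute $\F_k$ rightward and strip only the source's copy, which plants $\F_k\E_k\1_\mu$ in the middle of the target. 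Splitting via condition (\ref{co:EF}) correctly recovers the summands handled by Lemmas \ref{LEM:Ts1}/\ref{LEM:Ts2}, but the leftover $\E_k\F_k$ piece, after one more adjunction, becomes $\Hom(\E_i\E_j\E_k\1_{\mu-\alpha_k}, \E_j\E_i\E_k\1_{\mu-\alpha_k}\la d-2\ra)$. This is \emph{not} of the form $\Hom(\E_i\E_j\E_k, \E_k\E_j\E_i)$ that Lemma \ref{LEM:homijk} bounds, and your proposed remedy --- ``swapping $\E_k$ across $\E_j\E_i$'' --- is precisely the sort of commutation that is not freely available at this stage (it would presuppose the $T$-maps or Serre relation whose construction this appendix is meant to feed into). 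As written, the vanishing of this residual term is asserted rather than proved. Separately, the adjoint shift should be $(\F_k\1_\l)_R = \E_k\1_{\l-\alpha_k}\la -\l_k+1\ra$, not $\la \l_k-1\ra$; this sign error happens to cancel against the shifts in the $\oplus_{[-\mu_k]}\1_\mu$ summands so it does not corrupt the piece you analyzed, but it would have derailed the residual-term bookkeeping. The cleanest fix is to adopt the two-sided adjunction at the outset and avoid creating the $\F_k\E_k$ factor altogether.
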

\begin{proof}
By adjunction
\begin{align*}
&\Hom(\F_k \E_i \E_j \1_\l, \E_j \E_i \F_k \1_\l \la d \ra)  \\
\cong& \Hom(\E_i \E_j (\1_{\l-\alpha_k} \F_k)_R, (\1_{\l+\alpha_i+\alpha_j-\alpha_k} \F_k)_R \E_j \E_i) \la d \ra) \\
\cong& \Hom(\E_i \E_j \E_k \1_{\l-\alpha_k} \la -\l_k + 2 - 1 \ra, \E_k \E_j \E_i \1_{\l-\alpha_k} \la d - \l_k - \la i,k \ra - \la j,k \ra + 2 - 1 \ra).
\end{align*}
The result now follows from Lemma \ref{LEM:homijk}. 
\end{proof}

\end{document}